\title{Quasi-stationary distributions and population processes}
\author{Sylvie M\'el\'eard and Denis Villemonais}
\DeclareMathSymbol{\minus}{\mathord}{operators}{"2D}
\newtheorem{theorem}{Theorem}
\newtheorem{lemme}[theorem]{Lemma}
\newtheorem{proposition}[theorem]{Proposition}
\newtheorem{remark}{Remark}
\newtheorem{example}{Example}
\newtheorem{definition}{Definition}
\newtheorem{corollary}[theorem]{Corollary}
\def\be{\begin{eqnarray}}
\def\ee{\end{eqnarray}}
\def\ben{\begin{eqnarray*}}
\def\een{\end{eqnarray*}}
\def\bei{\begin{itemize}}
\def\eei{\end{itemize}}
\def\me{\medskip \noindent}
\def\bi{\bigskip \noindent}
\def\E{\mathbb{E}}
\def\P{\mathbb{P}}
\def\R{\mathbb{R}}
\def\1{\mathbf{1}}
\def\N{\mathbb{N}}
\def\W{\mathbb{W}}
\begin{document}
\maketitle
\begin{abstract}
This survey concerns the study of quasi-stationary distributions with
a specific focus on models derived from ecology and population
dynamics. We are concerned with the long time behavior of different
stochastic population size processes when $0$ is an absorbing point
almost surely attained by the process. The hitting time of this point,
namely the extinction time, can be large compared to the physical time
and the population size can fluctuate for large amount of time before
extinction actually occurs.  This phenomenon can be understood by the
study of quasi-limiting distributions. In this paper, general results
on quasi-stationarity are given and examples developed in detail. One
shows in particular how this notion is related to the spectral
properties of the semi-group of the process killed at $0$. Then we
study different stochastic population models including nonlinear terms
modeling the regulation of the population. These models will take
values in countable sets (as birth and death processes) or in
continuous spaces (as logistic Feller diffusion processes or
stochastic Lotka-Volterra processes). In all these situations we study
in detail the quasi-stationarity properties. We also develop an
algorithm based on Fleming-Viot particle systems and show a lot of
numerical pictures. \end{abstract}

\emph{Keywords:} population dynamics, quasi-stationarity,
Yaglom limit, birth and death process, logistic Feller
diffusion, Fleming-Viot particle system.

\section{Introduction}
\label{sec:intro}
\label{section:introduction}
We are interested in the long time behavior of isolated biological
populations with a regulated (density-dependent) reproduction.
Competition for limited resources impedes these natural populations
without immigration to grow indefinitely and leads them to become
extinct. When the population's size attains zero, nothing happens
anymore and this population's size process stays at zero. This point
$0$ is thus an absorbing point for the process. Nevertheless, the time
of extinction can be large compared to the individual time scale and
it is common that population sizes fluctuate for large amount of time
before extinction actually occurs. For example, it has been observed
 in populations of endangered species, as  the Arizona
ridge-nose rattlesnakes studied in Renault-Ferri\`ere-Porter
\cite{Renault}, that the statistics of some biological traits seem to
stabilize.  Another stabilization phenomenon is given by the
{mortality plateau}. While demographers thought for a long time
that the rate of mortality of individuals grows as an exponential function
of the age, it has been observed more recently that the rate of mortality slows at
advanced ages, or even stabilizes.  To capture these phenomena, we
will study the long time behavior of the process conditioned on non
extinction and the related notion of {quasi-stationarity}. In
particular, we will see that a Markov process with extinction which
possesses a quasi-stationary distribution has a mortality plateau.

\me In all the following,  the population's size
process $(Z_t, t\geq 0)$ will be   a Markov process going almost surely 
to extinction. We are interested in looking for characteristics of this
process giving more  information on its long time behavior. One way to approach this problem is to study
the "quasi-limiting distribution" (QLD) of the process (if it exists),
that is the limit, as $t\to +\infty$, of the distribution of $Z_t$
conditioned on non-absorption up to time $t$.  This distribution,
which is also called  Yaglom limit, provides particularly useful
information if the time scale of absorption is substantially larger
than the one of the quasi-limiting distribution. In that case, the
process relaxes to the quasi-limiting regime after a relatively short
time, and then, after a much longer period, absorption will eventually
occur.  Thus the quasi-limiting distribution bridges the gap between
the known behavior (extinction) and the unknown time-dependent
behavior of the process.

\noindent  There is another point of view concerning quasi-stationarity. A
quasi-stationary distribution for the process $(Z_t, t\geq 0)$ denotes
any proper initial distribution on the non-absorbing states
such that the distribution of $Z_t$ conditioned on non-extinction up
to time $t$ is independent of $t, t\geq 0$.

\me There is a large literature on quasi-stationary distributions and
Yaglom limits (see for example the large bibliography updated by
Pollett~\cite{Pollett}) and a lot of references will be given during the
exposition. The present paper is by no means exhaustive, but is a
survey presenting a collection of tools for the study of QSD
concerning specific population's size models. More than the
originality of the proofs, we emphasize some general patterns for
qualitative and quantitative results on QSD. We also provide a lot of
numerical illustrations of the different notions.

\me In Section~\ref{section:definitions} of this survey, we will
introduce the different notions of QSD and review theoretical
properties on QSD and QLD. We will also highlight the relations
between QSDs and mortality plateaus.  In
Section~\ref{section:finite-case}, we will study the simple case of
QSD for processes in continuous time with finite state space. We
develop a simple example to make things more concrete.  Thus we will
concentrate on QSD for several stochastic population models
corresponding to different space and time scales. We will underline
the importance of spectral theory as mathematical tool for the
research of QSD, in these different contexts.  In
Section~\ref{section:QSD-for-BD-process}, we will consider birth and
death processes. We will state results giving explicit conditions on
the coefficients ensuring the almost sure extinction of the process,
and the existence and uniqueness (or not) of a QSD. We will especially
focus on the density-dependence case, when the death rate of each
individual is proportional to the population's size (called logistic
birth and death process). We will show that in that case, the process
goes almost surely to extinction, and that there is a unique QSD,
coinciding with the unique QLD.
%% If one assumes that the total amount of resources is fixed and that
%% the initial population's size is large, then the biomass of each
%% individual is small, and we are led to renormalize the birth and death
%% process.
In Section~\ref{section:the-logistic-feller-diffusion-process}, the
birth and death process is rescaled by a growing initial population
and by small individual weights (small biomass). This process is
proved to converge, as the initial population's size tends to
infinity, to the unique solution of the deterministic logistic
equation, whose unique stable equilibrium is given by the carrying
capacity.  If the individual birth and death rates are proportional to
the population's size while preserving the ecological balance, more
numerous are the individuals, smaller are their weights and faster are
their birth and death events, reflecting allometric demographies. In
that case, the rescaled birth and death process converges, as the
initial population size increases, to the solution of a stochastic
differential equation with a $1/2$-H\"older diffusion coefficient and
a quadratic drift, called logistic Feller equation.  The existence of
the QSD is proved in this case and uniqueness is characterized by a
condition meaning the return of the process from infinity in finite
time.  The proof relies QSD investigation to spectral theory tools
developed in a functional $\mathbb{L}^2-$space. The logistic Feller
equation describes the size of a mono-type population where
individuals are indistinguable. Motivated by ecological and
biodiversity problems, we generalize the model to multi-type
populations with intra-specific and inter-specific competition. That
leads us to consider stochastic Lotka-Volterra processes. We give
conditions ensuring mono-type transient states or coexistence,
preserving one or several dominant types in a longer scale.
A large place in the survey is given to illustrations obtained by
simulations. Some of them derive from an approximation method based on
Fleming-Viot interacting particle systems, which is carefully
described in Section~\ref{section:simulation}.

%%%%%%%%%%%%%%%%%%
%
\bi\textbf{A brief bibliography on quasi-stationary distributions}

\me The study of quasi-stationary distributions began with the work of
Yaglom on sub-critical Galton-Watson processes \cite{Yaglom1947}. Since
then, the existence, uniqueness and other properties of
quasi-stationary distributions for various processes have been
studied.

\me In particular, the case of Markov processes on finite state spaces has
been studied by Darroch and Seneta, who proved under some
irreducibility conditions the existence and uniqueness of the QSD, for
both discrete \cite{Darroch1965} and continuous time settings \cite{Darroch1967}
(detailed proofs and results are reproduced in Section~\ref{section:finite-case} of the
present paper). We also refer the reader to the works of van Doorn and
Pollett~\cite{vanDoorn2009} for a relaxation of the irreducibility
condition.

\me The case of discrete time birth and death processes has been treated
by Seneta and Vere-Jones~\cite{Seneta1966} and Ferrari, Mart\'inez and
Picco~\cite{Ferrari1991}. For continuous time birth and death
processes, we refer to van Doorn~\cite{vanDoorn1991}. This last case
is quite enlightening, since it leads to examples of processes with no
QSD, of processes with a Yaglom limit and an infinite number of QSD
and to processes with a Yaglom limit which is the unique QSD (detailed
proofs and results are also developed in
Section~\ref{section:QSD-for-BD-process} of this survey). For further
developments, we may refer to Pakes and Pollett~\cite{Pakes1989}
(where results on continuous-time birth and death processes with
catastrophic events are obtained), to Bansaye~\cite{Bansaye2009}
(where a discrete time branching process in random environment is
studied), to Coolen-Schrijner~\cite{Coolen2006} (where general discrete time
processes are studied) and references therein.

\me Diffusion processes have also been extensively studied in the past
decades, beginning with the seminal work of Mandl~\cite{Mandl1961} for
the one-dimensional case and of Pinsky~\cite{Pinsky1985} and Gong,
Qian and Zhao~\cite{Gong1988} in the multi-dimensional
situation. Mart\'inez, Picco and San Mart\'in~\cite{Martinez1998} and
Lladser and San Mart\'in~\cite{Lladser2000} highlighted cases of
diffusions with infinitely many quasi-limiting distributions, with a
non-trivial dependence on the initial distribution of the process. For
recent development of the theory of QSDs for diffusion processes, we
refer to Steinsaltz and Evans~\cite{Steinsaltz2007} and Kolb and
Steinsaltz~\cite{Kolb2010} where the case of one dimensional
diffusions with different boundary conditions is studied. We also
emphasize that in the case of Wright-Fisher diffusions and some of its
relatives, Huillet~\cite{Huillet2007} derived explicit values of
QSDs. Other diffusion processes related to demographic models have
been studied in Cattiaux, Collet, Lambert, Mart\'inez, M\'el\'eard and
San Mart\'in~\cite{Cattiaux2009}, where the case of the Feller
logistic diffusion is developed (proofs and results are also written
in detail in
Section~\ref{section:the-logistic-feller-diffusion-process} of this
paper), and in Cattiaux and M\'el\'eard~\cite{Cattiaux2008}, where the
case of a two dimensional stochastic Lotka-Volterra system is
developed ($k$ types stochastic Lotka-Volterra systems are also
studied in Section~\ref{subsection:multi-type-population} of this
survey).

\me Let us mention the original renewal approach of Ferrari, Kesten,
Mart\'inez and Picco~\cite{Ferrari1995}, also studied recently by Barbour and
Pollett~\cite{Barbour2010} in order to provide an approximation method for
the QSD of discrete state space Markov processes in continuous
time. Other approximation methods have been proposed by Pollett and
Stewart~\cite{Pollett1994} and by Hart and Pollett~\cite{Hart2000}. In this
survey, we describe the approximation method based on Fleming-Viot
type interacting particle systems, introduced by Burdzy, Holyst,
Ingerman et March~\cite{Burdzy1996} in 1996 and studied later by Burdzy,
Holyst and March~\cite{Burdzy2000}, Grigorescu and Kang~\cite{Grigorescu2004},
Ferrari and Mari\`c~\cite{Ferrari2007}, Villemonais~\cite{Villemonais2010}
\cite{Villemonais2011} and Asselah, Ferrari and Groisman~\cite{Asselah2011}.

\me For studies on the so-called Q-process, which is the process
distributed as the original process conditioned to never extinct, we
refer the reader to the above cited articles~\cite{Mandl1961},
\cite{Pinsky1985}, \cite{Gong1988}, \cite{Darroch1965}, \cite{Darroch1967} and, for
further developments, to the works of Collet, Mart\'inez and San
Mart\'in~\cite{Collet1995} and of Lambert~\cite{Lambert2007} and references
therein.

%
%%%%%%%%%%%%%%%%%

%% While previous sections concern existence and uniqueness of QSD,
%% Section~\ref{section:simulation} is devoted to the description of an
%% approximation method based on Fleming-Viot interacting particle
%% systems. 

%%   \me In many cases, the proofs of the existence and
%% uniqueness of QSD do not lead to quantitative information on the QSD.
%% %% are based on
%% %%  spectral theory,  and . They cannot be quantitatively exploited.
%%  It is thus mainly useful to construct an algorithmic method to
%%  simulate the QSD. In Section~\ref{section:simulation}, we describe an
%%  approximation method based on Fleming-Viot interacting particle
%%  systems.  A large place in the survey is given to simulations and
%%  pictures.

\bi\textbf{The framework}

\me Let us now introduce our framework  in more details. The  population's size $(Z_t, t\geq 0)$ is a
Markov process taking values in a subset $E$ of $\mathbb{N}$ or
$\mathbb{R}_+$, in a discrete or continuous time setting.  If the
population is isolated, namely without immigration, then the state
$0$, which describes the extinction of the population, is a
trap. Indeed, if there are no more individuals, no reproduction can
occur and the population disappears. Thus if the system reaches $0$,
it stays there forever, that is, if $Z_t=0$ for some $t$, then $Z_s=0$
for any $s\geq t$.

\me We denote by $T_0$ the  extinction time, i.e. the stopping time
 \begin{equation}
 \label{extinction-time}
 T_0= \inf\{t>0, Z_t = 0\}.
 \end{equation}
 We will consider cases  for which the process goes almost surely to
 zero, whatever the initial state is, namely, for all $z\in
 E$,
 \begin{equation}
 \label{extinction}
 \mathbb{P}_z(T_0<\infty)=1.
 \end{equation}
 Before extinction, the process
 takes its values in the space
 $ E^*= E \backslash \{0\}.$
 Any long time distribution of the process conditioned on
 non-extinction will be supported by $E^*$.

\bi \textbf{Notations} For any probability measure $\mu$ on $E^*$, we
denote by $\P_{\mu}$ (resp. $\E_{\mu}$) the probability (resp. the
expectation) associated with the process $Z$ initially distributed
with respect to $\mu$. For any $x\in E^*$, we set
$\P_x=\P_{\delta_x}$ and $\E_x=\E_{\delta_x}$.
We denote by $(P_t)_{t\geq 0}$ the semi-group of the process $Z$ killed
at $0$.  More precisely, for any $z>0$ and $f$ measurable and
bounded on $E^*$, one defines
\begin{equation}
 \label{sgkill}
 P_tf(z) = \mathbb{E}_z(f(Z_t) {\bf 1}_{t<T_0}).
\end{equation}
For any finite measure $\mu$ and any bounded measurable function $f$, we set
$$\mu(f)=\int_{E^*} f(x) \mu(dx),$$ and we also define the finite measure
$\mu P_t$ by 
\begin{equation*}
\mu P_t (f)=\mu(P_t f)=\mathbb{E}_{\mu}(f(Z_t) {\bf 1}_{t<T_0}).
\end{equation*}

\section{Definitions, general properties and first examples}
\label{section:definitions}

\me
There are several natural questions associated with this situation.

\me \textit{Question 1} What is the distribution of the size of a
non-extinct population at a large time $t$ ?  The mathematical
quantity of interest is thus the conditional distribution of $Z_t$
defined, for any Borel subset $A\subset E^*$, by
\begin{equation}
\label{conditioning}
\mathbb{P}_{\nu}(Z_t\in A | T_0>t)=\frac{\mathbb{P}_{\nu}(Z_t\in A
; T_0>t)}{\mathbb{P}_{\nu}( T_0>t)}=\frac{\nu P_t(\mathbf{1}_A)}{\nu P_t (\mathbf{1}_{E^*})},
\end{equation}
where $\nu$ is the initial distribution of the population's size
$Z_0$.  We will study the asymptotic behavior of this conditional
probability when $t$ tends to infinity. The first definition that we
introduce concerns the existence of a limiting conditional distribution.
\begin{definition}
Let $\alpha$ be a probability measure on $E^*$. We say that $\alpha$ is a \textbf{quasi-limiting distribution} (QLD) for $Z$, if there exists a
probability measure $\nu$ on $E^*$ such that, for any measurable set
$A\subset E^*$,
\begin{equation*}
\lim_{t\rightarrow\infty} \P_{\nu}\left(Z_t\in A | T_0 > t\right)=\alpha(A).
\end{equation*}
\end{definition}
\noindent %It is well known that in the
In some cases 
the long time behavior of the conditioned distribution can be proved to be
initial state independent. This
leads to the following definition.
\begin{definition}
We say that $Z$ has a \textbf{Yaglom limit} if there exists a
probability measure $\alpha$ on $E^*$ such that, for any $x\in E^*$ and any
measurable set $A\subset E^*$,
\begin{equation}
\label{yag}
\lim_{t\rightarrow\infty} \P_{x}\left(Z_t\in A | T_0 > t\right)=\alpha(A).
\end{equation}
\end{definition}
\noindent When it exists, the Yaglom limit is a QLD. The reverse isn't true in
general and \eqref{yag} will actually not imply  the same property for any initial distribution.

\me \textit{Question 2} As in the ergodic case, we can ask if this
Yaglom limit has the conditional stationarity property given by the
following definition.
\begin{definition}
  Let $\alpha$ be a probability measure on $E^*$. We say that $\alpha$
  is a \textbf{quasi-stationary distribution} (QSD) if, for all $t\geq 0$ and any measurable set $A\subset E^*$,
\begin{equation*}
\alpha(A)=\P_{\alpha}\left(Z_t\in A| T_0 > t\right).
\end{equation*}
\end{definition}
\noindent The main questions are: Does a QSD exists? Is there a unique
QSD for the process? We will study examples where  QSDs do not
exist, or with an infinity of QSDs, or with a
unique QSD. The relation between the existence of QSD, QLD and Yaglom
limit is clarified in Proposition \ref{prop:qldimpliesqsd}
below. Namely, we will prove that
\begin{equation*}
  \text{Yaglom limit }\Rightarrow \text{QSD }\Leftrightarrow \text{ QLD}.
\end{equation*}

\me \textit{Question 3} Since the processes we are interested in
become extinct in finite time almost surely, the event $t<T_0$ becomes
a rare event when $t$ becomes large. An important question is then to
know whether the convergence to the Yaglom limit happens before the
typical time of extinction, or if it happens only after very large
time periods, in which case the populations whose size are distributed
with respect to the Yaglom limit are very rare. Both situations can
appear, as illustrated by the simple example of
Section~\ref{subsectionSimpleExample}.

\me \textit{Question 4} While most of theoretical results on QLDs, QSDs
and Yaglom limits are concerned with existence and uniqueness
problems, it would be useful in practice to have qualitative
information on the Yaglom limit. We present here particle
approximation results and numerical computations of the Yaglom limit
for some population's size models, providing some enlightenment on
Question 3 above.

\me \textit{Question 5} Another mathematical quantity related to this
conditioning is based on a pathwise point of view. In the finite state
space case of Section~\ref{section:finite-case} and the logistic
Feller diffusion case of
Section~\ref{section:the-logistic-feller-diffusion-process}, we will
describe the distribution of the trajectories who never attain the
trap. This will allow us to define a process, commonly referred to as
the $Q$ process for $Z$. We will prove that the new process defined by
this distribution is ergodic, and that its stationary distribution is
absolutely continuous with respect to the QSD (but not equal).

\bi The present section is organized as follows. In
Subsection~\ref{subsectionGeneralProperties}, we state general
properties of QLDs, QSDs and Yaglom limits. In
Subsection~\ref{subsectionGaltonWatson}, we develop the case of the
Galton-Watson process. This discrete time process is of historical
importance, since the notion of Yaglom limit has originally been
developed for this process by Yaglom itself (see
\cite{Yaglom1947}). In Subsection~\ref{subsectionSimpleExample}, we
develop a very simple example of a process evolving in a finite subset
of $\mathbb{N}$. For this process, one can easily prove the existence
of the Yaglom limit, the uniqueness of the QSD, and compare the speed of
extinction to the speed of convergence to the Yaglom limit. We also
provide  numerical computation of the relevant quantities.

\subsection{General properties}
\label{subsectionGeneralProperties}
\label{subsection:general-properties}

Most of the following results are already known by the QSD community. In this section, we emphasize their generality.

\subsubsection{QSD, QLD and Yaglom limit}
\label{subsubsection:QSD-QLD-Yaglom-limit}

\me It is clear that any Yaglom limit and any QSD is also a QLD. The
reverse implication has been proved by Vere-Jones~\cite{Vere1969} for
continuous time Markov chains evolving in a countable state
space. The following proposition extends this result to the general
setting.

\begin{proposition}
\label{prop:qldimpliesqsd}
\label{proposition:qld-implies-qld}
Let $\alpha$ be a probability measure on $E^*$. The distribution
$\alpha$ is a QLD for $Z$ if and only if it is a QSD for $Z$.
\end{proposition}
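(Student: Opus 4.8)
The plan is to prove the two implications separately. The forward implication is immediate: if $\alpha$ is a QSD, then choosing $\nu=\alpha$ in the definition of a QLD gives $\P_\alpha(Z_t\in A\mid T_0>t)=\alpha(A)$ for \emph{every} $t\ge 0$, so the limit as $t\to\infty$ is trivially $\alpha(A)$ and $\alpha$ is a QLD. The whole content lies in the converse.

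For the converse, suppose $\alpha$ is a QLD with witnessing initial law $\nu$, and write $\mu_t(\cdot)=\P_\nu(Z_t\in\cdot\mid T_0>t)=\nu P_t(\cdot)/\nu P_t(\mathbf{1}_{E^*})$ for the conditioned laws, which are well defined since $\P_\nu(T_0>t)>0$ for all $t$. By hypothesis $\mu_t(A)\to\alpha(A)$ for every measurable $A$. The engine of the proof is the cocycle identity coming from the semigroup property $\nu P_{t+s}=(\nu P_t)P_s$: dividing the numerator and denominator of $\mu_{t+s}(A)$ by $\nu P_t(\mathbf{1}_{E^*})$ gives
\begin{equation*}
\mu_{t+s}(A)\,\mu_t(P_s\mathbf{1}_{E^*})=\mu_t(P_s\mathbf{1}_A),\qquad s,t\ge 0.
\end{equation*}
I would then upgrade the setwise convergence $\mu_t\to\alpha$ to $\mu_t(f)\to\alpha(f)$ for every bounded measurable $f$; this is standard and follows by uniformly approximating $f$ by simple functions and using that every $\mu_t$ is a probability measure. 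Applying this to the bounded functions $P_s\mathbf{1}_A$ and $P_s\mathbf{1}_{E^*}$ and letting $t\to\infty$ in the displayed identity yields, for every $s\ge 0$ and every measurable $A$,
\begin{equation*}
\alpha(A)\,\alpha P_s(\mathbf{1}_{E^*})=\alpha P_s(\mathbf{1}_A).
\end{equation*}

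The remaining and genuinely delicate point, which I expect to be the main obstacle, is to divide by $c_s:=\alpha P_s(\mathbf{1}_{E^*})=\P_\alpha(T_0>s)$, i.e.\ to rule out $c_s=0$; the limiting identity above is also satisfied in the degenerate case where $\alpha P_s$ vanishes identically, so positivity must be extracted from extra structure. To this end I would first observe that the identity above, extended from indicators to bounded measurable functions by linearity and uniform approximation, reads $\alpha P_s=c_s\,\alpha$ as measures; combined with $P_{s+u}=P_sP_u$ this forces the multiplicativity $c_{s+u}=c_sc_u$, with $c_0=1$ and $c_s$ nonincreasing in $[0,1]$. If $c_{s_0}=0$ for some $s_0>0$, multiplicativity propagates the zero to every dyadic fraction $s_0/2^n$ and hence to all $s>0$, contradicting $c_s=\P_\alpha(T_0>s)\to\P_\alpha(T_0>0)=1$ as $s\downarrow 0$ (the process starts in $E^*$ and does not leave it instantaneously). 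Hence $c_s>0$ for all $s$, and dividing by $c_s$ gives $\P_\alpha(Z_s\in A\mid T_0>s)=\alpha(A)$ for all $s\ge 0$ and all $A$, which is precisely the QSD property. In the discrete-time setting the same scheme applies with $s,t\in\mathbb{N}$, the positivity of $c_1=\P_\alpha(T_0>1)$ following directly from $\alpha$ being supported on $E^*$.
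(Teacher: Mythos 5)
Your proof is correct and follows essentially the same route as the paper's: both apply the QLD convergence (upgraded from sets to bounded measurable functions) to $z\mapsto P_s\mathbf{1}_A(z)=\P_z(Z_s\in A,\,T_0>s)$ and $z\mapsto P_s\mathbf{1}_{E^*}(z)=\P_z(T_0>s)$, and pass to the limit in the semigroup identity to obtain $\alpha P_s(\mathbf{1}_A)=\alpha(A)\,\P_\alpha(T_0>s)$. The only difference is that you explicitly justify the positivity of $\P_\alpha(T_0>s)$ before dividing (via multiplicativity of $c_s$ and right-continuity at $0$), a point the paper's proof passes over silently.
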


\begin{remark} \upshape
When it exists, the Yaglom limit is uniquely
defined, while there are processes with an infinity of QSDs (see the
birth and death process case of Section~\ref{sectionQSDbdProcess}). We
immediately deduce that there exist QSDs which aren't a Yaglom
limit.
\end{remark}

\begin{proof} (1) If $\alpha$ is a QSD then it is a QLD for $Z$ starting with distribution $\alpha$.

\me (2) Assume now that $\alpha$ is a QLD for $Z$ and for an initial probability
measure $\mu$ on $E^*$.
 Thus, for any measurable and bounded  function $f$ on
$E^*$, 
\begin{eqnarray*}
  \alpha(f)&=&\lim_{t\to \infty} \mathbb{E}_\mu(f(Z_t)| T_0>t)
  = \lim_{t\to \infty} \frac{\mathbb{E}_\mu(f(Z_t); T_0>t)}{\mathbb{P}_\mu(T_0>t)}.
\end{eqnarray*}
 Applying the latter with
$f(z)=\mathbb{P}_z(T_0>s)$, we get by the Markov property
\begin{equation*}
\mathbb{P}_\alpha( T_0>s)=\lim_{t\to
\infty} \frac{\mathbb{P}_\mu( T_0>t+s)}{\mathbb{P}_\mu(T_0>t)}.
\end{equation*}

\me  Let us now consider
$f(z)=\mathbb{P}_z(Z_s\in A, T_0>s)$, with $A\subset E^*$. Applying the
Markov property again, it yields
\begin{eqnarray*}
  \mathbb{P}_{\alpha}(Z_s\in A; T_0>s) &=& \lim_{t\to \infty}
  {\mathbb{P}_\mu(Z_{t+s}\in A; T_0>t+s)\over \mathbb{P}_\mu(
  T_0>t)}\\
  &=& \lim_{t\to \infty} {\mathbb{P}_\mu(Z_{t+s}\in A;
  T_0>t+s)\over \mathbb{P}_\mu( T_0>t+s)} {\mathbb{P}_\mu(
  T_0>t+s)\over \mathbb{P}_\mu( T_0>t)}.
\end{eqnarray*}
By  definition of the QLD
$\alpha$, ${\mathbb{P}_\mu(Z_{t+s}\in A; T_0>t+s)\over \mathbb{P}_\mu(
T_0>t+s)}$ converges to $\alpha(A)$ and ${\mathbb{P}_\mu( T_0>t+s)\over \mathbb{P}_\mu( T_0>t)}$
converges to $\mathbb{P}_\alpha( T_0>s)$, when $t$ goes to infinity.
We deduce that, for any Borel set $A$ of $E^*$ and any
$s>0$,
\begin{equation*}
\alpha(A)=\mathbb{P}_{\alpha}(Z_s \in A | T_0>s).
\end{equation*}
The probability measure $\alpha$ is then a QSD.
\end{proof}

\subsubsection{Exponential extinction rate}
\label{subsubsection:exponential-extinction-rate}

\begin{proposition}
\label{propositionExtinctionRate}
\label{proposition:extinction-rate-from-qsd}
 Let us consider a Markov process $Z$
with absorbing point $0$ satisfying \eqref{extinction}. Assume
that $\alpha$ is a QSD for the process. Then there exists a positive real number $\theta(\alpha)$
 depending on the QSD such that
 \begin{equation}
 \label{extinction-rate}
 \mathbb{P}_{\alpha} (T_0>t) = e^{-\theta(\alpha) t}.
 \end{equation}
\end{proposition}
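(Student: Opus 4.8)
The claim is that for a QSD $\alpha$, the survival probability $\mathbb{P}_\alpha(T_0 > t)$ decays exactly exponentially. The plan is to exploit the defining property of a QSD together with the semi-group (Chapman--Kolmogorov / Markov) structure to show that the function $t \mapsto \mathbb{P}_\alpha(T_0>t)$ satisfies a multiplicative functional equation, and then to invoke the classical characterization of such solutions.

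First I would set $\varphi(t) = \mathbb{P}_\alpha(T_0>t) = \alpha P_t(\mathbf{1}_{E^*})$ and establish the semi-group identity $\varphi(t+s) = \varphi(t)\varphi(s)$. To do this, I would condition on the state at time $s$: by the Markov property, $\mathbb{P}_\alpha(T_0 > t+s) = \mathbb{E}_\alpha\!\left(\mathbf{1}_{T_0>s}\,\mathbb{P}_{Z_s}(T_0>t)\right)$. The key step is that, because $\alpha$ is a QSD, the law of $Z_s$ conditioned on $\{T_0>s\}$ is again $\alpha$; hence $\mathbb{E}_\alpha\!\left(\mathbf{1}_{T_0>s}\,\mathbb{P}_{Z_s}(T_0>t)\right) = \mathbb{P}_\alpha(T_0>s)\,\mathbb{E}_\alpha\!\left(\mathbb{P}_{Z_s}(T_0>t)\mid T_0>s\right) = \varphi(s)\,\alpha(\mathbb{P}_\cdot(T_0>t)) = \varphi(s)\,\varphi(t)$. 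This gives the Cauchy multiplicative equation $\varphi(t+s)=\varphi(t)\varphi(s)$ for all $s,t\geq 0$.

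Next I would solve this functional equation. Since $\varphi$ takes values in $[0,1]$ and $\varphi(0)=1$, and since $\alpha$ is supported on the non-absorbing states $E^*$ so that $\varphi(t)>0$ for every $t$ (the process started from $\alpha$ cannot be absorbed instantaneously, and a QSD cannot put mass where absorption is immediate), I may write $\psi(t)=-\log\varphi(t)$, a nonnegative additive function $\psi(t+s)=\psi(t)+\psi(s)$. Combined with monotonicity of $\varphi$ (survival probabilities are non-increasing in $t$), which rules out the pathological non-measurable solutions of Cauchy's equation, this forces $\psi(t)=\theta(\alpha)\,t$ for some constant $\theta(\alpha)\geq 0$, i.e. $\varphi(t)=e^{-\theta(\alpha)t}$.

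Finally I would argue that $\theta(\alpha)>0$ strictly, using the standing assumption \eqref{extinction} that $\mathbb{P}_z(T_0<\infty)=1$ for all $z$, hence $\mathbb{P}_\alpha(T_0<\infty)=1$; this means $\varphi(t)\to 0$ as $t\to\infty$, which excludes $\theta(\alpha)=0$ (that would give $\varphi\equiv 1$). I expect the main obstacle to be the justification of positivity and measurability/monotonicity needed to pin down the exponential form of the Cauchy solution, rather than the derivation of the multiplicative identity itself; the conditioning argument leading to $\varphi(t+s)=\varphi(t)\varphi(s)$ is the conceptual heart and follows directly from the QSD property applied to the function $f(z)=\mathbb{P}_z(T_0>t)$, exactly as in the proof of Proposition~\ref{prop:qldimpliesqsd}.
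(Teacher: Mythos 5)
Your proof is correct and follows essentially the same route as the paper: derive the multiplicative identity $\mathbb{P}_\alpha(T_0>t+s)=\mathbb{P}_\alpha(T_0>t)\,\mathbb{P}_\alpha(T_0>s)$ from the Markov property plus the QSD property applied to $z\mapsto\mathbb{P}_z(T_0>t)$, then solve the Cauchy functional equation using monotonicity and the extinction hypothesis to get strict positivity of $\theta(\alpha)$. The only difference is that you spell out the "elementary proof" the paper leaves implicit (positivity of the survival probability, which is in fact forced by the very definition of a QSD since the conditional law must be well defined, and the exclusion of pathological Cauchy solutions via monotonicity), which is a welcome addition rather than a divergence.
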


\me
 This theorem shows us that starting from a QSD, the extinction
 time has an exponential distribution with parameter $\theta(\alpha)$ independent of $t>0$, given by
% The \textbf{extinction rate of
% survival}
% $\theta(\alpha)$ is thus given by
 $$ \theta(\alpha) = -  {\ln \mathbb{P}_{\alpha}
 (T_0>t)\over t}.
 $$

\me
\begin{proof}
By the Markov
property,
\begin{eqnarray*}
  \mathbb{P}_{\alpha} \left(T_0>t+s\right) 
    &=& \mathbb{E}_{\alpha}\left(\mathbb{P}_{Z_t} ( T_0>s)\mathbf{1}_{T_0>t}\right)\\
    &=& \mathbb{P}_{\alpha}(T_0 >t) \mathbb{E}_{\alpha}\left(\mathbb{P}_{Z_t} ( T_0>s)| T_0>t\right),
\end{eqnarray*}
since $T_0\leq t$ implies $Z_t=0$, and $\mathbb{P}_{0} ( T_0>s)=0$.
By definition of a QSD, we get
\begin{eqnarray*}
  \mathbb{E}_{\alpha}\left(\mathbb{P}_{Z_t} ( T_0>s)| T_0>t \right) = \mathbb{P}_{\alpha}(T_0>s).
\end{eqnarray*}
Hence we obtain that for all $s,t>0$,
%\begin{equation*}
$  \mathbb{P}_{\alpha} ( T_0>t+s) = \mathbb{P}_{\alpha} ( T_0>s)
  \mathbb{P}_{\alpha} ( T_0>t).
$
%\end{equation*}
Let us denote $g(t) =\mathbb{P}_{\alpha} ( T_0>t)$. We have $g(0)=1$
and, because of \eqref{extinction}, $g(t)$ tends to $0$ as $t$ tends
to infinity. An elementary proof allows us to
conclude that there exists a real number $\theta(\alpha)>0 $ such
that
$$\mathbb{P}_\alpha(T_0>t) = e^{-\theta(\alpha) t}.$$
\end{proof}

\subsubsection{QSD and exponential moments}
\label{subsubsection:QSD-and-exponential-moments}

 \me The following statement gives a necessary condition for the
 existence of QSDs in terms of existence of exponential moments of
 the hitting time $T_{0}$.

\begin{proposition}
\label{prop:moment}
\label{proposition:moment-for-extinction-time}
Assume  that $\alpha$ is a QSD. Then, for
any $\, 0<\gamma<\theta(\alpha)$,
\begin{equation}
\label{prop-moment} \mathbb{E}_{\alpha}(e^{\gamma T_0})<+\infty.
\end{equation}
In particular, there exists a positive number $z$ such that
$\mathbb{E}_z(e^{\gamma T_0})<+\infty$.
\end{proposition}

\noindent Proposition \ref{prop:moment} suggests that if the population can
escape extinction for too long times with positive probability, then
the process has no QSD. This is the case for the critical
Galton-Watson process: its extinction time is finite almost surely,
but its expectation isn't finite.

\begin{proof}  
We compute the exponential moment in continuous and discrete time settings.
In both cases, it is finite if and only if
$\theta(\alpha)>\gamma$.

\me In the continuous time setting, \eqref{extinction-rate} says that,
under $\mathbb{P}_{{\alpha}}$, $T_{0}$ has an exponential distribution
with parameter $\theta(\alpha)$. We deduce that, for any $\theta(\alpha)>\gamma$,
\begin{equation*}
  \E_{\alpha}\left(e^{\gamma
    T_0}\right)=\frac{\theta(\alpha)}{\theta(\alpha)-\gamma}.
\end{equation*}
In the discrete time setting, \eqref{extinction-rate} says that under
$\P_{\alpha}$, $T_0$ has a geometric distribution with parameter
$e^{-\theta(\alpha)}$. We deduce that
\begin{equation*}
  \E_{\alpha}\left(e^{\gamma
    T_0}\right)=\frac{1-e^{-\theta(\alpha)}}{e^{-\gamma}-e^{-\theta(\alpha)}}.
\end{equation*}
 Since $\E_{\alpha}(e^{\gamma T_0})$ is equal to
$\int_{E^*} \mathbb{E}_z(e^{\gamma T_0}) \alpha(dz)$, the finiteness of the
integral implies the last assertion.

\end{proof}

\begin{remark} \rm In the particular case of an irreducible continuous time Markov chain
with state space $\N$ such that
%\begin{equation*}
 $ \lim_{z\rightarrow+\infty} \P_z(T_0\leq t)=0,\ \forall t\geq 0,$
%\end{equation*}
Ferrari, Kesten, Mart\'inez and Picco~\cite{Ferrari1995} proved that
the existence of the moment \eqref{prop-moment} for some
$z\in\mathbb{N}$ and some $\gamma>0$ is equivalent to the the existence of a
quasi-stationary distribution.

\me It is actually not true in any case, as shown by the following
counter-example.  Let $Z$ be a continuous time random walk on $\N$
reflected on $1$ and killed at rate $1$. Thus, for any
$\lambda\in[0,1[$ and any probability measure $\mu$ on $\N$,
    $\mathbb{E}_{\mu}(e^{\lambda T_0})$ is finite. Nevertheless the
    conditional distribution $\mathbb{P}_ z(Z_t\in\cdot|t<T_0)$ is the
    distribution of a standard continuous time random walk reflected
    on $1$, which converges to $0$ as $t$ tends to infinity. In
    particular $Z$ has no QLD and thus no QSD.
\end{remark}

\subsubsection{A spectral point of view}
\label{subsubsection:a-spectral-point-of-view}
In this section, the results are stated in the continuous time
setting.  The operator $L$ with domain ${\cal D}(L)$ denotes the infinitesimal generator of
the sub-Markovian semi-group $(P_{t})$ associated with the killed
process $Z$.  The next proposition links the existence of QSDs for
$Z$ and the spectral properties of the dual of the operator $L$.
It is one of the main tools used in a large literature studying QSDs.

\begin{proposition}
\label{PrQSDeigenfunction}
\label{proposition:qsd-spectral-point-of-view}
Let $\alpha$ be a probability measure on $E^*$. We assume that there
exists a set $D \subset {\cal D}(L)$ such that, for any
measurable subset $A\subset E^*$, there exists a uniformly bounded
sequence $(f_n)$ in $D$ converging point-wisely to
$\mathbf{1}_{A}$.

 Then $\alpha$ is a quasi-stationary distribution if
and only if there exists $\theta(\alpha)>0$ such that
\begin{equation*}
\alpha (L f) = -\theta(\alpha) \alpha(f),\ \forall f\in{ D}.
\end{equation*}
\end{proposition}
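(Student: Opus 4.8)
The plan is to funnel both directions through the single measure-level identity $\alpha P_t=e^{-\theta(\alpha)t}\,\alpha$, converting between the measure and the generator by differentiating at $t=0$ and integrating back up. The approximation hypothesis will be used only to pass from the test class $D$ to indicators: once a linear identity $\alpha(P_tf)=e^{-\theta(\alpha)t}\alpha(f)$ is known for every $f\in D$, I would pick a uniformly bounded sequence $(f_n)\subset D$ with $f_n\to\1_A$ pointwise and apply dominated convergence (legitimate because $\alpha$ and $\alpha P_t$ are finite measures) to obtain $\alpha P_t(\1_A)=e^{-\theta(\alpha)t}\alpha(A)$ for every measurable $A$.

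For the ``only if'' direction, suppose $\alpha$ is a QSD. Proposition~\ref{propositionExtinctionRate} gives $\P_\alpha(T_0>t)=\alpha P_t(\1_{E^*})=e^{-\theta(\alpha)t}$, while the definition of a QSD reads $\alpha(A)=\alpha P_t(\1_A)/\alpha P_t(\1_{E^*})$. Multiplying, I get $\alpha P_t(\1_A)=e^{-\theta(\alpha)t}\alpha(A)$ for all measurable $A$, so the two finite measures $\alpha P_t$ and $e^{-\theta(\alpha)t}\alpha$ agree on every set and hence coincide; in particular $\alpha(P_tf)=e^{-\theta(\alpha)t}\alpha(f)$ for every $f\in D\subset\mathcal D(L)$. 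Since $f\in\mathcal D(L)$ we have $(P_hf-f)/h\to Lf$ in the Banach-space norm, and as $\alpha$ is a bounded linear functional the right-hand derivative at $0$ exists and equals $\alpha(Lf)$; differentiating $\alpha(P_tf)=e^{-\theta(\alpha)t}\alpha(f)$ at $t=0$ then yields $\alpha(Lf)=-\theta(\alpha)\alpha(f)$.

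For the converse, assume $\alpha(Lf)=-\theta(\alpha)\alpha(f)$ for all $f\in D$. By the reduction in the first paragraph it suffices to establish $\alpha(P_tf)=e^{-\theta(\alpha)t}\alpha(f)$ for each fixed $f\in D$: propagating to indicators and taking $A=E^*$ recovers $\P_\alpha(T_0>t)=e^{-\theta(\alpha)t}$, and dividing gives exactly $\alpha(A)=\P_\alpha(Z_t\in A\mid T_0>t)$. To get the exponential identity I would set $u(t)=\alpha(P_tf)$ and aim for $u'(t)=-\theta(\alpha)u(t)$ with $u(0)=\alpha(f)$. Using the Kolmogorov equation $\tfrac{d}{dt}P_tf=LP_tf=P_tLf$ and boundedness of $\alpha$, one has $u'(t)=\alpha(LP_tf)$, so the ODE would close immediately \emph{provided} the eigen-relation $\alpha(L\,\cdot\,)=-\theta(\alpha)\alpha(\,\cdot\,)$ were valid not only on $D$ but along the whole orbit $\{P_tf:t\ge 0\}$.

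This is exactly where the main obstacle lies: the hypothesis furnishes the relation only on the test class $D$, whereas $P_tf$ need not lie in $D$. I would resolve it by upgrading the relation from $D$ to all of $\mathcal D(L)$. Both functionals $f\mapsto\alpha(Lf)$ and $f\mapsto-\theta(\alpha)\alpha(f)$ are continuous for the graph norm $\|f\|+\|Lf\|$ and agree on $D$, so they agree on the graph-closure of $D$; the delicate point is to guarantee that this closure is all of $\mathcal D(L)$, i.e.\ that $D$ is a core for $L$ (a structural requirement that is available in the concrete models of the survey, and which is the natural strengthening of the bare pointwise-approximation hypothesis). Once the relation holds on $\mathcal D(L)$, the integrated equation $\alpha(P_tf)-\alpha(f)=\int_0^t\alpha(LP_sf)\,ds=-\theta(\alpha)\int_0^t\alpha(P_sf)\,ds$ gives $u'=-\theta(\alpha)u$, hence $u(t)=e^{-\theta(\alpha)t}\alpha(f)$. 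Equivalently, and more cleanly, one can feed the extended relation into the resolvent $R_\lambda=(\lambda-L)^{-1}=\int_0^\infty e^{-\lambda t}P_t\,dt$ to obtain $\alpha(R_\lambda g)=(\lambda+\theta(\alpha))^{-1}\alpha(g)$ for all $g$, and recover $\alpha(P_tg)=e^{-\theta(\alpha)t}\alpha(g)$ by uniqueness of the Laplace transform. I expect the core/density step to be the only genuinely subtle part; the rest is bookkeeping with the semigroup and the finiteness of $\alpha$.
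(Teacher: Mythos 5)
Your proposal follows essentially the same route as the paper: make $\alpha P_t=e^{-\theta(\alpha)t}\alpha$ the pivot identity, pass between the test class $D$ and indicators by dominated convergence along the uniformly bounded sequence $(f_n)$, and convert between semigroup and generator by differentiating at $t=0$ in one direction and integrating the Kolmogorov equation in the other. The one point where you diverge is the point where you are more careful than the paper. In the converse direction the paper writes
$\frac{\partial}{\partial t}\,\alpha(P_tf)=\alpha(LP_tf)=-\theta(\alpha)\,\alpha P_t(f)$ for $f\in D$; the second equality applies the eigenrelation to $P_tf$, which is only hypothesized on $D$, and $P_tf$ has no reason to remain in $D$. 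That is exactly the obstacle you isolate, and your repair --- asking that $D$ be a core so that the two graph-norm-continuous functionals $f\mapsto\alpha(Lf)$ and $f\mapsto-\theta(\alpha)\alpha(f)$, agreeing on $D$, agree on all of ${\cal D}(L)$, after which either the integrated ODE $u'=-\theta(\alpha)u$ or the resolvent/Laplace-transform argument closes the loop --- is the standard and correct one; without some such strengthening the implication from the relation on a bare test class to quasi-stationarity can fail, just as a measure annihilating $L$ on a non-core need not be invariant. So your forward direction and the indicator-approximation step coincide with the paper's, while your converse is the paper's argument made rigorous at the cost of a mild extra hypothesis that the paper uses implicitly and that holds in the concrete models treated later in the survey.
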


We emphasize that the existence of $ D$ is always true if the
state space $E^*$ is discrete. It is also fulfilled if $E^*$ is an open subset
of $\mathbb{R}^d$ and if $Z$ is a diffusion with locally bounded
coefficients.

\begin{proof}
(1) Let $\alpha$ be a QSD for $Z$. By definition of a QSD, 
we have, for every Borel set $A\subseteq E^*$,
\begin{eqnarray*}
  \alpha(A)=\frac{\alpha P_t (\mathbf{1}_A)}{\alpha P_t (\mathbf{1}_{E^*})}.
\end{eqnarray*}

\me  By Theorem \ref{extinction-rate}, there
exists $\theta(\alpha)>0$ such that for each $t>0$,
\begin{equation*}
 \alpha P_t (\mathbf{1}_{E^*}) =  \mathbb{P}_\alpha(T_0>t) = e^{-\theta(\alpha) t}.
\end{equation*}
We deduce that, for any measurable set $A\subseteq E^*$, 
%\begin{equation*}
%  \label{spectral1}
$  \alpha P_t (\mathbf{1}_A) = e^{-\theta(\alpha) t} \alpha(A),$
%\end{equation*}
which is equivalent to 
%\begin{equation*}
$\ \alpha P_t= e^{-t \theta(\alpha)} \alpha$.
%\end{equation*}
By Kolmogorov's forward equation and by assumption on $ D$, we
have
\begin{equation*}
  \left| \frac{\partial P_t f}{\partial t}(x) \right| = |P_t Lf(x)| \leq
  \|Lf\|_{\infty} <+\infty,\ \forall f\in{ D}.
\end{equation*}
In particular, one can differentiate $\ \alpha P_t
f=\int_{E^*} P_t f(x)\alpha(dx)$ under the integral sign, which implies that
\begin{equation*}
\alpha(Lf)=-\theta(\alpha) \alpha(f),\ \forall f\in{ D}.
\end{equation*}

\bi (2) Assume know that $\alpha(Lf)=-\theta(\alpha) \alpha(f)$ for
all $f\in{ D}$. By Kolmogorov's backward equation and the
same ``derivation under the integral sign'' argument, we have
\begin{equation*}
  \frac{\partial \alpha (P_t f)}{\partial t}=\alpha(L P_t
  f)=-\theta(\alpha)\alpha P_t (f),\ \forall f\in{ D}.
\end{equation*}
We deduce that
\begin{equation*}
  \alpha P_t (f)=e^{-t\theta(\alpha)} \alpha(f),\ \forall f\in{ D}.
\end{equation*}
By assumption, there exists, for any measurable subset $A\subset E^*$, a
uniformly bounded sequence $(f_n)$ in $ D$ which converges point-wisely to $\mathbf{1}_A$.
Finally, we deduce by  dominated convergence that
\begin{equation*}
 \alpha P_t( \mathbf{1}_A)= e^{-t\theta(\alpha)} \alpha(A). 
\end{equation*}
This implies immediately that $\alpha$ is a quasi-stationary
distribution for $Z$.
\end{proof}

\subsubsection{Long time limit of the extinction rate}
\label{subsubsection:limit-of-the-extinction-rate}

Another quantity of interest in the demography and population's
dynamics is given by the long time behavior of the killing or
extinction rate. In the demography setting, the process $Z$ models the
vitality of some individual and $t$ its physical age. Thus $T_{0}$ is
the death time of this individual. The long time behavior of the
extinction rate has been studied in detail by Steinsaltz-Evans
\cite{Steinsaltz2004} for specific cases.

\me
 The definition of the
extinction rate depends on the time setting:
\begin{itemize}
  \item In the discrete time setting, the extinction rate of $Z$
    starting from $\mu$ at time $t\geq 0$ is defined by
  \begin{equation*}
    r_{\mu}(t)=\P_{\mu}(T_0=t+1|T_0> t).
  \end{equation*}
  \item In the continuous time setting,  the extinction rate of $Z$
    starting from $\mu$ at time $t\geq 0$ is defined by
    \begin{equation*}
      r_{\mu}(t)=-\frac{\frac{\partial}{\partial t}\P_{\mu}(T_0> t)}{\P_{\mu}(T_0>t)},
    \end{equation*}
    when the derivative exists and is integrable with respect to $\mu$.
\end{itemize}

\me Historically (\textit{cf.} \cite{Gompertz1825}), demographers
applied the Gompertz law meaning that this extinction rate was
exponentially increasing with time. However in 1932, Greenwood and
Irwin \cite{Greenwood1939} observed that in some cases, this behavior
was not true. In particular there exist cases where the extinction
rate converges to a constant when time increases, leading to the
notion of mortality plateau.  This behavior of the extinction rate has
been observed in experimental situations (see for instance
\cite{Carey1992}). 

 \me The QSDs play a main role in this framework. Indeed, by
 Proposition~\ref{propositionExtinctionRate}, if $\alpha$ is a
 QSD, then the extinction rate $r_{\alpha}(t)$ is constant and given
 by
  \begin{equation*}
    r_{\alpha}(t)=\left\lbrace
    \begin{array}{l}
       1-e^{-\theta(\alpha)}\ \text{in the discrete time setting}\\
       \theta(\alpha)\ \text{in the continuous time setting}
    \end{array}
    \right. ,\ \forall t\geq 0.
  \end{equation*}
  We refer to the introduction of Steinsaltz-Evans
  \cite{Steinsaltz2004} for a nice discussion of the notion of QSD in
  relationship with mortality plateaus.

\me In the next proposition, we prove that  the existence
of a QLD for $Z$ started from $\mu$ implies the existence of a long term mortality plateau.

\begin{proposition}
  \label{PrExtinctionRate}
  \label{proposition:convergence-of-extinction-rate}
  Let $\alpha$ be a QLD for $Z$, initially distributed with respect to a
probability measure $\mu$ on $E^*$.  
  In the continuous time setting, we assume moreover that there exists $h>0$ such
  that $L (P_h \mathbf{1}_{E^*})$ is well defined and bounded.  In
  both time settings, the rate of extinction converges in the long
  term:
  \begin{equation}
    \label{EqPrYa1}
    \lim_{t\rightarrow\infty} r_{\mu}(t)=r_{\alpha}(0).
  \end{equation}
\end{proposition}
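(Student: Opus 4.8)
The plan is to exploit the ratio convergence that already appears inside the proof of Proposition~\ref{prop:qldimpliesqsd}. Writing $g_\mu(t)=\mathbb{P}_\mu(T_0>t)=\mu P_t(\mathbf{1}_{E^*})$, the argument there (applied with $f(z)=\mathbb{P}_z(T_0>s)$) shows that for every $s>0$,
$$\lim_{t\to\infty}\frac{g_\mu(t+s)}{g_\mu(t)}=\mathbb{P}_\alpha(T_0>s)=e^{-\theta(\alpha)s},$$
the last equality coming from Proposition~\ref{propositionExtinctionRate}. In the discrete time setting this immediately finishes the proof: since $r_\mu(t)=\mathbb{P}_\mu(T_0=t+1\mid T_0>t)=1-g_\mu(t+1)/g_\mu(t)$, taking $s=1$ gives $\lim_{t\to\infty}r_\mu(t)=1-e^{-\theta(\alpha)}=r_\alpha(0)$. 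So all the work is in the continuous time case, where a pointwise statement about the derivative of $g_\mu$ has to be extracted, and the bare ratio convergence above is not enough on its own.

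For the continuous time case I would first use the hypothesis that $\psi:=L(P_h\mathbf{1}_{E^*})$ is well defined and bounded to gain regularity of $g_\mu$. For $t\ge h$ write $P_t\mathbf{1}_{E^*}=P_{t-h}(P_h\mathbf{1}_{E^*})$; Kolmogorov's equation then gives $\tfrac{\partial}{\partial t}P_t\mathbf{1}_{E^*}=P_{t-h}\psi$, which is uniformly bounded by $\|\psi\|_\infty$, so one may differentiate under $\mu$ to obtain $g_\mu'(t)=\mu P_{t-h}(\psi)$ for $t\ge h$. Consequently, for $t\ge h$,
$$r_\mu(t)=-\frac{g_\mu'(t)}{g_\mu(t)}=-\frac{\mu P_{t-h}(\psi)}{\mu P_{t-h}(P_h\mathbf{1}_{E^*})}=-\frac{\mu P_{t-h}(\psi)/\mu P_{t-h}(\mathbf{1}_{E^*})}{\mu P_{t-h}(P_h\mathbf{1}_{E^*})/\mu P_{t-h}(\mathbf{1}_{E^*})}.$$
Now $\mu P_{t-h}(\,\cdot\,)/\mu P_{t-h}(\mathbf{1}_{E^*})$ is exactly the conditional law $\mathbb{P}_\mu(Z_{t-h}\in\cdot\mid T_0>t-h)$, which by the QLD assumption converges setwise to $\alpha$; since setwise convergence of probability measures forces convergence of integrals of all bounded measurable functions, and both $\psi$ and $P_h\mathbf{1}_{E^*}$ are bounded, letting $t\to\infty$ yields $r_\mu(t)\to -\alpha(\psi)/\alpha(P_h\mathbf{1}_{E^*})$.

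It remains to identify this limit with $r_\alpha(0)=\theta(\alpha)$, and here the cleanest route avoids computing $\alpha(\psi)$ directly: the limiting constant $-\alpha(\psi)/\alpha(P_h\mathbf{1}_{E^*})$ depends only on $\alpha$ (and on the fixed $h$), not on the initial law $\mu$. Since $\alpha$ is itself a QLD for $Z$ started from $\alpha$ (part~(1) of Proposition~\ref{prop:qldimpliesqsd}) and the $h$-hypothesis is a property of the process, the same computation applies verbatim with $\mu=\alpha$, giving $r_\alpha(t)\to -\alpha(\psi)/\alpha(P_h\mathbf{1}_{E^*})$; but by Proposition~\ref{propositionExtinctionRate} the function $r_\alpha$ is the constant $\theta(\alpha)$, so the limiting constant equals $\theta(\alpha)=r_\alpha(0)$, proving \eqref{EqPrYa1}. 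The main obstacle I anticipate is purely the analytic justification in the middle step, namely the differentiation of $t\mapsto\mu P_t(\mathbf{1}_{E^*})$ under the integral sign; this is precisely what the boundedness of $L(P_h\mathbf{1}_{E^*})$ is designed to supply, since $\mathbf{1}_{E^*}$ itself need not lie in $\mathcal D(L)$, and the shift by $h$ through the semigroup is what regularizes it.
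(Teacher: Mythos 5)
Your proof is correct and, for the main analytic work, takes the same route as the paper: in discrete time the one-line computation $r_\mu(t)=1-\mu P_t(P_1\mathbf{1}_{E^*})/\mu P_t(\mathbf{1}_{E^*})$ combined with the QLD convergence, and in continuous time the regularization of $\mathbf{1}_{E^*}$ by the semigroup shift $P_h$, Kolmogorov's equation giving $\frac{\partial}{\partial t}\mu P_t(\mathbf{1}_{E^*})=\mu P_{t-h}\bigl(L(P_h\mathbf{1}_{E^*})\bigr)$ for $t\geq h$ (the boundedness hypothesis justifying the differentiation under the integral), and passage to the limit in the two conditional expectations. The only place you diverge is the identification of the limiting constant $-\alpha\bigl(L(P_h\mathbf{1}_{E^*})\bigr)/\alpha(P_h\mathbf{1}_{E^*})$: the paper invokes Proposition~\ref{PrQSDeigenfunction} to write $\alpha\bigl(L(P_h\mathbf{1}_{E^*})\bigr)=-\theta(\alpha)\,\alpha(P_h\mathbf{1}_{E^*})$, whereas you observe that the limit does not depend on $\mu$, rerun the computation with $\mu=\alpha$, and use that $r_\alpha\equiv\theta(\alpha)$ by Proposition~\ref{propositionExtinctionRate}. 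Your variant is slightly more self-contained, since it avoids having to check that $P_h\mathbf{1}_{E^*}$ lies in the class $D$ to which Proposition~\ref{PrQSDeigenfunction} applies --- a point the paper passes over in silence; the price is that you must note that the QLD convergence of sets extends to integrals of bounded measurable functions (which you do, and which the paper also uses implicitly). Both arguments are sound.
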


\begin{proof}
  %% The first part of the proposition is a straightforward consequence
  %% of Proposition~\ref{propositionExtinctionRate}, in both time settings. 

\noindent In the discrete time setting, by the semi-group property and
the definition of a QLD, we have
  \begin{equation*}
    \begin{split}
    r_{\mu}(t)=1-\frac{\mu P_{t}(P_1\mathbf{1}_{E^*})}{\mu P_{t}(\mathbf{1}_{E^*})}
            \   \xrightarrow[t\rightarrow+\infty]{} \ 1 -  \alpha(P_1 \mathbf{1}_{E^*}) = r_\alpha(0).
    \end{split}
  \end{equation*}
  The limit is by definition the extinction rate at time $0$ of
  $Z$ starting from $\alpha$.

\noindent In the continuous time setting, by the Kolmogorov's backward
equation, we have
 \begin{equation*}
   \frac{\partial}{\partial t}P_{t+h} \mathbf{1}_{E^*}(x)= P_t L(P_h \mathbf{1}_{E^*})(x),\ \forall x\in E^*.
 \end{equation*}
 Since $L( P_h\mathbf{1}_{E^*})$ is assumed to be bounded, we deduce that
 \begin{equation*}
   \frac{\partial}{\partial t}\mu P_{t+h}(\mathbf{1}_{E^*})= \mu P_t L(P_h \mathbf{1}_{E^*}).
 \end{equation*}
 Then
  \begin{equation*}
    \begin{split}
      \frac{\frac{\partial}{\partial t}\mu P_{t+h}
        (\mathbf{1}_{E^*})}{\mu P_t (\mathbf{1}_{E^*})}
      =\frac{\mu P_t
        L(P_h \mathbf{1}_{E^*})}{\mu P_t
        (\mathbf{1}_{E^*})}
      \ \xrightarrow[t\rightarrow\infty]{}\ 
        \alpha(L P_h \mathbf{1}_{E^*}) =-\theta(\alpha) \alpha(P_h
        \mathbf{1}_{E^*}),
    \end{split}
  \end{equation*}
  by the definition of a QLD and by Proposition
  \ref{PrQSDeigenfunction}. We also have
  \begin{equation*}
    \frac{\mu (P_{t+h} \mathbf{1}_{E^*})}{\mu (P_t \mathbf{1}_{E^*})}
    \xrightarrow[t\rightarrow\infty]{}
    \alpha(P_h \mathbf{1}_{E^*}).
  \end{equation*}
  Finally, we get
  \begin{equation*}
    r_{\mu}(t+h)=-\frac{\frac{\partial}{\partial t}\mu (P_{t+h}
        \mathbf{1}_{E^*})}{\mu (P_{t+h} \mathbf{1}_{E^*})}
     \xrightarrow[t\rightarrow\infty]{} \theta(\alpha),
  \end{equation*}
  which allows us to conclude the proof of Proposition
  \ref{PrExtinctionRate}.
\end{proof}

\subsection{A historical example in discrete time: the Galton-Watson process}
\label{subsectionGaltonWatson}
\label{subsection:Galton-Watson-process}
The Galton-Watson process is a population's dynamics
model in discrete time, whose size $(Z_n)_{n\geq0}$ evolves according to the recurrence
formula $Z_0$ and
\begin{equation*}
  Z_{n+1}=\sum_{i=1}^{Z_n}\xi^{(n)}_i,
\end{equation*}
where $(\xi^{(n)}_i)_{i,n}$ is a family of independent random
variables, identically distributed following the   probability
measure $\mu$ on $\mathbb{N}$ with generating function $g$. As
defined, $Z_n$ is the size of the $n^{th}$ generation of a population
where each individual has a random number of children, chosen following $\mu$ and independently of the rest of the population.
This process has been introduced by Galton and Watson (see
\cite{Galton1874}) in order to study the extinction of aristocratic
surnames.

\noindent  We will assume in the whole section that
%\begin{equation*}
 $ 0<\mu(\{0\})+\mu(\{1\})<1$.
%\end{equation*}
We denote by $m=E(\xi^{(0)}_1)$ the average number of children by
individual in our Galton-Watson process. The independence of  descendants implies that starting from $Z_{0}$, the process $Z$ is equal to the sum of $Z_{0}$ independent Galton-Watson processes issued from a single individual. By this branching property, the
probability of extinction for the population starting from one
individual is obtained as follows:
\begin{equation*}
  \mathbb{P}_1(\exists n\in\mathbb{N},\ Z_n=0)=\lim_{n\rightarrow+\infty} \mathbb{E}_1(0^{Z_n})
   =\lim_{n\rightarrow\infty} g\circ \cdots \circ g(0)\ (n\text{ times}).
\end{equation*}
 There are three different situations
(see for instance Athreya-Ney \cite{Athreya1972}):
\begin{itemize}
\item[-] The sub-critical case $m<1$: the
  process becomes extinct in finite time almost surely and the average extinction time
  $\E(T_0)$ is finite.
\item[-] The critical case $m=1$:  the
  process becomes extinct in finite time almost surely, but $\E(T_0)=+\infty$.
\item[-] The super-critical case $m>1$: the process is never
  extinct with a positive probability, and it yields immediately that $\E(T_0)=+\infty$.
\end{itemize}

\begin{theorem}[Yaglom \cite{Yaglom1947}, 1947]
  \label{ThYaglom}
  \label{theorem:yaglom}
  Let $(Z_{n})_{n\geq 0}$ be a Galton-Watson process with the
  reproduction generating function $g$. There is no quasi-stationary
  distribution in the critical and the super-critical case. In the
  sub-critical case, the Yaglom limit exists and is the unique QSD of
  $Z$. Moreover, its generating function $\hat g$ fulfills
  \begin{equation}
    \label{EqGenYaglom}
    \hat g(g(s))=m\hat g(s)+1-m,\ \forall s\in[0,1].
  \end{equation}
%  where $g$ denotes the generating function of $\mu$.
\end{theorem}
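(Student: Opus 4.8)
The plan is to reduce the whole statement to the analysis of the conditional generating functions. Writing $g_{n}=g\circ\cdots\circ g$ for the $n$-fold composition, the branching property gives $\mathbb{E}_{1}(s^{Z_{n}})=g_{n}(s)$, so the law of $Z_{n}$ conditioned on $\{Z_{n}>0\}$ from a single ancestor has generating function
\[
\hat g_{n}(s)=\frac{g_{n}(s)-g_{n}(0)}{1-g_{n}(0)}.
\]
I would first dispatch the two non-existence claims using the general results already proved. In the critical case extinction is almost sure but $\mathbb{E}_{z}(T_{0})=+\infty$ for every $z\geq 1$; since a QSD forces a finite exponential moment $\mathbb{E}_{z}(e^{\gamma T_{0}})<\infty$ for some state $z$ by Proposition~\ref{prop:moment}, hence $\mathbb{E}_{z}(T_{0})<\infty$, no QSD can exist. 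In the supercritical case the multiplicative identity established in the proof of Proposition~\ref{propositionExtinctionRate} still gives $\mathbb{P}_{\alpha}(T_{0}>n)=\rho^{n}$ with $\rho=\mathbb{P}_{\alpha}(T_{0}>1)$; if $\rho<1$ this makes extinction $\alpha$-almost sure, contradicting $\mathbb{P}_{k}(T_{0}=\infty)=1-q^{k}>0$ (with $q<1$ the extinction probability), while $\rho=1$ would make $\alpha$ an invariant law of the unconditioned process, impossible since $Z_{n}\to\infty$ on survival. So no QSD exists outside the subcritical regime.

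For existence in the subcritical case ($m<1$, so $g_{n}(s)\uparrow 1$ for every $s\in[0,1)$), I would prove that $\hat g_{n}(s)$ converges by a monotonicity argument. Set $f(t)=1-g(1-t)$, so that $f(0)=0$, $f$ is increasing and, because $g$ is convex, $f$ is concave; hence $t\mapsto f(t)/t$ is non-increasing on $(0,1]$. Writing $a_{n}=1-g_{n}(s)$ and $b_{n}=1-g_{n}(0)$ one has $a_{n}\leq b_{n}$ together with $a_{n+1}=f(a_{n})$ and $b_{n+1}=f(b_{n})$, from which $\phi_{n}(s):=a_{n}/b_{n}$ is non-decreasing in $n$ and bounded by $1$, hence convergent. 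Consequently $\hat g_{n}(s)=1-\phi_{n}(s)$ converges pointwise on $[0,1)$ to a limit $\hat g$, which is the generating function of a sub-probability measure on the positive integers by the continuity theorem for power series.

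Next I would identify the functional equation and upgrade $\hat g$ to a genuine law. From $g_{n}(g(s))=g_{n+1}(s)$ and $b_{n+1}/b_{n}=f(b_{n})/b_{n}\to f'(0)=m$, passing to the limit in $\phi_{n}(g(s))=\phi_{n+1}(s)\,(b_{n+1}/b_{n})$ yields $(1-\hat g)(g(s))=m\,(1-\hat g)(s)$, which is exactly \eqref{EqGenYaglom}. Letting $s\uparrow 1$ in this identity, and using that the monotone $\hat g$ has a limit $\ell$ at $1^{-}$ while $g(s)\uparrow 1$, gives $1-\ell=m(1-\ell)$, so $\ell=1$ because $m<1$; thus $\hat g$ is a bona fide generating function and its measure is the Yaglom limit. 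By Proposition~\ref{prop:qldimpliesqsd} this Yaglom limit is in particular a QSD.

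The main obstacle is uniqueness. Any QSD $\alpha$, with generating function $G$ supported on $\{1,2,\dots\}$, satisfies $G(g(s))=\rho G(s)+1-\rho$ with $\rho=\mathbb{P}_{\alpha}(T_{0}>1)$, obtained just as \eqref{EqGenYaglom} but directly from the QSD definition at one time step. Comparing with $\hat g$ through $\Phi=(1-G)/(1-\hat g)$, the two functional equations combine into $\Phi(g(s))=(\rho/m)\,\Phi(s)$, whence $\Phi(g_{n}(s))=(\rho/m)^{n}\Phi(s)$. Since $g_{n}(s)\to1$, controlling the boundary behaviour of $1-G$ and $1-\hat g$ at $s=1$ forces $\rho=m$ and then $\Phi\equiv\Phi(0)=1$, i.e. $G=\hat g$. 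The delicate point, and where I expect the genuine work to lie, is precisely this boundary control: ruling out the spurious solutions of the Schröder-type equation $K(g(s))=\rho K(s)$ requires a tail/mean estimate near $s=1$ (equivalently, an argument based on the Koenigs linearization of $g$ at its fixed point $1$), so that the subcritical hypothesis is used quantitatively and not merely through $g_{n}\to1$.
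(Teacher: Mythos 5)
Your handling of the critical and supercritical cases and of the existence of the limit in the subcritical case is correct and essentially the paper's own argument: the monotonicity of $t\mapsto f(t)/t$ is exactly the paper's monotonicity of $\Gamma(s)=(1-g(s))/(1-s)$, and your passage to the limit in $\phi_{n}(g(s))=\phi_{n+1}(s)\,b_{n+1}/b_{n}$ is the paper's derivation of \eqref{EqGenYaglom}. (Your supercritical discussion is in fact more careful than the paper's one-line appeal to Proposition~\ref{prop:moment}, whose proof rests on almost sure absorption. A small omission: you only treat the initial condition $\delta_{1}$; to get the Yaglom limit you should add that $\mathbb{E}_{x}(s^{Z_{n}})=g_{n}(s)^{x}$ and that $(1-g_{n}(s)^{x})/(1-g_{n}(0)^{x})$ has the same limit because $g_{n}(s)\to 1$.)

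The genuine gap is the uniqueness step, which you yourself flag as unfinished, and it cannot be finished along the route you propose. The paper proves something strictly narrower: it takes two probability generating functions that both satisfy \eqref{EqGenYaglom} with the \emph{same} constant $m$, differentiates the iterated identity to get $\hat g'(g_{n}(s))\,g_{n}'(s)=m^{n}\hat g'(s)$, shows that the ratio $\hat g'/\hat h'$ is constant by sandwiching $s$ between $g_{k}(0)$ and $g_{k+1}(0)$ and using monotonicity of the derivatives, and concludes from the normalisations at $0$ and $1$. No boundary control at $s=1$ is needed there, precisely because $\rho=m$ is part of the hypothesis. Your plan, by contrast, is to exclude QSDs with $\rho\neq m$ by a tail estimate near $s=1$, and this is impossible: for every $c\in(0,1]$ the function $G_{c}=1-(1-\hat g)^{c}$ is the generating function of a probability measure on $\mathbb{N}^{*}$ (it is $\hat g$ composed with the Sibuya generating function $u\mapsto 1-(1-u)^{c}$, whose Taylor coefficients are positive and sum to $1$), and it satisfies $1-G_{c}(g(s))=m^{c}\,(1-G_{c}(s))$, hence defines a QSD with $\rho=m^{c}$. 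So the subcritical Galton--Watson process actually carries a one-parameter family of QSDs (the classical Seneta--Vere-Jones picture), and the uniqueness that is provable -- and that the paper's derivative-ratio argument actually establishes -- is uniqueness of the solution of \eqref{EqGenYaglom}, i.e.\ of the QSD with extinction parameter $-\log m$, equivalently of the quasi-limiting distribution from initial laws with finite mean. Your instinct that the boundary control is ``where the genuine work lies'' is therefore right in the strongest sense: that step is not delicate but false, and the uniqueness claim has to be reformulated and proved within the class of solutions of \eqref{EqGenYaglom} as the paper does.
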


\begin{proof}
The proof is adapted from Athreya-Ney \cite{Athreya1972} p. 13-14. In
the critical or the super-critical case, we have $\E_1(T_0)=+\infty$,
which implies that $\E_{\alpha}(T_0)=+\infty$ for all probability
measure $\alpha$ on $\mathbb{N}^*$. We deduce from Proposition
\ref{prop:moment} that there is no QSD.

\me Assume now that $m<1$. Let us fix an arbitrary probability
measure $\nu$ on $\mathbb{N}^*$ and prove that there exists a QLD
$\alpha$ for $Z$ starting with distribution $\nu$. 
%% We also prove that
%% the generating function of $\alpha$ fulfills \eqref{EqGenYaglom}. In a
%% second step, we prove that there is at most one generating function
%% which fulfills equation \eqref{EqGenYaglom}, so that $\alpha$ doesn't
%% depend on $\nu$, concluding the proof of Theorem \ref{ThYaglom}.
For each $n\geq 0$, we denote by $g_n$ the generating function of
$Z_n$, 
%\begin{equation*}
 $ g_n(s)=\E_{\nu}\left(s^{Z_n}\right)\ \forall s\in [0,1]$. Recall that $g_{n+1}=g_1\circ g_n $. 
%\end{equation*}
 Let us also denote by $\hat g_n$ the generating function of $Z_n$ conditioned
to $\{Z_n>0\}=\{T_{0}>n\}$:
\begin{eqnarray*}
 \hat g_{n}(s) &=& \E_{\nu}(s^{Z_{n}}|\, Z_{n}>0) = \frac{\E_{\nu}(s^{Z_{n}}{\bf 1}_{ Z_{n}>0}) }{\P_{\nu}(Z_{n}>0)}\\
&=& \frac{\E_{\nu}(s^{Z_{n}}) - \P_{\nu}(Z_{n}=0) }{1-\P_{\nu}(Z_{n}=0)}\\
&=& \frac{g_{n}(s)-g_{n}(0)}{1-g_{n}(0)} = 1 - \frac{1-g_{n}(s)}{1-g_{n}(0)} \in [0,1].
\end{eqnarray*}
Note that $\hat g_{n}(0) =0$, which is quite natural since the
conditional law doesn't charge $0$. 
%%  We first prove that the
%% generating function $\hat g_{n}$ converges when $n$ goes to infinity,
%% which implies that the distribution of $Z_n$ conditioned to $Z_n>0$
%% converges to a measure $\alpha$ on $\mathbb{N}^*$ when $n$ goes to
%% infinity. Then we prove that this limiting measure is a probability
%% measure, concluding that $\lim_{n\rightarrow\infty} \P_1(Z_n\in
%% .|Z_n>0)$ defines a QLD.
For a fixed $s\in[0,1)$, we set $\Gamma(s)=\frac{1-g_1(s)}{1-s}$. Then we
  have, for all $n\geq 0$,
\begin{equation*}
1-\hat g_{n+1}(s)=\frac{\Gamma(g_n(s))}{\Gamma(g_n(0))}\left(1-\hat g_{n}(s)\right).
\end{equation*}
 Since $g_1$ is convex, $\Gamma$
is non-decreasing. 
Moreover $m<1$ implies that $g_n(x)\geq x$, so that $g_n(s)$ and $1-\hat g_n(s)$ are non-decreasing in $n$.
 In particular, $\lim_{n\rightarrow\infty} \hat
g_n(s)$ exists. Let us denote by $\hat g(s)$ its limit and by $\alpha$
the corresponding finite measure (whose mass is smaller than one). In
order to prove that $\alpha$ is a probability measure on
$\mathbb{N}^*$, it is sufficient to prove that $\hat g(s)\rightarrow
1$ when $s$ goes to $1$. We have
\begin{equation*}
  \Gamma(g_n(0))\left(1-\hat g_{n+1}(s)\right)=\left(1-\hat g_{n}(g_1(s))\right).
\end{equation*}
Taking the limit on each size,
where $\lim_{n\rightarrow\infty} \Gamma(g_n(0))=\Gamma(1)=m$, we deduce
that
\begin{equation*}
 m( 1-\hat g(s) )= 1-\hat g(g_1(s)),
\end{equation*}
which implies Equation \eqref{EqGenYaglom}. Since $\lim_{s\rightarrow
  1} g_1(s)=1$ and $m<1$, then $\hat g(1)=1$. Finally, $\alpha$ is a
QLD for $Z$ starting with distribution $\nu$.

\me One could think \textit{a priori} that the function $\hat g$
depends on the starting distribution $\nu$. We prove now that it isn't the
case, so that there is a unique QLD, and then a unique QSD, which is
also the Yaglom limit of the process (indeed, one could choose
$\nu=\delta_x$, $x\in\mathbb{N}^*$).

\noindent Assume that there exist two generating functions $\hat g$ and $\hat
h$ which fulfill Equation \eqref{EqGenYaglom}. By induction, we have,
for all $n\geq 1$ and all $s\in[0,1]$,
\begin{eqnarray*}
  \hat g(g_n(s))=m^n \hat g(s)+\left(m^{n-1}+\cdots+m+1\right)(m-1),\\
  \hat h(g_n(s))=m^n \hat h(s)+\left(m^{n-1}+\cdots+m+1\right)(m-1).
\end{eqnarray*}

\noindent We deduce that for $s\in [0,1[$
\begin{equation*}
{\hat g}'(g_n(s))\ g'_{n}(s)= m^n\, {\hat g}'(s)\ ;\ {\hat h}'(g_n(s))\ g'_{n}(s)= m^n\, {\hat h}'(s).
\end{equation*}
Since for the sub-critical case $g_{n}(0) \uparrow 1$ when
$n\rightarrow\infty$, for any $s\in [0,1[$ there will be a $k$ such
    that
$$g_{k}(0) \leq s\leq g_{k+1}(0).$$
Hence,
\begin{equation*}
  \frac{{\hat g}'(s)}{{\hat h}'(s)}=\frac{{\hat g}'(g_n(s))}{{\hat h}'(g_n(s))}\leq \frac{{\hat g}'(g_{n+k+1}(0))}{{\hat h}'(g_{n+k}(0))}=  \frac{{\hat g}'(0)}{{\hat h}'(0)} \,\frac{m\,g'_{n+k}(0)}{g'_{n+k+1}(0)} = \frac{{\hat g}'(0)}{{\hat h}'(0)} \,\frac{m}{g'(g_{n+k}(0))}.
\end{equation*}
When  $n$ goes to infinity, we obtain $ \frac{{\hat g}'(s)}{{\hat h}'(s)}\leq  \frac{{\hat g}'(0)}{{\hat h}'(0)}$. The converse inequality is established similarly.
Since $\hat g$ and $\hat h$ are generating functions of probability
measures on $\mathbb{N}^*$, we have $\hat g(0)=\hat h(0)=0$ and $\hat
g(1)=\hat h(1)=1$. Finally, the two functions $\hat g$ and $\hat h$ are
equal, which concludes the proof of Theorem \ref{ThYaglom} .
\end{proof}

\subsection{The simple example of an ergodic process with uniform killing in a finite state space}
\label{subsection:a-simple-example}
\label{subsectionSimpleExample}
We present a very simple Markov process with extinction whose
quasi-stationary distribution, Yaglom limit, speed of extinction
and speed of convergence to the Yaglom limit are very easy to obtain.

\noindent Let $(X_t)_{t\geq 0}$ be an exponentially ergodic Markov process which evolves in the state
space $E^*=\{1,\cdots,N\}$, $N\geq 1$. By exponentially
ergodic, we mean that there exist a probability measure $\alpha$ on
$E^*$ and two positive constants $C,\lambda>0$ such that, for all
$z\in\{1,\cdots,N\}$ and all $t\geq 0$,
\begin{equation*}
  \sup_{i\in E^*}\left| \P_z(X_t=i)-\alpha(\{i\})\right|\leq C e^{-\lambda t}.
\end{equation*}
There is no possible extinction for $(X_t)$. Let $d>0$ be a positive
constant and let $\tau_d$ be an exponential random time of parameter
$d$ independent of the process $(X_t)$. We define the  process $(Z_t)$  by setting
\begin{equation*}
  Z_t=
\left\lbrace
\begin{array}{l}
  X_t,\ \text{if}\ t<\tau_d\\
  0,\ \text{if}\ t\geq \tau_d.
\end{array}
\right.
\end{equation*}
This model is a model for the size of a population which cannot be extinct, except at a catastrophic event which happens with rate $d$. Thus we have
\begin{equation*}
  \P_z(t<T_0)=e^{-dt},\ \forall t\geq 0.
\end{equation*}
The conditional distribution of $Z_t$ is simply given by
the distribution of $X_t$:
\begin{equation*}
  \P_z(Z_t=i|Z_t\neq 0)=\P_z(X_t=i),\ \forall z\in E^*.
\end{equation*}
We deduce that the unique QSD is the Yaglom limit $\alpha$ and that  for all $z\in E^*$ and all $t\geq 0$,
\begin{equation*}
  \sup_{i\in E^*}\left| \P_z(Z_t=i|T_0>t)-\alpha(\{i\})\right|\leq C e^{-\lambda t}.
\end{equation*}
Thus in this case, the conditional distribution of $Z$ converges
exponentially fast to the Yaglom limit $\alpha$, with rate $\lambda>0$
and the process becomes extinct exponentially fast, with rate $d>0$.
  
Hence the comparison between the speed of convergence to the Yaglom
limit and the speed of extinction will impact the observables of the
process before extinction:
\begin{itemize}
\item[(a)] If $\lambda \gg d$, then the convergence to the Yaglom
  limit happens before the typical time of extinction of the
  population and the quasi-stationary regime will be observable.
\item[(b)] If $\lambda \ll d$, then the extinction of the population
  occurs before the quasi-stationary regime is reached. As a
  consequence, we are very unlikely to observe the Yaglom limit.
  \item[(c)] If $\lambda \sim d$, the answer is not so immediate and
    depends on other parameters, as in particular the initial
    distribution.
\end{itemize}

\bi

\begin{example}\upshape
The population size $Z$ is described by a random walk in continuous
time evolving in $E=\{0,1,2,\cdots,N\}$ with transition rates given by
\begin{eqnarray*}
  &&i\rightarrow i+1\text{ with rate }1,\text{ for all }i\in\{1,2,\cdots,N-1\},\\
  &&i\rightarrow i-1\text{ with rate }1,\text{ for all }i\in\{2,3,\cdots,N\},\\
  &&i\rightarrow 0\text{ with rate }d>0,\text{ for all }i\in\{1,2,\cdots,N\}.
\end{eqnarray*}
The boundedness of the population size models a constraint of fixed
resources which acts on the growth of the population. We will see more
realistic fixed resources models including logistic death rate in the
next sections.
One can check that the quasi-stationary probability measure of $Z$ is given
by $\alpha_i=1/N$ for all $i\in E^*$.

\me {\it Numerical simulations}.  We fix $N=100$. In that finite case,
one can obtain by numerical computation the whole set of eigenvalues
and eigenvectors of the infinitesimal generator $L$ (we use here the
software SCILAB).  Numerical computation using the fact that $\lambda$
is the spectral gap of the generator of $X$ gives $\lambda=0.098$. For
different values $d=0.001$, $d=0.500$ and $d=0.098$, we compute
numerically the mathematical quantities of interest: the extinction
probability $\P_{z}(T_{0}>t)=e^{-dt}$ as a function of $t$ (cf. Figure
\ref{figExample1} left picture) and the distance $ \sup_{i\in
  E^*}\left| \P_z(Z_t=i|T_0>t)-\alpha(\{i\})\right|$ between the
conditional distribution of $Z_t$ and $\alpha$ as a function of $-\log
\P_{z}(T_{0}>t)$, which gives the extinction's time scale.
(cf. Figure \ref{figExample1} right picture).
\begin{center}
\begin{figure}
\includegraphics[width=13cm]{./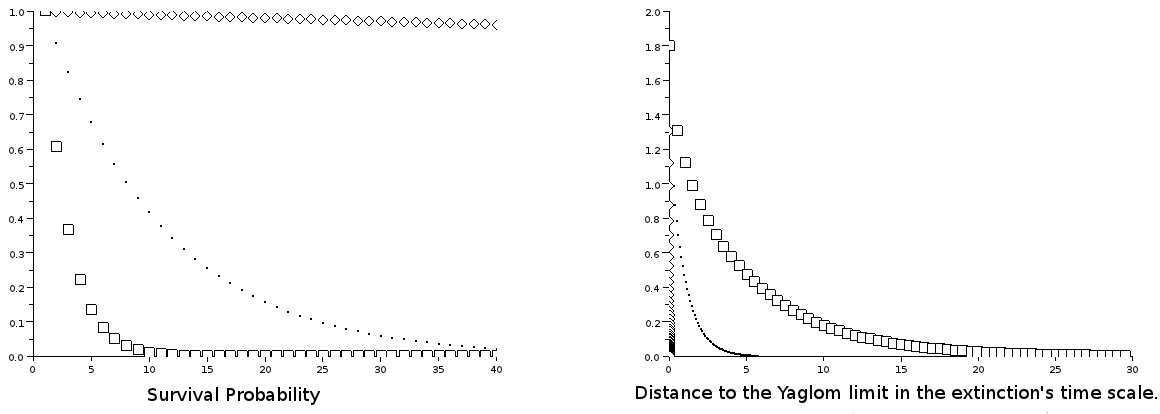}
\caption{\label{figExample1} Example 1. A numerical computation leads
  to $\lambda=0.098$. Three different situations are observed, which
  lead to three very different patterns for the speed of convergence
  to the Yaglom limit in the extinction's time scale: ($\diamond$) $\lambda\gg d=0.001$; ($\Box$)
  $\lambda\ll d=0.500$; ($\cdot$) $\lambda=d=0.098$.  }
\end{figure}
\end{center}
 We observe that the convergence to the Yaglom limit happens rapidly
in the case ($\diamond$)~$\lambda=0.098\gg d=0.001$. Indeed the
distance to the Yaglom limit is equal to $0.05$, while the survival
probability can't be graphically distinguished from $1$. On the
contrary, we observe that the convergence happens very slowly in the
case ($\Box$) $\lambda=0.098\ll d=0.500$. Indeed, the distance to the
Yaglom limit is equal to $0.05$ when the survival probability appears
to be smaller than $e^{-15}\simeq 3\times 10^{-7}$. The case
($\cdot$)~$\lambda=0.98=d$ is an intermediate case, where the distance
to the Yaglom limit is equal to $0.05$ when the survival probability
appears to be equal to $e^{-3}\simeq 0.05$.

%% \me We observe that the convergence to the Yaglom limit can
%% happen quikly or lately in the extinction's time scale, depending on
%% the parameters of the model. 

%% In particular, this numerical illustration confirms the
%% theoretical discussion above.
\end{example}

\section{The finite case, with general  killing's rate}
\label{section:finite-case}

\subsection{The quasi-stationary distributions}
% \noindent   For this section, we may refer to Darroch and
% Seneta (\cite{Darroch1965} and \cite{Darroch1967}). 

\bi The Markov process $(Z_t)_{t\geq 0}$ evolves in continuous time in $E=\{0,1,...,N\}$, $N\geq 1$ and we still assume that $0$ is
its unique absorbing state.  The
semi-group $(P_t)_{t\geq 0}$  is the sub-Markovian semi-group of the killed process and we still
denote by $L$ the associated infinitesimal generator. In this finite state space
case, the operators $L$ and $P_t$ are matrices, and a probability
measure on the finite space $E^*$ is  a vector of non-negative
entries whose sum is equal to $1$. The results of this section
have been originally proved by Darroch and Seneta (\cite{Darroch1965} and \cite{Darroch1967}).

 \begin{theorem}
   \label{theorem-qsd-finitecase}
   \label{theorem:qsd-finitecase}
   Assume that $Z$ is an irreducible and aperiodic process before
   extinction, which means that there exists $t_0>0$ such that the
   matrix $P_{t_0}$ has only positive entries (in particular, it
   implies that $P_t$ has positive entries for  $t>t_{0}$). Then the
   Yaglom limit $\alpha$ exists and is the unique QSD of the process
   $Z_t$.

   Moreover, denoting by $\theta(\alpha)$ the extinction rate
   associated to $\alpha$ (see
   Proposition~\ref{proposition:extinction-rate-from-qsd}), there
   exists a probability measure $\pi$ on $E^*$ such that, for any
   $i,j\in E^*$,
   \begin{equation*}
     \lim_{t\to \infty} e^{\theta(\alpha) t}\,
     \mathbb{P}_i(Z_t=j) = \pi_i \,\alpha_j
   \end{equation*}
   and
   \begin{equation*}
     \lim_{t\to \infty} {\mathbb{P}_i(T_0>t+s)\over \mathbb{P}_j(T_0>t)} = {\pi_i\over
       \pi_j} e^{-\theta(\alpha) s}.
   \end{equation*}
 \end{theorem}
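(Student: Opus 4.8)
The plan is to read the whole statement off the Perron--Frobenius theorem. Since $E^{*}=\{1,\dots,N\}$ is finite, $L$ and every $P_{t}$ are genuine $N\times N$ matrices with $P_{t}=e^{tL}$, and $(P_{t})_{ij}=\mathbb{P}_{i}(Z_{t}=j)$. The hypothesis says that $A:=P_{t_{0}}$ has strictly positive entries, hence is primitive. Perron--Frobenius then provides a real eigenvalue $\rho(A)>0$, simple and of maximal modulus, strictly dominating in modulus every other eigenvalue of $A$, and carrying strictly positive right and left eigenvectors. Because the semigroup is sub-Markovian and the process is absorbed almost surely (hypothesis \eqref{extinction}), one has $\rho(A)<1$.

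First I would transfer this to the generator. All the $P_{t}$ commute and $A=e^{t_{0}L}$, so the Perron eigenvectors of $A$ are eigenvectors of $L$: there is a real eigenvalue $-\theta$ of $L$, with $\theta>0$ and $e^{-\theta t_{0}}=\rho(A)$, whose real part strictly exceeds that of every other eigenvalue of $L$, and there are strictly positive vectors $\alpha$ (left, $\alpha L=-\theta\alpha$) and $\pi$ (right, $L\pi=-\theta\pi$). I normalise $\alpha$ to be a probability measure and $\pi$ so that $\alpha(\pi)=\sum_{k}\alpha_{k}\pi_{k}=1$. In the finite setting the approximation hypothesis of Proposition~\ref{PrQSDeigenfunction} is automatic, so, together with Proposition~\ref{propositionExtinctionRate}, the QSDs are exactly the probability measures that are left eigenvectors of $L$ with a negative eigenvalue. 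Perron--Frobenius guarantees that $-\theta$ is the only eigenvalue of $L$ admitting a nonnegative eigenvector and that its eigenspace is one-dimensional; hence $\alpha$ is the unique QSD and $\theta(\alpha)=\theta$.

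Next I would exploit the spectral gap. Writing $\Pi$ for the spectral projection onto $\mathrm{span}(\pi)$, the normalisation $\alpha(\pi)=1$ gives $\Pi_{ij}=\pi_{i}\alpha_{j}$, and the strict domination of $-\theta$ yields $e^{\theta t}P_{t}\to\Pi$ entrywise (every other spectral contribution is $O(t^{m}e^{t(\mathrm{Re}\,\lambda+\theta)})\to 0$). This is precisely $\lim_{t\to\infty}e^{\theta t}\mathbb{P}_{i}(Z_{t}=j)=\pi_{i}\alpha_{j}$. Summing over $j$ and using $\sum_{j}\alpha_{j}=1$ gives $e^{\theta t}\mathbb{P}_{i}(T_{0}>t)\to\pi_{i}$, whence $\mathbb{P}_{i}(T_{0}>t+s)/\mathbb{P}_{j}(T_{0}>t)\to(\pi_{i}/\pi_{j})e^{-\theta s}$, the second displayed limit (a ratio, hence insensitive to the normalisation of $\pi$). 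Dividing the two limits gives $\mathbb{P}_{i}(Z_{t}=j\mid T_{0}>t)\to\pi_{i}\alpha_{j}/\pi_{i}=\alpha_{j}$ for every starting state $i$, so $\alpha$ is the Yaglom limit. If one insists on $\pi$ being a probability vector one rescales it, at the cost of a harmless constant in the first identity; the ratio identity and the Yaglom-limit conclusion are unaffected.

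The heart of the argument, and its only delicate point, is the Perron--Frobenius input: turning the qualitative primitivity of $P_{t_{0}}$ into a real, simple, strictly dominant eigenvalue with a genuine spectral gap and strictly positive eigenvectors, and in particular into the fact that no other eigenvalue of $L$ carries a nonnegative eigenvector, which is exactly what forces uniqueness of the QSD and independence of the Yaglom limit from the initial state. Once the gap and positivity are secured, transferring from the single matrix $A$ to the generator and to the continuous family $(P_{t})$, and extracting the three limits, is routine finite-dimensional linear algebra.
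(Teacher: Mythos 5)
Your proposal is correct and follows essentially the same route as the paper: Perron--Frobenius applied to the positive matrix $P_{t_0}$, the resulting spectral gap giving $e^{\theta t}P_t\to(\pi_i\alpha_j)_{ij}$ entrywise, and the two displayed limits by summing over $j$ and taking ratios. The only (harmless) divergence is the uniqueness step: the paper deduces it by showing every initial distribution yields the same QLD (using finiteness of $E^*$) and then invoking Proposition~\ref{prop:qldimpliesqsd}, whereas you use the spectral characterisation of QSDs from Propositions~\ref{PrQSDeigenfunction} and~\ref{propositionExtinctionRate} together with the standard Perron--Frobenius fact that the Perron root is the only eigenvalue carrying a nonnegative left eigenvector; both arguments are valid.
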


 The main tool of the proof of Theorem \ref{theorem:qsd-finitecase} is the
 Perron-Frobenius Theorem, which gives us a complete description of
 the spectral properties of $P_{t}$ and $L$. The main point is that
 the matrix $P_{1}$ has positive entries. For the proof of the
 Perron-Frobenius Theorem, we refer to Gantmacher
 \cite{Gantmacher1959} or Serre \cite{Serre2002}.
 \begin{theorem}[Perron-Frobenius Theorem]
   \label{thm:perron-frobenius}
   \label{theorem:perron-frobenius}
   Let $(P_{t})$ be a submarkovian semi-group on $\{1,\cdots,N\}$ such
   that the entries of $P_{t_{0}}$ are positive for $t_{0}>0$.  Thus,
   there exists a unique positive eigenvalue $\rho$, which is the
   maximum of the modulus of the eigenvalues, and there exists a
   unique left-eigenvector $\alpha$ such that $\alpha_i>0\ $ and
   $\ \sum_{i=1}^N \alpha_i = 1$, and there exists a unique
   right-eigenvector $\pi$ such that $\pi_i>0\ $ and $\ \sum_{i=1}^N
   \alpha_i \pi_i = 1$, satisfying
   \begin{equation}
     \label{equationThmPerron1}
     \alpha P_ {t_{0}}= \rho\, \alpha \ ;\ P_{t_{0}} \pi = \rho\, \pi.
   \end{equation}
   
   \noindent In addition, since $(P_t)$ is a sub-Markovian
   semi-group, $\rho <1$ and there exists $\theta>0$ such that
   $\rho = e^{-\theta}$.  Therefore
   \begin{equation}
     \label{rateP} P_t=e^{-\theta t} A + \vartheta(e^{-\chi t}),
   \end{equation}
   where $A$ is the
   matrix defined by $A_{ij}= \pi_i \alpha_j$, and $\chi >  \theta$
   and $\vartheta(e^{-\chi t})$ denotes a matrix such that none of the
   entries exceeds $C e^{-\chi t}$, for some constant $C>0$.
 \end{theorem}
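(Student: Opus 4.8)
\noindent The plan is to deduce the statement from the classical Perron--Frobenius theorem for a single strictly positive matrix, and then to propagate the resulting spectral data through the whole semi-group. I would first fix a reference matrix that is strictly positive: since $P_t$ has positive entries for every $t>t_0$, I may rescale time so that the reference is $P_1$, which is then strictly positive. Applying the classical theorem to $P_1$ gives a dominant eigenvalue $\rho>0$, equal to its spectral radius, which is algebraically simple and strictly larger in modulus than every other eigenvalue, together with strictly positive left and right eigenvectors. Normalizing them by $\sum_i\alpha_i=1$ and $\sum_i\alpha_i\pi_i=1$ yields $\alpha P_1=\rho\alpha$, $P_1\pi=\rho\pi$, and the asserted uniqueness.

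\noindent Next I would upgrade $\alpha$ and $\pi$ to eigenvectors of every $P_t$. As $(P_t)$ is a one-parameter semi-group, each $P_t$ commutes with $P_1$ and hence preserves the one-dimensional dominant eigenspace $\mathbb{R}\pi$; therefore $P_t\pi=\rho(t)\pi$ for a scalar $\rho(t)$, and likewise $\alpha P_t=\rho(t)\alpha$ on the left. The semi-group property gives $\rho(t+s)=\rho(t)\rho(s)$ with $\rho(t)>0$ and $\rho(1)=\rho$, and continuity of $t\mapsto P_t$ makes $\rho$ continuous, so Cauchy's equation forces $\rho(t)=e^{-\theta t}$ with $\rho=e^{-\theta}$. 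To see that $\theta>0$, i.e.\ $\rho<1$, I would use sub-Markovianity: with $\mathbf{1}$ the all-ones vector one has $P_1\mathbf{1}\le\mathbf{1}$ entrywise, and the almost sure extinction makes this strict at some coordinate, since $P_1\mathbf{1}(i)=\mathbb{P}_i(T_0>1)<1$. Pairing with the strictly positive probability vector $\alpha$ gives $\rho=\alpha(P_1\mathbf{1})<\alpha(\mathbf{1})=1$.

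\noindent For the rate estimate \eqref{rateP} I would pass through the spectral projection onto the dominant eigenspace. The rank-one matrix $A_{ij}=\pi_i\alpha_j$ satisfies $A^2=A$ (using $\sum_k\alpha_k\pi_k=1$) and $P_1A=AP_1=\rho A$, so it is exactly the spectral projection attached to $\rho$, while $I-A$ projects onto the complementary $P_1$-invariant subspace $V=\{x:\alpha(x)=0\}$. On $V$ the spectral radius of $P_1$ is the modulus of the second-largest eigenvalue, strictly below $\rho=e^{-\theta}$, so there are $\chi>\theta$ and $C>0$ with $\|(P_1)^n(I-A)\|\le Ce^{-\chi n}$. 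Writing $P_t=e^{-\theta t}A+P_t(I-A)$ and, for $t=n+r$ with $r\in[0,1)$, factoring $P_t(I-A)=P_r(P_1)^n(I-A)$ with $\sup_{r\in[0,1)}\|P_r\|<\infty$, I would extend the geometric bound from integer times to all $t$ and obtain $P_t=e^{-\theta t}A+\vartheta(e^{-\chi t})$.

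\noindent The step I expect to be the main obstacle is this last one: converting the discrete spectral gap of the single matrix $P_1$ into a genuinely continuous-in-$t$ estimate, so that the remainder decays like $e^{-\chi t}$ with a fixed rate $\chi>\theta$ uniformly in $t$ rather than only along integers. The commutation-plus-simplicity argument also deserves care, since it is what guarantees that the entire semi-group shares the single Perron eigenpair and that $\rho(t)$ is truly exponential; without the algebraic simplicity of $\rho$ for $P_1$ the eigenspace could fail to be stable under the rest of the semi-group.
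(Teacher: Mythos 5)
The paper does not actually prove this statement: it is quoted as a known result, with the proof deferred to Gantmacher and Serre, so there is no ``paper's proof'' to match yours against. On its own terms your argument is correct and is the standard way to pass from the single-matrix Perron--Frobenius theorem to the continuous-time statement: apply the classical theorem to one strictly positive matrix of the semi-group to get a simple, strictly dominant eigenvalue $\rho$ with positive left and right eigenvectors; use commutation of each $P_t$ with that matrix together with the one-dimensionality of the dominant eigenspace to see that the whole semi-group shares the eigenpair, so that $\rho(t)$ satisfies the multiplicative Cauchy equation and equals $e^{-\theta t}$; and identify $A_{ij}=\pi_i\alpha_j$ as the rank-one spectral projection, bounding the complementary part by Gelfand's formula at integer times and interpolating with $\sup_{r\in[0,1)}\Vert P_r\Vert<\infty$. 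Your worry about the last step is unfounded in this finite-dimensional setting --- the interpolation you describe is exactly what is needed and it works. Two small points deserve explicit mention. First, your time rescaling silently normalizes $t_0=1$; otherwise the correct relation is $\rho=e^{-\theta t_0}$, which is also what the paper implicitly assumes when it later writes $\alpha P_1(\mathbf{1}_{E^*})=e^{-\theta}$. Second, the implication ``sub-Markovian $\Rightarrow\rho<1$'' is false as literally stated (a Markovian semi-group is sub-Markovian and has $\rho=1$); you correctly repair it by importing the standing almost-sure-extinction hypothesis \eqref{extinction} to get $\mathbb{P}_i(T_0>1)<1$ for some $i$ and then pairing with the strictly positive $\alpha$, and it is worth saying explicitly that this extra hypothesis is genuinely needed.
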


\begin{proof}[Proof of Theorem \ref{theorem:qsd-finitecase}]
Applying Perron-Frobenius Theorem to the submarkovian semi-group
$(P_t)_{t\geq 0}$, it is immediate from \eqref{rateP} that there
exists $\theta>0$ and a probability measure $\alpha$ on $E^*$ such
that, for any $i,j\in E^*$,
  \begin{equation}
    \label{EqFi1}
    e^{\theta t}\mathbb{P}_i(Z_t=j) = e^{\theta t} [P_t]_{ij}= \pi_i \alpha_j + \vartheta(e^{-(\chi-\theta) t}).
  \end{equation}
  Summing over $j\in E^*$, we deduce that
  \begin{equation}
     \label{EqFi2}
     e^{\theta t}\mathbb{P}_i(T_0>t) = \pi_i + \vartheta(e^{-(\chi-\theta)t}).
  \end{equation}
  It follows that, for any $i,j\in E^*$,
  \begin{equation*}
    \mathbb{P}_i(Z_t=j|T_0>t) =
           {\mathbb{P}_i(Z_t=j)\over \mathbb{P}_i(T_0>t)} \xrightarrow[t\to
             \infty]{} \alpha_j.
  \end{equation*}
  Thus the Yaglom limit exists and is equal to $\alpha$.
  Since $E$ is finite, we have for any initial distribution
  $\nu$ on $E^*$
 \begin{equation*}
   \lim_{t\rightarrow\infty} \mathbb{P}_{\nu}(Z_t=j|T_0>t)=\sum_{i\in
     E^*} \nu_i \lim_{t\rightarrow\infty} \mathbb{P}_{i}(Z_t=j|T_0>t)
   =\sum_{i\in E^*} \nu_i \alpha_j=\alpha_j.
 \end{equation*}
 We deduce that the Yaglom limit $\alpha$ is the unique QLD of $Z$,
 and thus it is its unique QSD.
 By Proposition \ref{propositionExtinctionRate}, we have $\alpha
 P_1(\mathbf{1}_{E^*})=e^{-\theta(\alpha)}$. By
 \eqref{equationThmPerron1}, this  quantity is also equal to
 $e^{-\theta}$, so that $\theta=\theta(\alpha)$. The end of
 Theorem~\ref{theorem:qsd-finitecase} is thus a straightforward
 consequence of \eqref{EqFi1} and \eqref{EqFi2}
\end{proof}

\begin{remark}\upshape
  One can deduce from \eqref{EqFi1} and \eqref{EqFi2} that there
  exists a positive constant $C_{L}$ such that
  \begin{equation*}
    \sup_{j\in E^*,  i\in E^* }\left| P_i(Z_t=j | Z_t>0 )-\alpha_{j}
    \right|\leq C_L e^{-(\chi-\theta(\alpha))},
  \end{equation*} 
  where the quantity $\chi-\theta(\alpha)$ is the spectral gap of $L$,
  \textit{i.e.} the distance between the first and the second
  eigenvalue of $L$.  Thus if the time-scale $\chi-\theta(\alpha)$ of the
  convergence to the quasi-limiting distribution is substantially
  bigger than the time scale of absorption ($\chi-\theta(\alpha)
  \gg \theta(\alpha)$), the process will relax to the QSD after a
  relatively short time, and after a much longer period, extinction
  will occur. On the contrary, if $\chi-\theta(\alpha) \ll
  \theta(\alpha)$, then the extinction happens before the process had
  time to relax to the quasi-limiting distribution.

\noindent   In intermediate cases, where $\lambda-\theta(\alpha)\approx
  \theta(\alpha)$, the constant $C_L$, which depends on the whole set
  of eigenvalues and eigenfunctions of $L$, plays a main role which
  needs further investigations.
\end{remark}

\me 
We generalize Example 1 to a more realistic case where the killing's rate can depend
on the size of the population. For instance, it can be higher  for a small population
than for a big one.

\begin{example}\upshape

Let $Z$ be a Markov process which models a population whose individuals 
reproduce and die independently, with individual birth rate
$\lambda>0$ and individual death rate $\mu=1$. In order to take into
account the finiteness of the resources, the process is reflected when
it attains a given value $N$, that we choose here arbitrarily equal to $100$.
Thus the process $Z$ evolves in the finite state space $\{0,1,\cdots,100\}$ and
its transition rates are given by
\begin{eqnarray*}
  &&i\rightarrow i+1\text{ with rate }\lambda i,\text{ for all }i\in\{1,2,\cdots,99\},\\
  &&i\rightarrow i-1\text{ with rate }  \mu i,\text{ for all }i\in\{1,2,3,\cdots,100\}.
\end{eqnarray*}
The infinitesimal generator of $Z$ is given by
\begin{eqnarray*}
&&L_{1,1}=-1 -\lambda \text{ and }  L_{1,2}=\lambda,\\
&&L_{i,i-1}= i,\quad L_{i,i}=- (1+\lambda)i \text{ and }
 L_{i,i+1}=\lambda\, i,\quad \forall i\in\{2,\cdots,99\},\\
&&L_{100,99}=100\text{ and } L_{100,100}=-100,\\
&&L_{i,j}=0,\ \forall i,j\in\{1,\cdots,100\}\text{ such that }|j-i|>1.
\end{eqnarray*}
The process $Z$ clearly fulfills the conditions of Theorem
\ref{theorem:qsd-finitecase}.  As a consequence, it has a Yaglom limit
$\alpha$, which is its unique QSD.  Moreover, the probability measure
$\alpha$ is the unique normalized and positive left eigenvector of
$L$. Since $L$ is a finite matrix of size $100\times 100$, one can
numerically compute the whole set of eigenvectors and eigenvalues of
the matrix $(L_{ij})$.  This will allow to obtain numerically the
Yaglom limit $\alpha$, its associated extinction rate
$\theta(\alpha)$, and the speed of convergence $\chi-\theta(\alpha)$.
Moreover, for any $t\geq 0$, one can compute the value of $e^{tL}$,
which is equal to $P_t$ (the semi-group of $Z$ at time $t$). Hence, we
may obtain the numerical value of the conditioned distribution
$\P_{Z_0}(Z_t\in .|t<T_0)$, for any initial size $Z_0$.  Finally, we
are also able to compute numerically the distance between $\alpha$ and
the conditioned distribution $\P_{Z_0}(Z_t\in .|t<T_0)$, for any value
of $\lambda>0$ and $Z_0\in\{1,\cdots,100\}$.

\me In Figure \ref{example2_yagloms}, we represent the Yaglom limit $\alpha$
 for different values of $\lambda$, namely
$\lambda=0.9$, $\lambda=1.0$ and $\lambda=1.1$.
\begin{figure}[position]
   \includegraphics[width=13cm]{./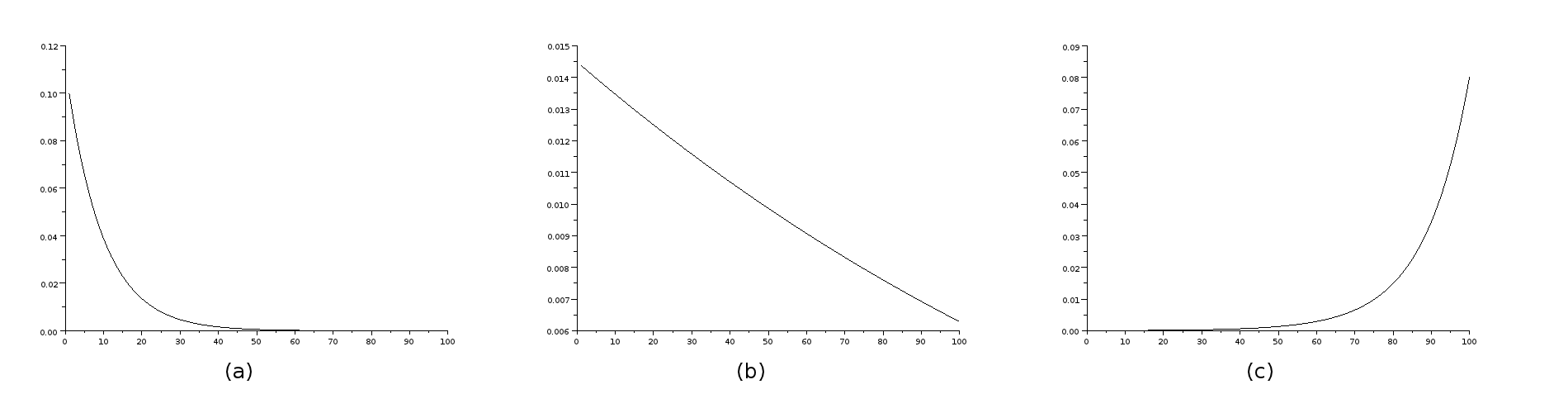}
   \caption{\label{example2_yagloms} Example 2. Yaglom limits for different
     values of $\lambda$. The following values of $\theta(\alpha)$ are obtained
     by numerical computation.  (a) $\lambda=0.9$, $\theta(\alpha)=0.100$;
     (b) $\lambda=1.0$, $\theta(\alpha)=0.014$; (c)
     $\lambda=1.1$, $\theta(\alpha)= 5.84 \times
     10^{-5}$.}
\end{figure}
Let us comment the numerical results.
\begin{itemize}
  \item[(a)] In the first case ($\lambda=0.9$), an individual is more
    likely to die than to reproduce and we observe that the Yaglom
    limit is concentrated near the absorbing point $0$. The rate of
    extinction $\theta(\alpha)$ is the highest in this case, equal to
    $0.100$.  In fact, the process reaches the upper bound $100$ very
    rarely, so that the behavior of the process is very similar to the
    one of a linear birth and death process with birth and death rates
    equal to $\lambda$ and $\mu$ respectively.  In
    Section~\ref{section:QSD-for-BD-process}, we study such linear
    birth and death processes. We show that the Yaglom limit (which
    exists if and only if $\lambda<\mu$) is given by a geometric law
    and $\theta(\alpha)=\mu-\lambda$.

  \item[(b)] In the second case ($\lambda=\mu=1$), we observe that
    $\alpha$ decreases almost linearly from $\alpha_1$ to
    $\alpha_{100}$ and the upper bound $N=100$ plays a crucial
    role. In fact, letting $N$ tend to $+\infty$, one would observe
    that for any $i\geq 1$, $\alpha_i$ decreases to $0$. The
    extinction rate $\theta({\alpha})$ which is equal to $0.014$ for
    $N=100$ would also go to $0$. The counterpart of this phenomenon
    for the linear birth and death process studied in
    Section~\ref{section:QSD-for-BD-process} is that the Yaglom limit
    will not exist when $\mu=\lambda$.

    \item[(c)] In the third case ($\lambda=1.1$), the Yaglom limit
      $\alpha$ is concentrated near the upper bound $100$, while the
      extinction rate is $\theta(\alpha)=5.84\times 10^{-5}$. The
      comparison with the linear birth and death process is no more
      relevant, since the important factor in this case is the effect
      of the upper bound $N=100$, which models the finiteness of the
      resources in the environment. 
\end{itemize}

\me In Figure \ref{example2_graphs}, we study the effect of the
initial position and of the value of the parameter $\lambda$ on the
speed of convergence to the Yaglom limit and on the speed of
extinction. We choose the positions $Z_0=1$, $Z_0=10$ and $Z_0=100$,
and we look at the two different cases $\lambda=0.9$ and
$\lambda=1.1$, which correspond to the subcritical case (a) and to
the supercritical case (c) respectively. We represent, for each set of values of
$(\lambda,Z_0)$, the distance to the Yaglom limit,
$\sup_{i\in\{1,\cdots,100\}}|\P_{Z_0}(Z_t=i|t<T_0)-\alpha_i|$ as a
function of the time, and the same distance as a function of the
logarithm of the survival probability $-\log\P_{Z_0}(t<T_0)$ (i.e. the
extinction time scale). By numerical computation, we also obtain that
\begin{itemize}
  \item[(a)] $\lambda=0.9$: $\theta(\alpha)=0.100$ and $\theta(\alpha)-\chi=0.102$.
%  \item[(b)] $\lambda=1.0$: $\theta(\alpha)=0.014$ and $\theta(\alpha)-\chi=0.061$.
  \item[(c)] $\lambda=1.1$: $\theta(\alpha)=5.84\times 10^{-5}$ and $\theta(\alpha)-\chi=0.103$.
\end{itemize}
\begin{figure}
\centering
\subfloat{\includegraphics[width=13cm]{./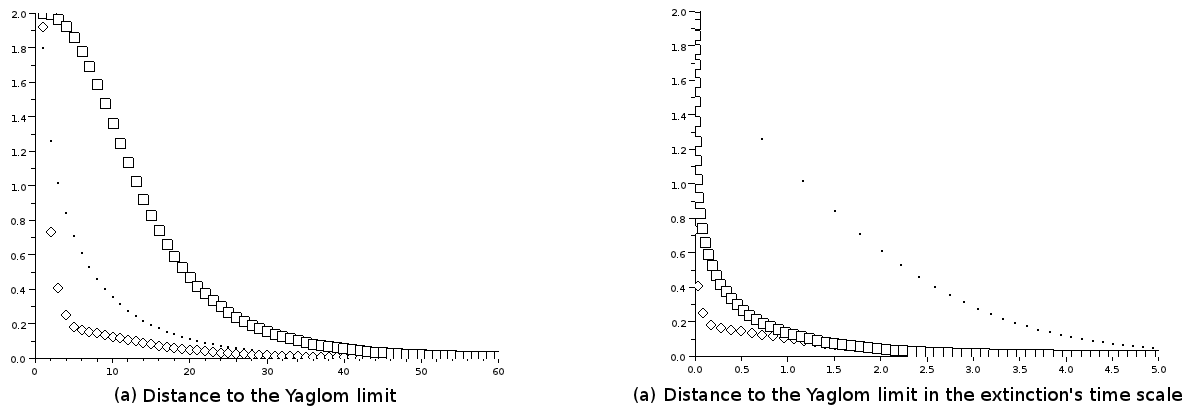}}\\
%\subfloat{\includegraphics[width=16cm]{}}\\
\subfloat{\includegraphics[width=13cm]{./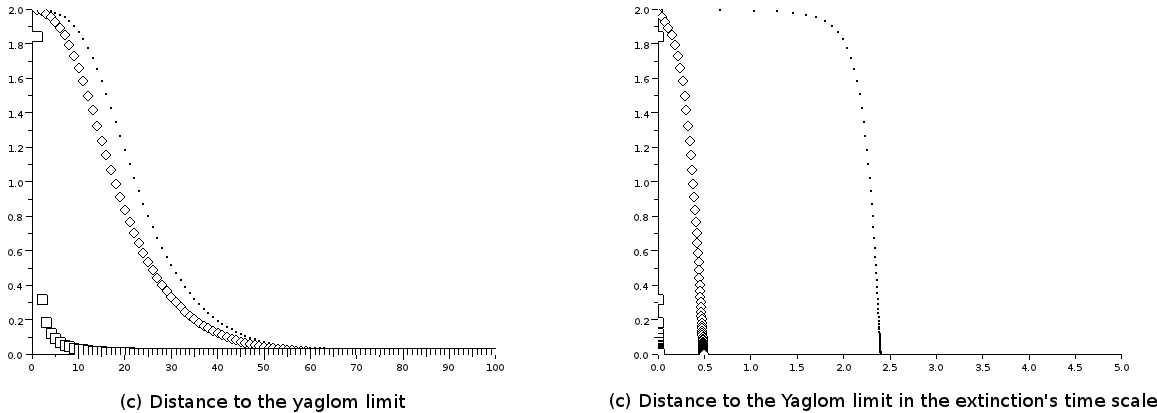}}
\caption{Example 2.  Pictures (a) and (c) correspond to different
  values of $\lambda$ (the following values of $\theta(\alpha)-\chi$
  have been obtained by numerical computation): (a) $\lambda=0.9$,
  $\theta(\alpha)=0.100$, $\theta(\alpha)-\chi=0.102$; (c)
  $\lambda=1.1$, $\theta(\alpha)=5.84\times 10^{-5}$,
  $\theta(\alpha)-\chi=0.103$; each curve corresponds to a given
  initial size of the population: $(\cdot)\ Z_0=1$;
  $(\diamond)\ Z_0=10$; $(\Box)\ Z_0=100$.}
\label{example2_graphs}
\end{figure}

\noindent In the case (a), we have $\theta(\alpha)=0.100\simeq
\chi-\theta(\alpha)=0.102$ and we observe that the speed of
convergence depends on the initial position in a non-trivial way:
while the survival probability is smaller for the process starting
from $10$ than for the process starting from $100$, the convergence to
the Yaglom limit in the extinction's time scale happens faster in the
case $Z_0=10$. In the case (c), we have $\theta(\alpha)=5.84\times
10^{-5} \ll \chi-\theta(\alpha)=0.103$. The speed of convergence to
the Yaglom limit in the extinction's time scale depends on the initial
position: if ($\Box$)~$Z_0=100$, then it is almost immediate; if
($\diamond$)~$Z_0=10$, the distance between the conditional
distribution and the Yaglom limit is equal to $0.05$ when the survival
probability is around $e^{-0.5}\simeq 0.61$; if ($\cdot$)~$Z_0=1$,
then this distance is equal to $0.05$ when the survival probability is
around $e^{-2.4}\simeq 0.091$.
\end{example}

\subsection{The Q-process}
Let us now study the marginals of the process conditioned to never be
extinct.
\begin{theorem}
  \label{thm:finiteqproc}
  \label{theorem:q-process-finite-case}
  Assume that we are in the conditions of
  Theorem~\ref{theorem:qsd-finitecase}.  For any $\ i_0, i_1, \cdots,
  i_k\in E^*$, any $\ 0<s_1< \cdots, s_k< t$, the limiting value
  $\lim_{t\to \infty} \mathbb{P}_{i_0}(Z_{s_1}={i_1}, \cdots,
  Z_{s_k}={i_k}|T_0>t)\ $ exists.

  \me
  Let $(Y_t, t\geq 0)$ be the process starting from $i_0\in E^*$ and
  defined by its finite dimensional distributions
  \begin{equation}
  \mathbb{P}_{i_0}(Y_{s_1}={i_1}, \cdots, Y_{s_k}={i_k}) =
  \lim_{t\to \infty}\mathbb{P}_{i_0}(Z_{s_1}={i_1}, \cdots,
  Z_{s_k}={i_k}|T_0>t).
  \end{equation}
  Then $Y$ is a Markov process with values
  in $E^*$ and transition probabilities given by
  \begin{equation*}
    \mathbb{P}_{i}(Y_{t}={j})= e^{\theta(\alpha) t}\ {\pi_i\over \pi_j}\
  P_{ij}(t).
  \end{equation*}
  It is conservative, and has the unique stationary
  probability measure $(\alpha_j \pi_j)_j$.
\end{theorem}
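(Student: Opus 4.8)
The plan is to compute the finite-dimensional distributions of the conditioned process explicitly and pass to the limit using the spectral asymptotics already established in the proof of Theorem~\ref{theorem:qsd-finitecase}. First I would fix $0=s_0<s_1<\cdots<s_k<t$ and states $i_0,i_1,\ldots,i_k\in E^*$, and use the Markov property of $Z$ together with the fact that $0$ is absorbing (so that $Z_{s_\ell}=i_\ell\in E^*$ already forces $T_0>s_k$) to factor the conditioning event:
\begin{equation*}
\mathbb{P}_{i_0}\!\left(Z_{s_1}=i_1,\ldots,Z_{s_k}=i_k,\,T_0>t\right)
=\left(\prod_{\ell=1}^k [P_{s_\ell-s_{\ell-1}}]_{i_{\ell-1}i_\ell}\right)\mathbb{P}_{i_k}(T_0>t-s_k).
\end{equation*}
Dividing by $\mathbb{P}_{i_0}(T_0>t)$ reduces the existence of the limit to controlling a single ratio of survival probabilities.

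Second, I would invoke the Perron--Frobenius estimate \eqref{EqFi2}, namely $\mathbb{P}_i(T_0>t)=\pi_i e^{-\theta t}+\vartheta(e^{-\chi t})$ with $\chi>\theta$, which yields
\begin{equation*}
\frac{\mathbb{P}_{i_k}(T_0>t-s_k)}{\mathbb{P}_{i_0}(T_0>t)}\ \xrightarrow[t\to\infty]{}\ \frac{\pi_{i_k}}{\pi_{i_0}}\,e^{\theta s_k}.
\end{equation*}
Hence the limit defining the law of $Y$ exists and equals $\left(\prod_{\ell=1}^k [P_{s_\ell-s_{\ell-1}}]_{i_{\ell-1}i_\ell}\right)\frac{\pi_{i_k}}{\pi_{i_0}}e^{\theta s_k}$. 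The decisive manipulation is then to telescope $e^{\theta s_k}=\prod_\ell e^{\theta(s_\ell-s_{\ell-1})}$ and $\pi_{i_k}/\pi_{i_0}=\prod_\ell \pi_{i_\ell}/\pi_{i_{\ell-1}}$, so that the finite-dimensional law factors as a product over consecutive increments of the kernel $\mathbb{P}_i(Y_s=j)=e^{\theta s}\,\tfrac{\pi_j}{\pi_i}\,[P_s]_{ij}$. This product structure is precisely the assertion that $Y$ is a time-homogeneous Markov process with the stated transition probabilities, the Chapman--Kolmogorov identity for the kernel being inherited from that of $P$.

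Third, I would check the two remaining properties by direct computation against the spectral identities. Conservativeness reads $\sum_j \mathbb{P}_i(Y_s=j)=e^{\theta s}\pi_i^{-1}\sum_j \pi_j [P_s]_{ij}=e^{\theta s}\pi_i^{-1}[P_s\pi]_i=1$, using the right-eigenvector relation $P_s\pi=e^{-\theta s}\pi$. Stationarity of $(\alpha_j\pi_j)_j$ follows from $\sum_i \alpha_i\pi_i\,\mathbb{P}_i(Y_s=j)=e^{\theta s}\pi_j[\alpha P_s]_j=\pi_j\alpha_j$, using the left-eigenvector relation $\alpha P_s=e^{-\theta s}\alpha$; and this is a genuine probability measure because the normalization in Theorem~\ref{theorem:perron-frobenius} gives $\sum_j \alpha_j\pi_j=1$. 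Uniqueness is then immediate: $Y$ inherits irreducibility from $Z$ on the finite set $E^*$, and an irreducible finite-state Markov process has a unique stationary law.

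The step I expect to demand the most care is verifying that the pointwise limits of finite-dimensional distributions genuinely assemble into a consistent, time-homogeneous Markov family, rather than merely a consistent family of marginals: one must confirm that the increments combine so that the limiting kernel is independent of the conditioning horizon $t$ and satisfies Chapman--Kolmogorov. This hinges entirely on the exact cancellation of the $e^{-\theta t}$ factors and the $\pi$-weights between numerator and denominator, made transparent by the telescoping identity above; once that cancellation is exhibited, both time-homogeneity and the Markov property of $Y$ reduce to the corresponding properties of the killed semi-group $(P_s)$.
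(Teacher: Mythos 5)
Your proposal is correct and follows essentially the same route as the paper: factor out $\mathbb{P}_{i_k}(T_0>t-s_k)$ by the Markov property, pass to the limit with the Perron--Frobenius asymptotics \eqref{EqFi2}, and read off the transition kernel $e^{\theta s}\frac{\pi_j}{\pi_i}[P_s]_{ij}$. The only (harmless) divergence is at the end, where you verify conservativeness and stationarity directly from the eigenrelations $P_s\pi=e^{-\theta s}\pi$ and $\alpha P_s=e^{-\theta s}\alpha$ plus irreducibility, whereas the paper computes the generator $\hat L$ of $Y$ and obtains the stationary law $(\alpha_j\pi_j)_j$ as the $t\to\infty$ limit of $\mathbb{P}_i(Y_t=j)$.
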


\me Remark that the stationary probability  is absolutely continuous with respect to
the QSD, but, contrary to intuition, it is not equal to the QSD.

\me
\begin{proof}
Let us denote $\theta(\alpha)$ by $\theta$ for simplicity. 
Let $i_0, i_1, \cdots, i_k\in E^*$ and $0<s_1< \cdots < s_k< t$.
We introduce the filtration ${\cal F}_s= \sigma(Z_u, u\leq
s)$. Then 
\begin{eqnarray*}
 \mathbb{P}_{i_0}(Z_{s_1}={i_1}, \cdots,
     Z_{s_k}={i_k}\ ;\ T_0>t)
 &=& \E_{i_0}\left(\mathbf{1}_{Z_{s_1}=i_1, \cdots, Z_{s_k}=i_k}\,
       \E_{i_0}\left(\mathbf{1}_{T_0>t}|{\cal F}_{s_k}\right)\right)\\
% &=& \mathbb{P}_{i_0}(Z_{s_1}={i_1},
%       \cdots, Z_{s_k}={i_k}\ ;\ \mathbb{E}(T_0>t|{\cal F}_{s_k}))\\
 &=& \E_{i_0}(\mathbf{1}_{Z_{s_1}={i_1}, \cdots, Z_{s_k}={i_k}}\,
       \mathbb{E}_{i_k}(\mathbf{1}_{T_0>t-s_k}))\\&& (\hbox{ by Markov property}) \\
 &=& \P_{i_0}\left(Z_{s_1}={i_1}, \cdots,Z_{s_k}={i_k}\right)\P_{i_k}(T_0>t-s_k). 
\end{eqnarray*}
By Theorem \ref{theorem:qsd-finitecase},
\begin{equation*}
  \lim_{t\rightarrow\infty} \frac{ \P_{i_k}(T_0>t-s_k)}{\mathbb{P}_{i_0}(T_0>t)}
      =\frac{\pi_{i_k}}{\pi_{i_0}} e^{\theta s_k}.
\end{equation*}
Thus 
\begin{eqnarray}
 \lim_{t\to \infty}\mathbb{P}_{i_0}(Z_{s_1}={i_1}, \cdots, Z_{s_k}={i_k}|T_0>t)
% &=&
%\mathbb{P}_{i_0}(Z_{s_1}={i_1}, \cdots, Z_{s_k}={i_k})\ \lim_{t\to \infty}
%{\mathbb{P}_{i_k}(T_0>t-s_k)\over \mathbb{P}_{i_0}(T_0>t)}\\
&=&\mathbb{P}_{i_0}(Z_{s_1}={i_1}, \cdots, Z_{s_k}={i_k})\frac{\pi_{i_k}}{\pi_{i_0}} e^{\theta s_k}. \label{equation:q-proc-finite-case-1}
\end{eqnarray}
Let us now show that $Y$ is a Markov
process. We have 
\begin{eqnarray*}
  \mathbb{P}_{i_0}(Y_{s_1}={i_1}, \cdots,
  Y_{s_k}={i_k}, Y_t=j)&=& e^{\theta t}\ {\pi_j\over \pi_{i_0}}\
  \mathbb{P}_{i_0}(Z_{s_1}={i_1}, \cdots, Z_{s_k}={i_k},
  Z_t=j)\\
  &=& e^{\theta(t-s_k)}\ e^{\theta s_k}\ {\pi_j\over \pi_{i_k}}
  {\pi_{i_k}\over \pi_{i_0}}\ \mathbb{P}_{i_0}(Z_{s_1}={i_1},
  \cdots, Z_{s_k}={i_k}) \\
  &&{ \quad \times \mathbb{P}_{i_k}( Z_{t-s_k}=j)
  \quad (\text{by
    Markov property of }Z)}\\
  &=&\mathbb{P}_{i_0}(Y_{s_1}={i_1}, \cdots, Y_{s_k}={i_k})\
  \mathbb{P}_{i_k}( Y_{t-s_k}=j),
\end{eqnarray*}
and thus $\mathbb{P}(Y_t = j|
  Y_{s_1}={i_1}, \cdots, Y_{s_k}={i_k}) = \mathbb{P}_{i_k}(
  Y_{t-s_k}=j).$

\me By \eqref{equation:q-proc-finite-case-1} and Theorem \ref{theorem:qsd-finitecase}, we have
\begin{equation*}
\mathbb{P}_{i}( Y_{t}=j)= {\pi_j\over \pi_i} \ \mathbb{P}_{i}(
Z_{t}=j)\ e^{\theta t} \xrightarrow[t\to +\infty]{} {\pi_j\over \pi_i}\ \alpha_j\ \pi_i =
\alpha_j \pi_j.
\end{equation*}

\me Moreover let us compute the infinitesimal generator $\hat{L}$
of $Y$ from the infinitesimal generator $L$ of $Z$.  We have for $j\neq i$,\ben \hat{L}_{ij} = \lim_{s\to 0}
\hat{P}_{ij}(s) = {\pi_j\over \pi_i}\ L_{ij}.\een For $j=i$, \ben
\hat{L}_{ii}& =& - \lim_{s\to 0} {1-\hat{P}_{ii}(s)\over s} = -
\lim_{s\to 0} {1-e^{\theta s}{P}_{ii}(s)\over s}\\
&=&- \lim_{s\to 0} {1- e^{\theta s} + e^{\theta s} (1-
p_{ii}(s))\over s} = \theta + L_{ii}.\een 
We thus check that
\ben \sum_{j\in
E^*} \hat{L}_{ij}= \sum_{j\in E^*} {\pi_j\over \pi_i} L_{ij} +
\theta.\een Since $L\pi = -\theta \pi$, then $\sum_{j\in E^*}
\pi_j L_{ij} = - \theta \pi_i$ and thus $\sum_{j\in E^*}
\hat{L}_{ij}= 0$.
\end{proof}

\section{QSD for birth and death processes}
\label{sectionQSDbdProcess}
\label{section:QSD-for-BD-process}
We are describing  here the dynamics of isolated asexual populations, as for
example populations of bacteria with cell binary division, in
continuous time. Individuals may reproduce or die, and there is only
one child per birth.  The population size dynamics will be modeled by
a birth and death process in continuous time. The individuals may
interact, competing (for example) for resources and therefore the
individual death's rate will depend on the total size of the
population. 
% We will mainly focus on the logistic case, which models
%linear density dependance: the death rate per capita (individual death
%rate) depends linearly on the population size.
In a first part, we recall and partially prove some results on the
non-explosion of continuous time birth and death processes. We will
also recall conditions on the birth and death rates which ensure that
the process goes to extinction in finite time almost surely.  In a second part,
we concentrate on the cases where the process goes almost surely to
zero and we study the existence and uniqueness of quasi-stationary
distributions.

\subsection{Birth and death processes}
\label{subsection:BD-processes-general-study}

We consider birth and death processes with rates $(\lambda_i)_i$ and $(\mu_i)_i$, that is
 $\mathbb{N}$-valued
pure jump Markov
 processes, whose jumps are $+1$ or $-1$, with
transitions
\begin{eqnarray*}
  \label{birth-death}
  &i & \to \quad i+1 \quad \hbox{ with rate } \quad \lambda_i\ ,\\
  &i & \to \quad i-1 \quad \hbox{ with rate } \quad \mu_i,
\end{eqnarray*}
where $\lambda_i$ and $\mu_i$, $i\in \mathbb{N}$, are non-negative
real numbers.

\me
Knowing that the process is at state $i$ at a certain time,  the
process will wait for an exponential time of parameter $\lambda_i$
before jumping to $i+1$ or independently, will wait for an
exponential time of parameter $\mu_i$ before jumping to $i-1$. The
total jump rate from state $i$ is thus $\lambda_i+\mu_i$. We
will assume in what follows that $\lambda_0=\mu_0=0$. This
condition  ensures  that $0$ is an absorbing point, modeling the
extinction of the population. Since these processes have a main importance in the modeling of biological processes, we study in detail their existence and extinction properties, and then their QSDs.

 \me The most standard examples
are the following ones.
\begin{enumerate}
\item The Yule process. For each  $i\in \mathbb{N}$, $\
\lambda_i=\lambda i\ $ for a positive real number $\lambda$, and
$\ \mu_i =0$. There are no deaths. It's a fission model.

\item The linear birth and death process, or binary branching
process. There exist positive numbers $\lambda$ and $\mu$ such that
$\lambda_i = \lambda i$ and $ \mu_i=\mu i$. This model holds if
individuals reproduce and die independently, with birth rate equal to
$\lambda$ and death rate equal to $\mu$.

\item The logistic birth and death process. We assume that
every individual in the population has a constant birth rate
$\lambda>0$ and a natural death rate $\mu>0$. Moreover the individuals compete to share fixed
resources, and each individual $j\neq i$ creates a competition pressure on
individual $i$ with rate $c>0$. Thus, given that the population's size
is $i$, the individual death rate due to competition is given by $c(i-1)$ and the total
death rate is $\mu_i= \mu i + c i(i-1).$
\end{enumerate}

\bi
In the following, we
will assume that $\lambda_i>0$ and $\mu_i>0$ for any
$i\in\mathbb{N}^*$.

\me We denote by $(\tau_n)_n$ the sequence of the jump times of the
process, either births or deaths. Let us first see under which
conditions on the birth and death rates the process is well defined
for all time $t\geq 0$, i.e. $\tau=\lim_n \tau_n = + \infty$ almost surely. Indeed,
if $\tau=\lim_n \tau_n < \infty$ with a positive probability, the
process would only be defined for $t< \tau$ on this event. There would
be an accumulation of jumps near $\tau$ and the process could
increase until infinity in finite time.

\me Let us give a necessary and sufficient condition ensuring that a
birth and death process does not explode in finite time. The result is already
stated in Anderson~\cite{Anderson1991},
but the following proof is actually far much shorter and easier to follow.
\begin{theorem}
  \label{thm:BDexistence}
  \label{theorem:BDexistence}
  The birth and death process does not explode in finite time, almost
  surely, if and only if $\sum_{n}r_{n}=+\infty,$ where
  \begin{equation*}
    r_n = {1\over \lambda_n} + \sum_{k=1}^{n-1} {\mu_{k+1} \cdots
      \mu_n\over \lambda_k \lambda_{k+1} \cdots \lambda_n} + {\mu_{1}
      \cdots \mu_n\over \lambda_1 \cdots \lambda_n}.
  \end{equation*}
\end{theorem}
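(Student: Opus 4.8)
The plan is to reduce the whole statement to the construction of a single function on the state space that is harmonic, up to an additive constant, for the killed generator, and then to run an optional-stopping argument in the two regimes. Writing $Lf(i)=\lambda_i(f(i+1)-f(i))+\mu_i(f(i-1)-f(i))$, I would first record the purely algebraic fact that the $r_n$ obey the first-order recursion $\lambda_n r_n=1+\mu_n r_{n-1}$ for $n\ge1$, with $r_0=1$; this is a one-line check from the closed form of $r_n$, and it is really the only computation in the proof. Setting $\Phi(0)=-1$ and $\Phi(i)=\sum_{k=1}^{i-1}r_k$ for $i\ge1$, the recursion rewrites exactly as $L\Phi(i)=\lambda_i r_i-\mu_i r_{i-1}=1$ for every $i\ge1$, while $\Phi$ is nondecreasing with $\Phi(i)\uparrow\Phi_\infty:=\sum_{k\ge1}r_k\in(0,+\infty]$. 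Thus $\sum_n r_n=+\infty$ is precisely the statement that $\Phi$ is unbounded, and the dichotomy of the theorem becomes the dichotomy ``$\Phi$ unbounded / bounded''.

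For the direction $\sum_n r_n=+\infty$ (non-explosion) I would use $\Phi$ as a Lyapunov function. Let $(\tau_n)$ be the jump times and $\tau=\lim_n\tau_n$ the explosion time. Up to $\tau_n$ the trajectory is bounded, so Dynkin's formula applies without integrability issues and gives, for every $t\ge0$, $\E_i(\Phi(Z_{t\wedge\tau_n}))=\Phi(i)+\E_i\big(\int_0^{t\wedge\tau_n}\1_{Z_s\ge1}\,ds\big)\le\Phi(i)+t$. Since $\Phi\ge-1$ is bounded below, Fatou's lemma applied to $\Phi+1$ yields $\E_i(\liminf_n\Phi(Z_{t\wedge\tau_n}))\le\Phi(i)+t+1<\infty$. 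On the event $\{\tau\le t\}$ the process makes infinitely many jumps before time $t$; because the total jump rate is bounded on every finite set, this forces $Z_{\tau_n}\to+\infty$, hence $\Phi(Z_{t\wedge\tau_n})\to\Phi_\infty=+\infty$. The finiteness above then forces $\P_i(\tau\le t)=0$, and letting $t\to\infty$ gives non-explosion.

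For the direction $\sum_n r_n<+\infty$ (explosion), $\Phi$ is bounded by $\Phi_\infty<\infty$. The same identity, now read as $\E_i(t\wedge\tau_n\wedge T_0)=\E_i(\Phi(Z_{t\wedge\tau_n}))-\Phi(i)\le\Phi_\infty-\Phi(i)$, together with monotone convergence in $t$ and $n$, gives $\E_i(\tau\wedge T_0)\le\Phi_\infty-\Phi(i)<\infty$; in particular $\tau\wedge T_0<\infty$ almost surely. It remains to produce trajectories that escape to $+\infty$ rather than being absorbed at $0$: since $r_n\ge\frac{\mu_1\cdots\mu_n}{\lambda_1\cdots\lambda_n}=:a_n$, the hypothesis forces $\sum_n a_n<\infty$, so the scale function $s(i)=\sum_{k=0}^{i-1}a_k$, which satisfies $Ls=0$ and hence makes $s(Z_t)$ a local martingale, is bounded; a standard gambler's-ruin computation then gives $\P_i(T_0=\infty)=s(i)/s(\infty)>0$. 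On $\{T_0=\infty\}$ one has $\tau=\tau\wedge T_0<\infty$, so the process explodes with positive probability, which is the required failure of almost sure non-explosion.

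The only genuinely delicate points are the bookkeeping at the two boundaries. The main obstacle is the passage to the limit on the explosion event in the first direction: one must know that explosion of a nearest-neighbour process necessarily means $Z_{\tau_n}\to+\infty$ (infinitely many jumps cannot accumulate inside a finite set, where the total rate is bounded), and that $\Phi(Z_{t\wedge\tau_n})$ can be controlled by Fatou despite $\Phi$ being unbounded. A secondary, more routine, point is the treatment of the absorbing state $0$: the integral $\int_0^{t}L\Phi(Z_s)\,ds$ accumulates only on $\{Z_s\ge1\}$, which is what produces $t\wedge\tau_n\wedge T_0$ rather than $t\wedge\tau_n$, and this is exactly why the transience input $\sum_n a_n<\infty$ is needed to convert ``$\tau\wedge T_0<\infty$'' into genuine explosion.
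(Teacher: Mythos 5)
Your proof is correct, but it takes a genuinely different route from the paper's. The paper first proves a general Reuter-type criterion for arbitrary pure-jump chains --- non-explosion holds if and only if the only bounded non-negative solution of $Lx=x$ is $x=0$ --- by analysing the quantities $h_i^{(n)}=\mathbb{E}_i(e^{-\tau_n})$ and the recursion $h_i^{(n+1)}=\sum_{j\neq i}\frac{L_{ij}}{1+q_i}h_j^{(n)}$, and then solves the second-order difference equation $Lx=x$ explicitly for birth--death rates, obtaining the two-sided bounds $x_1(1+r_1+\cdots+r_n)\leq x_{n+1}\leq x_1\prod_{k=1}^n(1+r_k)$, so that bounded solutions exist exactly when $\sum_k r_k<\infty$. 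You instead exploit the first-order recursion $\lambda_n r_n=1+\mu_n r_{n-1}$ to build a function $\Phi$ with $L\Phi=\mathbf{1}_{\{\cdot\geq 1\}}$ and run a Lyapunov/optional-stopping argument, splitting the converse into the moment bound $\mathbb{E}_i(\tau\wedge T_0)\leq \Phi_\infty-\Phi(i)$ plus a scale-function transience estimate from $\sum_n \mu_1\cdots\mu_n/(\lambda_1\cdots\lambda_n)<\infty$ (the latter is essentially the computation the paper carries out separately in Proposition~\ref{prop-BDextinct}). Both arguments are complete; the paper's buys a reusable criterion valid for general jump chains at the cost of the resolvent fixed-point machinery, while yours stays elementary and probabilistic, produces the quantitative by-product $\mathbb{E}_i(\tau\wedge T_0)\leq\sum_k r_k$, and makes transparent why the two tail pieces of $r_n$ enter (expected upward passage time versus transience). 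The price you pay is having to justify two standard but non-trivial facts that the paper's route never needs: that explosion of a nearest-neighbour chain forces $Z_{\tau_n}\to+\infty$ (infinitely many jumps cannot accumulate in a finite set where the total rate is bounded), and the applicability of Dynkin's formula and Fatou with the unbounded $\Phi$; you flag both correctly, and your justifications are sound.
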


\begin{proof}
1) Let us more generally consider a pure jump Markov process $(X_t,
 t\geq 0)$ with values in $\mathbb{N}$, and generator $(L_{ij}, i, j
 \in \mathbb{N})$. We set $q_i= - L_{ii}$.  Let $(\tau_n)_n$ be
 the sequence of jump times of the process and $(U_n)_n$ the sequence
 of inter-times defined by
 \begin{equation*}
   U_n=\tau_n-\tau_{n-1}, \quad  \forall n\geq 1; \quad \tau_0=0, \quad U_0=0.
 \end{equation*}
 We also set $\tau_{\infty}=\lim_{n\rightarrow\infty}\tau_n
 \in[0,+\infty]$. The process does not explode in finite time almost
 surely (and is well defined for all time $t\in\mathbb{R}_+$), if and
 only if for each $ i \in \mathbb{N}$
 \begin{equation*}
   \mathbb{P}_i(\tau_{\infty} <\infty) = 0.
 \end{equation*}

 \me Let us show that this property is equivalent to the fact that the
 unique non-negative and bounded solution $x = (x_i)_{i\in \mathbb{N}}$
 of $\ L\, x = x\ $ is the null solution.

 \me For any $i$, we set $h_i^{(0)}=1$ and, for $n\in \mathbb{N}^*$,
 $h_i^{(n)} = \mathbb{E}_i(\exp(-\sum_{k=1}^n U_k))$.  For any $n\in
 \mathbb{N}$, we have
 \begin{equation*}
   h_i^{(n+1)} = \sum_{j\neq i} {L_{ij}\over q_i} \
   h_j^{(n)}\ \mathbb{E}_i(\exp(-U_1)).
 \end{equation*}
 Indeed, the property is true for $n=0$ since $\sum_{i\neq j}
 {L_{ij}\over q_i} = 1$.  Moreover, by conditioning with respect to $\
 U_1$ and using the strong Markov property, we get
 \begin{eqnarray}
   \label{EqConditionalExp1}
   \mathbb{E}_i\left(\left.\exp(-\sum_{k=1}^{n+1} U_k)\right| U_1\right)&=&
   \exp(- U_1)\
   \mathbb{E}_{X_{U_1}}\left(\exp(-\sum_{k=1}^n U_k)\right),
 \end{eqnarray} 
 since the jump times of the $U_1$-translated process are the $\tau_n
 - U_1, n\in \mathbb{N}^* $. We
 have
 \begin{eqnarray*}
   {\E}_i\left(\mathbb{E}_{X_{U_1}}\left(\exp(-\sum_{k=1}^n U_k)\right)\right) 
     &=& \sum_{j\neq i} \mathbb{P}_i(X_{U_1} = j)\ \mathbb{E}_j\left(\exp(-\sum_{k=1}^n U_k)\right)\\
     &=& \sum_{j\neq i} \frac{L_{ij}}{q_i}\ \mathbb{E}_j(\exp(-\sum_{k=1}^n U_k)),
 \end{eqnarray*}
 since $\mathbb{P}_i(X_{U_1} = j)= {L_{ij}\over q_i}$. By
 independence of $U_1$ and $X_{U_1}$, we deduce from
 \eqref{EqConditionalExp1} that
 \begin{equation*}
   \mathbb{E}_i\left(\exp(-\sum_{k=1}^{n+1} U_k)\right) =
   \sum_{j\neq i} {L_{ij}\over q_i}\ \mathbb{E}_j\left(\exp(-\sum_{k=1}^n
   U_k)\right) \ \mathbb{E}_i\left(\exp(- U_1)\right).
 \end{equation*}

 \bi As
 \begin{equation*}
   \mathbb{E}_i(\exp(-U_1))=\int_0^\infty q_i e^{-q_i s} e^{-s} ds =
	  {q_i\over 1+ q_i},
 \end{equation*}
 it turns out that
 \begin{equation}
   \label{rec}
   h_i^{(n+1)}= \sum_{j\neq i} {L_{ij}\over 1
     + q_i}\ h_j^{(n)}.
 \end{equation}

 \me Let $(x_i)_i$ be a nonnegative solution of $Lx=x$ bounded by $1$, then
   $x_i=\sum_{j} L_{ij}\ x_j= L_{ii} x_i + \sum_{j\neq i}
   L_{ij} x_j = -q_i x_i + \sum_{j\neq i} L_{ij} x_j$,
 so that
 \begin{equation}
   \label{rec-bis}
   x_i=\sum_{j\neq i} {L_{ij}\over 1+q_i} x_j.
 \end{equation}
 Since $h_i^{(0)}=1 \geq x_i\geq 0$ and $\frac{L_{ij}}{1+q_i} \geq 0$
 for all $i,j\in E$, we deduce by iteration from \eqref{rec} and \eqref{rec-bis} that $h_i^{(n)} \geq
 x_i\geq 0$, for any $n\in \mathbb{N}$.

 \me Let us in the other hand define for any $j$ the quantity $z_j=
 \mathbb{E}_j (e^{-\tau_\infty})$. Using $\tau_\infty =\lim_n \tau_n$, and
 $\tau_n=\sum_{k=1}^n U_k$, we deduce by monotone convergence that $z_j =
 \lim_n h_j^{(n)}$.
 
 \me If the process does not explode a.s., then $\tau_\infty =\infty$
 a.s., and $\lim_n h_i^{(n)}=z_i=0$. Since $h_i^{(n)} \geq x_i\geq 0$,
 we deduce that $x_i=0$.  It turns out that the unique nonnegative bounded
 solution of $Lx=x$ is zero.

 \me If the process explodes with positive probability, then there
 exists $i$ such that $\mathbb{P}_i(\tau_\infty<\infty)>0$. Making $n$
 tend to infinity in \eqref{rec}, we get 
% \begin{equation*}
 $  z_i= \sum_{j\neq i}
   {L_{ij}\over 1 + q_i} \ z_j$.
% \end{equation*}
 Since $z_i>0$, $z$ is a positive
 and bounded solution of $L z = z$.

 \bi 2) Let us now apply this result to the birth and death process
 with $\lambda_0=\mu_0=0$. Then for $i\geq 1$,
 $L_{i,i+1}=\lambda_i,\ L_{i,i-1}=\mu_i,\ L_{i,i}=-(\lambda_i+\mu_i)$.
 The equation $Lx=x$ is given by $\ x_0 = 0$ and for all $n\geq 1$ by
 \begin{equation*}
   \lambda_n x_{n+1} -(\lambda_n + \mu_n) x_n + \mu_n x_{n-1} = x_n.
 \end{equation*}
 Thus, if we set $\Delta_n= x_n - x_{n-1}$, we have $\Delta_1= x_1$
 and for $n\geq 1$, $\Delta_{n+1} = \Delta_n \ {\mu_n\over \lambda_n}
 + {1\over \lambda_n} x_n$. Let us remark that, for any $n$, $\Delta_n
 \geq 0$ and the sequence $(x_n)_n$ is nondecreasing.  If
 $x_1=0$, the solution is zero. If not, we get by induction
 \begin{equation*}
   \Delta_{n+1} = {1\over \lambda_n} x_n + \sum_{k=1}^{n-1} {1\over
     \lambda_k} \ {\mu_{k+1}\over \lambda_{k+1}} \cdots {\mu_n\over
     \lambda_n} \ x_k + {\mu_1 \over \lambda_1}\cdots {\mu_n\over
     \lambda_n} \ x_1.
 \end{equation*}
Letting 
 \begin{equation*}
   r_n = {1\over
     \lambda_n} +  \sum_{k=1}^{n-1} {\mu_{k+1} \cdots  \mu_n\over \lambda_k
     \lambda_{k+1} \cdots \lambda_n} + {\mu_{1} \cdots \mu_n\over \lambda_1
     \cdots \lambda_n},
 \end{equation*} 
we deduce that
 %\begin{equation*}
  $ r_n \ x_1 \leq \Delta_{n+1} \leq r_n \ x_n$.
  % \end{equation*}
 Then 
 \begin{equation*}
   x_1(1+ r_1+ \cdots r_n ) \leq x_{n+1}\leq x_1 \
   \prod_{k=1}^n(1+r_k) .
 \end{equation*}
 The boundedness of the sequence $(x_n)_n$ is thus equivalent to the
 convergence of the series $\sum_k r_k$.
\end{proof}

 \me
\begin{corollary}
  Let us consider a BD-process with birth rates $(\lambda_i)_i$. If
  there exists a constant $\lambda>0$ such that
  \begin{equation*}
    \lambda_i \leq \lambda i,\ \forall i\geq 1,
  \end{equation*}
  then the process is well defined on $\mathbb{R}_+$.
\end{corollary}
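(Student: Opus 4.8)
The plan is to invoke Theorem~\ref{theorem:BDexistence}, which reduces the non-explosion of the process to the divergence of the series $\sum_n r_n$, with $r_n$ as defined there. So it suffices to show that $\sum_n r_n = +\infty$ under the hypothesis $\lambda_i \leq \lambda i$.

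The key observation is that every term in the expression defining $r_n$ is non-negative, since all the birth rates $\lambda_i$ and death rates $\mu_i$ are positive for $i \geq 1$ (so each quotient $\mu_{k+1}\cdots\mu_n/(\lambda_k\cdots\lambda_n)$ is positive). Discarding the middle sum and the last term, I keep only the first term and obtain the lower bound
$$r_n \geq \frac{1}{\lambda_n}.$$
The hypothesis $\lambda_n \leq \lambda n$ then gives $1/\lambda_n \geq 1/(\lambda n)$, hence $r_n \geq 1/(\lambda n)$ for every $n \geq 1$.

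Summing over $n$ and comparing with the harmonic series yields
$$\sum_n r_n \geq \frac{1}{\lambda}\sum_{n\geq 1}\frac{1}{n} = +\infty,$$
so by Theorem~\ref{theorem:BDexistence} the process does not explode in finite time, which is what had to be proved. There is essentially no obstacle here: the only thing worth emphasizing is that no delicate estimate of the full expression for $r_n$ is needed, because its first term alone already produces a divergent (harmonic) series after summation, while positivity of the discarded terms can only help. Note also that the death rates $(\mu_i)$ play no role beyond guaranteeing non-negativity of those remaining terms; the sole quantitative ingredient is the linear growth bound on the birth rates, which keeps the expected waiting time $1/\lambda_n$ for up-jumps from decaying too fast.
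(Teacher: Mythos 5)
Your proof is correct and is exactly the argument the paper intends (the paper merely states ``The proof is immediate''): since all terms of $r_n$ are non-negative, $r_n \geq 1/\lambda_n \geq 1/(\lambda n)$, and the divergence of the harmonic series gives $\sum_n r_n = +\infty$, so Theorem~\ref{theorem:BDexistence} applies.
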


\me The proof is immediate. It turns out that the linear BD-processes and
the logistic processes are well defined on $\mathbb{R}_+$.

\bi Let us now recall under which assumption a BD-process goes to
extinction almost surely.

\me
\begin{proposition}
  \label{prop-BDextinct}
  The BD-process goes almost-surely to
  extinction if and only if 
  \begin{equation}
    \label{cond-ext}
    \sum_{k=1}^\infty {\mu_1\cdots \mu_k\over \lambda_1\cdots
      \lambda_k} = +\infty.
  \end{equation}
\end{proposition}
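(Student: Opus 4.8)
The plan is to reduce the question to a gambler's-ruin computation for the embedded jump chain. Since $0$ is absorbing and all rates $\lambda_i,\mu_i$ are positive for $i\geq 1$, the continuous-time process attains $0$ (necessarily at a jump instant, hence in finite time) if and only if its discrete-time embedded chain $(Y_n)_n$ attains $0$; the exact timing, and in particular any possible explosion, is irrelevant to whether the state $0$ is ever reached, so $\{T_0<\infty\}=\{\tau_0<\infty\}$ almost surely, where $\tau_0$ denotes the first hitting time of $0$ by $(Y_n)$. The chain $(Y_n)$ is the nearest-neighbour random walk on $\mathbb{N}$ absorbed at $0$, with upward probability $p_i=\lambda_i/(\lambda_i+\mu_i)$ and downward probability $q_i=\mu_i/(\lambda_i+\mu_i)$ from each state $i\geq 1$. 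It therefore suffices to decide when $\mathbb{P}_i(\tau_0<\infty)=1$ for all $i$.

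First I would fix a level $N>i$ and compute the classical ruin probability $u_i^{(N)}=\mathbb{P}_i(\tau_0<\tau_N)$, where $\tau_N$ is the hitting time of $N$. By a one-step analysis it is harmonic,
\[
u_i^{(N)}=p_i\,u_{i+1}^{(N)}+q_i\,u_{i-1}^{(N)},\qquad 1\leq i\leq N-1,
\]
with boundary values $u_0^{(N)}=1$ and $u_N^{(N)}=0$. Writing $\rho_0=1$ and $\rho_k=\prod_{j=1}^k(\mu_j/\lambda_j)$, the increments $u_{i+1}^{(N)}-u_i^{(N)}$ obey the geometric recursion with ratio $q_i/p_i=\mu_i/\lambda_i$, which telescopes to the explicit formula
\[
u_i^{(N)}=\frac{\sum_{k=i}^{N-1}\rho_k}{\sum_{k=0}^{N-1}\rho_k}.
\]

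Then I would let $N\to\infty$. The events $\{\tau_0<\tau_N\}$ increase with $N$ (since $\tau_N\leq\tau_{N+1}$ for this nearest-neighbour walk), and their union is exactly $\{\tau_0<\infty\}$: a path that reaches $0$ does so after finitely many steps, hence below some level, and therefore hits $0$ before that level; conversely a path avoiding $0$ forever must exceed every finite level, so $\tau_0<\tau_N$ fails for all $N$. Hence $\mathbb{P}_i(\tau_0<\infty)=\lim_{N\to\infty}u_i^{(N)}$. Since $\sum_{k\geq 0}\rho_k=1+\sum_{k\geq 1}\mu_1\cdots\mu_k/(\lambda_1\cdots\lambda_k)$, the limit equals $1$ for every $i\geq 1$ precisely when this series diverges, which is the stated criterion; if instead it converges, the limit is $\sum_{k\geq i}\rho_k/\sum_{k\geq 0}\rho_k<1$ and extinction is not almost sure.

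The main point requiring care is the identification $\{T_0<\infty\}=\{\tau_0<\infty\}$ for the continuous-time process together with the monotone passage $N\to\infty$ (interpreting "not ruined before any level" as "escaping to infinity"); once these are justified, the conclusion is an elementary consequence of the ruin formula. A variant closer in spirit to the proof of Theorem~\ref{theorem:BDexistence} would instead characterise almost-sure extinction directly through the harmonic function $x_i=\mathbb{P}_i(\tau_0<\infty)$ on the generator $L$, but the ruin computation above yields the explicit divergence criterion most transparently.
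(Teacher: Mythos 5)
Your proposal is correct and follows essentially the same route as the paper: both compute the two-boundary ruin probability $u_i^{(N)}=\mathbb{P}_i(\tau_0<\tau_N)$ from the harmonic recursion, obtain the identical explicit formula $\sum_{k=i}^{N-1}\rho_k\big/\sum_{k=0}^{N-1}\rho_k$, and let $N\to\infty$. The only difference is presentational: you pass through the embedded jump chain (which makes the explosion issue and the monotone limit explicit), whereas the paper works directly with the continuous-time hitting times $T_0$ and $T_I$.
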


%It yields that the condition for extinction is given by
%\begin{equation*}
%$  \sum_{k\geq 1} {1\over \lambda_k\pi_k} = + \infty$.
%\end{equation*}

\begin{proof}
  Let us introduce 
    \begin{equation*}
    u_i:=\mathbb{P}(Extinction|Z_0=i) = \mathbb{P}_i(T_0<\infty),
  \end{equation*}
  which is the probability to attain $0$ in finite time, starting from
  $i$.  As before $T_0$ denotes the extinction time and $T_{I}$  the hitting time of any $I$. The Markov property yields the induction formula
  \begin{equation*}
  \lambda_i\ u_{i+1}-(\lambda_i+\mu_i)\ u_i +\mu_i \ u_{i-1}=0,\ \forall i\geq 1.
  \end{equation*}
  To resolve this equation, we firstly assume that the rates
  $\lambda_i, \mu_i$ are nonzero until some fixed level $I$ such that
  $\lambda_I=\mu_I=0$.  We set for each $i$,
 $
    u_i^{(I)}:=\mathbb{P}_i(T_0<T_I).
$
  Thus
 $    u_i = \lim_{I\to \infty} u_i^{(I)}.
  $  Defining 
 % \begin{equation*}
  $  U_I:=\sum_{k=1}^{I-1} {\mu_1\cdots \mu_k\over
      \lambda_1\cdots\lambda_k},$
%  \end{equation*}
  an easy computation shows that for
  $i\in \{1,\cdots,I-1\}$, 
  \begin{equation*}
    u_i^{(I)} = (1+U_I)^{-1}
    \sum_{k=i}^{I-1} {\mu_1\cdots \mu_k\over
      \lambda_1\cdots\lambda_k}.
  \end{equation*}

  \me In particular, $u_1^{(I)} = {U_I\over 1+U_I}$.  Hence, either
  $(U_I)_I$ tends to infinity when $I\to \infty$ and any extinction
  probability $u_i$ is equal to $1$ or $(U_I)_I$ converges to a
  finite limit $U_\infty$ and for $i\geq 1$, 
  \begin{equation*}
    u_i= (1+U_\infty)^{-1} \sum_{k=i}^\infty {\mu_1\cdots \mu_k\over
    \lambda_1\cdots\lambda_k} < 1.
  \end{equation*}
 
\end{proof}

\begin{corollary}
  \begin{enumerate}
  \item The linear BD-process with rates $\lambda i$ and $\mu i$ goes
    almost surely to extinction if and only if $\lambda \leq \mu.$
  \item The logistic BD-process goes almost surely to extinction.
  \end{enumerate}
\end{corollary}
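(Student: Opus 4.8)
The plan is to apply Proposition~\ref{prop-BDextinct} directly in both cases, so that each assertion reduces to deciding whether the series
\begin{equation*}
\sum_{k=1}^\infty \frac{\mu_1\cdots\mu_k}{\lambda_1\cdots\lambda_k}
\end{equation*}
converges or diverges. No further probabilistic input is needed; the work is purely the evaluation of this series for the two specific choices of rates.

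For part~1, I would substitute $\lambda_i=\lambda i$ and $\mu_i=\mu i$, so that the $k$-th term collapses after cancelling the factorials,
\begin{equation*}
\frac{\mu_1\cdots\mu_k}{\lambda_1\cdots\lambda_k}
=\frac{\mu^k\,k!}{\lambda^k\,k!}=\left(\frac{\mu}{\lambda}\right)^k .
\end{equation*}
The series is then geometric of ratio $\mu/\lambda$, and it diverges exactly when $\mu/\lambda\geq 1$, that is when $\lambda\leq\mu$. By Proposition~\ref{prop-BDextinct} this divergence is precisely the criterion for almost sure extinction, which gives the stated equivalence.

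For part~2, I would substitute $\lambda_i=\lambda i$ and $\mu_i=\mu i+ci(i-1)$, so that the ratio of consecutive rates is
\begin{equation*}
\frac{\mu_i}{\lambda_i}=\frac{\mu+c(i-1)}{\lambda}.
\end{equation*}
Since $c>0$, this grows linearly in $i$ and exceeds $1$ for all $i$ beyond some index $i_0$; consequently the product $\prod_{j=1}^k \frac{\mu+c(j-1)}{\lambda}$ increases without bound, so the general term of the series tends to $+\infty$ and a fortiori the series diverges. Proposition~\ref{prop-BDextinct} then yields almost sure extinction unconditionally. There is no genuine obstacle in either case—both are immediate specializations of the general criterion—and the only point deserving a word of care is in part~2, where it suffices to observe that the terms fail to tend to zero, so one need not produce any delicate estimate to conclude divergence.
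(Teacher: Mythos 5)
Your proposal is correct and follows essentially the same route as the paper: both reduce each assertion to the divergence criterion of Proposition~\ref{prop-BDextinct}, identify the general term as $(\mu/\lambda)^k$ in the linear case, and observe divergence in the logistic case (where the paper merely states that the condition "is easy to check" and you supply the detail that the terms fail to tend to zero). No gaps.
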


\begin{proof}
  1) If $\lambda\leq \mu$, \textit{i.e.} when the process is
  sub-critical or critical, we obtain $U_I\geq I-1$ for any $I\geq 1$.
  Then $(U_I)_I$ goes to infinity when $I\to \infty$ and the process
  goes to extinction with probability $1$. Conversely, if $\lambda>
  \mu$, the sequence $(U_I)_I$ converges to ${\mu\over \lambda-\mu}$,
  and an easy computation gives $u_i=(\lambda/\mu)^i$.

  \me 2) Here we have 
  \begin{equation}
    \label{logistique} 
    \lambda_i=\lambda i\ ;\
    \mu_i=\mu i+ ci(i-1).
  \end{equation}
  It is easy to check that \eqref{cond-ext} is satisfied.
\end{proof}

\subsection{Quasi-stationary distributions for birth and death processes}
\label{subsection:QSD-for-BD-processes-detailed-study}

We consider a BD-process $(Z_t)$ with almost sure extinction. A probability measure $\alpha$ on
$\mathbb{N}^*$ is given by a sequence $(\alpha_j)_{j\geq 1}$ of
non-negative numbers such that $\sum_{j\geq 1} \alpha_j =1$.

\noindent Our first result is a necessary and sufficient condition for such a
sequence $(\alpha_j)_{j\geq 1}$ to be a QSD for $Z$. Thereafter we
will study the set of sequences which fulfill this condition (we refer
the reader to van Doorn \cite{vanDoorn1991} for more details).
\begin{theorem}
  \label{thm:qsd-BD}
  \label{theorem:qsd-BD-process1}
  The sequence $(\alpha_j)_{j\geq 1}$ is a QSD if and
  only if

\begin{enumerate}
  \item $\alpha_j \geq 0, \  \forall j\geq 1\,$ and $\,\sum_{j\geq 1}
  \alpha_j=1$.
  \item $\forall \ j\geq 1$, 
    \begin{eqnarray}
      \label{qsd-BD}
      \lambda_{j-1}\alpha_{j-1}
      -(\lambda_j+\mu_j) \alpha_{j} +\mu_{j+1}\alpha_{j+1} &=& - \mu_1 \alpha_1 \alpha_j\,; \nonumber\\
      -(\lambda_1+\mu_1) \alpha_{1} +\mu_{2}\alpha_{2} &=& - \mu_1 \alpha_1^2.
    \end{eqnarray}
\end{enumerate}
\end{theorem}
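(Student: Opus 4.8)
The plan is to derive the whole statement from the spectral characterisation of Proposition~\ref{proposition:qsd-spectral-point-of-view}. Since the state space $E^*=\mathbb{N}^*$ is discrete, its hypotheses are automatically met: taking $D$ to be the set of finitely supported functions on $E^*$, each such $f$ lies in $\mathcal{D}(L)$ because $Lf$ is again finitely supported (hence bounded), and any indicator $\mathbf{1}_A$ is the pointwise limit of the uniformly bounded sequence $\mathbf{1}_{A\cap\{1,\dots,n\}}\in D$. Thus $\alpha$ is a QSD if and only if it is a probability measure on $\mathbb{N}^*$ and there is a constant $\theta(\alpha)>0$ with $\alpha(Lf)=-\theta(\alpha)\,\alpha(f)$ for every $f\in D$. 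This already yields condition (1), so the entire content is to translate the eigen-relation into the recurrence (2).

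Next I would make the eigen-relation explicit by testing it against the singletons $f=\mathbf{1}_{\{j\}}$. For the birth-and-death generator the only nonzero entries feeding column $j$ are $L_{j-1,j}=\lambda_{j-1}$, $L_{j,j}=-(\lambda_j+\mu_j)$ and $L_{j+1,j}=\mu_{j+1}$, so $\alpha(L\mathbf{1}_{\{j\}})=\sum_i\alpha_iL_{ij}=-\theta(\alpha)\alpha_j$ reads
\begin{equation*}
  \lambda_{j-1}\alpha_{j-1}-(\lambda_j+\mu_j)\alpha_j+\mu_{j+1}\alpha_{j+1}=-\theta(\alpha)\,\alpha_j,\qquad j\geq 1,
\end{equation*}
with the convention $\lambda_0\alpha_0=0$ producing the special first line of \eqref{qsd-BD} (the term $i=j-1=0$ being absent since $0\notin E^*$). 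Hence conditions (1)--(2) hold precisely when $\alpha$ solves this eigen-relation with the particular eigenvalue $\theta(\alpha)=\mu_1\alpha_1$, and the only remaining task is to identify the eigenvalue.

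To pin down $\theta(\alpha)=\mu_1\alpha_1$ I would use the probabilistic meaning of $\theta(\alpha)$ as the extinction rate. By Proposition~\ref{proposition:extinction-rate-from-qsd}, $\mathbb{P}_\alpha(T_0>t)=\alpha(P_t\mathbf{1}_{E^*})=e^{-\theta(\alpha)t}$, so differentiating at $t=0$ gives $\theta(\alpha)=-\alpha(L\mathbf{1}_{E^*})$. The constant function $\mathbf{1}_{E^*}$ lies in $\mathcal{D}(L)$ and $L\mathbf{1}_{E^*}=-\mu_1\mathbf{1}_{\{1\}}$, since from a state $i\geq 2$ the chain cannot reach $0$ in a single jump while from state $1$ it is killed at rate $\mu_1$; the differentiation under the integral is licit because $L\mathbf{1}_{E^*}$ is bounded, exactly as in the proof of Proposition~\ref{proposition:qsd-spectral-point-of-view}. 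This gives $\theta(\alpha)=\mu_1\alpha_1$ and closes the forward implication. For the converse, a probability measure satisfying (2) obeys $\alpha L=-\mu_1\alpha_1\,\alpha$; one first checks $\alpha_1>0$ (otherwise the recurrence forces $\mu_2\alpha_2=0$, then $\alpha_j\equiv 0$ by induction, contradicting $\sum_j\alpha_j=1$), so the eigenvalue $\mu_1\alpha_1$ is strictly positive and Proposition~\ref{proposition:qsd-spectral-point-of-view} returns that $\alpha$ is a QSD.

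The main obstacle is this clean identification of the eigenvalue. The tempting purely algebraic route---summing the recurrence over $j=1,\dots,N$ so that the interior terms telescope---leaves a boundary flux $\mu_{N+1}\alpha_{N+1}-\lambda_N\alpha_N$ whose vanishing as $N\to\infty$ is not evident from summability of $(\alpha_j)$ alone when the rates $\lambda_j,\mu_j$ are unbounded (as in the logistic case). The differentiation argument sidesteps this difficulty by working with the bounded function $L\mathbf{1}_{E^*}$, which is why I would favour it; the boundary flux then vanishes only a posteriori, since the recurrence with $\theta(\alpha)=\mu_1\alpha_1$ gives $\mu_{N+1}\alpha_{N+1}-\lambda_N\alpha_N=\mu_1\alpha_1\sum_{k>N}\alpha_k\to 0$.
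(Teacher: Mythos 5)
Your proposal is correct, and while it follows the paper's overall strategy (reduce to the spectral characterisation of Proposition~\ref{proposition:qsd-spectral-point-of-view}, then read off the recurrence by testing against singletons), it diverges from the paper precisely at the step you flag as delicate: the identification $\theta(\alpha)=\mu_1\alpha_1$. The paper takes the ``algebraic route'' you describe as tempting --- it sums the component equations over all $j\geq 1$ and re-indexes, asserting $0=\sum_j\big[\lambda_j\alpha_j-(\lambda_j+\mu_j)\alpha_j+\mu_j\alpha_j\big]=-\theta\sum_j\alpha_j+\mu_1\alpha_1$. As you correctly observe, this rearrangement is only licit if one already controls the boundary flux $\mu_{N+1}\alpha_{N+1}-\lambda_N\alpha_N$ (equivalently, the absolute convergence of $\sum_j\lambda_j\alpha_j$ and $\sum_j\mu_j\alpha_j$), which is not obvious from $\sum_j\alpha_j=1$ alone when the rates are unbounded, as in the logistic case. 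Your substitute --- computing $\theta(\alpha)=-\alpha(L\mathbf{1}_{E^*})$ from $\mathbb{P}_\alpha(T_0>t)=e^{-\theta(\alpha)t}$ together with the pointwise identity $L\mathbf{1}_{E^*}=-\mu_1\mathbf{1}_{\{1\}}$ (the only state from which killing occurs in one jump is $1$) --- works with a bounded function throughout, sidesteps the flux issue, and then recovers the vanishing of the flux a posteriori via $\mu_{N+1}\alpha_{N+1}-\lambda_N\alpha_N=\mu_1\alpha_1\sum_{k>N}\alpha_k$. This is a genuine improvement in rigour over the published argument. Your treatment of the converse (checking $\alpha_1>0$ so that the eigenvalue is positive, then invoking the proposition) matches what the paper leaves implicit; both you and the paper inherit whatever domain subtleties reside in Proposition~\ref{proposition:qsd-spectral-point-of-view} itself, so no additional gap is introduced.
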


\me
The next result follows immediately.
\begin{corollary}
\label{vando}
\me Let us define inductively the sequence of polynomials
$(H_n(x))_n$ as follows: $H_1(x)=1$ for all $x\in
\mathbb{R}$ and for $n\geq 2$,
\be \label{polynomial}  \lambda_n \ H_{n+1}(x)& =& (\lambda_n + \mu_n - x)\ H_n(x) -
\mu_{n-1}\
H_{n-1}(x)\,;\nonumber\\
\lambda_1 \ H_2(x)& =& \lambda_1+\mu_1 - x. \ee

Then, any quasi-stationary distribution $(\alpha_j)_j$
satisfies for all $j\geq 1$, 
\begin{equation*}
  \alpha_j = \alpha_1 \ \pi_j \ H_j(\mu_1
  \alpha_1),
\end{equation*}
where
\begin{equation}
  \label{def-pi} 
  \pi_1 =1 \ ;\ \pi_n = {\lambda_1\cdots
    \lambda_{n-1}\over \mu_2\cdots \mu_n}.
\end{equation}
\end{corollary}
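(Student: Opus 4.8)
The plan is to read Corollary \ref{vando} as an explicit closed-form solution of the linear recurrence that Theorem \ref{theorem:qsd-BD-process1} imposes on a QSD. The key preliminary observation is that the right-hand side of \eqref{qsd-BD} is only superficially nonlinear: for a \emph{fixed} QSD the scalar $x := \mu_1\alpha_1$ is just a number (it can moreover be identified with the extinction rate $\theta(\alpha)$ by comparing \eqref{qsd-BD} with the eigen-relation of Proposition \ref{PrQSDeigenfunction}), so once $x$ is frozen, \eqref{qsd-BD} becomes the genuinely linear three-term recurrence
\begin{equation*}
\mu_{j+1}\alpha_{j+1} = (\lambda_j+\mu_j-x)\,\alpha_j - \lambda_{j-1}\alpha_{j-1},\quad j\geq 2,
\end{equation*}
supplemented by the boundary relation $\mu_2\alpha_2 = (\lambda_1+\mu_1-x)\,\alpha_1$ from the second line of \eqref{qsd-BD}. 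Hence, given $\alpha_1$, every $\alpha_j$ is determined, and the whole content of the corollary is to display this unique solution.

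First I would substitute the ansatz $\alpha_j = \alpha_1\,\pi_j\,g_j$ into the recurrence above and divide through by $\alpha_1\pi_j$, with $\pi_j$ defined by \eqref{def-pi}. The role of the weights $\pi_j$ is precisely that their consecutive ratios telescope: from \eqref{def-pi} one reads off $\pi_{j+1}/\pi_j = \lambda_j/\mu_{j+1}$ and $\pi_{j-1}/\pi_j = \mu_j/\lambda_{j-1}$. Feeding these two identities into the recurrence cancels the birth and death rates in exactly the right way, so that $g_j$ is seen to obey a three-term recurrence matching \eqref{polynomial}, together with the initial data $g_1 = 1$ and $\lambda_1 g_2 = \lambda_1+\mu_1-x$. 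Since a second-order recurrence is pinned down by its two initial values, this forces $g_j = H_j(x)$ for all $j$, which is the asserted formula $\alpha_j = \alpha_1\pi_j\,H_j(\mu_1\alpha_1)$.

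The routine but load-bearing step --- the one I would check most carefully --- is this $\pi_n$ bookkeeping, which conjugates the non-symmetric recurrence for $(\alpha_j)$ into the recurrence \eqref{polynomial} defining the $(H_j)$: one must confirm in particular that the coefficient attached to $g_{j-1}$ produced by the substitution coincides with the corresponding coefficient in \eqref{polynomial}. I would also treat the case $j=1$ separately, since the recurrence degenerates there ($\lambda_0=0$, and there is no $\alpha_0$ term), and verify that the base cases $H_1\equiv 1$ and $\lambda_1 H_2 = \lambda_1+\mu_1-x$ are exactly reproduced by the normalized sequence. Beyond this the argument is a plain induction on $j$, requiring no positivity, summability, or analytic input --- only the linear algebra of the recurrence --- so that Corollary \ref{vando} is indeed, as announced, an immediate consequence of Theorem \ref{theorem:qsd-BD-process1}.
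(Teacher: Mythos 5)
Your strategy is the right one and is surely what the authors intended: the paper offers no proof beyond ``the next result follows immediately,'' and the whole content is the conjugation of the left-eigenvector recurrence \eqref{qsd-BD} by the weights $\pi_j$ of \eqref{def-pi}. Freezing $x=\mu_1\alpha_1$ (which the proof of Theorem~\ref{thm:qsd-BD} identifies with $\theta(\alpha)$), using the telescoping ratios $\pi_{j+1}/\pi_j=\lambda_j/\mu_{j+1}$ and $\pi_{j-1}/\pi_j=\mu_j/\lambda_{j-1}$, and invoking uniqueness of the solution of a second-order recurrence given two initial values: all of that is correct and is the only argument available.

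The difficulty is that the one computation you single out as load-bearing and promise to ``check most carefully'' is precisely the one you never carry out, and it does not come out as you claim. Writing $\alpha_j=\alpha_1\pi_j g_j$ and substituting into $\mu_{j+1}\alpha_{j+1}=(\lambda_j+\mu_j-x)\alpha_j-\lambda_{j-1}\alpha_{j-1}$, the detailed-balance identity $\lambda_{j-1}\pi_{j-1}=\mu_j\pi_j$ turns the last term into $-\mu_j\,g_{j-1}$, so the normalized sequence satisfies
\begin{equation*}
\lambda_j\,g_{j+1}=(\lambda_j+\mu_j-x)\,g_j-\mu_j\,g_{j-1},
\end{equation*}
whereas \eqref{polynomial} as printed carries $-\mu_{j-1}H_{j-1}$. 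The two disagree already at $j=2$ unless $\mu_1=\mu_2$: solving \eqref{qsd-BD} directly gives $\mu_2\mu_3\alpha_3/\alpha_1=(\lambda_2+\mu_2-x)(\lambda_1+\mu_1-x)-\lambda_1\mu_2$, while $\alpha_1\pi_3H_3(x)$ produces $-\lambda_1\mu_1$ in place of $-\lambda_1\mu_2$. So either \eqref{polynomial} contains a typo ($\mu_{n-1}$ for $\mu_n$ --- the version with $\mu_n$ is the standard van Doorn birth--death polynomial recurrence, and with that correction your induction closes), or the corollary as stated is false. A complete proof must exhibit this coefficient explicitly and note the discrepancy, rather than assert that the rates ``cancel in exactly the right way.''
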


%\begin{proof}
  %From \eqref{qsd-BD} and \eqref{polynomial}, we get $ \lambda_1
  %H_2(x) = \lambda_1 + \mu_1 - x\ $ and
  %\begin{equation*}
    %\alpha_2 \ \mu_2 = (\lambda_1 + \mu_1 - \alpha_1 \mu_1)\ \alpha_1 = \lambda_1\
%    H_2(\alpha_1 \mu_1)\ \alpha_1,
 % \end{equation*}
 % which leads to 
 % \begin{equation*}
  %\alpha_2 = {\lambda_1\over
  %  \mu_2} \ \alpha_1 \ H_2(\mu_1 \alpha_1) = \pi_2 \ \alpha_1 H_2(\mu_1 \alpha_1).
 % \end{equation*}
%  We thus proceed by induction to conclude.
%\end{proof}

\bigskip
\begin{proof} [Proof of Theorem
  \ref{thm:qsd-BD}.]
  
By Proposition \ref{PrQSDeigenfunction} and for a QSD $\alpha$, there
exists $\theta>0$ such that
\begin{equation*}
  \alpha L=-\theta\, \alpha,
\end{equation*}
where  $L$ is the infinitesimal generator of $Z$
restricted to $\mathbb{N}^*$. Taking the $j^{th}$ component of this
equation, we get
\begin{eqnarray*}
&&\lambda_{j-1}\alpha_{j-1}
      -(\lambda_j+\mu_j) \alpha_{j} +\mu_{j+1}\alpha_{j+1} = - \theta \,\alpha_j,\ \forall j\geq 2\\
&&  -(\lambda_1+\mu_1) \alpha_{1}
  +\mu_{2}\alpha_{2} =- \theta\, \alpha_1.
\end{eqnarray*}
Summing over $j\geq 1$, we get after re-indexing
\begin{equation*}
 0=\sum_{j\geq 1} \lambda_j \alpha_j - (\lambda_j+\mu_j)\alpha_j+\mu_{j}\alpha_j=-\theta\sum_{j\geq 1}\alpha_j+\mu_1 \alpha_1.
\end{equation*}
We deduce that $\theta=\mu_1\alpha_1$, which concludes the proof of
Theorem~\ref{theorem:qsd-BD-process1}.

\end{proof}

\bigskip \noindent
\me The study of the polynomials $(H_{n})$ has been detailed in  Van Doorn \cite{vanDoorn1991}. In particular it is shown that there  exists a non-negative number $\xi_{1}$ such that  
\begin{equation*}
  x\leq \xi_1 \
  \Longleftrightarrow\ H_n(x) >0, \ \forall n\geq 1.
\end{equation*}

\bi By Corollary \ref{vando},  $ \alpha_j = \alpha_1 \ \pi_j \ H_j(\mu_1
  \alpha_1)$.   Since for any $j$, $\alpha_j> 0$, we have $ H_j(\mu_1 \alpha_1) > 0$
  for all $j\geq 1 $ and then  
\begin{equation*}
  0 < \mu_1 \alpha_1 \leq \xi_1.
\end{equation*}

\me
We can immediately deduce from this property that if $\xi_1=0$, then
there is no quasi-stationary distribution.

\me To go further, one has to study more carefully the spectral
properties of the semi-group $(P_t)$ and the polynomials $(H_n)_n$, as
it has been done in \cite{Karlin1957}, \cite{Good1968} and
\cite{vanDoorn1991}.  From these papers, the polynomials $(H_n)_n$ are
shown to be orthogonal with respect to the spectral measure of
$(P_t)$.  In addition, it yields a tractable necessary and sufficient
condition for the existence of QSD based on the birth and death
rates. The series $(S)$ with general term $$S_n={1\over \lambda_n
  \pi_n} \sum_{i=n+1}^\infty \pi_i$$ plays a crucial role.  Remark
that $(S)$ converges if and only if $$ \sum_{n=1}^{\infty} \pi_n
\left({1\over \mu_1} + \sum_{i=1}^{n-1} {1\over \lambda_i
  \pi_i}\right)<+\infty.  $$

\me
\begin{theorem}
  \label{thm:BD-qsd}
  \label{theorem:qsd-BD-process2}
  (\cite{vanDoorn1991}, Theorems 3.2 and 4.1).
  We have the convergence
  \begin{equation*}
    \lim_{t\to \infty} \mathbb{P}_i(Z_t=j|
    T_0>t) = {1\over \mu_1}\ \pi_j\ \xi_1\ H_j(\xi_1).
  \end{equation*}
  In particular, we obtain 
  \begin{equation}
    \label{equation:xi-1}
    \xi_1=\lim_{t\rightarrow\infty} \mu_1
    \mathbb{P}_1(Z_t=1| T_0>t)
  \end{equation}

  \begin{enumerate}
       \item If $\xi_1=0$, there is no
          QSD.    
      \item  If $(S)$ converges, then $\xi_1>0$ and the Yaglom limit is the unique QSD.
      \item If $(S)$ diverges and $\xi_1\neq 0$, then there is a
        continuum of QSD, given by the one parameter family $(\hat \alpha_j(x))_{0<x\leq  \xi_1}$:
	\begin{equation*}
	  \hat \alpha_j(x)= {1\over \mu_1}\  \pi_j \ x\
	  H_j(x).
	\end{equation*}
   \end{enumerate}
\end{theorem}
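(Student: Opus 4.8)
The plan is to turn the QSD problem into a question about a single scalar parameter and then to read off the three regimes from the behaviour of the polynomials $(H_n)$. By Corollary~\ref{vando}, every QSD $\alpha$ must satisfy $\alpha_j=\alpha_1\,\pi_j\,H_j(\mu_1\alpha_1)$; setting $x:=\mu_1\alpha_1$ (so $\alpha_1=x/\mu_1$ and, by the proof of Theorem~\ref{thm:qsd-BD}, $x=\theta(\alpha)$ is the extinction rate) this is exactly $\hat\alpha_j(x)=\tfrac1{\mu_1}\pi_j\,x\,H_j(x)$. Conversely, the recurrence \eqref{polynomial} is built precisely so that for any $x>0$ the sequence $\hat\alpha(x)$ solves the eigen-relation $\alpha L=-x\,\alpha$ of Proposition~\ref{PrQSDeigenfunction}. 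Hence $\alpha$ is a QSD if and only if $\hat\alpha(x)$ is a genuine probability measure on $\mathbb{N}^*$ for some $x>0$, i.e. if and only if it is nonnegative and has total mass one. The whole proof then splits into checking these two requirements.

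Nonnegativity is immediate: since $\pi_j>0$ and $x>0$, one has $\hat\alpha_j(x)\ge0$ for all $j$ exactly when $H_j(x)>0$ for all $j\ge1$, which by the quoted property of the $(H_n)$ holds if and only if $0<x\le\xi_1$. In particular, when $\xi_1=0$ no admissible value of $x$ exists and there is no QSD, which is case (1).

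The normalization is the heart of the matter, and I would extract it from a telescoping identity. Summing the recurrence \eqref{polynomial} against the reversibility weights $\pi_n$ and repeatedly using $\lambda_n\pi_n=\mu_{n+1}\pi_{n+1}$, all interior terms cancel and one is left with
\[
  x\sum_{n=1}^{N}\pi_n H_n(x)=\mu_1-\lambda_N\pi_N\big(H_{N+1}(x)-H_N(x)\big).
\]
Thus $\hat\alpha(x)$ has total mass one precisely when the boundary flux $\lambda_N\pi_N(H_{N+1}(x)-H_N(x))$ tends to $0$, equivalently when $\sum_n\pi_nH_n(x)=\mu_1/x$. Deciding for which admissible $x$ this flux vanishes is where the spectral (Karlin--McGregor) theory of the orthogonal polynomials $(H_n)$ and the series $(S)$ enter: when $(S)$ diverges the flux vanishes for every $x\in(0,\xi_1]$, so the entire family $(\hat\alpha(x))_{0<x\le\xi_1}$ consists of QSDs and one gets the continuum of case (3); when $(S)$ converges the flux vanishes only at the top point $x=\xi_1$ (the process comes down from infinity and $\xi_1$ carries the only $\ell^2(\pi)$ eigenfunction), leaving $\hat\alpha(\xi_1)$ as the unique QSD of case (2), necessarily with $\xi_1>0$.

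Finally, the Yaglom limit and the formula for $\xi_1$ follow from the same ingredients. Writing $\mathbb{P}_i(Z_t=j\mid T_0>t)=P_{ij}(t)/\sum_k P_{ik}(t)$ and inserting the spectral representation $P_{ij}(t)=\pi_j\int_{[\xi_1,\infty)}e^{-xt}Q_i(x)Q_j(x)\,d\nu(x)$, the factors coming from the bottom of the spectrum cancel between numerator and denominator as $t\to\infty$, leaving $\pi_jH_j(\xi_1)/\sum_k\pi_kH_k(\xi_1)$; evaluating the denominator by the telescoping identity at $x=\xi_1$ gives the constant $\mu_1/\xi_1$, hence the limit $\tfrac1{\mu_1}\pi_j\xi_1H_j(\xi_1)=\hat\alpha_j(\xi_1)$, and specializing to $i=j=1$ with $\pi_1=H_1\equiv1$ yields \eqref{equation:xi-1}. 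I expect the genuine obstacle to be the spectral step of the third paragraph: proving the integral representation, identifying $\xi_1$ as the infimum of the support of $\nu$, and translating convergence versus divergence of $(S)$ into the vanishing of the boundary flux on all of $(0,\xi_1]$ versus only at its endpoint. This orthogonal-polynomial analysis is exactly the content the statement borrows from van Doorn~\cite{vanDoorn1991}.
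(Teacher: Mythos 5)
The paper gives no proof of this theorem: it is quoted from van Doorn~\cite{vanDoorn1991} (Theorems 3.2 and 4.1) and left as a citation, so there is no internal argument to compare yours against. Judged on its own, your skeleton is the right one and its elementary parts check out. The reduction of any QSD to the one-parameter family $\hat\alpha(x)$ with $x=\mu_1\alpha_1=\theta(\alpha)$ is Corollary~\ref{vando}; positivity of all coordinates is equivalent to $0<x\le\xi_1$ by the quoted property of the $(H_n)$, which settles case (1); and your telescoping identity $x\sum_{n=1}^{N}\pi_nH_n(x)=\mu_1-\lambda_N\pi_N\bigl(H_{N+1}(x)-H_N(x)\bigr)$ is correct, \emph{provided} one uses the recurrence $\lambda_nH_{n+1}=(\lambda_n+\mu_n-x)H_n-\mu_nH_{n-1}$ that is actually dual to \eqref{qsd-BD} under $\lambda_n\pi_n=\mu_{n+1}\pi_{n+1}$; as printed, \eqref{polynomial} has $\mu_{n-1}H_{n-1}$, with which the sum does not telescope, so you have silently (and rightly) corrected a typo.

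That said, as a proof the proposal stops exactly where the theorem acquires content. Three steps are asserted rather than established. First, the converse direction "a probability measure solving $\alpha L=-x\alpha$ is a QSD" leans on Proposition~\ref{PrQSDeigenfunction}, whose differentiation under the integral sign is not free for unbounded birth and death rates; this is precisely where non-explosion and almost-sure absorption must enter (for explosive chains an $x$-invariant probability measure need not be quasi-stationary). Second, the dichotomy that the boundary flux $\lambda_N\pi_N(H_{N+1}(x)-H_N(x))$ tends to $0$ for every $x\in(0,\xi_1]$ when $(S)$ diverges, but only at $x=\xi_1$ when it converges, is the entire substance of cases (2) and (3) and is nowhere argued. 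Third, the Yaglom limit and \eqref{equation:xi-1} require the Karlin--McGregor representation together with a ratio-limit (Abelian/Tauberian) argument controlling the spectral measure near the bottom $\xi_1$ of its support; the "cancellation of the factors from the bottom of the spectrum" is not a pointwise cancellation, in particular when the spectral measure has no atom at $\xi_1$. You identify these as the crux and attribute them to van Doorn, which is honest and matches what the paper itself does, but it means your text is an annotated citation rather than a self-contained proof.
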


\begin{remark}\upshape
\begin{enumerate}
\item Formula~\eqref{equation:xi-1} and the approximation method described
  in Section~\ref{section:simulation} allow us to deduce a simulation
  algorithm to get $\xi_1$.
\item Cases with more general birth and death processes have also
  been studied recently. Let us mention the infinite dimensional state
  space setting of Collet, Mart\'inez, M\'el\'eard and San
  Mart\'in~\cite{Collet2011}, where each individual has a type in a
  continuous state space which influences its birth and death rates,
  and mutation on the type can occur. The authors give sufficient and
  quite general conditions for the existence and uniqueness of a QSD. We
  also refer the reader to the recent work of
  van~Doorn~\cite{vanDoorn2011a}, where a transition to the state $0$
  may occur from any state. The author provides sufficient
  conditions for the existence of QSDs.
\end{enumerate}
\end{remark}
\me
\bi Let us now develop some examples.

\me 
\textbf{The linear case. } We assume $\lambda_i=\lambda\, i\ ;
\ \mu_i=\mu\, i$ and $\lambda\leq \mu$. In that case, the BD-process  is a branching process,
where each individual reproduces with rate $\lambda$ and dies with
rate $\mu$. A straightforward computation shows that the series $(S)$
diverges. 

\me Setting $f_s:k\mapsto s^k$, we get by the Kolmogorov forward equation,
\begin{equation*}
  \frac{\partial P_t f_s (1)}{\partial t}=\mu P_t f_s(0) - (\lambda+\mu)P_t f_s(1)+\lambda P_t f_s(2).
\end{equation*}
But the branching property of the process implies $P_t f_s(2)=\left(P_t f_s(1)\right)^2$, while $f_s(0)=1$ so that
\begin{equation*}
  \frac{\partial P_t f_s (1)}{\partial t}=\mu - (\lambda+\mu)P_t f_s(1)+\lambda \left(P_t f_s(1)\right)^2.
\end{equation*}
Setting $m=2\frac{\lambda}{\lambda+\mu}$, we deduce that for $s<1$, 
\begin{equation*}
  P_t f_s(1)=1-\frac{2(1-s)(2 m-1)}{(ms+m-2)e^{-(\lambda+\mu)(m-1)t}+(1-s)m}.
\end{equation*}
In particular, we deduce that the generating function $F_t:s\mapsto
\E(s^{Z_t}|Z_t>0)$ of $Z_t$ conditioned to $Z_t>0$ converges when $t$
  goes to infinity:
\begin{equation*}
 F_t(s)=\frac{P_t f_s(1)-P_t f_0(1)}{1-P_t
   f_0(1)}\xrightarrow[t\rightarrow\infty]{} \frac{
     (\lambda-\mu) s}{\lambda s-\mu}.
\end{equation*}
We deduce that the Yaglom limit of $Z$ does not exist if $\lambda=\mu$
and is given by the geometric distribution with parameter
$\frac{\lambda}{\mu}$ if $\lambda<\mu$:
\begin{equation*}
  \alpha_k=\left(\frac{\lambda}{\mu}\right)^{k-1}\left(1-\frac{\lambda}{\mu}\right).
\end{equation*}
%% \begin{equation*}
%%   \pi_n = {\lambda\over \mu}^{n-1} {1\over n}.
%% \end{equation*}
\me An easy computation yields $\xi_1 = \mu - \lambda$, since by
\eqref{equation:xi-1}, $\alpha_1=\frac{\xi_1}{\mu}$. But the series
$(S)$ diverges so that for $\lambda< \mu$, $\xi_1>0$ and there is an
infinite number of QSD. If $\lambda= \mu$, $\xi_1=0$ and there is no
QSD.

\me

\bi \textbf{The logistic case.} We assume $\lambda_i=\lambda i\ ;
\ \mu_i=\mu i + c i(i-1)$. Because of the quadratic term, the
branching property is lost and we can not compute the Yaglom limit as
above. Therefore, we have no other choice than to study the
convergence of the series $(S)$.

\me We have 
\begin{eqnarray*}
  \sum_{i=n+1}^\infty \pi_i
  &\leq & \sum_{i=n+1}^\infty \left({\lambda\over c}\right)^{i-1}
      {1\over i!} = \sum_{p=0}^\infty \left({\lambda\over
	c}\right)^{n+p} {1\over (n+p+1)!}\\
  &\leq& \left({\lambda\over c}\right)^{n}{1\over
	(n+1)!}\sum_{p=0}^\infty \left({\lambda\over c}\right)^{p} {1\over
	p!} = \left({\lambda\over c}\right)^{n}{1\over (n+1)!}\
      e^{\lambda\over c},
\end{eqnarray*}
since ${(n+1)!\over (n+p+1)!} \leq {1\over p!}$. Thus as ${1\over
\pi_n}\leq C \left({c\over \lambda}\right)^{n-1}\ n!\ $, we get
\begin{equation*}
  {1\over \lambda_n \pi_n} \sum_{i=n+1}^\infty \pi_i \leq
  {C\over c} {1\over n(n+1)}\ e^{\lambda\over c}.
\end{equation*}
Hence the series converges. Thus the Yaglom limit exists and is the
unique quasi-stationary distribution.

\me One can obtain substantial qualitative
information by looking closer to the jump rates of the process.
 For instance, $(\lambda -\mu)/c$ is a key value for the
process. Indeed, given a population size $i$, the expectation of the
next step is equal to $\frac{i(\lambda-\mu-c(i-1))}{\lambda i+\mu i +
  c i(i-1)}.$ Then the sign of this expectation depends on the
position of $i-1$ with respect to $(\lambda-\mu)/c$:
\begin{itemize}
\item If $i\leq (\lambda-\mu)/c+1$, then the expectation of the next step will be positive.
\item If $i=(\lambda-\mu)/c+1$, then it will be $0$.
\item If $i>(\lambda-\mu)/c+1$, then it will be negative.
\end{itemize}
We deduce that the region around $(\lambda-\mu)/c$ is stable: it plays
the role of a typical size for the population and we expect that the
mass of the Yaglom limit is concentrated around it.  The value
$(\lambda-\mu)/c$ is called (by the biologists) the charge capacity of
the logistic BD-process with parameters $\lambda$, $\mu$ and $c$.  In
the next section, we will consider large population processes, which
means logistic BD-processes with large charge capacity.

\begin{example}\upshape
  We develop now a numerical illustration of the logistic BD-process
  case. Across the whole example, the value of the charge capacity
  $\frac{\lambda-\mu}{c}$ is fixed, arbitrarily chosen equal to~$9$.

  \me In order to illustrate the concept of charge capacity, we
  represent in Figure \ref{figExample3_path} a random path of a logistic birth and death
  process with initial size $Z_0=1$ and with parameters $\lambda=10$,
  $\mu=1$ and $c=1$.
  \begin{figure}
    \includegraphics[width=13cm]{./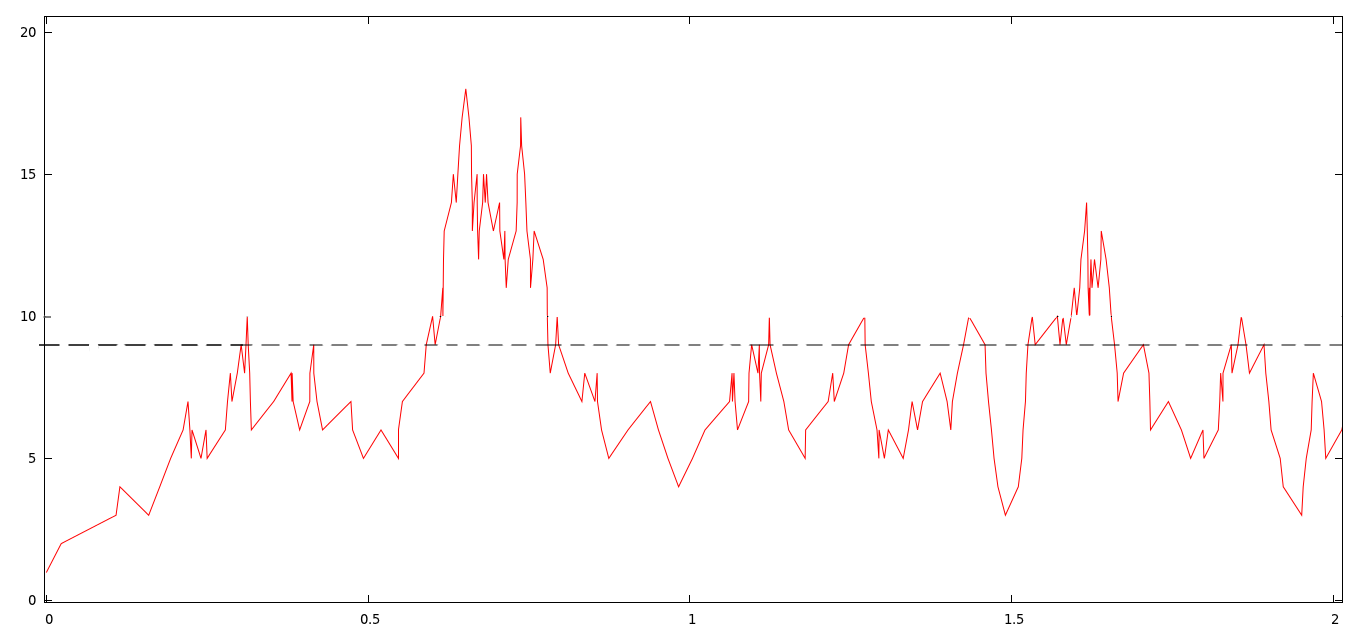}
    \caption{\label{figExample3_path} Example 3. A random path of a logistic birth and death
  process with initial size $Z_0=1$ and with parameters $\lambda=10$,
  $\mu=1$ and $c=1$}
  \end{figure}
  We observe that the process remains for long times in a region
  around the charge capacity. Moreover, we remark that the process
  remains mainly below the charge capacity; this is because the jumps
  rate are higher in the upper region, so that it is less stable than
  the region below the charge capacity.

  \me Let us now compare the Yaglom limits (numerically
  computed using the approximation method presented in
  Section~\ref{section:simulation}) of two different logistic BD
  processes whose charge capacities are equal to $9$ (see Figure
  \ref{example3_yaglom}):
  \begin{itemize}
  \item[(a)] $Z^{(a)}$, whose parameters are $\lambda=10$, $\mu=1$ and $c=1$,
  \item[(b)] $Z^{(b)}$, whose parameters are $\lambda=10$, $\mu=7$ and $c=1/3$.
  \end{itemize}
\begin{figure}
\centering
\subfloat{\includegraphics[width=13cm]{./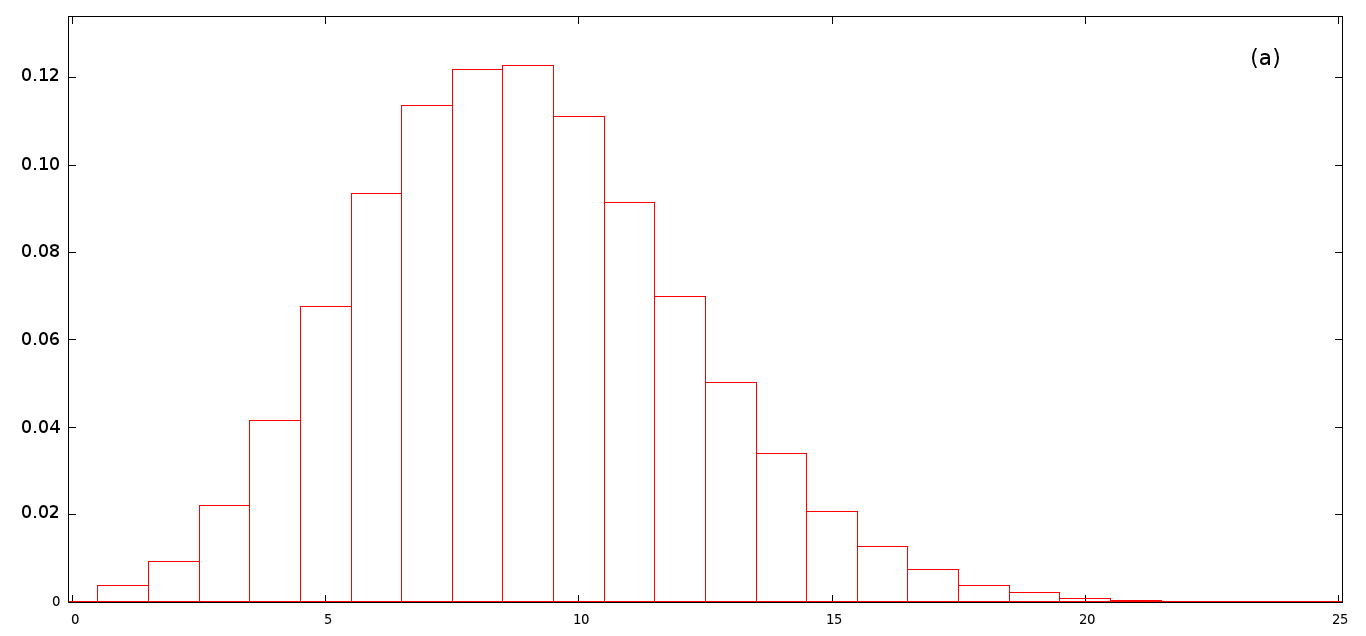}}\\
\subfloat{\includegraphics[width=13cm]{./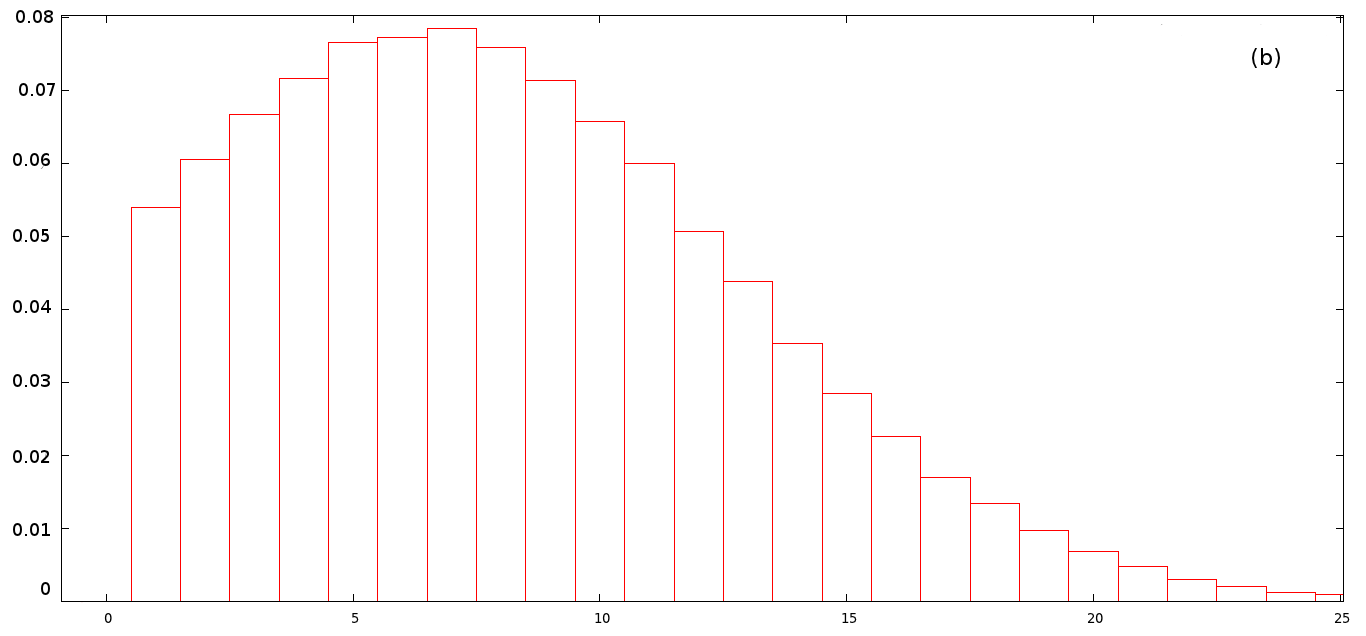}}
\caption{Example 3. The Yaglom limits of two logistic birth and death
  processes with the same charge capacity $\frac{\lambda-\mu}{c}=9$:
  (a) $\lambda=10$, $\mu=1$ and $c=1$;
  (b) $\lambda=10$, $\mu=7$ and $c=1/3$.
\label{example3_yaglom}}
\end{figure}

\me We observe that the Yaglom limits of $Z^{(a)}$ and $Z^{(b)}$ are
supported by a region which is around the charge capacity. We also
remark that the Yaglom limit of the process $Z^{(b)}$ has a more flat
shape than the Yaglom limit of $Z^{(a)}$. This is because the
competition parameter of $Z^{(b)}$ is small in comparison with the
birth and death parameters, so that the drift toward the charge
capacity is small too, both above and below the charge capacity.

\me We compute now the distance between the conditioned distribution
and the Yaglom limit for the two processes $Z^{(a)}$ and $Z^{(b)}$ for
different values of the initial state, namely $Z_0=1$, $Z_0=10$ and
$Z_0=100$. The numerical results are represented in Figure~\ref{example3_distance}.
\begin{figure}
\centering
\subfloat{\includegraphics[width=13cm]{./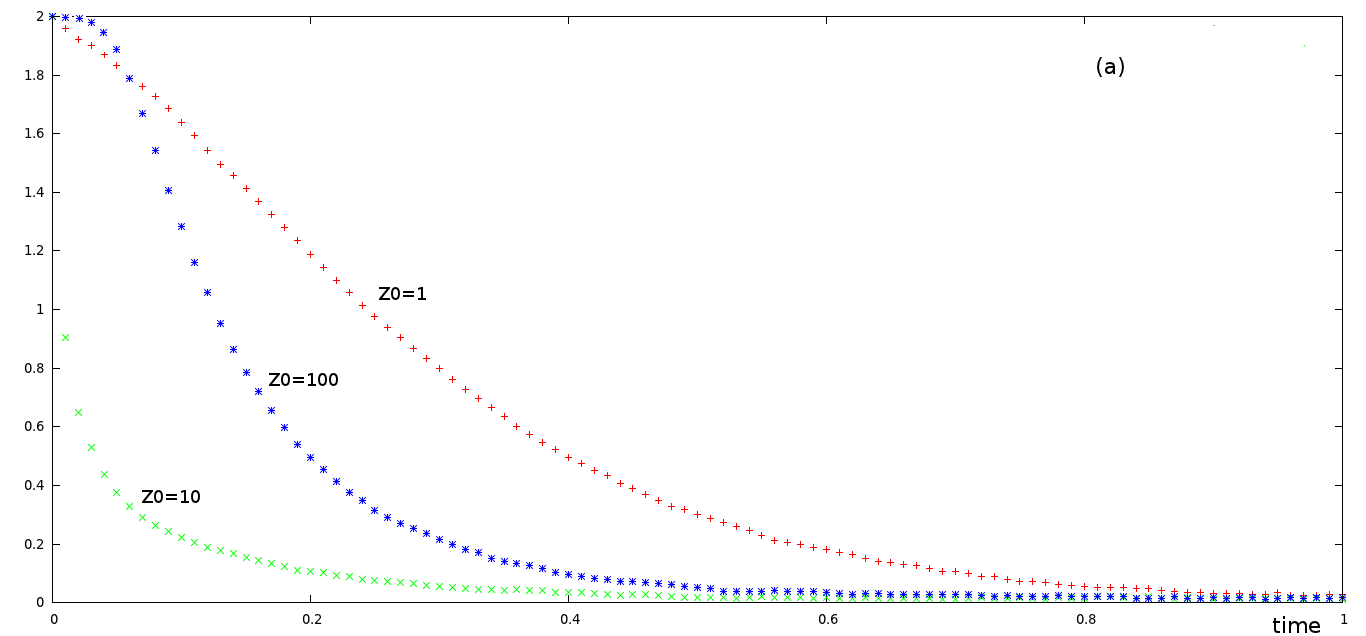}}\\
\subfloat{\includegraphics[width=13cm]{./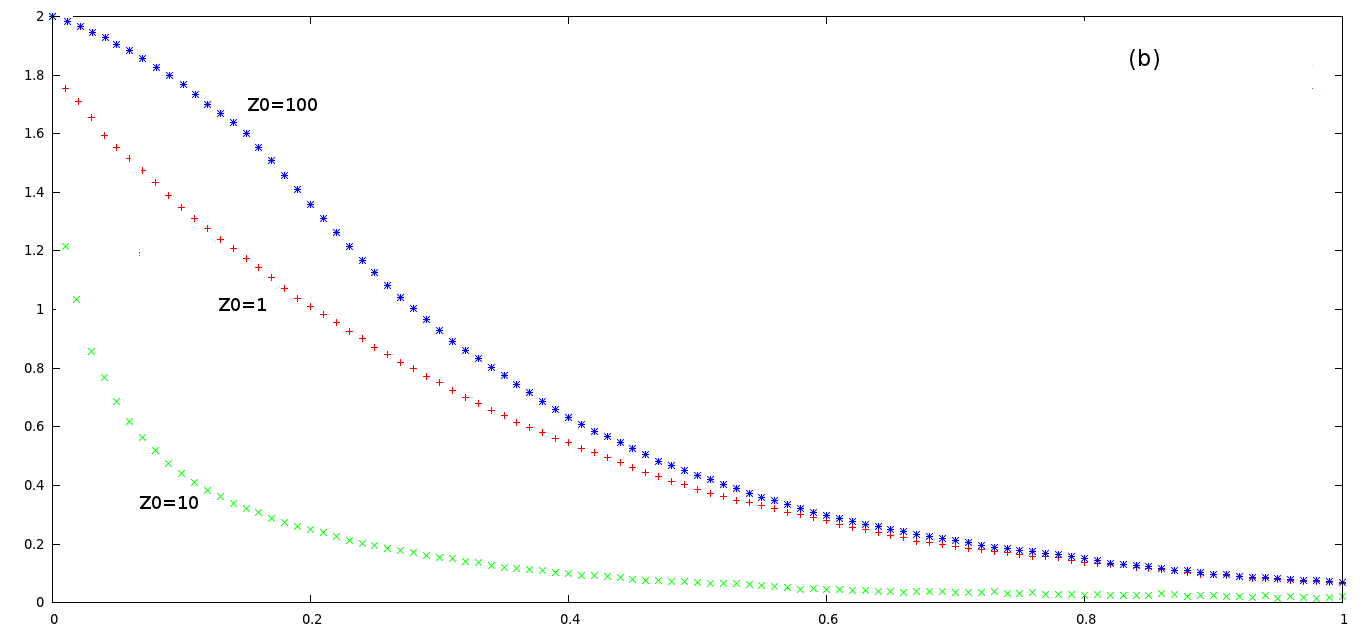}}
\caption{Example 3. Evolution of the distance between the conditioned
  distribution and the Yaglom limits of two logistic birth and death
  processes with the same charge capacity $\frac{\lambda-\mu}{c}=9$:
  (a) $\lambda=10$, $\mu=1$ and $c=1$; (b) $\lambda=10$, $\mu=7$ and
  $c=1/3$.
\label{example3_distance}}
\end{figure}
We observe a strong dependence between the speed of
convergence and the initial position of the processes. In the case of
$Z^{(a)}$, it only takes a very short time to the process starting
from $100$ to reach the charge capacity, because the competition
parameter is relatively high and so is the drift downward the charge
capacity. On the contrary, in the case of $Z^{(b)}$, it takes a longer
time for the process to come back from $100$ to the charge capacity,
so that the speed of convergence to the Yaglom limit is slow. In both
cases, the convergence to the Yaglom limit happens very fast when
starting from the value $10$, because it is near the charge capacity.

%% The speed of convergence to the Yaglom limit is approximately
%% given by the speed at which the region around the charge capacity is
%% attained. As seen on Figure \ref{figExample3_path}, this happens very
%% fast. On the contrary, the extinction happens very lately, so that
%% the Yaglom limit is observed far before the extinction for each of our
%% three logistic BD processes.
\end{example}

\section{The logistic Feller diffusion process}
\label{section:the-logistic-feller-diffusion-process}
\subsection{A large population model}
\label{subsection:a-large-population-model}

We are now rescaling logistic birth and death processes with a parameter $K\in \mathbb{N}^*$ modeling a
large population with small individuals, \textit{i.e.} a population
with a large initial size of order $K$  and a large charge capacity assumption. The individual's weights (or biomasses) are  assumed to be equal to ${1\over K}$ and we 
study the limiting behavior of the total biomass process $({Z_t^K\over K}, t\geq 0)$ when $K$
tends to infinity, $Z^K_{t}$ being the population's size  at time $t$.  In what follows, $\lambda$, $\mu$ and $c$ are
fixed positive constants.
 
\noindent In Subsection~\ref{subsubsectionLogisticEquation}, individual birth
and death rates are assumed to be constant and the competition rate
depends linearly on the individual biomass ${1\over K}$. 
%% The logistic
%% BD-process $Z^K$ has birth, death and competition parameters equal to
%% $\lambda$, $\mu$ and $c/K$ respectively.
In Subsection~\ref{subsubsectionLogisticFellerDiffusion}, we
investigate the qualitative differences of evolutionary dynamics
across populations with allometric demographics: life-lengths and
reproduction times are assumed to be proportional to the individual's
weights. 
%%  We study a logistic BD-process $Z^K$ whose birth and death
%% parameters are given by $\gamma\, K+\lambda$ and $\gamma\, K+\mu$
%% respectively, where $\gamma$ is a positive constant.  The competition
%% parameter is given by $c /K$.

\me
In both cases, the charge capacity of
$(Z^K)$ will be $(\lambda-\mu)K/c$.

\subsubsection{Convergence to the logistic equation}
\label{subsubsectionLogisticEquation}
\label{subsubsection:the-logistic-equation}
Given a parameter $K$  scaling the population's size, we consider
the logistic BD-process $Z^K$ with birth, death and competition
parameters $\lambda$, $\mu$ and $c/K$ respectively. We assume that
the initial value of $Z^K$ is of order $K$, in the sense that there
exists a non-negative real random variable $X_0$ such that
\begin{equation*}
   {Z_0^K \over K} \xrightarrow[K \to \infty]{} X_0\quad \hbox{ with }
   \ \mathbb{E}(X_0^3) < +\infty.
\end{equation*}
We consider the total biomass process  defined by $X^K=Z^K/K$ for all $K\geq 1$ and are interested in the limit of $X^K$ when $K\to \infty$. 
 The transitions
of the process $(X^K_t, t\geq 0)$ are the following ones: 
\begin{eqnarray*}
  \frac{i}{K} &\rightarrow& \frac{i+1}{K} \quad \text{ with rate }
  \    \lambda i = \lambda K \frac{i}{K};\\
  \frac{i}{K} &\rightarrow& \frac{i-1}{K} \quad \text{ with rate }
 \     \mu i + \frac{c}{K}\ i(i-1)
      = K \frac{i}{K} \left(\mu+c( \frac{i}{K} - \frac{1}{K})\right).
\end{eqnarray*}

%\noindent As $K$ increases, the birth and death process started from
%$Z^K_0$ will jump more and more times with smaller and smaller jumps
%and it is suitable to obtain more tractable approximations.

\begin{theorem}
  \label{thm:cvLogistic}
  \label{theorem:convergence-logistic-equation}
  Assume that $X_0$ is a positive number $x_0$. Then, the process $(X^K_t, t\geq 0)$ converges in
  law in $\mathbb{D}([0,T], \mathbb{R}_+)$ to the unique continuous
  (in time) deterministic function solution of
  \begin{equation*}
    x(t) =x_0 + \int_0^t(\lambda - \mu -c x(s))x(s) ds.
  \end{equation*}
\end{theorem}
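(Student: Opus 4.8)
The plan is to use the standard semimartingale decomposition of the density-dependent chain $X^K=Z^K/K$ and then pass to the limit. First I would compute the action of the generator $L^K$ of $X^K$ on the identity $f(x)=x$: with $x=i/K$, the birth transition contributes $\lambda Kx\cdot\tfrac1K$ and the death transition contributes $-Kx(\mu+c(x-\tfrac1K))\cdot\tfrac1K$, so that
\begin{equation*}
L^K f(x)=(\lambda-\mu-cx)x+\frac{c}{K}\,x.
\end{equation*}
This yields the decomposition
\begin{equation*}
X^K_t=X^K_0+\int_0^t\Bigl[(\lambda-\mu-cX^K_s)X^K_s+\tfrac{c}{K}X^K_s\Bigr]\,ds+M^K_t,
\end{equation*}
with $M^K$ a martingale. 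Applying $L^K$ to $x\mapsto x^2$ and subtracting $2x\,L^Kf(x)$ gives the predictable bracket
\begin{equation*}
\langle M^K\rangle_t=\frac1K\int_0^t\Bigl[\lambda X^K_s+X^K_s\bigl(\mu+c(X^K_s-\tfrac1K)\bigr)\Bigr]\,ds,
\end{equation*}
which is of order $1/K$.

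The heart of the argument is a moment bound uniform in $K$. Applying $L^K$ to $x\mapsto x^2$ produces a drift whose dominant terms are $2(\lambda-\mu)x^2-2cx^3$; the negative cubic term, coming from the competition, dominates for large $x$ and prevents any blow-up. I would use it, together with Gronwall's lemma applied to $t\mapsto\E[(X^K_t)^2]$ (and to $(X^K_t)^3$, using the hypothesis $\E(X_0^3)<\infty$), to obtain
\begin{equation*}
\sup_{K\geq1}\E\Bigl[\sup_{t\leq T}(X^K_t)^2\Bigr]<+\infty.
\end{equation*}
The favorable sign of the quadratic death rate is precisely what makes these estimates tractable despite the superlinear drift.

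With these bounds I would conclude in three steps. From $\langle M^K\rangle_t=O(1/K)$ and Doob's inequality, $\sup_{t\leq T}|M^K_t|\to0$ in $\mathbb{L}^2$, so the martingale part disappears in the limit. Tightness of $(X^K)$ in $\mathbb{D}([0,T],\R_+)$ would then follow from the Aldous--Rebolledo criterion, controlling the finite-variation part and the bracket through the moment bounds. Finally, any limit point $x$ satisfies the integral equation of the statement: one passes to the limit in the drift, discarding the $c/K$ correction via the moment bounds and using the continuity of $y\mapsto(\lambda-\mu-cy)y$. Since this map is only locally Lipschitz, both the identification and the convergence require a localization: introducing $\tau_A=\inf\{t:X^K_t\geq A\}$, applying Gronwall on $[0,\tau_A\wedge T]$, and letting $A\to\infty$ while using the uniform moment bounds to show $\P(\tau_A\leq T)\to0$. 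Uniqueness of the solution of the logistic equation (its drift is locally Lipschitz) then forces the whole sequence to converge to $x(\cdot)$. The hard part will be exactly the superlinearity of the competition drift: it is not globally Lipschitz, so neither the a priori moment control nor the identification of the limit is automatic, and both must be handled by combining the sign of the $-cx^2$ term with the localization at level $A$.
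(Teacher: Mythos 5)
Your proposal is correct and follows essentially the same route as the paper: semimartingale decomposition via the generator, a uniform moment bound, vanishing of the bracket $\langle M^K\rangle$ at rate $1/K$, Aldous tightness, continuity of limit points from the $1/K$ jump size, and identification plus uniqueness for the logistic ODE. The only cosmetic difference is the source of the moment estimate --- the paper couples $X^K$ with a dominating Yule process, whereas you apply the generator to $x^2$ and $x^3$ and use Gronwall exploiting the sign of the competition term, which is precisely the technique the paper itself uses for the Feller scaling in Theorem \ref{thm:cvFellerLogistic}.
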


\begin{remark}\upshape
 The function $x$ is thus solution of the ordinary differential
 equation
 \begin{equation}
   \label{logistic} \dot{x} = (\lambda - \mu) x - c x^2\ ;\ x(0)=x_0,
 \end{equation}
 called logistic equation. This equation has been historically
 introduced as the first macroscopic model describing populations
 regulated by competition (cf. \cite{Verhulst1838}). In
 Theorem~\ref{theorem:convergence-logistic-equation} above, it is obtained
 as the limit of properly scaled stochastic jump models.
 
 \noindent The function $x$ solution of \eqref{logistic} hits $0$ in
 finite time if $\lambda<\mu$, while it remains positive forever if
 $\lambda>\mu$, converging in the long term to its charge capacity
 $\frac{\lambda-\mu}{c}$. Thus at this scale extinction does not
 happen.
\end{remark}

\me
\begin{proof}[Proof of Theorem~\ref{theorem:convergence-logistic-equation}]
The Markov process $(X^K_t, t\geq 0)$ is well defined and its
infinitesimal generator is given, for any measurable and bounded
function $\phi$, by 
\begin{equation}
  \label{gen-NMlogistic} L_K \phi(x) =  \lambda K x\, \left(\phi(x+{1\over
    K})-\phi(x)\right) +  K (\mu x + c x (x-{1\over K}))\, \left(\phi(x - {1\over
    K})-\phi(x)\right).
\end{equation}
 Hence, by  Dynkin's theorem (\cite{Ethier1986} Prop. IV-1.7), the process
\begin{equation}
  \label{dynkin}
  \phi(X^K_t) - \phi(X^K_0) - \int_0^t L_K\phi(X^k_s) ds
\end{equation}
defines a local martingale, and a martingale, as soon as each
term in \eqref{dynkin} is integrable.  In particular, taking
$\phi(x)=x$ gives that $(X^K_t, t\geq 0)$ is a semimartingale
and there exists a local martingale $M^K$ such that 
\begin{equation}
  \label{semimart} X^K_t = X^K_0 + M^K_t + \int_0^t
  X^K_s\left(\lambda - \mu - c\left(X^K_s -{1\over K}\right)\right)
  ds.
\end{equation}
Since $x_0$ is deterministic and using a localization argument, we
deduce that $ \mathbb{E}(\sup_{t\leq T} (X^K_t)^2)<\infty$. Moreover,
taking $\phi(x)=x^2$ applied to \eqref{dynkin}, and comparing with
It\^o's formula applied to $(X^K)^2$ prove that $(M^K)$ is a
square-integrable martingale with quadratic variation process
\begin{equation}
  \label{variationquadratique} 
  \langle M^K\rangle_t = {1\over K} \int_0^t \left(\lambda + \mu + c\left(X^K_s -
  {1\over K}\right) \right)X^K_s ds.
\end{equation}

\bi
Let us  now study the
convergence in law of the sequence $(X^K)$, when $K$ tends to
infinity. For any $K$, the law of $X^K$ is a probability measure
on the trajectory space $\mathbb{D}_T=\mathbb{D}([0,T], \mathbb{R}_+)$, namely the  Skorohod space of
left-limited and right-continuous functions from $[0,T]$ into
$\mathbb{R}_+$, endowed with the Skorohod topology.  This topology   makes $\mathbb{D}_T$ a Polish
state, that is a metrizable complete and separable space, which is
not true if $\mathbb{D}_T$ is endowed with the uniform topology.
See Billingsley \cite{Billingsley1968} for details.

\me
The proof of Theorem \ref{thm:cvLogistic} is obtained by a compactness-uniqueness
argument. The uniqueness of the solution of \eqref{logistic} is immediate.

\me By a natural coupling, one may bound the birth and death process
$X^K$ stochastically from above by the Yule process $Y^K$ started from
$x_0$, which jumps from $x$ to $x+{1\over K}$, at the same birth times
than $X^K$. One easily shows that $\sup_K\mathbb{E}(\sup_{t\leq T}
(Y^K_t)^3) <\infty$ and thus 
\begin{equation*} \label{moments}
\sup_K\mathbb{E}(\sup_{t\leq T} (X^K_t)^3) <\infty.
\end{equation*}
From this uniform estimate, we deduce the uniform
tightness of the laws of $X^K$ (as probability measures on
$\mathbb{D}_T$), using the Aldous criterion (cf. Aldous
\cite{Aldous1978}, Joffe-M\'etivier \cite{Joffe1986}). Then, by Prokhorov's
Theorem, the compactness of the laws of $(X^K)$ follows. Since $\, \sup_{t\leq
  T} |X^K_t - X^K_{t^-}| \leq {1\over K}$ and the function $x
\mapsto \sup_{t\leq T} |x_t - x_{t^-}|$ is continuous on
$\mathbb{D}_T$,  each limiting value (in law) of
the sequence $(X^K)$ will be a pathwise continuous process. In addition
using \eqref{variationquadratique} and \eqref{moments}, it can be shown
that $ \lim_{K\to \infty} \mathbb{E}(\langle M^K\rangle_t) = 0$.
Then, the random fluctuations disappear when $K$ tends to infinity and the
limiting values are deterministic functions. Now it remains to show
that these limiting values are solutions of \eqref{logistic}, which
can be done similarly to the proof of Theorem
\ref{thm:cvFellerLogistic} (4) stated below.
\end{proof}

\subsubsection{The logistic Feller diffusion process}
\label{subsubsectionLogisticFellerDiffusion}
\label{subsubsection:the-logistic-Feller-diffusion-process}

%%%We study a
In this section, we study the logistic BD-processes $Z^K$ with birth
and death rates given by $\gamma\, K+\lambda$ and $\gamma\, K+\mu$
respectively. Here $\gamma$, $\lambda$ and $\mu$ are still positive
constants. We assume that the competition parameter is given by
$c/K$, so that the charge capacity of $Z^K$ is still
$(\lambda-\mu)K/c$.

\begin{remark}
This BD-process $(Z^K_{t})_{t}$ can also be interpreted as a
time-rescaled BD-process $Y^K_{Kt}$, whose birth, death and
competition parameters are given by $\gamma+\lambda/K$,
$\gamma+\mu/K$ and $c /K^2$ respectively, that is a
%% Thus this model 
%% %is very similar
%% %to the ``$\lambda=\mu$ case'' of the previous paragraph and 
%% can also be interpreted a
critical
BD-process with small pertubations.
\end{remark}

 \bi We are  considering as in Section
 \ref{subsubsection:the-logistic-equation} the sequence of processes
 $X^K$ defined for all $t\geq 0$ by
$
  \, X^K_t=\frac{Z^K_{t}}{K}$.

\me The transitions of the process $(X^K)$ are given by
\begin{eqnarray}
  \label{transitionsFellerlogistic}
        \frac{i}{K} &\rightarrow& \frac{i+1}{K} \ \text{ with rate }
                                \  \gamma K i + \lambda i \\ 
        \frac{i}{K} &\rightarrow& \frac{i-1}{K}\  \text{ with rate }
                                 \  \gamma K i + \mu i + \frac{c}{K} i (i-1).
                                  \nonumber
\end{eqnarray}
Formula \eqref{semimart} giving the semi-martingale decomposition of
$X^K$ will stay true in this case with another square integrable martingale
part $N^K$ such that

 \begin{equation*}
   \langle N^K\rangle_t
   = {1\over K} \int_0^t (2 \gamma K + \lambda + \mu + c\left(X^K_s -
   {1\over K}\right) X^K_s ds.
 \end{equation*} 

\noindent One immediately observes that the expectation of this
quantity does not tend to zero as $K$ tends to infinity. Hence the
fluctuations will not disappear at infinity and the limit will stay
random. Let us now state the convergence  theorem.

\begin{theorem}
  \label{thm:cvFellerLogistic}
  \label{theorem:convergence-FellerLogistic}
Assume $\gamma, c, \lambda, \mu>0$ and $\lambda>\mu$.

\noindent  i)  Assume that the sequence $(X^K_{0})_{K}$ converges in law to $X_{0}$  with $\mathbb{E}(X_0^3)<\infty$. Then the sequence of processes $(X^K)_{K}$ with transitions
  \eqref{transitionsFellerlogistic}  converges in law in ${\cal
    P}(\mathbb{D}_T)$ to the continuous process $X$, defined as the unique
  solution of the stochastic differential equation
  \begin{equation}
    \label{eq:Fellerlogistic}
    d X_t = \sqrt{2 \gamma X_t} dB_t +
    \left((\lambda-\mu)X_t - c X_t^2\right) dt; \  X_0\in ]0,+\infty[,
  \end{equation}
  where $(B_t)_{t\in[0,+\infty[}$ is a standard Brownian motion.
  
 \me ii) Let  us introduce for each $y\geq
  0$ the stopping time
\begin{equation}
\label{tempsd'entree}
  T_y=\inf\{t \in \mathbb{R}_+, X_t=y\}.
\end{equation}
For any $x\geq
  0$, we get
 $$\mathbb{P}_x(T_0<\infty)=1.$$
\end{theorem}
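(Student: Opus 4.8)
The plan is to treat $X$ as a regular one-dimensional diffusion on $(0,\infty)$ and to read off its behaviour at the two boundaries $0$ and $+\infty$ from its scale function and speed measure. I would assume throughout the strong existence and pathwise uniqueness of \eqref{eq:Fellerlogistic} already used in part i) (the $\frac12$-H\"older coefficient is handled by a Yamada--Watanabe argument), so that $X$ is a well-defined regular diffusion. The conceptual core is that, although the linear part is \emph{supercritical} ($\lambda>\mu$), the competition term $-cX_t^2$ makes $+\infty$ inaccessible and $0$ an exit boundary, which is what forces almost-sure extinction. First I would compute a scale density. The generator of \eqref{eq:Fellerlogistic} is $Lf(x)=\gamma x f''(x)+\big((\lambda-\mu)x-cx^2\big)f'(x)$, so, fixing a reference point $x_0>0$,
\[
 s'(x)=\exp\left(-\int_{x_0}^{x}\frac{(\lambda-\mu)-cy}{\gamma}\,dy\right)=\textrm{const}\cdot\exp\left(-\frac{(\lambda-\mu)x}{\gamma}+\frac{cx^{2}}{2\gamma}\right),\qquad s(x)=\int_{x_0}^{x}s'(u)\,du.
\]
Two features matter: near $0$ the density $s'$ tends to a finite positive limit, so $s(0^+)$ is finite; near $+\infty$ one has $s'(x)\sim e^{cx^2/2\gamma}\to+\infty$, whence $s(+\infty)=+\infty$. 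It is precisely the quadratic competition that produces this blow-up, since for $c=0$ and $\lambda>\mu$ the density $e^{-(\lambda-\mu)x/\gamma}$ would be integrable at infinity; extinction is therefore a genuinely nonlinear effect.

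Next I would exploit the scale function to rule out escape to infinity and to force descent toward $0$. For $0<a<x<b$, optional stopping applied to the bounded martingale $\big(s(X_{t\wedge T_a\wedge T_b})\big)_t$, with $T_y$ as in \eqref{tempsd'entree}, gives
\[
\mathbb{P}_x(T_a<T_b)=\frac{s(b)-s(x)}{s(b)-s(a)}.
\]
Letting $b\to\infty$ and using $s(+\infty)=+\infty$ yields $\mathbb{P}_x(T_a<\infty)=1$ for every $a\in(0,x)$ (the event $\{T_a<T_b\}$ increases to $\{T_a<\infty\}$, and the limit of the ratio is $1$). Thus the process cannot escape to $+\infty$ and reaches every positive level below its starting point almost surely, so that $T_0=\lim_{a\downarrow 0}T_a$ and $X_t\to 0$.

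It then remains to show that this increasing limit is almost surely finite, i.e. that $0$ is genuinely attained in finite time and not merely approached, and this is the step I expect to be the main obstacle, since the scale argument alone only delivers convergence to $0$. Here the speed measure enters: with $m'(x)=\frac{1}{2\gamma x\,s'(x)}$ one has $m'(x)\sim \frac{1}{2\gamma x}$ near $0$, so the speed measure of a neighbourhood of $0$ is infinite, whereas the reachability integral $\int_{0}s'(y)\big(\int_y^{x_0}m'(z)\,dz\big)\,dy$ is finite because its inner integral grows only like $\log(1/y)$, which is integrable at $0$. By Feller's boundary test this identifies $0$ as an exit (accessible and absorbing) boundary, hence one reached in finite time with probability one from the interior. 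Combining this with the previous paragraph (no escape to $+\infty$, and a.s.\ descent to $0$) yields $\mathbb{P}_x(T_0<\infty)=1$ for every $x>0$, the case $x=0$ being trivial. The delicate point to argue carefully is the upgrade from Feller's positive-probability statement to probability one: it uses the inaccessibility of $+\infty$ (and hence, essentially, the competition term) together with the strong Markov property to secure that the absorption at $0$ actually takes place in finite clock time.
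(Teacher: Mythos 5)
Your proposal addresses only part ii) of the theorem. Part i) --- the convergence in law in $\mathbb{D}_T$ of the rescaled birth-and-death processes $(X^K)$ with transitions \eqref{transitionsFellerlogistic} to the diffusion $X$ --- is not touched at all; indeed you take it (or at least the well-posedness of \eqref{eq:Fellerlogistic}) as an input by writing that existence and pathwise uniqueness are ``already used in part i)''. That convergence is the bulk of the statement and of the paper's proof, which proceeds by: (a) uniform moment bounds $\sup_K \mathbb{E}\bigl(\sup_{t\leq T}(X^K_t)^2\bigr)<\infty$, obtained by applying the generator $\tilde L_K$ of \eqref{gen:saut} to $\phi(x)=x^3$ and then to $\phi(x)=x^2$, together with Gronwall's lemma and Doob's inequality; (b) tightness of the laws on the Skorohod space via the Aldous criterion and Prokhorov's theorem; (c) the remark that jumps have size $1/K$, so every limit point charges only continuous paths; and (d) identification of any limit point as a solution of the martingale problem for $L\phi(x)=\gamma x\phi''(x)+((\lambda-\mu)x-cx^2)\phi'(x)$ via a Taylor comparison of $\tilde L_K$ with $L$, followed by the martingale representation theorem to produce the Brownian motion in \eqref{eq:Fellerlogistic}. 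None of these steps appears in your proposal, so as it stands the theorem is not proved.

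For part ii) your route is essentially the paper's. The paper quotes the Ikeda--Watanabe/Karatzas--Shreve criterion: $\mathbb{P}_x(T_0<T_\infty)=1$ for all $x>0$ if and only if $\Lambda(+\infty)=+\infty$ and $\kappa(0^+)<+\infty$, with $\Lambda$ and $\kappa$ as in \eqref{kappa}. Your scale density $s'$ (which is $e^{Q}$ up to a constant), the divergence $s(+\infty)=+\infty$ driven by the factor $e^{cx^2/(2\gamma)}$, and the finiteness of the reachability integral at $0$ are precisely the verification of that criterion, and your observation that the quadratic competition term is what renders $+\infty$ inaccessible (whereas for $c=0$ the supercritical drift would allow escape) is the right reading of the mechanism. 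Two minor remarks: the criterion as quoted already delivers $\mathbb{P}_x(T_0<T_\infty)=1$, not merely positive probability, so the ``upgrade'' you single out at the end is part of the cited theorem rather than an extra argument to be supplied; and the Yamada--Watanabe step you invoke for pathwise uniqueness needs the drift only locally Lipschitz together with non-explosion, which your scale computation at $+\infty$ provides --- both points are harmless, but neither compensates for the missing part i).
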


\noindent When $c=0$, Equation \eqref{eq:Fellerlogistic} defines the
super-critical Feller diffusion process and will explode with positive
probability. In the general case where $c\neq 0$, it defines the
so-called logistic Feller diffusion process following the terminology
introduced by Etheridge \cite{Etheridge2004} and Lambert
\cite{Lambert2005}. Let us remark that the quadratic term driven by
$c$ regulates the population size, which fluctuates until it attains the absorbing point
$0$. %
%\begin{remark}\upshape
  Theorem \ref{thm:cvFellerLogistic} shows that an accumulation of a
  large amount of birth and death events may create stochasticity, often called by biologists ecological drift or
  demographic stochasticity. Contrarily to the  previous case (Theorem \ref{thm:cvLogistic}), the limiting process suffers extinction almost surely.
%\end{remark}

\begin{proof}
As for Theorem \ref{thm:cvLogistic}, the proof is
based on a uniqueness-compactness argument.

\me (1)  The uniqueness of the solution of \eqref{eq:Fellerlogistic} follows from a general existence and pathwise uniqueness result  in 
Ikeda-Watanabe \cite{Ikeda1989} Section IV-3 or  Karatzas-Shreve
\cite{Karatzas1988}. 
For
a stochastic differential equation
\begin{equation*}
  d X_t =\sigma( X_t) dB_t + b(X_t) dt,
\end{equation*}
with $\sigma$ and $b$ smooth enough, the existence and pathwise
uniqueness are determined thanks to the following scale functions: for
$x>0$,
\begin{eqnarray} 
  \label{lambda}Q(x) &=& -
  \int_1^x {2b(y)\over \sigma^2(y)}dy\ ;
  \  \Lambda(x)=\int_1^x e^{Q(z)} dz\ ;\nonumber\\
  \kappa(x) &=& \int_1^x e^{Q(y)} \left(\int_1^y e^{-Q(z)}
  dz\right)dy. \label{kappa}
\end{eqnarray}
  More precisely, it is proved that
  (i) $\forall x>0$,
  $\mathbb{P}_x(T_0<T_\infty)
  = 1$
  and
  (ii) $\Lambda(+\infty) = + \infty\ ;\ \kappa(0^+)<+\infty$
  are equivalent. 
  \me
  In that case, pathwise uniqueness follows,
  and then uniqueness in law.

  \noindent In our situation, the coefficients are given by
  \begin{equation*}
    \sigma(x) = \sqrt{2\gamma x} \ ;\ b(x)=(\lambda-\mu)x- cx^2,
  \end{equation*}
  so that the functions $\Lambda$ and $\kappa$ satisfy (ii). Thus the
  SDE \eqref{eq:Fellerlogistic} has a pathwise unique solution which
  reaches $0$ in finite time almost surely.

  \me (2) Let us assume that $\mathbb{E}(X_0^3)<\infty$ and 
  prove that
%  \begin{equation*}
    $\sup_K \mathbb{E}(\sup_{t\leq T}
    (X^K_t)^2)<\infty.$
%  \end{equation*}
  The infinitesimal generator of $X^K$ is given by 
  \begin{eqnarray}
  \label{gen:saut} \tilde{L}_K\phi(x)&=& (\gamma K x + \lambda x)K\, \left(\phi(x+{1\over
    K})-\phi(x)\right)
  \nonumber\\
  &&+ (\gamma K x + \mu x+
  cx(x-{1\over K}))K\,  \left(\phi(x-{1\over K})-\phi(x)\right).
  \end{eqnarray}
  With $\phi(x)=x^3$, we obtain that 
  \begin{eqnarray*}
  (X^K_t)^3 &=& X_0^3 + M^K_t + \int_0^t \gamma K^2
  X^K_{s}\left[\left(X^K_{s}+{1\over K}\right)^3 -
    \left(X^K_{s}-{1\over K}\right)^3 -
    \left(X^K_{s}\right)^3\right] ds\\
  +&& \int_0^t  \lambda K X^K_{s} \left[\left(X^K_{s}+{1\over
      K}\right)^3  - \left(X^K_{s}\right)^3\right]ds\\
  &&+ \int_0^t  (\mu+ c(X^K_s -1)) K X^K_{s}
  \left[\left(X^K_{s}-{1\over K}\right)^3 -
    \left(X^K_{s}\right)^3\right]ds,
  \end{eqnarray*}
 where $M^K$ is a local martingale.
  Using a standard localization argument and  
  $$\left(X^K_{s}+{1\over K}\right)^3 -
  \left(X^K_{s}-{1\over K}\right)^3 - \left(X^K_{s}\right)^3 = 6\
       {X^K_{s}\over K^2},$$ we get
       \ben\mathbb{E}((X^K_t)^3) \leq \mathbb{E}(X_0^3)+ C\int_0^t
       \mathbb{E}((X^K_s)^3) ds,\een where $C$ is independent of $K$. 
       Gronwall's lemma yields  \be \label{unif3} \sup_{t\leq T}
       \sup_K \mathbb{E}(|X^K_t|^3)<\infty.\ee
       Now, thanks to \eqref{unif3} and  Doob's inequality, we deduce from the
       semi-martingale decomposition of $(X^K_t)^2$ (obtained using
       $\phi(x)=x^2$), that \be\label{uniformite}  \sup_K
       \mathbb{E}(\sup_{t\leq T}|X^K_t|^2)<\infty.\ee

\me (3) As previously, the uniform tightness of the laws of $(X^K)$ is obtained from  \eqref{uniformite} and the Aldous criterion
\cite{Aldous1978}. Therefore, the sequence of laws is relatively compact and
it remains to characterize its limit values.

\me (4) As in the proof of Theorem \ref{thm:cvLogistic}, we remark
that the limiting values only charge the set of continuous
trajectories, since $\sup_{t\leq T}|\Delta X^K_t| \leq {1\over
  K}$. Let $\mathbb{Q}\in {\cal P}(C([0,T], \mathbb{R}_+))$ be a
limiting value of the sequence of laws of the processes $X^K$. We will
identify $\mathbb{Q}$ as the (unique) law of the solution of
\eqref{eq:Fellerlogistic} and the convergence will be proved. Let us
denote $C_T=C([0,T], \mathbb{R}_+)$ and define, for $\phi\in C_b^2$
and $t>0$, the function
\begin{eqnarray*}
 \psi_t: C_T
\ &\rightarrow & \mathbb{R}\\ X &\mapsto& \phi(X_t)-\phi(X_0) -
\int_0^t \left(\gamma X_s \phi''(X_s) + ((\lambda-\mu)X_s -
cX_s^2)\phi'(X_s)\right) ds,
\end{eqnarray*}
which is continuous $\mathbb{Q}$-a.s.. Let us show first that
the process $(\psi_t(X))_t$ is a $\mathbb{Q}$-martingale.

\me For $x\in \mathbb{R}_+$, we define
\begin{equation*} 
L\phi(x) = \gamma x
\phi''(x) + ((\lambda-\mu)x - c x^2)\phi'(x).
\end{equation*}
Using Taylor's expansion, we immediately get (with $\tilde{L}_K$
defined in \eqref{gen:saut})
\begin{eqnarray}
  \label{comp-gen} |\tilde{L}_K\phi(x)
  - L\phi(x)| &=& \gamma K^2\ x\ \left|\phi(x+{1\over K})+\phi(x-{1\over
    K})-2\phi(x)- {1\over K^2}\phi''(x)\right|\nonumber\\ && +
  \lambda\ K\ x\left|\phi(x+{1\over K})-\phi(x)-{1\over
    K}\phi'(x)\right|\nonumber\\ &&+ K\ (\mu x+
  cx(x-{1}))\ \left|\phi(x-{1\over K})-\phi(x)+{1\over
    K}\phi'(x)\right|\nonumber\\ &\leq& {C\over K}\ (x^2+1),
\end{eqnarray}
where $C$ doesn't depend on $x$ and $K$. By \eqref{uniformite}, we
deduce that $\ \mathbb{E}\left(|\tilde{L}_K\phi(X^K_t) -
L\phi(X^K_t)|\right)$ tends to $0$ as $K$ tends to infinity, uniformly
for $t\in[0,T]$.

\me For $s_1<\cdots < s_k<s<t$, for $g_1, \cdots, g_k\in C_b$, we
introduce the function $H$ defined on the path space by 
\begin{equation*}
 H(X)=
g_1(X_{s_1})\cdots g_k(X_{s_k}) \left(\psi_t(X)-\psi_s(X)\right).
\end{equation*}
Let us show now that \be \label{prop-mart}
\mathbb{E}_\mathbb{Q}(H(X))=0,\ee which will imply that
$(\psi_{t}(X))_{t}$ is a $\mathbb{Q}$-martingale.

\me By construction, $\psi^K_t(X^K)= \phi(X^K_t)-\phi(X_0) - \int_0^t
\tilde{L}_K\phi(X^K_s) ds\ $ defines a martingale, then 
\begin{equation*}
\mathbb{E}\left[g_1(X^K_{s_1})\cdots g_k(X^K_{s_k})
  \left(\psi^K_t(X^K)-\psi^K_s(X^K)\right)\right]=0.
\end{equation*}
In another
way, this quantity is equal to
\begin{eqnarray*} &&
  \mathbb{E}\left[g_1(X^K_{s_1})\cdots g_k(X^K_{s_k})
    \left(\psi^K_t(X^K)-\psi^K_s(X^K)-\psi_t(X^K)+\psi_s(X^K)\right)\right]\\ &&
+\mathbb{E}\left[g_1(X^K_{s_1})\cdots g_k(X^K_{s_k})
  \left(\psi_t(X^K)-\psi_s(X^K)\right) - g_1(X_{s_1})\cdots
  g_k(X_{s_k})\left(\psi_t(X)-\psi_s(X)\right)\right]\\ &&+
\mathbb{E}\left[g_1(X_{s_1})\cdots
  g_k(X_{s_k})\left(\psi_t(X)-\psi_s(X)\right)\right] .
\end{eqnarray*}

\noindent The first term is equal to $
\mathbb{E}\left[g_1(X^K_{s_1})\cdots g_k(X^K_{s_k})\int_0^t
\left(\tilde{L}_K \phi(X^K_s)-L\phi(X^K_s)\right) ds\right]$  and tends to $0$ by \eqref{uniformite}  and 
\eqref{comp-gen}.

\noindent The second term is equal to $\mathbb{E}(H(X^K)-H(X))$.
The function $X\mapsto H(X)$ is continuous and since $H(X)\leq
C\left(1+\int_s^t (1+X_u^2) du\right)$, it is also
uniformly integrable by \eqref{unif3}.  This leads the second term to  tend to $0$ as
$K$ tends to infinity.

\noindent Therefore, it turns out that \eqref{prop-mart} is fulfilled and  the process
$\psi_t(X)=\phi(X_t) - \phi(X_0) - \int_0^t L\phi(X_s) ds$ is a $\mathbb{Q}$-martingale.

\me By \eqref{uniformite} and taking $\phi(x)=x$ leads to
$X_t=X_0+M_t+\int_0^t ((\lambda-\mu)X_s -cX_s^2)ds$, where $M$ is a
martingale. Taking $\phi(x)=x^2$ on the one hand and applying It\^o's
formula for $X_t^2$ on the other hand allow us to identify
\begin{equation*}
 \langle M\rangle_t=\int_0^t 2 \, \gamma\ X_s\ ds.
\end{equation*}
By the
 representation
theorem proved in \cite{Karatzas1988} Theorem III-4.2 or in \cite{Ikeda1989},
there exists a Brownian
motion $B$ such that 
\begin{equation*}
  M_t=\int_0^t \sqrt{2\gamma X_s}\ dB_s.
\end{equation*}
That concludes the proof. 

\end{proof}

\subsection{QSD for logistic Feller diffusion processes}
\label{subsection:qsd-for-logistic-Feller-diffusions}
\subsubsection{Statement of the  results}
We are now interested in studying the quasi-stationarity for  the logistic Feller
diffusion process  solution of the equation
\begin{equation*}
\label{fellz}
  dZ_t=\sqrt{Z_t} d B_t +(rZ_t-cZ_t^2)dt, \quad Z_0>0,
\end{equation*}
where the Brownian motion $B$ and the initial state $Z_0$ are given,
and $r$ and $c$ are assumed to be positive. (We have assumed that
$\gamma=1/2$). The results and proofs that are presented in  Section \ref{subsection:qsd-for-logistic-Feller-diffusions}
have been obtained by Cattiaux, Collet, Lambert, Mart\'inez, M\'el\'eard
and San Mart\'in \cite{Cattiaux2009}.

\bi
 Let us firstly state the main theorem of this part.
\begin{theorem}
  \label{thm:feller-qsd}
  \label{theorem:qsd-logistic-Feller}
  Assume that $Z_0$, $r$ and $c$ are positive. Then the Yaglom limit
  of the process $Z$ exists and is a QLD for $Z$ starting from any
  initial distribution. As a consequence, it is the unique QSD of $Z$.

%% there exists a positive function $\rho$ on $\R_+^*$ such that
%%   \begin{enumerate}
%%   \item 
%%     \begin{equation}
%%       \label{equation:logistic-feller-alpha}
%%       \alpha(2xdx)=\frac{\eta_1(x) e^{-Q(x)}}{\int_{\R_+^*} \eta_1(y) e^{-Q(y)} dy} dx,
%%     \end{equation}
%%   \item $\forall x\in\R_+^*$, $\lim_{t\rightarrow\infty}\, e^{\theta(\alpha) t} \P_x(T_0>t)=\eta_1(x)$, 
%%   \item there exists $\chi>0$ such that, $\forall x\in\R_+^*$,
%%     \begin{equation*}
%%       \lim_{t\rightarrow+\infty} e^{-(\chi-\theta(\alpha))
%%         t}\left|\P_x\left(X_t\in A | T_0>t\right)-\alpha(A)\right|<+\infty.
%%     \end{equation*}
%%     \item  the QSD $\alpha$ attracts all initial distribution, which means
%%  that $\alpha$ is a QLD for $Z$ starting from any initial
%%  distribution.
%  \end{enumerate}
  
\end{theorem}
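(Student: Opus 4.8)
The plan is to turn the quasi-stationarity problem into a spectral problem for the generator $Lf(x) = \tfrac{x}{2} f''(x) + (rx - cx^2) f'(x)$, viewed as a self-adjoint operator on a weighted $\mathbb{L}^2$-space, and then to read off the QSD, the Yaglom limit and the convergence from a spectral decomposition. First I would put $L$ in Feller (Sturm--Liouville) form by computing scale and speed densities. With $\sigma^2(x) = x$ and $b(x) = rx - cx^2$ one gets $s'(x) = e^{cx^2 - 2rx}$ and speed density $m(x) = x^{-1} e^{2rx - cx^2}$, so that $L$ is symmetric in $\mathbb{L}^2(\mu)$ with $\mu(dx) = m(x)\,dx$. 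The decisive structural feature is that $\mu$ decays like a Gaussian at $+\infty$ (the $-cx^2$ in the exponent, coming from the competition term) and has only a mild $1/x$ singularity at $0$; a boundary classification then shows that $0$ is an absorbing (exit) boundary while $+\infty$ is an entrance boundary, i.e. the diffusion \emph{comes down from infinity} in finite time. Almost sure extinction, $\mathbb{P}_x(T_0 < \infty) = 1$, follows from the divergence of the scale function at $0$ together with the entrance nature of $+\infty$, and is the continuous analogue of the logistic birth--death computation.

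The analytic heart of the proof is to show that the killed semigroup $(P_t)$ has \emph{discrete spectrum} on $\mathbb{L}^2(\mu)$. The cleanest route is to conjugate $L$ into a one-dimensional Schr\"odinger operator $-\tfrac12 \partial_y^2 + V(y)$ via the change of variable $y = \int^x z^{-1/2}\,dz$, which renders the $\sqrt{Z}$ noise constant-coefficient, combined with a ground-state-type weighting. The resulting potential $V$ is \emph{confining}, growing to $+\infty$ at both ends: like $x^2$ at the right end because of the term $cZ^2$, and through the drift/diffusion balance at the left end. A confining potential forces compact resolvent, hence $(P_t)$ is compact (one can in fact push this to Hilbert--Schmidt or intrinsic ultracontractivity). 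I would then invoke the Krein--Rutman / Perron--Frobenius theorem for the positivity-preserving compact semigroup: there is a simple principal eigenvalue $\lambda_1 > 0$ with strictly positive eigenfunction $\eta_1$, and a spectral gap $\lambda_2 > \lambda_1$.

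With the spectral data in hand the conclusions are bookkeeping. The normalized measure $\alpha(dx) = c_0\,\eta_1(x)\,\mu(dx)$ satisfies $\alpha(Lf) = -\lambda_1 \alpha(f)$ on a suitable core $D$, so Proposition~\ref{PrQSDeigenfunction} identifies $\alpha$ as a QSD with $\theta(\alpha) = \lambda_1$. The self-adjoint spectral decomposition $P_t f = e^{-\lambda_1 t}\langle f, \eta_1\rangle_\mu\,\eta_1 + O(e^{-\lambda_2 t})$ then yields $\mathbb{P}_\nu(Z_t \in \cdot \mid T_0 > t) \to \alpha$ with geometric rate $\lambda_2 - \lambda_1 > 0$; taking $\nu = \delta_x$ gives the Yaglom limit. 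To upgrade this to convergence from \emph{any} initial distribution $\nu$ — which is exactly what makes $\alpha$ the \emph{unique} QSD through Proposition~\ref{prop:qldimpliesqsd} — I would exploit the coming-down-from-infinity property to dominate arbitrary starting measures by the law started from large $x$, so that the ratio defining the conditioned law stabilizes uniformly in the initial condition. I expect the main obstacle to be the spectral step: verifying the confining/compactness estimate rigorously, especially near the singular boundary $x = 0$ where the weight behaves like $1/x$ and the transformation is delicate, and confirming that $P_t$ genuinely maps into and is compact on $\mathbb{L}^2(\mu)$ rather than merely symmetric.
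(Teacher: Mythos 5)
Your proposal follows essentially the same route as the paper: the change of variable $y=\int^x z^{-1/2}dz$ is exactly the substitution $X=2\sqrt{Z}$ used there, the ground-state conjugation to a confining Schr\"odinger operator, the discrete spectrum with positive principal eigenfunction $\eta_1$ and spectral gap, and the identification of $\alpha(dx)\propto\eta_1\,d\mu$ as the Yaglom limit all match the paper's argument, as does the use of coming down from infinity to attract arbitrary initial laws. The only under-specified point is the control of initial mass near the absorbing boundary $0$ in the ``attracts all initial distributions'' step, which the paper handles with a Harnack inequality (Lemma~\ref{tension}); your domination by the law started from large $x$ only covers the tail at infinity.
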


%% \begin{remark} \upshape
%% This theorem has to be related with the one concerning
%% the logistic birth and death process (Section ....). In both cases we
%% have existence and uniqueness of a QSD. Hence, the latter can be
%% proposed as the interpretation of the mortality plateaus that
%% biologists  observed.\\......... A VOIR ......
%% \end{remark}

\begin{remark}\upshape
\bi 1)  The theory studying the quasi-stationary distributions for
one-dimen\-sional diffusion processes started with Mandl
\cite{Mandl1961} and has been developed by many authors. See in
particular \cite{Collet1995}, \cite{Martinez2004},
\cite{Steinsaltz2007}, \cite{Knobloch2010}. Nevertheless in most of
the papers, the diffusion and drift coefficients are regular and the
"Mandl's condition" $\kappa(+\infty)=\infty$ (see \eqref{kappa}) is
assumed. This condition is not satisfied in our case because of  the degeneracy of the diffusion and the unboundedness of the drift coefficient. 

\me
2) Theorem \ref{thm:feller-qsd}
 differs from the results obtained in case of 
drifts  going slower to infinity. For example, Lambert
\cite{Lambert2007} proves that if $c=0$ and $r\leq 0$, then either
$r=0$ and there is no QSD, or $r<0$ and there is an infinite number of
QSD. Lladser and San Mart\'in \cite{Lladser2000} show that in the case
of the Ornstein-Uhlenbeck process
$
dY_t = dB_t - Y_t dt$, killed at $0$, there is also a continuum of QSD.  In the logistic Feller diffusion situation as in the logistic
BD-process, the uniqueness comes from the quadratic term $c\,X_t^2$
induced by the ecological constraints. 

\me 3) We have seen that the rescaled charge capacity of the logistic
birth and death process converges to the charge capacity of the
logistic Feller diffusion. However, whether the rescaled Yaglom limit
of the logistic birth and death process converges to the Yaglom limit of the logistic
Feller diffusion process remains an open problem.
\end{remark}

\me In order to prove Theorem~\ref{thm:feller-qsd}, we firstly make a
change of variable and introduce the process $(X_t, t\geq 0)$ defined
by $X_t=2\sqrt{Z_t}$. Of course, $X$ is still absorbed at $0$ and
 QSDs for $Z$ will be easily deduced from QSDs for $X$. From now on, we focus on the process $(X_t)$.

\me An elementary computation using It\^o's formula shows that $(X_t)$
is the Kolmogorov diffusion process defined by
\begin{equation}
  \label{kolmogorov}
  d X_t= dB_t - q(X_t) dt,
\end{equation}
with 
\begin{equation*} 
  q(x)= {1\over 2 x} -{r x\over 2} + {c x^3\over 8}.
\end{equation*}
 Mention that the function $q$ is continuous on $\mathbb{R}_+^*$
 but explodes at $0$ as ${1\over 2x}$ and at infinity as ${c\over 8}
 x^3$. The strong (cubic) downward drift at infinity will force the
 process to live essentially in compact sets. That will provide the
 uniqueness of the QSD, as seen below.

\me  We introduce the measure $\mu$,
defined by
\begin{equation*}
 \label{eqmu}
 \mu(dy) = e^{-Q(y)} dy,
\end{equation*}
where $Q$ is given  by
\begin{equation}
\label{q}
 Q(y) = \int_1^y 2 q(z) dz = \ln y + {r\over 2}(1-y^2) +{c\over
   16}(y^4-1).
\end{equation}
In particular $-Q/2$ is a potential of the drift $-q$. The following
result clearly implies Theorem~\ref{theorem:qsd-logistic-Feller}.
%Let us remark
%that in our case, the measure $\mu$ is not finite.

\begin{theorem}\cite{Cattiaux2009}
  \label{theorem:feller-logistic-qsd-for-X}
  Assume that $X_0$, $r$ and $c$ are positive. Then the Yaglom limit
  $\alpha$ of the process $X$ exists.% and is a QLD for $X$ starting
  %from any initial distribution. In particular, it is the unique QSD
  %of $Z$.

 Moreover, there exists a positive function $\eta_1\in\mathbb{L}^2(d\mu)$ such that
   \begin{enumerate}
   \item 
     \begin{equation}
       \label{equation:logistic-feller-alpha}
       \alpha(dx)=\frac{\eta_1(x) e^{-Q(x)}}{\int_{\R_+^*} \eta_1(y) e^{-Q(y)} dy} dx,
     \end{equation}
   \item $\forall x\in\R_+^*$, $\lim_{t\rightarrow\infty}\, e^{\theta(\alpha) t} \P_x(T_0>t)=\eta_1(x)$, 
   \item there exists $\chi>0$ such that, $\forall x\in\R_+^*$,
     \begin{equation*}
       \lim_{t\rightarrow+\infty} e^{-(\chi-\theta(\alpha))
         t}\left|\P_x\left(X_t\in A | T_0>t\right)-\alpha(A)\right|<+\infty.
     \end{equation*}
     \item  the QSD $\alpha$ attracts all initial distribution, which means
  that $\alpha$ is a QLD for $X$ starting from any initial
  distribution.
  \end{enumerate}
\end{theorem}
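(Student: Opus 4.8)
The plan is to exploit the reversible structure of the Kolmogorov diffusion \eqref{kolmogorov} and to recast the problem as the spectral analysis of a self-adjoint Schr\"odinger operator with a confining potential. First I would record that the generator $L=\frac12\partial_x^2-q\,\partial_x$ is symmetric on $\mathbb{L}^2(d\mu)$, since $Q'=2q$ makes $\mu(dx)=e^{-Q(x)}dx$ its reversible measure. Conjugating $L$ by multiplication with $e^{Q/2}$, i.e. writing $\eta=e^{Q/2}\psi$, turns the eigenvalue problem $L\eta=-\theta\eta$ into the Schr\"odinger problem $H\psi=\theta\psi$ with $H=-\frac12\partial_x^2+V$ and $V=\frac12(q^2-q')$ acting on $\mathbb{L}^2(\R_+^*,dx)$ with a Dirichlet condition at $0$ coming from the absorption. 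The decisive feature is the behaviour of $V$: from the expression of $q$ one gets $V(x)\sim \frac{3}{8x^2}$ as $x\to0$ and $V(x)\sim\frac{c^2}{128}x^6$ as $x\to\infty$, so $V$ is confining (tends to $+\infty$) at both boundary points. This is precisely where the strong cubic drift at infinity and the singular drift at $0$ enter.

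Next I would run the spectral analysis of $H$. Since $V\to+\infty$ at both ends, the endpoints $0$ and $+\infty$ are of limit-point type, so $H$ is essentially self-adjoint, and the confinement forces the form domain to embed compactly into $\mathbb{L}^2(dx)$; hence $H$ has purely discrete spectrum $0<\theta_1<\theta_2\le\cdots\to\infty$. Standard ground-state theory (positivity improving property of $e^{-tH}$) gives that the bottom eigenvalue $\theta_1$ is simple, with an eigenfunction $\psi_1$ that can be chosen strictly positive. Undoing the conjugation, $\eta_1:=e^{Q/2}\psi_1>0$ solves $L\eta_1=-\theta_1\eta_1$ and lies in $\mathbb{L}^2(d\mu)$; I set $\theta(\alpha)=\theta_1$ and $\chi=\theta_2$. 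A short boundary analysis ($\psi_1\sim x^{3/2}$ near $0$) shows $\eta_1 e^{-Q}\sim x$ near $0$ and decays super-exponentially at infinity, so $\int_{\R_+^*}\eta_1\,d\mu<\infty$ and the measure in \eqref{equation:logistic-feller-alpha} is a genuine probability measure; by Proposition~\ref{PrQSDeigenfunction} it is the QSD with extinction rate $\theta(\alpha)$.

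It remains to pass from this $\mathbb{L}^2$ spectral information to the stated pointwise and conditional limits for arbitrary starting points and initial laws. The essential tool is intrinsic ultracontractivity of the killed semi-group: using the confining potential I would establish two-sided bounds $p_t(x,y)\asymp \eta_1(x)\eta_1(y)$ (for each fixed $t>0$) for the kernel of $P_t$ with respect to $\mu$. This repairs the fact that $\mu$ has infinite mass near $0$, so that neither $\delta_x$ nor $\mathbf{1}_{E^*}$ sits in $\mathbb{L}^2(d\mu)$. Intrinsic ultracontractivity yields the uniform ratio limit $e^{\theta(\alpha)t}P_tf(x)\to \eta_1(x)\,\langle f,\eta_1\rangle_\mu/\|\eta_1\|_\mu^2$, whence (after the normalisation making $\langle\mathbf{1}_{E^*},\eta_1\rangle_\mu=\|\eta_1\|_\mu^2$) statement (2), and then (1) by taking the ratio $P_t\mathbf{1}_A/P_t\mathbf{1}_{E^*}\to\int_A\eta_1\,d\mu/\int\eta_1\,d\mu=\alpha(A)$. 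The quantitative rate (3) is the spectral gap $\chi-\theta(\alpha)=\theta_2-\theta_1>0$ furnished by the discreteness of the spectrum, and (4), convergence from any initial distribution, follows because the two-sided kernel bounds make the limit uniform in the starting point and hence stable under integration against any law.

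The hard part is the functional-analytic control of the singular and unbounded potential $V$: proving essential self-adjointness with the correct Dirichlet behaviour at the degenerate boundary $0$, the compactness of the resolvent, and above all the intrinsic ultracontractivity, since the standard criteria must be adapted to a potential that is simultaneously $1/x^2$-singular at the origin and $x^6$-growing at infinity. Everything else (reversibility, the conjugation, ground-state positivity, and the extraction of the Yaglom limit together with its rate) is then routine given these estimates.
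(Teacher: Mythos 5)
Your proposal is correct in outline and shares the paper's spectral skeleton: both arguments exploit the reversibility of $L$ with respect to $\mu=e^{-Q}dx$, conjugate by $e^{Q/2}$ to the Schr\"odinger operator with potential $\tfrac12(q^2-q')$ (whose confinement, $\sim 3/(8x^2)$ at $0$ and $\sim c^2x^6/128$ at infinity, is exactly what the paper uses to get compactness in Theorem~\ref{spectral}), and extract a discrete spectrum $0<\lambda_1<\lambda_2<\cdots$ with a positive, $\mathbb{L}^1(\mu)$ ground state $\eta_1$. Where you genuinely diverge is in the passage from $\mathbb{L}^2(\mu)$ spectral theory to the pointwise statements (2)--(3) and to the attraction of arbitrary initial laws (4). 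You invoke intrinsic ultracontractivity with two-sided bounds $r(t,x,y)\asymp\eta_1(x)\eta_1(y)$, which, if established, delivers all four assertions at once with a uniform-in-$x$ error of order $e^{-(\lambda_2-\lambda_1)t}$; this is elegant but concentrates all the difficulty in one unproved (though true, and genuinely hard) kernel estimate for a potential that is simultaneously $1/x^2$-singular and $x^6$-growing. The paper instead gets by with strictly weaker inputs: the Girsanov representation (Proposition~\ref{propgirsanov}) yields the one-sided bounds of Theorem~\ref{thmloit} showing that $\delta_xP_1$ has an $\mathbb{L}^2(\mu)$ density and that $P_t$ maps bounded functions into $\mathbb{L}^2(\mu)$, which suffices for (1)--(3) by pairing the spectral expansion of Corollary~\ref{corollary:spectral-decomposition-logistic-feller} against $r(1,x,\cdot)$; and for (4) it substitutes a tightness argument (Lemma~\ref{tension}), combining a Harnack inequality near $0$ with exponential moments of the return time from infinity (Proposition~\ref{controle-moment}) and the one-sided domination $r(1,x,y)\le\Theta(x)\eta_1(y)$. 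So your route buys a cleaner and quantitatively uniform conclusion at the price of the full intrinsic-ultracontractivity estimate, while the paper's route trades that for several softer, separately verifiable lemmas. One small point of care: at the origin the conjugated operator is at the borderline limit-point coupling $3/4$ in $-u''+cu/x^2$, so essential self-adjointness holds but the ``Dirichlet condition'' is automatic rather than imposed; and your normalisation of $\eta_1$ must be fixed by $\langle\eta_1,\mathbf{1}_{E^*}\rangle_\mu=1$ (not the $\mathbb{L}^2$ normalisation) for statement (2) to read exactly as claimed, which the theorem's phrasing permits.
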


\me
The proof of Theorem \ref{theorem:feller-logistic-qsd-for-X} will be
decomposed in the next subsections.

 \subsubsection{Spectral theory for the killed semi-group}
\bi
 As previously
we are interested in the semi-group of the killed process, that
is, for any $x>0$,  $t>0$ and  any $f\in
C_b(\mathbb{R}_+^*)$,
\be \label{sg-diffkol} P_t f(x) =
\mathbb{E}_x( f(X_t) {\bf 1}_{t< T_0} ), \ee 
with the associated  infinitesimal generator  given for $\phi
\in C^2_c((0,+\infty))$ by
\begin{equation*}
 \label{gen-fiffkol} 
 L \phi = {1\over 2} \phi'' - q \phi'. 
\end{equation*}
We are led to develop a spectral theory for this generator in
$\mathbb{L}^2(\mu)$.  Though the unity function $1$ does not belong to
$\mathbb{L}^2(\mu)$, this space is the good functional space in which
to work.  The key point we firstly show is that, starting from $x>0$,
the law of the killed process at time $t$ is absolutely continuous
with respect to $\mu$ with a density belonging to $\mathbb{L}^2(\mu)$.
The first step of the proof is a Girsanov Theorem.

\begin{proposition}
  \label{propgirsanov} 
  \label{proposition:girsanov-logistic-feller}
  For any bounded Borel function $F$ defined on $\Omega =
  C([0,t],\mathbb{R}_+^*)$ it holds
  \begin{equation*}
    \E_x \left[F(\omega) {\bf 1}_{t<T_0(\omega)}\right]
    = \E^{\W_x} \left[ F(\omega) {\bf 1}_{t<T_0(\omega)}
      \exp \left(\frac{1}{2} Q(x) - \frac{1}{2} Q(\omega_t) -
      \frac{1}{2} \int_0^t (q^2 - q')(\omega_s) ds\right)\right]
  \end{equation*}
  where $\E^{\W_x}$ denotes the expectation with respect to the Wiener
  measure starting from $x$ and $\omega$ the current point in
  $\Omega$.
\end{proposition}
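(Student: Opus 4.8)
The plan is to read this identity as a Cameron--Martin--Girsanov change of measure between the law $\P_x$ of the drifted diffusion \eqref{kolmogorov} and the Wiener measure $\W_x$, followed by an Itô-formula rewriting of the resulting stochastic integral. Under $\W_x$ the canonical path $\omega$ is a Brownian motion started at $x$, whereas under $\P_x$ it solves $d\omega_s = d\beta_s - q(\omega_s)\,ds$ for some driving Brownian motion $\beta$. Girsanov's theorem expresses the passage from the drift-free to the drifted law, on $\mathcal{F}_t$, through the exponential (local) martingale
\begin{equation*}
  \mathcal{E}_t = \exp\left(-\int_0^t q(\omega_s)\,d\omega_s - \frac{1}{2}\int_0^t q^2(\omega_s)\,ds\right),
\end{equation*}
so that, at least formally, for every $\mathcal{F}_t$-measurable bounded functional,
\begin{equation*}
  \E_x\left[F(\omega)\mathbf{1}_{t<T_0}\right] = \E^{\W_x}\left[F(\omega)\mathbf{1}_{t<T_0}\,\mathcal{E}_t\right].
\end{equation*}

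The next step is to turn the stochastic integral into the boundary terms of the statement. Since $Q'=2q$ and $Q''=2q'$, applying Itô's formula to $Q(\omega_s)$ under $\W_x$ gives $dQ(\omega_s)=2q(\omega_s)\,d\omega_s+q'(\omega_s)\,ds$, hence
\begin{equation*}
  \int_0^t q(\omega_s)\,d\omega_s = \frac{1}{2}\left(Q(\omega_t) - Q(x) - \int_0^t q'(\omega_s)\,ds\right).
\end{equation*}
Inserting this into the exponent of $\mathcal{E}_t$ replaces $-\int_0^t q\,d\omega - \frac12\int_0^t q^2\,ds$ by $\frac12 Q(x) - \frac12 Q(\omega_t) - \frac12\int_0^t (q^2-q')(\omega_s)\,ds$, which is precisely the claimed weight.

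The hard part is that $q(x)=\frac{1}{2x}-\frac{rx}{2}+\frac{cx^3}{8}$ is singular at $0$ and grows cubically at infinity, so $\mathcal{E}_t$ is a priori only a local martingale and the integrals above need not be finite on paths that approach the trap; neither Novikov's criterion nor the martingale property of $\mathcal{E}_t$ is available globally. I would resolve this by localization. With $\tau_n=\inf\{s\ge 0:\omega_s\notin(1/n,n)\}$, the functions $q,q',q^2$ are bounded on $[0,t\wedge\tau_n]$, so $\mathcal{E}_{t\wedge\tau_n}$ is a genuine martingale, Girsanov applies rigorously and the Itô rewriting is legitimate, yielding the exact identity
\begin{equation*}
  \E_x\left[F(\omega)\mathbf{1}_{t<\tau_n}\right] = \E^{\W_x}\left[F(\omega)\mathbf{1}_{t<\tau_n}\exp\left(\frac{1}{2} Q(x)-\frac{1}{2} Q(\omega_t)-\frac{1}{2}\int_0^t(q^2-q')(\omega_s)\,ds\right)\right],
\end{equation*}
where on $\{t<\tau_n\}$ the stopped and unstopped exponents coincide. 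Because the diffusion does not explode (the cubic downward drift confines it) and reaches $0$ in finite time by Theorem~\ref{thm:cvFellerLogistic}, one has $\tau_n\uparrow T_0$ and $\{t<\tau_n\}\uparrow\{t<T_0\}$. For nonnegative bounded $F$ the monotone convergence theorem then applies to both sides; the finiteness of the right-hand limit is automatic, since the left-hand limit $\E_x[F(\omega)\mathbf{1}_{t<T_0}]\le\|F\|_\infty$ is finite, so no separate domination of the exploding weight $e^{-Q(\omega_t)/2}$ near the trap is needed. Splitting a general bounded Borel $F$ into its positive and negative parts finishes the argument. I expect this limiting passage, rather than the formal Girsanov and Itô computations, to be the only genuinely delicate point.
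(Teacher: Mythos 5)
Your argument is correct and follows essentially the same route as the paper: localize away from $0$ and $+\infty$ (the paper stops at $T_\varepsilon\wedge T_{1/\varepsilon}$ and uses a smooth cutoff of $q$ plus Novikov, where you stop the exponential local martingale directly — a cosmetic difference), apply Girsanov on the localized event, integrate by parts the stochastic integral via It\^o's formula on $Q$, and pass to the limit by monotone convergence using $\tau_n\uparrow T_0$. The only point worth making explicit is that the monotone convergence $\mathbf{1}_{t<\tau_n}\uparrow\mathbf{1}_{t<T_0}$ must hold almost surely under \emph{both} $\P_x$ and $\W_x$, as the paper notes, since the two sides are expectations under different measures.
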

\begin{proof}
It is enough to show the result for  non-negative and bounded functions $F$.
Let $\varepsilon \in (0,1)$ and $\tau_\varepsilon=T_\varepsilon
\wedge T_{1/\varepsilon}$. Let us choose some $\psi_\varepsilon$
which is a non-negative $C^\infty$ function with compact support
included in $]\varepsilon/2, 2/\varepsilon[$ such that
$\psi_\varepsilon(u)=1$ if $\varepsilon \leq u \leq
1/\varepsilon$. For all $x$ such that $\varepsilon \leq x \leq
1/\varepsilon$ the law of the diffusion \eqref{kolmogorov}
coincides up to $\tau_\varepsilon$ with the law of a similar
diffusion process $X^\varepsilon$ obtained by replacing $q$ with
the cutoff function $ q_{\varepsilon}=q\psi_\varepsilon$. For the
latter we may apply Novikov criterion (cf. \cite{Revuz1999} p.332),
ensuring that the law of $X^\varepsilon$ is given via Girsanov's
formula. Hence
{\setlength\arraycolsep{0.1em}
\begin{eqnarray*}
  \mathbb{E}_x \left[F(\omega){\bf 1}_{t<\tau_\varepsilon(\omega)}\right]
     &=& \mathbb{E}^{\W_x}\left[ F(\omega) \, {\bf 1}_{t<\tau_\varepsilon(\omega)}
           \, \exp\left(\int_0^t \, - q_\varepsilon(\omega_s) d\omega_s
           - \, \frac{1}{2}\, \int_0^t \, (q_\varepsilon)^2(\omega_s) ds\right)\right] \\
     &=& \mathbb{E}^{\W_x} \left[ F(\omega) \, {\bf 1}_{t<\tau_\varepsilon(\omega)}
           \, \exp \left(\int_0^t \, - q(\omega_s) d\omega_s
           - \, \frac 12 \, \int_0^t \, q^2(\omega_s) ds\right)\right]\\
     &=& \mathbb{E}^{\W_x} \left[ F(\omega) {\bf 1}_{t<\tau_\varepsilon(\omega)}
            \exp \left(\frac{1}{2} Q(x) - \frac{1}{2} Q(\omega_t)
           -  \frac{1}{2} \int_0^t  (q^2 - q')(\omega_s) ds\right)\right]
\end{eqnarray*}
}
integrating by parts the stochastic integral. But ${\bf
1}_{t<\tau_\varepsilon}$ is non-decreasing in $\varepsilon$ and
converges  almost surely to ${\1}_{t<T_0}$ both for
$\W_x$ and for  $\P_x$ (since
$\mathbb{P}_x(T_0<\infty)=1$)). Indeed, almost surely,
\begin{equation*}
  \lim_{\varepsilon \to 0} X_{\tau_\varepsilon}
  = \lim_{\varepsilon \to 0} X_{\tau_\varepsilon}
  = \lim_{\varepsilon \to 0} \varepsilon = 0
\end{equation*}
so that $\lim_{\varepsilon \to 0} \tau_\varepsilon \geq T_0$. But
$\tau_\varepsilon \leq T_0$ yielding the equality. It remains to use
Lebesgue monotone convergence theorem to finish the proof.
\end{proof}

\begin{theorem}
\label{thmloit}
\label{theorem:density-FellerLogistic}
  For all $x>0$ and all $t>0$ there exists a
density function $r(t,x,.)$ that satisfies
$$\mathbb{E}_x[f(X_t) \, {\bf 1}_{t<T_0}] = \int_0^{+\infty} \, f(y) \,
r(t,x,y) \, \mu(dy)$$ for all bounded Borel function $f$.
In addition, for all $t>0$ and all $x>0$,
\begin{equation*}
  \int_0^{+\infty} \, r^2(t,x,y) \, \mu(dy) \, \leq (1/2\pi t)^{\frac
    12} \, e^{Ct} \, e^{Q(x)} \, ,
\end{equation*}
where 
\begin{equation*}
  C=-\inf_{y> 0} (q^2(y)-q'(y)) <+\infty.
\end{equation*}
\end{theorem}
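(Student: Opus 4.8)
The plan is to read the density off directly from the Girsanov representation of Proposition~\ref{propgirsanov}, applied with the functional $F(\omega)=f(\omega_t)$ which depends only on the endpoint of the path. With this choice the formula becomes
\begin{equation*}
\E_x\left[f(X_t)\1_{t<T_0}\right] = e^{\frac12 Q(x)}\,\E^{\W_x}\left[f(\omega_t)\,e^{-\frac12 Q(\omega_t)}\,G(\omega)\right],\quad G(\omega):=\1_{t<T_0(\omega)}\,e^{-\frac12\int_0^t (q^2-q')(\omega_s)\,ds}.
\end{equation*}
The functional $G$ is nonnegative and, crucially, uniformly bounded: since $-(q^2-q')\le C$ with $C=-\inf_{y>0}(q^2-q')(y)$ and the indicator is at most $1$, one has $0\le G\le e^{Ct/2}$. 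The finiteness $C<+\infty$ I would check separately, noting that $q^2-q'$ is continuous on $(0,+\infty)$ and, from the explicit expression of $q$ below \eqref{q}, diverges to $+\infty$ at both endpoints (near $0$ the singular terms combine to give $q^2-q'\sim 3/4y^2$, and at infinity $q^2\sim c^2y^6/64$ dominates), so it is bounded below.

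Next I would introduce the finite sub-measure on $(0,+\infty)$ defined by $\nu(A)=\E_x[\1_A(X_t)\1_{t<T_0}]$ for Borel $A\subset(0,+\infty)$, so that the left-hand side above is $\int_0^\infty f\,d\nu$. Bounding $G$ by $e^{Ct/2}$ and using that under $\W_x$ the endpoint $\omega_t$ has the Gaussian density $p_t(x,y)=(2\pi t)^{-1/2}e^{-(y-x)^2/2t}$ with respect to Lebesgue measure, I obtain
\begin{equation*}
\nu(A)\le e^{\frac12 Q(x)}\,e^{Ct/2}\int_A e^{-\frac12 Q(y)}\,p_t(x,y)\,dy.
\end{equation*}
This shows $\nu$ is absolutely continuous with respect to Lebesgue measure, hence with respect to $\mu(dy)=e^{-Q(y)}dy$ (the two being mutually equivalent on $(0,+\infty)$); Radon--Nikodym then produces a density $r(t,x,\cdot)=d\nu/d\mu$ yielding the stated integral representation, and comparing densities in the domination gives the pointwise bound
\begin{equation*}
r(t,x,y)\le e^{Ct/2}\,e^{\frac12 Q(x)+\frac12 Q(y)}\,p_t(x,y).
\end{equation*}

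Finally the $\mathbb{L}^2(\mu)$ estimate follows by squaring and integrating against $\mu$: the factor $e^{Q(y)}$ arising from $r^2$ cancels the $e^{-Q(y)}$ of $\mu(dy)$, leaving
\begin{equation*}
\int_0^\infty r^2(t,x,y)\,\mu(dy)\le e^{Ct}\,e^{Q(x)}\int_0^\infty p_t^2(x,y)\,dy\le e^{Ct}\,e^{Q(x)}\int_{\R}p_t^2(x,y)\,dy,
\end{equation*}
and the Gaussian integral evaluates to $\int_{\R}p_t^2(x,y)\,dy=(2\sqrt{\pi t})^{-1}\le(2\pi t)^{-1/2}$, which is exactly the claimed constant. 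I expect the only genuinely delicate points to be the passage from the sub-measure domination to the pointwise bound on $r$ itself (a Radon--Nikodym comparison of densities) and the verification that $C<+\infty$; the remainder is the Girsanov substitution followed by a one-line Gaussian computation.
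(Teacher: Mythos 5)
Your proof is correct and starts from the same place as the paper's --- Proposition~\ref{propgirsanov} applied to $F(\omega)=f(\omega_t)$ --- but extracts the density and the $\mathbb{L}^2$ bound by a different mechanism. The paper disintegrates over the endpoint and \emph{defines} $r(t,x,y)=\E^{\W_x}(G\mid\omega_t=y)\,e^{-v(t,x,y)+Q(y)}$ (with $e^{-v(t,x,y)}$ the Gaussian kernel), keeping the path functional $e^{-\frac12\int_0^t(q^2-q')(\omega_s)\,ds}$ intact; the $\mathbb{L}^2(\mu)$ estimate then comes from the conditional Cauchy--Schwarz inequality $(\E^{\W_x}(G\mid\omega_t))^2\le\E^{\W_x}(G^2\mid\omega_t)$, with the bounds $e^{-\int_0^t(q^2-q')}\le e^{Ct}$ and $e^{-v}\le(2\pi t)^{-1/2}$ applied only at the last step. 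You instead bound the Girsanov weight pointwise by $e^{Ct/2}$ at the outset, obtain the existence of $r$ from Radon--Nikodym via domination by an absolutely continuous finite measure, and deduce the a.e.\ pointwise Gaussian bound $r(t,x,y)\le e^{Ct/2}\,e^{(Q(x)+Q(y))/2}\,(2\pi t)^{-1/2}e^{-(x-y)^2/2t}$, from which the $\mathbb{L}^2$ estimate follows by squaring and a one-line Gaussian integral. Both routes give the same constant (yours actually yields the slightly sharper $(2\sqrt{\pi t})^{-1}$ before weakening it to match the statement); yours is more elementary in that it avoids conditional expectations entirely and produces a pointwise bound on $r$ as a by-product, of the same flavour as the bound $r(1,x,y)\le\Theta(x)\eta_1(y)$ invoked later in the paper, while the paper's conditional-expectation formula for $r$ is the sharper object in principle since it does not discard the path functional before conditioning. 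Your separate verification that $C<+\infty$ (via $q^2-q'\sim 3/(4y^2)$ near $0$ and $q^2\sim c^2y^6/64$ at infinity) is correct and makes explicit a point the paper leaves implicit.
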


\begin{proof}
Define
$$G(\omega) = {\bf 1}_{t<T_0(\omega)} \,
\exp \left(\frac 12 \, Q(\omega_0) - \frac 12 \, Q(\omega_t) - \,
\frac 12 \, \int_0^t \, (q^2 - q')(\omega_s) ds\right) \, .$$
Denote by
$e^{-v(t,x,y)}= (2\pi t)^{- \frac 12} \, \exp \left( -
\frac{(x-y)^2}{2t}\right)$ the density at time $t$ of the
Brownian motion starting from $x$. According to Proposition
\ref{propgirsanov}, we have
\begin{eqnarray*}
  \mathbb{E}_x\left(f(X_t) \, {\bf 1}_{t<T_0}\right) & = &
      \mathbb{E}^{\mathbb{W}_x}\left(f(\omega_t)\
      \mathbb{E}^{\mathbb{W}_x}(G|\omega_t)\right) \\
  & = & \int_0^{+\infty} \, f(y)\, \mathbb{E}^{\mathbb{W}_x}(G|\omega_t=y) \,
      e^{-v(t,x,y)} \, dy\\
  & = & \int_0^{+\infty} \, f(y) \, \mathbb{E}^{\mathbb{W}_x}(G|\omega_t=y) \,
      e^{-v(t,x,y)+Q(y)} \, \mu(dy) \, ,
\end{eqnarray*}
because $\mathbb{E}^{\mathbb{W}_x}(G|\omega_t=y)=0$ if $y\leq 0$.
In other words, the law of $X_t$ restricted to non extinction has
a density with respect to $\mu$ given by
\begin{equation*}
r(t,x,y)=\mathbb{E}^{\mathbb{W}_x}(G|\omega_t=y) \, e^{-v(t,x,y)+Q(y)}
\, .
\end{equation*}
Hence
\begin{eqnarray*}
  \int_0^{+\infty} \, r^2(t,x,y) \, \mu(dy) & = & \int_0^{+\infty} \,
      \left(\mathbb{E}^{\mathbb{W}_x}(G|\omega_t=y) \,
      e^{-v(t,x,y)+Q(y)}\right)^2\\
      & & \quad\quad\quad \times\, e^{-Q(y)+v(t,x,y)} \, e^{-v(t,x,y)}
      \, dy \\
  & = & \mathbb{E}^{\mathbb{W}_x}
      \left(e^{-v(t,x,\omega_t)+Q(\omega_t)} \,
      \left(\mathbb{E}^{\mathbb{W}_x}(G|\omega_t)\right)^2 \right) \\
  &\leq & \mathbb{E}^{\mathbb{W}_x}
      \left(e^{-v(t,x,\omega_t)+Q(\omega_t)} \,
      \mathbb{E}^{\mathbb{W}_x}(G^2|\omega_t) \right) \\
  & \leq & e^{Q(x)} \, \mathbb{E}^{\mathbb{W}_x} \left({\bf
        1}_{t<T_0(\omega)} \, e^{-v(t,x,\omega_t)} \, e^{- \int_0^t \,
        (q^2 - q')(\omega_s) ds}\right) \, ,
\end{eqnarray*}
where we have used Cauchy-Schwarz's inequality. Since
$e^{-v(t,x,.)} \leq (1/2\pi t)^{\frac 12}$, the proof is
completed.
\end{proof}

\bigskip
\noindent Thanks to Theorem \ref{thmloit}, we can show, using the
theory of Dirichlet forms (cf. Fukushima's book
\cite{Fukushima1980}) that the infinitesimal generator $L$ of $X$,
defined by \eqref{gen-fiffkol}, can be
extended to the generator of a continuous symmetric semi-group of
contractions of $\mathbb{L}^2(\mu)$ denoted by $(P_t)_{t\geq 0}$.
In all what follows, and for $f, g \in
\mathbb{L}^2(\mu)$, we will denote 
$ \langle f, g\rangle_\mu =
\int_{\mathbb{R}_+} f(x) g(x) \mu(dx).$ The symmetry of $P_t$ means
that $\langle P_t f, g\rangle_\mu = \langle f, P_t
g\rangle_\mu.$

\noindent In Cattiaux et al. \cite{Cattiaux2009}, the following
spectral theorem in $\mathbb{L}^2(\mu)$ is proved.
\begin{theorem}
  \label{spectral}
  \label{theorem:spectral-LogisticFeller}
  %  Assume \eqref{hyp:cond-dens} and that $\lim_{y\to \infty} q^2(y) -
  %  q'(y) = +\infty$.
  \me The  operator $-L$ has a purely discrete spectrum $0<
  \lambda_1 < \lambda_2 < \cdots$. Furthermore each $\lambda_i$
  ($i\in\mathbb{N}^*$)
  is
  associated with a unique (up to a multiplicative constant)
  eigenfunction $\eta_i$ of class $C^2((0,\infty))$, which satisfies
  the ODE
  \begin{equation}
    \label{ODE} \frac12\eta_i''-q \eta_i'=-\lambda_i \eta_i.
  \end{equation}
  The sequence $(\eta_i)_{i\geq 1}$ is an orthonormal basis of
  $\mathbb{L}^2(\mu)$ and  $\eta_1(x)>0$ for all $x>0$.

  \me
  In addition, for each $i$, $\eta_i\in \mathbb{L}^1(\mu)$.
\end{theorem}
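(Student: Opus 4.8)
The plan is to reduce the entire statement to the spectral theorem for a compact, self-adjoint, positivity-preserving operator, so the central step is to prove that each $P_t$, $t>0$, is Hilbert--Schmidt on $\mathbb{L}^2(\mu)$. By Theorem~\ref{theorem:density-FellerLogistic}, $P_t$ acts through the symmetric kernel $r(t,x,y)$, hence it suffices to bound $\int_0^\infty\int_0^\infty r^2(t,x,y)\,\mu(dx)\,\mu(dy)$. The naive use of the estimate of Theorem~\ref{theorem:density-FellerLogistic} gives $(2\pi t)^{-1/2}e^{Ct}\int_0^\infty e^{Q(x)}\mu(dx)=(2\pi t)^{-1/2}e^{Ct}\int_0^\infty dx$, which diverges at infinity, so this bound must be sharpened to keep the confinement of the drift. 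Returning to the Girsanov representation of Proposition~\ref{proposition:girsanov-logistic-feller} and using $[\mathbb{E}^{\mathbb{W}_x}(G\mid\omega_t=y)]^2\le \mathbb{E}^{\mathbb{W}_x}(G^2\mid\omega_t=y)$, one gets
\[
\|P_t\|_{HS}^2\le (2\pi t)^{-1/2}\int_0^\infty \mathbb{E}^{\mathbb{W}_x}\!\left[\mathbf{1}_{t<T_0}\,e^{-\int_0^t(q^2-q')(\omega_s)\,ds}\right]dx,
\]
and the point is that, because $q^2-q'$ behaves like $c^2x^6/64$ at infinity and like $3/(4x^2)$ near $0$, the Feynman--Kac weight makes the inner expectation decay fast enough in $x$ for the outer integral to converge. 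This quantitative exploitation of the cubic downward drift is what I expect to be the main obstacle: it is the genuine analytic content, since the crude a priori bound is not summable.

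Conceptually the same compactness is the unitary equivalence, via the ground-state map $Uf=e^{-Q/2}f:\mathbb{L}^2(\mu)\to\mathbb{L}^2(dx)$, between $-L$ and the Schr\"odinger operator $\mathcal{H}=-\tfrac12\tfrac{d^2}{dx^2}+W$ on $(0,\infty)$ with $W=\tfrac12(q^2-q')$ and a Dirichlet condition at $0$; note that the constant of Theorem~\ref{theorem:density-FellerLogistic} is exactly $C=-2\inf W$, so $W$, and hence $\mathcal{H}$, is bounded below. Since $W(x)\to+\infty$ both as $x\to0^+$ and as $x\to+\infty$, the classical criterion for one-dimensional Schr\"odinger (Sturm--Liouville) operators with confining potential shows that $\mathcal{H}$ has compact resolvent. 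This is a slightly cleaner alternative route to the compactness obtained above, and it simultaneously records that both endpoints are in the limit-point case.

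Once $P_t$ is known to be compact, self-adjoint (symmetry is given) and positivity-preserving, I would finish as follows. The spectral theorem for compact self-adjoint operators yields a real sequence of eigenvalues of $P_t$ accumulating only at $0$, together with an associated orthonormal family in $\mathbb{L}^2(\mu)$; since the transition density $r(t,x,y)$ is strictly positive, $P_t$ is injective, so this family is complete and forms an orthonormal basis. Positivity of $P_t$ makes all these eigenvalues positive, so writing them $e^{-\lambda_i t}$ defines $\lambda_i\ge0$ with $\lambda_i\to+\infty$, and the semigroup relation forces the eigenfunctions to be common to all $P_t$ and to solve $\tfrac12\eta_i''-q\eta_i'=-\lambda_i\eta_i$; ODE regularity then gives $\eta_i\in C^2((0,\infty))$. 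Simplicity of each eigenvalue, hence the strict inequalities $\lambda_1<\lambda_2<\cdots$, comes from the limit-point structure at both endpoints (an eigenfunction is the unique $\mathbb{L}^2$ solution near each endpoint), and positivity of the ground state $\eta_1>0$ follows from the Perron--Frobenius/Krein--Rutman theorem applied to the strictly positive compact operator $P_t$.

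It remains to prove $\lambda_1>0$ and the integrability $\eta_i\in\mathbb{L}^1(\mu)$. For $\lambda_1>0$ I would note that $\lambda_1=0$ would produce a non-trivial $\mathbb{L}^2(\mu)$ solution of $L\eta=0$; but every such solution is of the form $B+A\int^x e^{Q(z)}dz$, and the second term grows super-exponentially at infinity (forcing $A=0$ for membership in $\mathbb{L}^2(\mu)$) while the surviving constant fails to be in $\mathbb{L}^2(\mu)$ because $\mu$ has infinite mass near $0$; hence $0$ is not an eigenvalue and $\lambda_1>0$, consistently with $\theta(\alpha)=\lambda_1$ in Proposition~\ref{proposition:extinction-rate-from-qsd}. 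For $\eta_i\in\mathbb{L}^1(\mu)$ one cannot simply invoke Cauchy--Schwarz, since $\mathbb{L}^2(\mu)\not\subset\mathbb{L}^1(\mu)$ when $\mu$ is infinite; instead I would use $\eta_i=e^{\lambda_i t}P_t\eta_i$ together with the pointwise bound $|\eta_i(x)|\le e^{\lambda_i t}\|r(t,x,\cdot)\|_{\mathbb{L}^2(\mu)}\|\eta_i\|_{\mathbb{L}^2(\mu)}\le C_t\,e^{Q(x)/2}$ from Theorem~\ref{theorem:density-FellerLogistic}, and check that $\int_0^\infty e^{Q(x)/2}\mu(dx)=\int_0^\infty e^{-Q(x)/2}dx<+\infty$, the integral converging at $0$ (where $e^{-Q/2}\sim x^{-1/2}$) and at infinity (where $Q\to+\infty$).
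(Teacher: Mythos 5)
Your proposal is correct and follows, in substance, the route the paper indicates: the paper does not actually prove Theorem~\ref{theorem:spectral-LogisticFeller} but defers to Cattiaux et al.~\cite{Cattiaux2009}, sketching only the ground-state conjugation $\tilde P_t g = e^{-Q/2}P_t(g\,e^{Q/2})$ onto a Schr\"odinger semigroup with potential $(q^2-q')/2$ and asserting compactness. You execute that sketch and add genuine content at the points where the paper is silent or loose. First, you correctly diagnose that mere lower-boundedness of $q^2-q'$ (the only property the paper invokes for compactness) is not sufficient; what matters is the confinement $q^2-q'\to+\infty$ at both endpoints (like $3/(4x^2)$ at $0$ and $c^2x^6/64$ at $\infty$), or equivalently your direct Hilbert--Schmidt bound on the kernel $r(t,\cdot,\cdot)$, for which you rightly observe that the crude estimate of Theorem~\ref{theorem:density-FellerLogistic} is not summable (since $\int e^{Q}\,d\mu=\int dx$ diverges) and must be sharpened through the Feynman--Kac weight. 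Second, your argument for $\eta_i\in\mathbb{L}^1(\mu)$ is exactly the right one and is not in the paper: since $\mu$ is infinite near $0$, Cauchy--Schwarz is unavailable, and the pointwise bound $|\eta_i(x)|\le C_t e^{\lambda_i t}e^{Q(x)/2}$ combined with $\int_0^\infty e^{-Q(x)/2}\,dx<\infty$ closes the gap. Simplicity via the limit-point structure, $\lambda_1>0$ via the explicit solutions $B+A\int^x e^{Q}$ of $L\eta=0$, and $\eta_1>0$ via Krein--Rutman for the positivity-improving compact $P_t$ all match the standard arguments of~\cite{Cattiaux2009}. The only imprecision is attributing the positivity of the eigenvalues of $P_t$ to positivity-preservation; what actually yields it is the semigroup identity $P_t=(P_{t/2})^2$ together with the injectivity you establish from strict positivity of the kernel. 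This does not affect the conclusion.
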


\me The proof of this theorem is based on a relation between the
Fokker-Planck operator $L$ and a Schr\"odinger operator.
 Indeed, let us set for $g\in \mathbb{L}^2(dx)$,
 \ben \tilde{P}_t g = e^{-Q/2}\ P_t(g\ e^{Q/2}).\een
 $\tilde{P}_t$ is a strongly semi-group on $\mathbb{L}^2(dx)$ with generator defined
  for $g\in C^\infty_c((0,+\infty))$ by
  \ben \tilde{L} g = {1\over 2} \triangle g - {1\over 2} (q^2- q')\ g.\een
 The spectral theory for such Schr\"odinger operator with
potential ${(q^2-q')\over 2}$ on the line (or the half-line)
  is well known    (see for example the book of
Berezin-Shubin  \cite{Berezin1991}), but the potential ${(q^2-q')\over
2}$ does not belong to $\mathbb{L}^\infty_{loc}$ as generally assumed.
Nevertheless, in our case $\inf (q^2-q')>-\infty$, which ensures the
compactness of the operators $\tilde{L}$ and $\tilde{P}_t$.

\bi The following corollary of
Theorem~\ref{theorem:spectral-LogisticFeller} is a generalization of the
Perron-Frobenius Theorem in this infinite-dimensional framework. 
%It implies  Theorem~\ref{theorem:qsd-logistic-Feller} (2) and (3).
 \begin{corollary}
   \label{cor-spectral} 
   \label{corollary:spectral-decomposition-logistic-feller}
   For any bounded and measurable function $f$, we have
   \begin{equation}
     \label{limit-Pt} P_t f=_{ \mathbb{L}^2(\mu)} \ \sum_{i\in\mathbb{N}^*}
     e^{-\lambda_it}\ \langle \eta_i,f\rangle_\mu \ \eta_i.
   \end{equation}
 \end{corollary}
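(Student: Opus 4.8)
The plan is to read the identity as an instance of the spectral calculus for the self-adjoint semigroup $(P_t)$ on $\mathbb{L}^2(\mu)$, and then to upgrade it from square-integrable $f$ to merely bounded $f$ by exploiting the smoothing property of $P_t$ together with the $\mathbb{L}^1(\mu)$-integrability of the eigenfunctions.

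First I would record the clean case. By Theorem~\ref{theorem:spectral-LogisticFeller} the family $(\eta_i)_{i\geq 1}$ is an orthonormal basis of $\mathbb{L}^2(\mu)$ with $-L\eta_i=\lambda_i\eta_i$, and $(P_t)$ is the associated symmetric contraction semigroup, so $P_t=e^{-t(-L)}$. For any $g\in\mathbb{L}^2(\mu)$ the functional calculus then gives $P_t g=\sum_i e^{-\lambda_i t}\langle\eta_i,g\rangle_\mu\,\eta_i$ with convergence in $\mathbb{L}^2(\mu)$, the series being convergent since $\sum_i e^{-2\lambda_i t}|\langle\eta_i,g\rangle_\mu|^2\leq\|g\|_{\mathbb{L}^2(\mu)}^2$. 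This already settles the statement whenever $f\in\mathbb{L}^2(\mu)$.

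The difficulty is that a bounded $f$ need not lie in $\mathbb{L}^2(\mu)$, because $\mu(dx)=e^{-Q(x)}dx$ behaves like $dx/x$ near the absorbing boundary $0$ and is therefore infinite there. Two observations make the passage possible. On one hand, the coefficients $\langle\eta_i,f\rangle_\mu$ are still well defined and finite, since $\eta_i\in\mathbb{L}^1(\mu)$ by Theorem~\ref{theorem:spectral-LogisticFeller} and $f$ is bounded. On the other hand, I would split the time and write $P_t f=P_{t/2}(P_{t/2}f)$, the point being that $g:=P_{t/2}f$ \emph{does} belong to $\mathbb{L}^2(\mu)$. Granting this, I apply the clean case to $g$ at time $t/2$, and compute the coefficients by symmetry: $\langle\eta_i,g\rangle_\mu=\langle\eta_i,P_{t/2}f\rangle_\mu=e^{-\lambda_i t/2}\langle\eta_i,f\rangle_\mu$. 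This step is legitimate even though $f\notin\mathbb{L}^2(\mu)$, since by Fubini (justified by $\int|\eta_i(x)|\,\P_x(T_0>t/2)\,\mu(dx)\leq\|\eta_i\|_{\mathbb{L}^1(\mu)}\|f\|_\infty<+\infty$) and the $\mu$-symmetry $r(t/2,x,y)=r(t/2,y,x)$ of the kernel of Theorem~\ref{theorem:density-FellerLogistic}, one has $\langle\eta_i,P_{t/2}f\rangle_\mu=\langle P_{t/2}\eta_i,f\rangle_\mu=e^{-\lambda_i t/2}\langle\eta_i,f\rangle_\mu$. Substituting gives $P_t f=P_{t/2}g=\sum_i e^{-\lambda_i t}\langle\eta_i,f\rangle_\mu\,\eta_i$ in $\mathbb{L}^2(\mu)$, as claimed.

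The main obstacle is thus the claim that $P_{t/2}f\in\mathbb{L}^2(\mu)$ for bounded $f$, i.e. that $P_t$ regularizes $\mathbb{L}^\infty$ into $\mathbb{L}^2(\mu)$ for every $t>0$. Using the kernel representation of Theorem~\ref{theorem:density-FellerLogistic} and positivity of $r$ one has the pointwise bound $|P_s f(x)|\leq\|f\|_\infty\,P_s\mathbf{1}_{\R_+^*}(x)=\|f\|_\infty\,\P_x(T_0>s)$, so everything reduces to showing that $x\mapsto\P_x(T_0>s)$ lies in $\mathbb{L}^2(\mu)$. Near infinity this is harmless, because $\mu$ is finite there (the $cy^4/16$ term in $Q$ forces super-exponential decay) and the survival probability is bounded by $1$; the real work is near $0$, where one must check that $\P_x(T_0>s)$ vanishes fast enough to beat the $1/x$ blow-up of $\mu$. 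Here I would invoke the behavior of the diffusion at the absorbing boundary: since near $0$ the process $X=2\sqrt{Z}$ is governed by the Feller branching part of $Z$, one expects $\P_x(T_0>s)=O(x^2)$ as $x\to 0$, whence $(\P_x(T_0>s))^2/x=O(x^3)$ is integrable at $0$. This boundary estimate, rather than the spectral calculus, is the genuinely delicate ingredient, and it is exactly the kind of control that the a priori $\mathbb{L}^2(\mu)$ bound of Theorem~\ref{theorem:density-FellerLogistic} provides.
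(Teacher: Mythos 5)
Your strategy is essentially the paper's: reduce everything to showing that the semigroup maps bounded functions into $\mathbb{L}^2(\mu)$, expand in the orthonormal basis $(\eta_i)$, and identify the coefficients by symmetry of the kernel together with $\eta_i\in\mathbb{L}^1(\mu)$ (the paper expands $P_tf$ directly and extends $\langle P_tf,\eta_i\rangle_\mu=e^{-\lambda_i t}\langle f,\eta_i\rangle_\mu$ from $\mathbb{L}^2\cap\mathbb{L}^\infty$ to $\mathbb{L}^\infty$ by dominated convergence; your split $P_t=P_{t/2}P_{t/2}$ is an equivalent repackaging). The one place where your argument is not yet a proof is precisely the step you flag as delicate: the $\mathbb{L}^2(\mu)$-integrability of $x\mapsto\P_x(T_0>s)$ near $0$. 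You justify it by the heuristic ``one expects $\P_x(T_0>s)=O(x^2)$'' and a vague appeal to Theorem~\ref{theorem:density-FellerLogistic}; as stated, the bound of that theorem controls $\|r(s,x,\cdot)\|_{\mathbb{L}^2(\mu)}$ in the $y$-variable and does not immediately yield a pointwise rate in $x$, since Cauchy--Schwarz against $\mathbf{1}_{(0,1)}$ fails ($\mathbf{1}_{(0,1)}\notin\mathbb{L}^2(\mu)$ because $e^{-Q(y)}\sim 1/y$). The paper closes this by going back to the Girsanov representation (Proposition~\ref{proposition:girsanov-logistic-feller}), which gives the clean pointwise bound $P_sf(x)\leq K_s\|f\|_\infty e^{Q(x)/2}$, so that $(P_sf(x))^2e^{-Q(x)}$ is bounded near $0$ — note the provable rate is $e^{Q(x)/2}\sim\sqrt{x}$, not your $x^2$, but it is exactly enough. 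Alternatively, your pointer to Theorem~\ref{theorem:density-FellerLogistic} can be made to work via the Chapman--Kolmogorov identity $r(2s,x,y)=\langle r(s,x,\cdot),r(s,y,\cdot)\rangle_\mu\leq C_s^2e^{Q(x)/2}e^{Q(y)/2}$ and the integrability of $e^{-Q/2}$ on $(0,\infty)$, which again gives $\P_x(T_0>2s)\leq C'_se^{Q(x)/2}$; either way this estimate must be written out, since it is the entire analytic content of the corollary.
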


\begin{proof}
Fix $t>0$ and let $f$ be a bounded measurable function on $\mathbb{R}_{+}^*$. Let us first prove
that $P_t f$ belongs to $\mathbb{L}^2(\mu)$. On the one hand, we have
\begin{equation*}
  \int_1^{+\infty} (P_t f(x))^2 d\mu(x)\leq \|f\|_{\infty}^2 \int_1^{\infty} e^{-Q(x)}dx <\infty.
\end{equation*}
On the other hand, by
Proposition~\ref{proposition:girsanov-logistic-feller}, we have, for
all $x\in \mathbb{R}_{+}^*$,
\begin{eqnarray*}
  P_t f(x)&\leq& \|f\|_{\infty}\, e^{\frac{1}{2}Q(x)+\frac{1}{2}Ct}\,
                 \E^{\mathbb{W}_x}\left[\mathbf{1}_{t<T_0(\omega)} e^{-\frac{1}{2} Q(\omega_t)} \right]\\
          &\leq& \|f\|_{\infty}\,e^{\frac{1}{2}Q(x)+\frac{1}{2}Ct}\,
                 \int_0^{\infty} e^{-\frac{1}{2} Q(y)}\frac{e^{-\frac{1}{2t}(y-x)^2}}{\sqrt{2\pi t}} dy.
\end{eqnarray*}
But the function
\begin{equation*}
y\mapsto e^{-\frac{1}{2} Q(y)}=\frac{1}{\sqrt{y}} e^{-\frac{r}{4}(1-y^2) -\frac{c}{32}(y^4-1) },
\end{equation*}
is integrable on $]0,+\infty[$. Since $e^{-\frac{1}{2t}(y-x)^2}\leq
1$, we deduce that there exists a constant $K_t>0$ independent of $x$ and
$f$ such that
\begin{equation*}
  P_t f(x)\leq K_t \|f\|_{\infty} e^{\frac{1}{2}Q(x)},
\end{equation*}
and thus
\begin{equation*}
  \int_0^1 (P_t f(x))^2 d\mu(x) \leq K_t^2 \|f\|_{\infty}^2.
\end{equation*}
Finally $(P_t f)^2$ is integrable with respect to $\mu$, so that $P_t f\in \mathbb{L}^2(\mu)$. 

\me Now we deduce from Theorem~\ref{theorem:spectral-LogisticFeller}
that
\begin{equation}
  \label{equationFirstStep1}
 P_t f=_{\mathbb{L}^2(\mu)} \sum_{i\in\mathbb{N}^*} \langle P_t f, \eta_i \rangle_\mu\ \eta_i
\end{equation}
If $f$ belongs to $\mathbb{L}^2(\mu)$, then the symmetry of $P_t$ implies that
\begin{eqnarray*}
  \langle P_t f, \eta_i \rangle_\mu  = \langle f, P_t\eta_i \rangle_\mu
           = e^{-\lambda_i t}\ \langle f, \eta_i\rangle_\mu.
\end{eqnarray*}
Since $\eta_i \in \mathbb{L}^1(\mu)$, we deduce from the Dominated
Convergence Theorem that the equality $\langle P_t f, \eta_i
\rangle_\mu=e^{-\lambda_i t}\ \langle f, \eta_i\rangle_\mu$ extends to
all measurable bounded functions. This and the equality
\eqref{equationFirstStep1} allow us to conclude the proof of
Corollary~\ref{corollary:spectral-decomposition-logistic-feller}.
\end{proof}

\subsubsection{Existence of the Yaglom limit}
%, uniqueness of the QSD and attractiveness of all initial distribution}
\label{subsubsection:existence-Yaglom-limit}

By Corollary \ref{corollary:spectral-decomposition-logistic-feller},
we have for any bounded and measurable function $f$,
\begin{eqnarray*}
  \|e^{\lambda_1 t} P_t f - \langle \eta_1,
      f\rangle\eta_1\|^2_{\mathbb{L}^2(\mu)}
  &&=\sum_{i\geq 2}
      e^{-2t(\lambda_i-\lambda_1)}|\langle \eta_i,f\rangle|^2\\
  &&\leq e^{-2(t-1)(\lambda_2-\lambda_1)}\sum_{i\in\mathbb{N}^*} 
      e^{-2 (\lambda_i-\lambda_1)} |\langle \eta_i,f\rangle|^2\\
  &&\leq  e^{-2(t-1)(\lambda_2-\lambda_1)}\,e^{2\lambda_1}\,\|P_1 f\|^2_{\mathbb{L}^2(\mu)}
%\\
%  &&\xrightarrow[t\rightarrow\infty]{} 0.
\end{eqnarray*}
Using Cauchy-Schwartz inequality, we deduce that, for any 
function $h\in\mathbb{L}^2(\mu)$,
\begin{equation}
  \left| e^{\lambda_1 t} \langle P_t f,h\rangle_{\mu} - \langle \eta_1,
      f\rangle \langle \eta_1, h\rangle_{\mu}\right|
  \leq e^{-2(t-1)(\lambda_2-\lambda_1)}\|P_1 f\|_{\mathbb{L}^2(\mu)} \|h\|_{\mathbb{L}^2( \mu)}.
\label{equation:attract-L2}
\end{equation}
%% In particular, $\alpha$ attracts any initial distribution with a
%% density in $\mathbb{L}^2(d\mu)$.
  By Theorem
\ref{theorem:density-FellerLogistic}, $\delta_x P_1$ has the density
$r(1,x,.)\in\mathbb{L}^2(\mu)$ with respect to $\mu$, so that, taking $h=r(1,x,\cdot)$,
\begin{equation*}
  \left| e^{\lambda_1 t} P_{t+1} f(x) - \langle \eta_1,
      f\rangle \langle \eta_1, r(1,x,\cdot)\rangle_{\mu}\right|
  \leq e^{-2(t-1)(\lambda_2-\lambda_1)}\|P_1 f\|_{\mathbb{L}^2(\mu)} \|r(1,x,\cdot)\|_{\mathbb{L}^2(\mu)}.
\end{equation*}
 By definition of $\eta_1$, $\langle \eta_1,
 r(1,x,\cdot)\rangle_{\mu}=e^{-\lambda_1}\eta_1(x)$. Thus we have
 \begin{equation*}
   e^{\lambda_1 t} P_{t+1}f(x)\xrightarrow[t\rightarrow+\infty]{}
   \langle\eta_1, f\rangle_{\mu}\, e^{-\lambda_1}\eta_1(x)
 \end{equation*}
 and
 \begin{equation*}
   e^{\lambda_1 t} P_{t+1}\mathbf{1}_{\R_+^*}(x)\xrightarrow[t\rightarrow\infty]{}
   \langle\eta_1, \1_{\R_+^*}\rangle_{\mu}\, e^{-\lambda_1}\eta_1(x)
 \end{equation*}
 Finally, $\eta_1(x)$ being positive, for any $x\in\R_+^*$,
 \begin{equation*}
   \frac{P_{t}f(x)}{P_{t}\mathbf{1}_{\R_+^*}(x)}\xrightarrow[t\rightarrow+\infty]{} \frac{\langle\eta_1, f\rangle_{\mu}}{\langle\eta_1, \1_{\R_+^*}\rangle_{\mu}} = \alpha(f),
 \end{equation*}
 where $\alpha$ is defined in \eqref{equation:logistic-feller-alpha}.
 We conclude that $\alpha$ is the Yaglom limit for $Z$. We also deduce
 parts (2) and (3) of Theorem \ref{theorem:feller-logistic-qsd-for-X}.

\subsubsection{Attractiveness of any initial distribution}
\label{subsubsection:attractiveness}

Let us first consider a compactly supported probability measure $\nu$
on $(0,+\infty)$. By Theorem \ref{theorem:density-FellerLogistic},
$y\mapsto \int_{E^*} r(1,x,y) \nu(dx)$ is the density of $\nu P_1$
with respect to $\mu$.  By \cite[Lemma 5.3]{Cattiaux2009}, there
exists a locally bounded function $\Theta$ such that
\begin{equation*}
  r(1,x,y)\leq \Theta(x)\eta_1(y),\ \forall x,y\in (0,+\infty).
\end{equation*}
In particular, $h:y\mapsto \int_{E^*} r(1,x,y) \nu(dx)$ belongs to
$\mathbb{L}^2$.  Then we deduce from \eqref{equation:attract-L2} that
\begin{equation*}
\E_{\nu}\left(\left. f(X_{t+1}) \right| T_0>t+1 \right)=
\frac{\nu P_{t+1} (f)}{\nu P_{t+1}
  (\mathbf{1}_{E^*})}\xrightarrow[t\rightarrow\infty]{} \alpha(f).
\end{equation*}
We conclude that $\alpha$ attracts any compactly supported probability
measure.

\bi 

\bi Let us now prove that $\alpha$ attracts all initial distributions
$\nu$ supported in $(0,\infty)$. We want to show that, for any probability
measure $\nu$ on $\mathbb{R}_{+}^*$, for any Borel set $A$, we get
\begin{equation}
  \label{attract}
  \lim_{t\to \infty}\mathbb{P}_{\nu}(X_t\in A|T_0>t)
  = \alpha(A).
\end{equation}
This is part (4) of Theorem~\ref{theorem:feller-logistic-qsd-for-X} and it clearly
implies the uniqueness of the QSD for $X$ (and hence for $Z$).

\begin{proposition}
\label{controle-moment}
\label{proposition:controle-moment-logistic-feller}
For any $a>0$, there exists $y_a>0$ such
that $\sup_{x>y_a} \mathbb{E}_x(e^{aT_{y_a}}) <\infty$.
\end{proposition}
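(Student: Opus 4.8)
The plan is to exhibit an explicit \emph{bounded} Lyapunov function on $(y_a,\infty)$ for the generator $L\phi=\frac12\phi''-q\phi'$ and to convert it into a supermartingale giving the uniform exponential bound. The driving mechanism is the cubic downward drift: since $q(x)=\frac{1}{2x}-\frac{rx}{2}+\frac{cx^3}{8}$, there is a level beyond which $q(x)\ge\frac{c}{16}x^3>0$, so the process is pushed down from large values very fast, and the larger $y_a$ is, the more negative $LV$ can be made.

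Concretely, I would test $V(x)=2-y_a^2/x^2$, which is increasing, satisfies $V(y_a)=1$ and $1\le V(x)<2$ for $x>y_a$. A direct computation gives
\begin{equation*}
LV(x)=\tfrac12 V''(x)-q(x)V'(x)=-y_a^2 x^{-3}\Big(\tfrac{3}{x}+2q(x)\Big).
\end{equation*}
First choose $y_a$ large enough that $q(x)\ge\frac{c}{16}x^3$ for every $x>y_a$ (possible because the cubic term dominates the linear one for large $x$). Dropping the positive $\frac3x$ term, we get $LV(x)\le -2y_a^2 x^{-3}q(x)\le-\frac{c}{8}y_a^2$ uniformly in $x>y_a$. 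Enlarging $y_a$ further so that $\frac{c}{8}y_a^2\ge 2a$, and using $V<2$, we obtain
\begin{equation*}
LV(x)\le -2a\le -a\,V(x),\qquad \forall x>y_a.
\end{equation*}

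With this inequality the conclusion is standard. Applying It\^o's formula to $e^{at}V(X_t)$ and stopping at $T_{y_a}\wedge T_n$, where $T_n=\inf\{t:X_t\ge n\}$, shows that $M_t=e^{a(t\wedge T_{y_a}\wedge T_n)}V(X_{t\wedge T_{y_a}\wedge T_n})$ has nonpositive drift $e^{at}(aV+LV)\le 0$, hence is a genuine supermartingale (on $[y_a,n]$ the functions $V,V',V''$ are bounded, so the stochastic integral is a true martingale). Thus $\E_x[M_t]\le V(x)<2$. Since the cubic drift makes $+\infty$ inaccessible (non-explosion established in the proof of Theorem~\ref{thm:cvFellerLogistic}, equivalently $\Lambda(+\infty)=\infty$ in \eqref{kappa}), one has $T_n\to\infty$ almost surely; letting $n\to\infty$ and then $t\to\infty$ by monotone convergence, together with $V\ge V(y_a)=1$, yields
\begin{equation*}
\E_x\big(e^{aT_{y_a}}\big)\le V(x)<2,\qquad \forall x>y_a,
\end{equation*}
which is exactly the claimed uniform bound.

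The computation of $LV$ and the choice of $y_a$ are routine; the only point requiring genuine care is the justification of the supermartingale and optional-stopping step, namely controlling the local-martingale part and guaranteeing $T_n\to\infty$ (no escape to $+\infty$ before reaching $y_a$). Because $V$ and its derivatives are bounded on each interval $[y_a,n]$ and non-explosion at $+\infty$ is already available from the scale-function analysis, this step is harmless, and no estimate on $q$ beyond its cubic growth is needed.
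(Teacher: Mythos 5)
Your proof is correct. The overall strategy is the same as the paper's: build a bounded Lyapunov function $\phi$ on $(y_a,\infty)$ with $a\phi+L\phi\le 0$, turn $e^{a(t\wedge T_{y_a}\wedge T_n)}\phi(X_{t\wedge T_{y_a}\wedge T_n})$ into a supermartingale via It\^o's formula and a localization at the level $n$, and conclude by monotone convergence using non-explosion and the positive lower bound $\phi\ge\phi(y_a)$. The difference lies in the choice of $\phi$. The paper takes the canonical object $J(x)=\int_{x_a}^x e^{Q(y)}\int_y^\infty e^{-Q(z)}\,dz\,dy$, which solves $LJ=-1/2$ exactly; the inequality $aJ+LJ\le 0$ is then obtained not from any growth estimate on $q$ but by pushing $x_a$ far enough out that $J(\infty)\le 1/(2a)$, which is possible precisely because the $\kappa$-type integral $\int^\infty e^{Q}\int^\infty e^{-Q}$ converges (the ``return from infinity'' condition already used in \eqref{kappa}). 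You instead take the explicit power function $V(x)=2-y_a^2/x^2$ and derive $LV\le -aV$ directly from the cubic lower bound $q(x)\ge \tfrac{c}{16}x^3$ for large $x$. Your computation of $LV$ and the two enlargements of $y_a$ check out, and your handling of the local-martingale part and of $T_n\to\infty$ is adequate. What each approach buys: yours is more elementary and fully explicit (it even gives the clean bound $\E_x(e^{aT_{y_a}})<2$), but it exploits the specific polynomial form of the drift; the paper's construction is the one that generalizes, since it works for any drift for which the integral condition holds, even when no pointwise growth estimate on $q$ is available.
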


\begin{proof} Let us remark that
%\begin{equation*}
$\int_{1}^\infty e^{Q(y)} \int_y^\infty \,e^{-Q(z)} \, dz\, dy <\infty.$
%\end{equation*}
Let $a>0$, and pick $x_a$ large enough so that
$
\int_{x_a}^\infty e^{Q(x)} \int_x^\infty \,e^{-Q(z)} \, dz\, dx
\le \frac{1}{2a}\, .
$
Let $J$ be the nonnegative increasing function defined on
$[x_a,\infty)$ by
$$
J(x)=\int_{x_a}^x e^{Q(y)}\int_y^\infty \, e^{-Q(z)} \, dz\, dy.
$$
Then we check that $J''= 2q J' -1$, so that $LJ = -1/2$. Set now
$y_a=1+x_a$, and consider a large $M>x$. It\^o's formula gives
\begin{eqnarray*}
\mathbb{E}_x(e^{a(t\wedge T_M\wedge T_{y_a})} \, J(X_{t\wedge
T_M\wedge T_{y_a}})) = J(x) + \mathbb{E}_x \left(\int_0^{t\wedge
T_M\wedge T_{y_a}} \, e^{as}\,(a J(X_s)+LJ(X_s)) \, ds\right).
\end{eqnarray*}
But $LJ=-1/2$, and $J(X_s)< J(\infty) \le 1/(2a)$ for any $s\leq
T_{y_a}$, so that
\begin{eqnarray*}
\mathbb{E}_x(e^{a(t\wedge T_M\wedge T_{y_a})} \, J(X_{t\wedge
T_M\wedge T_{y_a}})) \leq J(x).
\end{eqnarray*}
For $x\geq y_a$, one gets $1/(2a)>J(x)\geq J(y_a)> 0$. It follows that
$\mathbb{E}_x(e^{a(t\wedge T_M\wedge T_{y_a})}) \leq 1/(2a J(y_a))$.
Letting $M\to \infty$ then $t\to \infty$
% with Proposition \ref{infinityentrance}
, we deduce $\mathbb{E}_x(e^{a T_{y_a}}) \leq 1/(2a
J(y_a))$, by the monotone convergence theorem. So Proposition
\ref{controle-moment} is proved.
\end{proof}

\bi Proving that $\alpha$ attracts all initial distributions
requires the following estimates near $0$ and $\infty$.

\begin{lemme}
  \label{tension} 
  For $h\in \mathbb{L}^1(\mu)$ strictly positive on $(0,\infty)$ we
  have
  \begin{eqnarray}
    \label{tight1}
    & &\lim\limits_{\varepsilon\downarrow 0}\limsup\limits_{t\to \infty}
    \frac{\int_0^\varepsilon h(x) \mathbb{P}_x(T_0>t) \mu(dx)} {\int_0^{+\infty}
      h(x) \mathbb{P}_x(T_0>t) \mu(dx)}=0,\\
          \label{tight2} & &\lim\limits_{M\uparrow \infty}\limsup\limits_{t\to
            \infty} \frac{\int_M^\infty h(x) \mathbb{P}_x(T_0>t) \mu(dx)}
                {\int_0^{+\infty} h(x) \mathbb{P}_x(T_0>t) \mu(dx)}=0.
  \end{eqnarray}
\end{lemme}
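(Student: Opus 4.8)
The plan is to treat both tail estimates with a common normalization by $e^{\theta(\alpha)t}$ (recall $\theta(\alpha)=\lambda_1$, the bottom of the spectrum), controlling the shared denominator once and then bounding each numerator separately. First I would handle the denominator $D(t):=\int_0^\infty h(x)\,\mathbb{P}_x(T_0>t)\,\mu(dx)$. Since part (2) of Theorem~\ref{theorem:feller-logistic-qsd-for-X} gives the pointwise limit $e^{\lambda_1 t}\mathbb{P}_x(T_0>t)\to\eta_1(x)$, Fatou's lemma yields $\liminf_{t\to\infty} e^{\lambda_1 t}D(t)\ge \langle h,\eta_1\rangle_\mu=:c_h>0$, the positivity coming from $h>0$ and $\eta_1>0$. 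It therefore suffices to show that $e^{\lambda_1 t}$ times each numerator is bounded, uniformly in large $t$, by a quantity that tends to $0$ as the truncation parameter approaches the boundary.

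For the behavior near $+\infty$, \eqref{tight2}, I would use the return-from-infinity estimate. Fix $a=\lambda_1$ in Proposition~\ref{proposition:controle-moment-logistic-feller} to obtain $y:=y_{\lambda_1}$ and $K:=\sup_{x>y}\mathbb{E}_x(e^{\lambda_1 T_y})<\infty$. For $x>y$ the continuity of $X$ forces $T_y<T_0$, so the strong Markov property at $T_y$ gives $\mathbb{P}_x(T_0>t)=\mathbb{E}_x\big[g(t-T_y)\big]$, where $g(u)=\mathbb{P}_y(T_0>u)$ for $u\ge0$ and $g(u)=1$ for $u<0$. Because $e^{\lambda_1 u}g(u)\to\eta_1(y)$ and $g\le1$, the constant $C_0:=\sup_{u\in\mathbb{R}}e^{\lambda_1 u}g(u)$ is finite, so $g(u)\le C_0 e^{-\lambda_1 u}$ for every $u$ and hence $\mathbb{P}_x(T_0>t)\le C_0 e^{-\lambda_1 t}\,\mathbb{E}_x(e^{\lambda_1 T_y})\le C_0 K\,e^{-\lambda_1 t}$ uniformly in $x>y$. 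Integrating over $(M,\infty)$ with $M\ge y$ gives $e^{\lambda_1 t}\int_M^\infty h\,\mathbb{P}_x(T_0>t)\,\mu(dx)\le C_0K\int_M^\infty h\,d\mu$; dividing by $e^{\lambda_1 t}D(t)$, taking $\limsup_t$ and then $M\to\infty$ drives the ratio to $0$ because $h\in\mathbb{L}^1(\mu)$.

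For the behavior near $0$, \eqref{tight1}, I would instead invoke the density bound $r(1,x,y)\le\Theta(x)\eta_1(y)$ of Cattiaux et al.\ used above. For $t\ge1$ write $\mathbb{P}_x(T_0>t)=P_1(P_{t-1}\mathbf{1})(x)=\int_0^\infty r(1,x,y)(P_{t-1}\mathbf{1})(y)\,\mu(dy)$; bounding $r$ and using the symmetry of $P_t$ together with $P_s\eta_1=e^{-\lambda_1 s}\eta_1$ gives $\mathbb{P}_x(T_0>t)\le\Theta(x)\langle\eta_1,P_{t-1}\mathbf{1}\rangle_\mu=\Theta(x)\,e^{-\lambda_1(t-1)}\int\eta_1\,d\mu$, i.e.\ $e^{\lambda_1 t}\mathbb{P}_x(T_0>t)\le C_2\,\Theta(x)$ uniformly in $t\ge1$. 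Hence $e^{\lambda_1 t}\int_0^\varepsilon h\,\mathbb{P}_x(T_0>t)\,\mu(dx)\le C_2\int_0^\varepsilon h\,\Theta\,d\mu$, and dividing by $e^{\lambda_1 t}D(t)$, taking $\limsup_t$ and then $\varepsilon\downarrow0$ gives $0$, provided $h\,\Theta$ is integrable near $0$ — which holds because $\Theta$ is locally bounded up to the origin and $h\in\mathbb{L}^1(\mu)$.

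The denominator step is routine, so the real work is the two uniform-in-$t$ upper bounds carrying the sharp rate $e^{-\lambda_1 t}$. The hard part will be the control near $\infty$: the cubic downward drift is what makes the process return, and Proposition~\ref{proposition:controle-moment-logistic-feller} is precisely the quantitative input (exponential moments of $T_y$) needed to beat the time factor and obtain a bound independent of $x>M$. Near $0$ the analogous difficulty is packaged into the imported estimate $r(1,x,y)\le\Theta(x)\eta_1(y)$, the only subtlety being that $\Theta$ remains controlled as $x\downarrow0$ so that the resulting bound is $h\,d\mu$-integrable at the origin.
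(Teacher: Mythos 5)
Your handling of the denominator (Fatou's lemma applied to $e^{\lambda_1 t}\mathbb{P}_x(T_0>t)\to\eta_1(x)$, with positivity of $h$ and $\eta_1$) is fine, and your proof of \eqref{tight2} is essentially the paper's own argument: the paper also takes $a=\lambda_1$ in Proposition~\ref{proposition:controle-moment-logistic-feller}, decomposes $\mathbb{P}_x(T_0>t)$ at the hitting time of a fixed level $x_0\ge y_{\lambda_1}$, and uses $B_0:=\sup_u e^{\lambda_1 u}\mathbb{P}_{x_0}(T_0>u)<\infty$ to get the uniform bound $\mathbb{P}_x(T_0>t)\le A_0(B_0+1)e^{-\lambda_1 t}$ for all $x\ge x_0$. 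Your function $g$ is just a compact rewriting of that decomposition.

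The proof of \eqref{tight1} is where you diverge from the paper, and where there is a genuine gap. The paper bounds the numerator by $\mathbb{P}_1(T_0>t)\int_0^\varepsilon h\,d\mu$ (using that $x\mapsto\mathbb{P}_x(T_0>t)$ is nondecreasing for this one-dimensional diffusion) and bounds the denominator from below by restricting to $[1,3/2]$ and invoking the parabolic Harnack inequality, so it never needs any information about the transition kernel near the origin. You instead rely on $r(1,x,y)\le\Theta(x)\eta_1(y)$ and assert that $\Theta$ is ``locally bounded up to the origin.'' What is stated (and what is actually used elsewhere in the paper, for compactly supported initial laws) is only that $\Theta$ is locally bounded on the open half-line $(0,+\infty)$, i.e.\ bounded on compact subsets bounded away from $0$; nothing quoted here controls $\Theta$ on $(0,\varepsilon]$, and without that you cannot conclude that $\int_0^\varepsilon h\Theta\,d\mu\to 0$. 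The step is repairable without $\Theta$: Theorem~\ref{theorem:density-FellerLogistic} gives $\|r(1,x,\cdot)\|^2_{\mathbb{L}^2(\mu)}\le C\,e^{Q(x)}$ with $e^{Q(x)}\to 0$ as $x\downarrow 0$, and Corollary~\ref{corollary:spectral-decomposition-logistic-feller} gives $\|P_{t-1}\mathbf{1}_{E^*}\|_{\mathbb{L}^2(\mu)}\le e^{-\lambda_1(t-2)}\|P_1\mathbf{1}_{E^*}\|_{\mathbb{L}^2(\mu)}$, so Cauchy--Schwarz yields $e^{\lambda_1 t}\mathbb{P}_x(T_0>t)\le C'e^{Q(x)/2}$ uniformly in $t\ge 2$, with $e^{Q(x)/2}$ bounded (indeed vanishing) near $0$; this makes your scheme work. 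As written, however, the argument for \eqref{tight1} rests on a property of $\Theta$ that has not been established.
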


\begin{proof}
We start with \eqref{tight1}. Using Harnack's inequality (see
\cite[Theorem~1.1]{Trudinger1968}), we have for $\varepsilon<1$ and
large $t$

\begin{equation*}
\frac{\int_0^\varepsilon h(x) \mathbb{P}_x(T_0>t) \mu(dx)} {\int_0^{+\infty}
h(x) \mathbb{P}_x(T_0>t) \mu(dx)}\le
\frac{\mathbb{P}_1(T_0>t)\int_0^\varepsilon h(z) \mu(dz)} {C
 \int_1^{3/2} h(x) \mu(dx) \mathbb{P}_1(T_0>t-1)  },
\end{equation*}
then
\begin{eqnarray*}
  \limsup\limits_{t\to \infty} \frac{\int_0^\varepsilon h(x)
    \mathbb{P}_x(T_0>t) \mu(dx)}{\int h(x) \mathbb{P}_x(T_0>t)
    \mu(dx)}
  &\le& \limsup\limits_{t\to \infty}
    \frac{\mathbb{P}_1(T_0>t)\int_0^\varepsilon h(z) \mu(dz)} {C
    \int_1^{3/2} h(x) \mu(dx) \mathbb{P}_1(T_0>t-1) }\\
  &=& \frac{e^{-\lambda_1} \int_0^\varepsilon h(z) \mu(dz)}
{C\;\int_1^{3/2} h(x) \mu(dx) },
\end{eqnarray*}
and the first assertion of the lemma is proved.

\me For the second limit, we set $A_{0}:=\sup\limits_{x\ge
  y_{\lambda_1}} \mathbb{E}_x(e^{\lambda_1
  T_{y_{\lambda_1}}})<\infty$, where $y_{\lambda_1}$ is taken from
Proposition~\ref{proposition:controle-moment-logistic-feller}. Then
for large $M>y_{\lambda_1}$, we have
\begin{equation*}
  \mathbb{P}_x(T_0>t)=\int_0^t \mathbb{P}_{x_0} (T_0>u)
  \mathbb{P}_x(T_{x_0} \in d(t-u)) + \mathbb{P}_x(T_{x_0}>t).
\end{equation*}
Using $\lim\limits_{u\to \infty} e^{\lambda_1 u}
\mathbb{P}_{x_0}(
T_0>u)=\eta_1(x_0)\langle\eta_{1},1\rangle_{\mu}$, we obtain
%\begin{equation*}
$  B_{0}:=\sup\limits_{u\ge 0}e^{\lambda_1 u} \mathbb{P}_{x_0}(
  T_0>u)<\infty$.
%\end{equation*}
Then
\begin{eqnarray*}
    \mathbb{P}_x(T_0>t) &\le& B_{0}\int_0^t e^{-\lambda_1 u}
      \mathbb{P}_x(T_{x_0} \in d(t-u)) + \mathbb{P}_x(T_{x_0}>t)\\
    &\le& B_{0}\ e^{-\lambda_1 t}\ \mathbb{E}_x(e^{\lambda_1 T_{x_0}})+
      e^{-\lambda_1 t}\ \mathbb{E}_x(e^{\lambda_1 T_{x_0}}) \le
      e^{-\lambda_1 t}A_{0}(B_{0}+1),
\end{eqnarray*}
and \eqref{tight2} follows immediately (since $x\geq x_0\geq
y_{\lambda_1}\Rightarrow\,T_{x_0}\leq T_{y_{\lambda_1}}$).
\end{proof}

\bi Let $\nu$ be any fixed probability distribution whose support is
contained in $(0,\infty)$. We must prove \eqref{attract}. We begin by
claiming that $\nu$ can be assumed to have a strictly positive density
$h$, with respect to $\mu$. Indeed, let
$$
\ell(y)=\int_0^{+\infty} r(1,x,y) \nu(dx).
$$
Using Tonelli's theorem we have
\begin{eqnarray*}
\int_0^{+\infty}\int_0^{+\infty} r(1,x,y) \nu(dx)\, \mu(dy)&=&\int_0^{+\infty}\int_0^{+\infty} r(1,x,y) \, \mu(dy)\,
\nu(dx)\\&=&\int_0^{+\infty} \mathbb{P}_x(T_0>1) \nu(dx)\le 1,
\end{eqnarray*}
which implies that $\int r(1,x,y) \nu(dx)$ is finite $dy-$a.s..
%Also $\ell$ is strictly positive by  Harnack's inequality.
Finally, define $h=\ell/\int \ell d\mu$. Notice that for $d\rho=h
d\mu$
$$
\mathbb{P}_\nu(X_{t+1} \in \cdot \mid T_0>t+1)=\mathbb{P}_\rho(X_t
\in \cdot \mid T_0>t),
$$
showing the claim.

\me Consider $M>\varepsilon>0$ and  any Borel set $A$ included in
$(0,\infty)$. Then
$$
\left|\frac{\int \mathbb{P}_x(X_t \in A,\, T_0>t)
h(x)\,\mu(dx)}{\int
 \mathbb{P}_x( T_0>t)  h(x)\,\mu(dx)}-
\frac{\int_\varepsilon^M  \mathbb{P}_x(X_t \in A,\, T_0>t) h(x)\,
  \mu(dx)}{\int_\varepsilon^M  \mathbb{P}_x( T_0>t)  h(x)\,\mu(dx)}\right|
$$
is bounded by the sum of the following two terms
\begin{eqnarray*}
  I1&=&\left|\frac{\int \mathbb{P}_x(X_t \in A,\, T_0>t) h(x)\,
    \mu(dx)}{\int
    \mathbb{P}_x( T_0>t) h(x)\, \mu(dx)}-
  \frac{\int_\varepsilon^M \mathbb{P}_x(X_t \in A,\, T_0>t) h(x)\,
    \mu(dx)}{\int
 \mathbb{P}_x( T_0>t)  h(x)\,\mu(dx)}\right|\\
  & &\\
  I2&=&\left|\frac{\int_\varepsilon^M \mathbb{P}_x(X_t \in A,\,
    T_0>t)
  h(x)\,\mu(dx)}{\int \mathbb{P}_x( T_0>t)  h(x)\,\mu(dx)}-
  \frac{\int_\varepsilon^M  \mathbb{P}_x(X_t \in A,\, T_0>t)  h(x)\,
    \mu(dx)}{\int_\varepsilon^M  \mathbb{P}_x( T_0>t)
h(x)\,\mu(dx)}\right|.
\end{eqnarray*}
We have the bound
$$
I1\vee I2\le \frac{\int_0^\varepsilon  \mathbb{P}_x( T_0>t) h(x)\,
 \mu(dx)+\int_M^\infty  \mathbb{P}_x( T_0>t)  h(x)\,\mu(dx)}
{\int \mathbb{P}_x( T_0>t)  h(x)\,\mu(dx)}.
$$
Thus, from Lemma \ref{tension} we get
\begin{multline*}
\lim\limits_{\varepsilon\downarrow 0, \, M\uparrow \infty}
\limsup_{t\to \infty} \left|\frac{\int  \mathbb{P}_x(X_t \in A,\,
T_0>t)
  h(x)
\mu(dx)}{\int  \mathbb{P}_x( T_0>t)  h(x)\,\mu(dx)}\right.\\
-
\left. \frac{\int_\varepsilon^M  \mathbb{P}_x(X_t \in A, T_0>t) h(x) \mu(dx)}
{\int_\varepsilon^M  \mathbb{P}_x( T_0>t)  h(x)\mu(dx)}\right|=0.
\end{multline*}
On the other hand we have
\begin{equation*}
\lim\limits_{t\to \infty} \frac{\int_\varepsilon^M
\mathbb{P}_x(X_t \in A,\,
  T_0>t)  h(x)\, \mu(dx)}
{\int_\varepsilon^M \mathbb{P}_x( T_0>t)  h(x)\,
\mu(dx)}=\frac{\int_A \eta_1(z) \mu(dz)}{\int_{\mathbb{R}^+}
\eta_1(z)
  \mu(dz)}=\alpha(A),
\end{equation*}
since $\alpha$ attracts any compactly supported probability measures
, and the result follows.

\bi

\bi

\begin{example}\upshape
We develop now a numerical illustration of this logistic Feller
diffusion case. As for the logistic birth and death process
(see Example 3, Section \ref{section:QSD-for-BD-process}), the value
of the charge capacity $\frac{r}{c}$ will remain  equal to the fixed value $9$ across the
whole example.

\me We begin by showing in Figure \ref{example4_path} a random path of
a logistic Feller diffusion process with initial size $Z_0=1$ and with
parameters $r=9$ and $c=1$ (an Euler method is used for the numerical
simulation of the random path). We observe that the process quickly attains the value of
the charge capacity and remains around it for a long time.
\begin{figure}
    \includegraphics[width=13cm]{./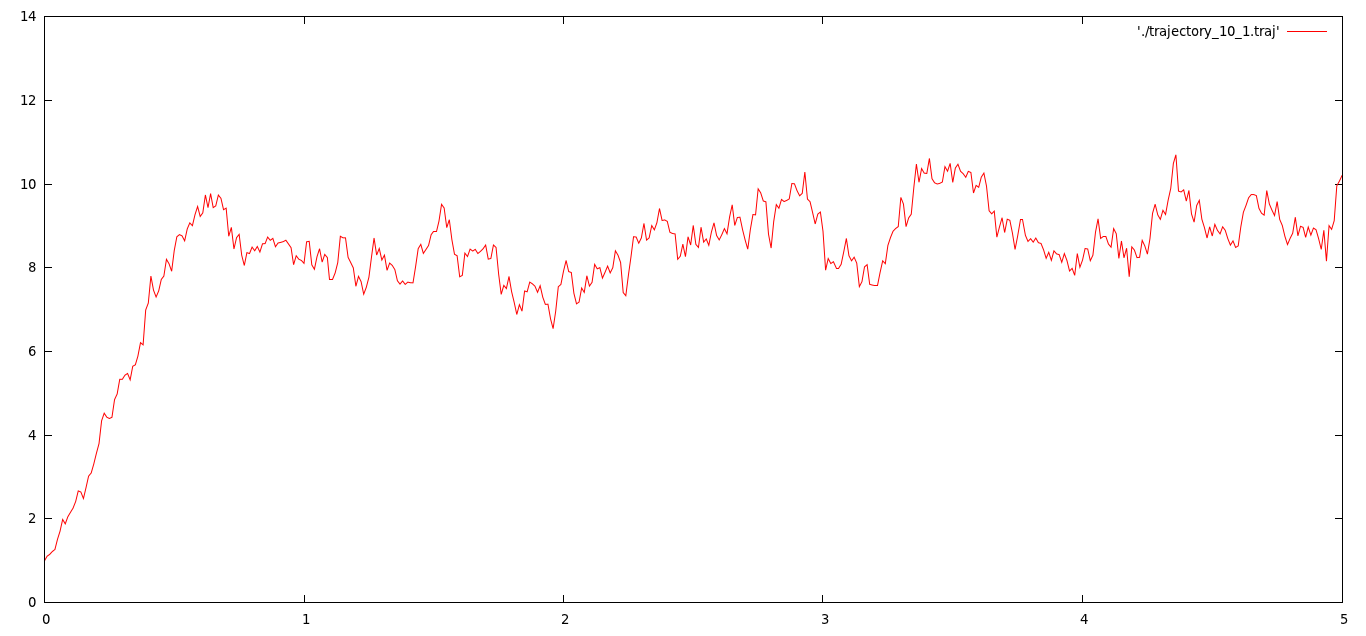}
    \caption{\label{example4_path} Example 4. A random path for
      logistic Feller diffusion process with initial size $Z_0=1$ and
      parameters $r=9$ and $c=1$}
  \end{figure}

%\me Following the numerical illustration of the logistic birth and
%death process of Example 3, 
\me We compare now the Yaglom limits of two
different logistic Feller diffusion processes whose charge capacity is
equal to $9$ (see Figure \ref{example4_yaglom}):
  \begin{itemize}
  \item[(a)] $Z^{(a)}$, whose parameters are $r=9$ and $c=1$,
  \item[(b)] $Z^{(b)}$, whose parameters are $r=3$ and $c=1/3$.
  \end{itemize}
\begin{figure}
\centering
\subfloat{\includegraphics[width=13cm]{./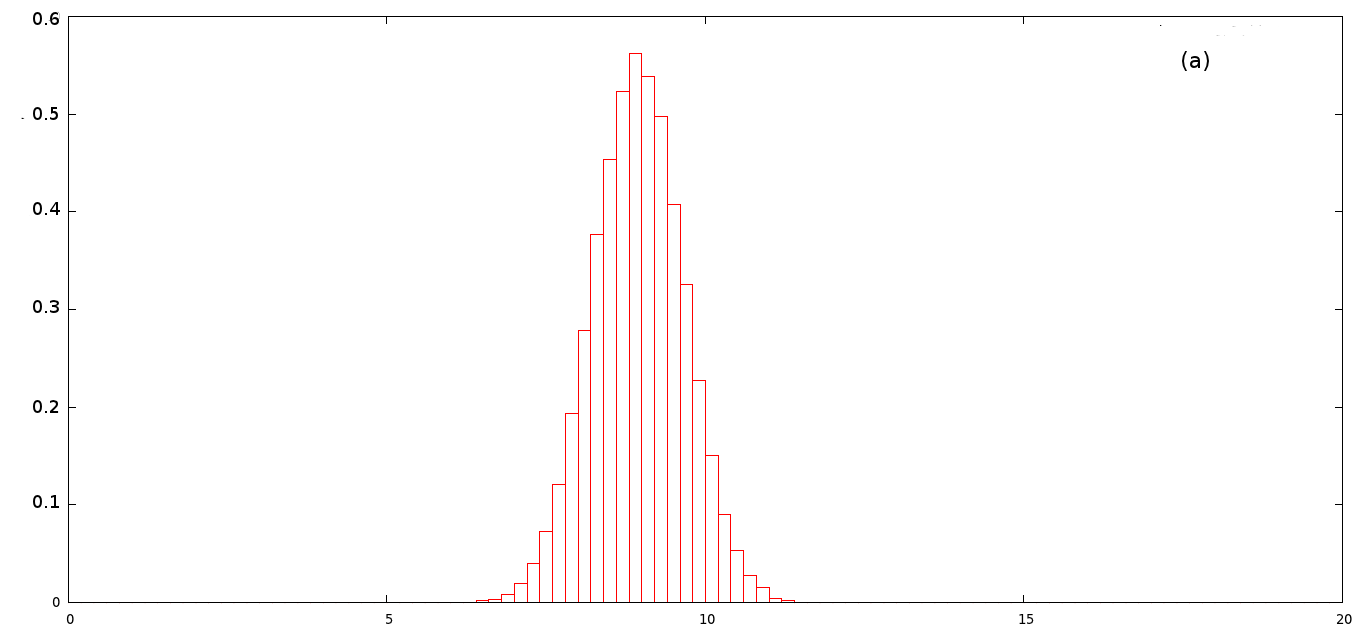}}\\
\subfloat{\includegraphics[width=13cm]{./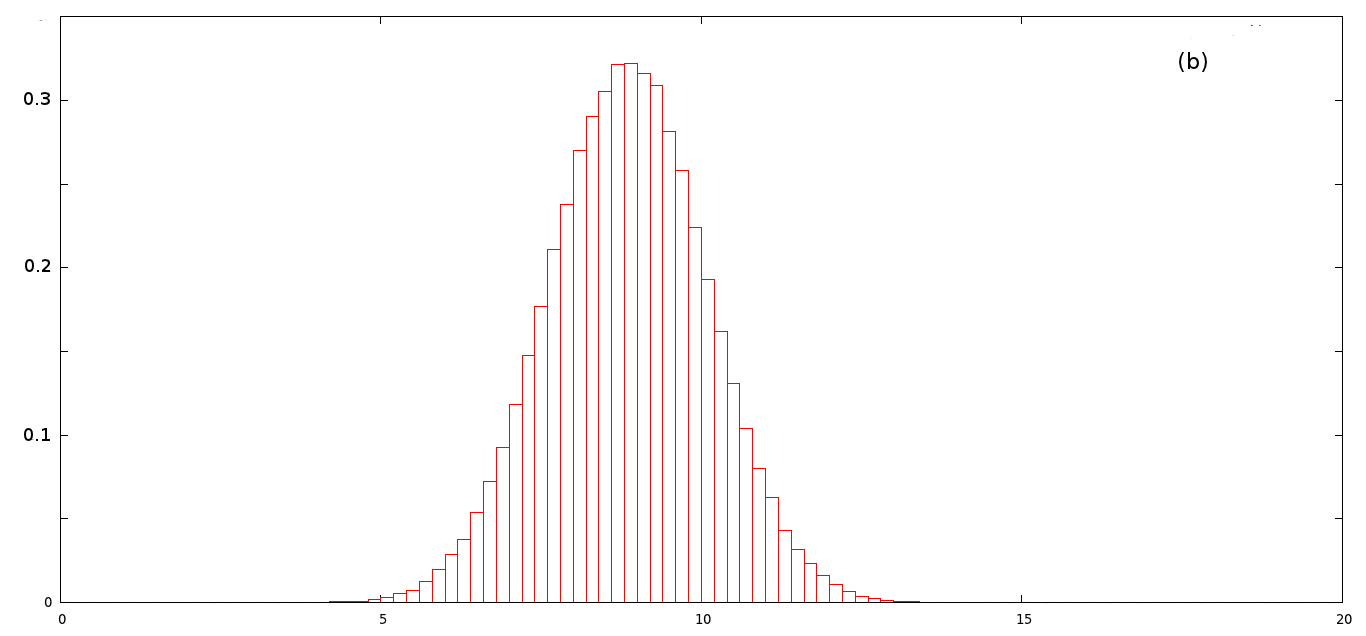}}
\caption{Example 4. The Yaglom limits of two logistic Feller
  diffusion processes with the same charge capacity: (a) $r=9$ and
  $c=1$; (b) $r=3$ and $c=1/3$.
\label{example4_yaglom}}
\end{figure}

\noindent As for the logistic BD-processes, we observe that the two
Yaglom limits are \textit{centered} around the charge capacity. But as
a consequence of the relatively \textit{weak} noise around the charge
capacity, the Yaglom limit has clearly a smaller variation around
this value in the logistic Feller diffusion case than in the logistic
BD process case. We also observe that the smaller are the parameters,
the flatter is the Yaglom limit and with a similar explanation as in
the logistic BD-process case.

%% \me As in the logistic birth and death case, the speed of convergence
%% to the Yaglom limit is approximately given by the speed at which the
%% region around the charge capacity is reached, which happens very fast
%% (see Figure \ref{example4_path}). On the contrary, the extinction
%% happens very lately, so that the Yaglom limit is observed far before
%% the extinction in this situation.

\me We observe now the distance between the conditional distributions
of $Z^{(a)}$ and $Z^{(b)}$ and their respective Yaglom limits, for
different initial states, namely $Z_0=1$, $Z_0=10$ and $Z_0=100$. The
results, computed with the help of the approximation method studied
in Section~\ref{section:simulation}, are represented on
figure~\ref{figure:example4_distance}.
\begin{figure}
\centering
\subfloat{\includegraphics[width=13cm]{./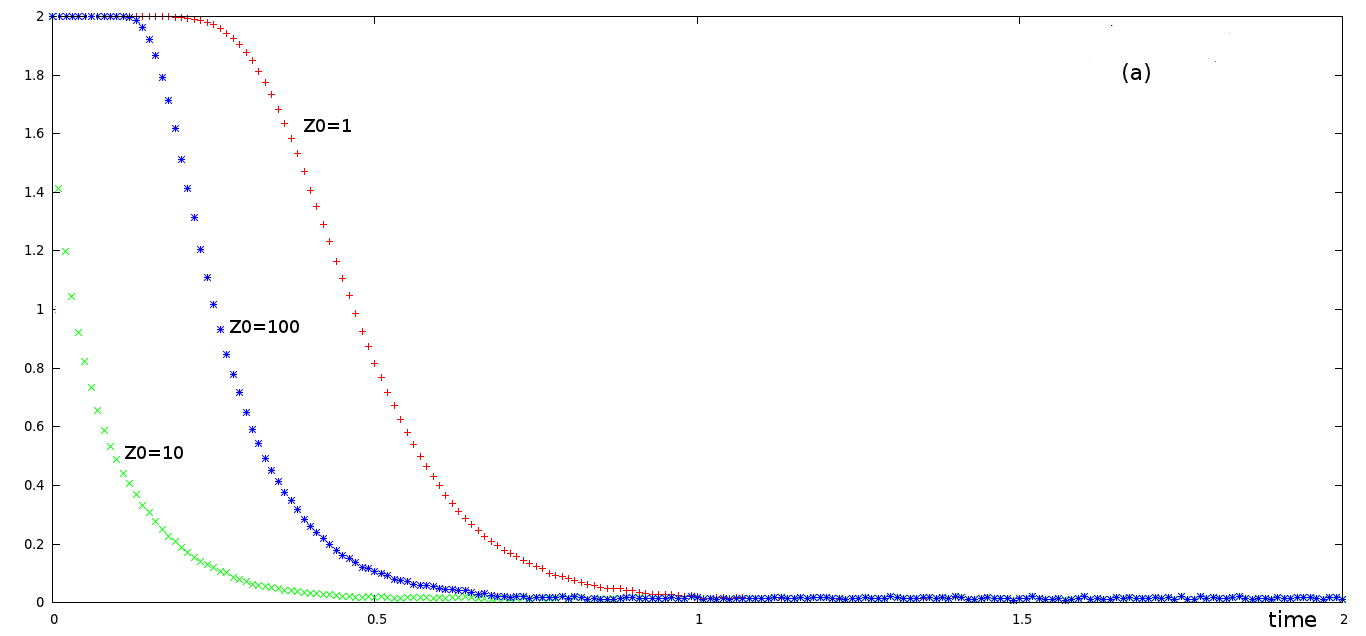}}\\
\subfloat{\includegraphics[width=13cm]{./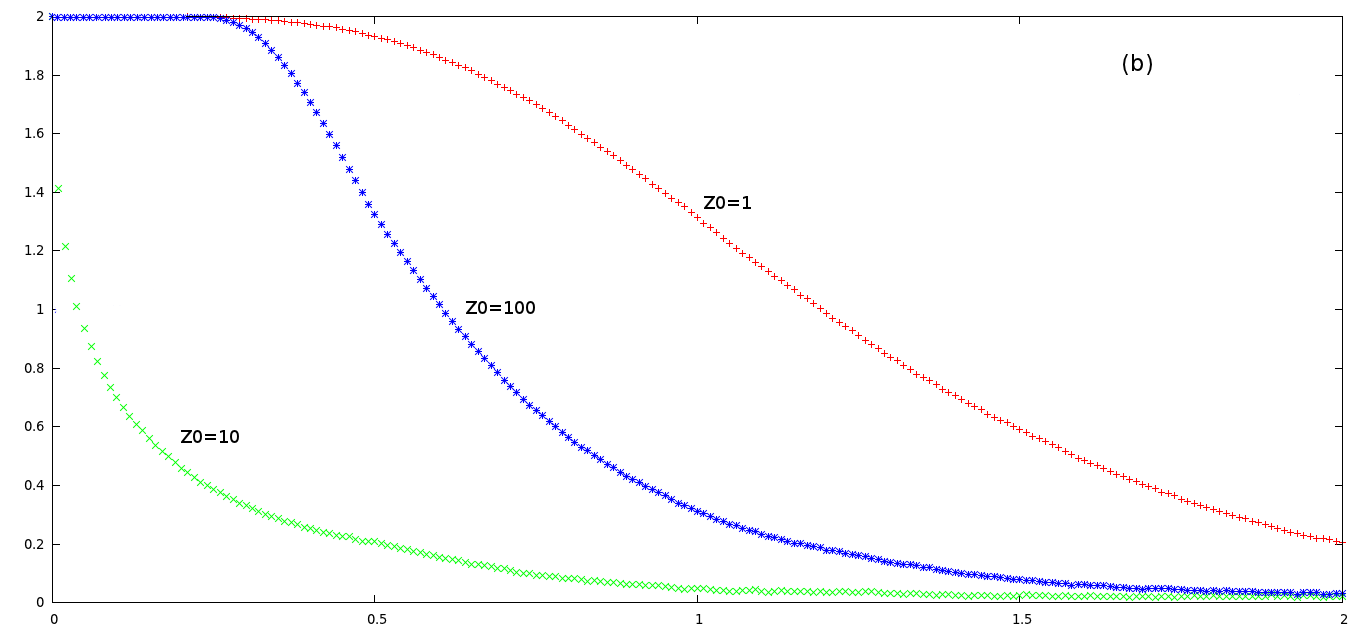}}
\caption{Example 4. Evolution of the distance between the conditioned
  distribution and the Yaglom limits of two logistic Feller diffusion
  processes with the same charge capacity $\frac{r}{c}=9$: (a) $r=9$
  and $c=1$; (b) $r=3$ and $c=1/3$.
\label{figure:example4_distance}}
\end{figure}
 For both $Z^{(a)}$ and $Z^{(b)}$, the speed of convergence to the
 Yaglom limit is the highest for $Z_0=10$, which is quite intuitive
 since the value of the charge capacity is $9$. We also observe that
 it is higher for the processes starting from $100$ than for the
 processes starting from $1$. In particular, this behavior is
 different than in the logistic birth and death process case.

\end{example}

\subsection{The $Q$-process}
\label{subsection:Q-process-logistic-Feller}

Let us now describe the law of the trajectories conditioned to
never attain $0$.

\begin{theorem}\cite{Cattiaux2009}
\label{Q-process}
\label{theorem:Q-process-FellerLogistic}
Let us fix a time $s$ and consider $B$ a
measurable subset of $C([0,s],\mathbb{R}_+)$. Then for any $x\in
\mathbb{R}_+^*$,
 \begin{equation*}
   \lim_{t\to \infty} \mathbb{P}_x(X\in B| t<T_0)=Q_x(X\in B),
 \end{equation*}
 where $Q_x$ is the law of a continuous process with transition
 probabilities given by $q(s,x,y)dy$, with
 \begin{equation*}
 q(s,x,y)=e^{\lambda_1 s}\ {\eta_1(y)\over \eta_1(x)}\ r(s,x,y)\
 e^{-Q(y)}.
 \end{equation*}
\end{theorem}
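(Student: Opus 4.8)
The plan is to follow the same route as the finite state space case (Theorem~\ref{theorem:q-process-finite-case}), now using the sharp tail asymptotics of part~(2) of Theorem~\ref{theorem:feller-logistic-qsd-for-X} in place of the Perron--Frobenius expansion; recall that the extinction rate equals the bottom eigenvalue, $\theta(\alpha)=\lambda_1$. First I would fix the horizon $s$, a bounded cylinder functional of $(X_u)_{u\le s}$ specifying the event $B$, and a time $t>s$, and apply the Markov property at time $s$ to write
\begin{equation*}
  \mathbb{P}_x(X\in B\mid t<T_0)
  = \frac{\mathbb{E}_x\!\left[\mathbf{1}_{\{(X_u)_{u\le s}\in B\}}\,\mathbf{1}_{s<T_0}\;\mathbb{P}_{X_s}(t-s<T_0)\right]}{\mathbb{P}_x(t<T_0)}.
\end{equation*}
Decomposing the ratio of survival probabilities as
\begin{equation*}
  \frac{\mathbb{P}_{X_s}(t-s<T_0)}{\mathbb{P}_x(t<T_0)}
  = e^{\lambda_1 s}\,
    \frac{e^{\lambda_1(t-s)}\mathbb{P}_{X_s}(t-s<T_0)}{e^{\lambda_1 t}\mathbb{P}_x(t<T_0)},
\end{equation*}
part~(2) of Theorem~\ref{theorem:feller-logistic-qsd-for-X} shows that the numerator converges pointwise to $\eta_1(X_s)$ and the denominator to the positive constant $\eta_1(x)$.

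Passing this limit under the expectation is the main obstacle, and it is exactly where the estimates already prepared for Lemma~\ref{tension} are needed. I would set $g(y)=\sup_{u\ge 0}e^{\lambda_1 u}\mathbb{P}_y(T_0>u)$ and check that $g$ is locally bounded and that $\mathbb{E}_x[\mathbf{1}_{s<T_0}\,g(X_s)]<\infty$: for $y$ bounded away from $0$ and $\infty$ this follows from the local boundedness provided by Harnack's inequality together with the convergence $e^{\lambda_1 u}\mathbb{P}_y(T_0>u)\to\eta_1(y)$, while the uniform bound $e^{\lambda_1 u}\mathbb{P}_y(T_0>u)\le A_0(B_0+1)$ obtained in the proof of \eqref{tight2} controls the tail $y\ge y_{\lambda_1}$. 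Since the denominator is bounded below for large $t$ and the killed law $\mathbb{P}_x(X_s\in\cdot\,;s<T_0)=r(s,x,\cdot)\,\mu$ integrates $g$ (its density is integrable near both ends by Theorem~\ref{theorem:density-FellerLogistic}), dominated convergence yields
\begin{equation*}
  \lim_{t\to\infty}\mathbb{P}_x(X\in B\mid t<T_0)
  = \mathbb{E}_x\!\left[\mathbf{1}_{\{(X_u)_{u\le s}\in B\}}\,\mathbf{1}_{s<T_0}\;e^{\lambda_1 s}\,\frac{\eta_1(X_s)}{\eta_1(x)}\right]
  =: Q_x(X\in B),
\end{equation*}
so that $Q_x$ is the $h$-transform of the killed law on $\mathcal{F}_s$ with $h=\eta_1$.

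It then remains to identify $Q_x$ with the claimed Markov law. Taking $B$ of the form $\{X_{s_1}\in dy_1,\dots,X_{s_k}\in dy_k\}$ with $s_k=s$ and inserting the density $\mathbb{P}_x(X_s\in dy;\,s<T_0)=r(s,x,y)e^{-Q(y)}\,dy$ of Theorem~\ref{theorem:density-FellerLogistic}, the one-step law under $Q_x$ is $q(s,x,y)\,dy$ with $q$ as stated. To see that these densities are the transitions of a genuine conservative continuous Markov process, I would verify, using the eigenrelation $P_s\eta_1=e^{-\lambda_1 s}\eta_1$ (Theorem~\ref{theorem:spectral-LogisticFeller}) and $\eta_1\in\mathbb{L}^1(\mu)$, the normalization
\begin{equation*}
  \int_0^\infty q(s,x,y)\,dy
  = \frac{e^{\lambda_1 s}}{\eta_1(x)}\int_0^\infty \eta_1(y)\,r(s,x,y)\,\mu(dy)
  = \frac{e^{\lambda_1 s}}{\eta_1(x)}\,P_s\eta_1(x)=1,
\end{equation*}
together with the Chapman--Kolmogorov identity $\int_0^\infty q(s,x,y)q(t,y,z)\,dy=q(s+t,x,z)$, which after cancelling $\eta_1(y)$ reduces to the relation $\int r(s,x,y)r(t,y,z)\,\mu(dy)=r(s+t,x,z)$ for the killed semi-group. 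Finally, continuity of the trajectories under $Q_x$ is inherited from the original diffusion, since on each finite horizon $Q_x$ is absolutely continuous with respect to $\mathbb{P}_x$ restricted to the non-absorbed paths. This establishes both the stated convergence and the description of the $Q$-process.
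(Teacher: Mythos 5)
Your proposal is correct and follows essentially the same route as the paper: apply the Markov property at time $s$, use the convergence $e^{\lambda_1 u}\mathbb{P}_y(T_0>u)\to\eta_1(y)$ from part (2) of Theorem~\ref{theorem:feller-logistic-qsd-for-X} to identify the limit of the ratio of survival probabilities, and recognize $Q_x$ as the $\eta_1$-transform of the killed law. You additionally supply the domination argument needed to pass the limit under the expectation and the normalization/Chapman--Kolmogorov checks, which the paper's proof leaves implicit; these are welcome refinements of the same argument rather than a different approach.
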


\begin{proof}
Since  $\ r(s,x,y)\
 e^{-Q(y)} dy\ $ is the law of $X_s$ started from $x$ before extinction, we have to prove that
 \ben
 Q_x(X\in B) = e^{\lambda_1 s}\mathbb{E}_x\left({\bf 1}_B(X)\ {\eta_1(X_s)\over
 \eta_1(x)}\ {\bf 1}_{T_0>s}\right).\een
 For $t>s$,
 \ben
{\mathbb{P}_x(X\in B ; T_0>t)\over \mathbb{P}_x(T_0>t)} =
{\mathbb{P}_x(X\in B ; T_0>s ; \mathbb{E}_{X_s}(T_0>t-s) )\over
\mathbb{P}_x(T_0>t)},\een
and we have proved that
 \ben \lim_{t\to \infty} {\mathbb{P}_y(T_0> t-s)\over
 \mathbb{P}_x(T_0>t)} = e^{\lambda_1 s}\ {\eta_1(y)\over
 \eta_1(x)}.\een
 Then,
\ben \lim_{t\to \infty} {\mathbb{P}_x(X\in B ; T_0>t)\over
\mathbb{P}_x(T_0>t)} = {e^{\lambda_1 s}\over
 \eta_1(x)}\ \mathbb{P}_x\left({\bf 1}_B(X)\ {\eta_1(X_s)}\ {\bf 1}_{T_0>s}\right).\een
\end{proof}

\begin{corollary}
\label{corjavaitort} For any Borel  set $A\subset (0,\infty)$ and
any $x$, \ben \lim_{s\to \infty} Q_x(X_s\in A)= \int_A \eta_1^2(y)
\mu(dy) = <\eta_1, 1>_\mu  \int_A \eta_1(y) \alpha(dy).\een
\end{corollary}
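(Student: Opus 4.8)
The plan is to read off $Q_x(X_s\in A)$ directly from the transition density of the $Q$-process furnished by Theorem~\ref{theorem:Q-process-FellerLogistic}, and then let $s\to\infty$ using the spectral machinery already set up for the Yaglom limit. Since the transition kernel is $q(s,x,y)\,dy = e^{\lambda_1 s}\,\frac{\eta_1(y)}{\eta_1(x)}\,r(s,x,y)\,\mu(dy)$, I would first rewrite
\begin{equation*}
  Q_x(X_s\in A) = \frac{e^{\lambda_1 s}}{\eta_1(x)}\int_A \eta_1(y)\, r(s,x,y)\,\mu(dy) = \frac{e^{\lambda_1 s}}{\eta_1(x)}\, P_s\!\left(\eta_1\mathbf{1}_A\right)(x),
\end{equation*}
recognizing the integral as the killed semi-group acting on the test function $g:=\eta_1\mathbf{1}_A$. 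The crucial observation is that $g\in\mathbb{L}^2(\mu)$: indeed $\eta_1\in\mathbb{L}^2(\mu)$ by Theorem~\ref{theorem:spectral-LogisticFeller} and $|g|\le|\eta_1|$. So, although $g$ need not be bounded, it lies in the Hilbert space where the spectral decomposition is available.

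The next step is the pointwise limit $e^{\lambda_1 s}P_s g(x)\to\langle\eta_1,g\rangle_\mu\,\eta_1(x)$. For this I would reuse the estimate \eqref{equation:attract-L2} from Subsection~\ref{subsubsection:existence-Yaglom-limit}, noting that its derivation only uses $f\in\mathbb{L}^2(\mu)$ (the identity $P_t f =_{\mathbb{L}^2(\mu)}\sum_i e^{-\lambda_i t}\langle f,\eta_i\rangle_\mu\,\eta_i$ holds for every $\mathbb{L}^2(\mu)$ function by symmetry of $P_t$), so it is legitimate to take $f=g$. Applying \eqref{equation:attract-L2} with this $g$ and with $h=r(1,x,\cdot)$, which belongs to $\mathbb{L}^2(\mu)$ by Theorem~\ref{theorem:density-FellerLogistic}, and using $\langle\eta_1,r(1,x,\cdot)\rangle_\mu = P_1\eta_1(x)=e^{-\lambda_1}\eta_1(x)$, yields
\begin{equation*}
  \left| e^{\lambda_1 t} P_{t+1}g(x)-\langle\eta_1,g\rangle_\mu\, e^{-\lambda_1}\eta_1(x)\right|\xrightarrow[t\to\infty]{}0.
\end{equation*}
Writing $s=t+1$ gives $e^{\lambda_1 s}P_s g(x)\to\langle\eta_1,g\rangle_\mu\,\eta_1(x)$, exactly as in the Yaglom limit argument.

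Dividing by $\eta_1(x)>0$ then produces $\lim_{s\to\infty}Q_x(X_s\in A)=\langle\eta_1,g\rangle_\mu=\langle\eta_1,\eta_1\mathbf{1}_A\rangle_\mu=\int_A\eta_1^2(y)\,\mu(dy)$, the first claimed equality. For the second equality I would only invoke the explicit form of $\alpha$ in \eqref{equation:logistic-feller-alpha}: since $\alpha(dy)=\eta_1(y)\,\mu(dy)/\langle\eta_1,1\rangle_\mu$, one has $\eta_1(y)\,\mu(dy)=\langle\eta_1,1\rangle_\mu\,\alpha(dy)$, whence $\int_A\eta_1^2\,d\mu=\int_A\eta_1(y)\cdot\eta_1(y)\,\mu(dy)=\langle\eta_1,1\rangle_\mu\int_A\eta_1(y)\,\alpha(dy)$.

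The only genuinely delicate point is upgrading the convergence from an $\mathbb{L}^2(\mu)$-statement to the value at a fixed $x$: one cannot sum the spectral series naively but must route the evaluation through the one-step density $r(1,x,\cdot)$, precisely as in Subsection~\ref{subsubsection:existence-Yaglom-limit}. Everything else is bookkeeping, once it is recorded that $g=\eta_1\mathbf{1}_A\in\mathbb{L}^2(\mu)$ so that the $\mathbb{L}^2$-spectral estimates genuinely apply to it.
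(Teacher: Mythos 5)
Your proposal is correct and follows essentially the same route as the paper: both rest on the observation that $\eta_1\mathbf{1}_A\in\mathbb{L}^2(\mu)$, so the convergence $e^{\lambda_1 s}r(s,x,\cdot)\to\eta_1(x)\eta_1(\cdot)$ in $\mathbb{L}^2(\mu)$ can be paired against this test function. The paper simply quotes that $\mathbb{L}^2$-convergence directly, whereas you re-derive the pointwise limit by routing through $h=r(1,x,\cdot)$ and the estimate \eqref{equation:attract-L2}; this is only a cosmetic difference.
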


\begin{proof}

 Since ${\bf 1}_A\, 
\eta_1 \in \mathbb{L}^2(\mu)$, thus
$$
\eta_1(x)\ Q_x(X_s\in A)=\int {\bf 1}_A(y)\ \eta_1(y) \,
e^{\lambda_1 s}\ r(s,x,y)\ \mu(dy)$$ converges to $\eta_1(x)
\, \int_B \eta^2_1(y) \mu(dy)$ as $s \to +\infty$, since
$e^{\lambda_1 s} r(s,x,.)$ converges to $\eta_1(x) \, \eta_1(.)$
in $\mathbb{L}^2(d\mu)$.
\end{proof}

\begin{remark}\upshape
The stationary measure of the $Q$-process is absolutely continuous
with respect to $\alpha$, with Radon-Nikodym derivative $<\eta_1,
1>_\mu \, \eta_1$.
\end{remark}

\subsection{The case of a multi-type population}
\label{subsection:multi-type-population}

Until now, we have considered a population where all individuals have
the same ecological parameters. This biological assumption corresponds
to the case where individuals have the same type.  In this section, we
generalize the previous study to a population composed of $k$
different types. The population size process describing the dynamics
of each subpopulation is given by a $k$-dimensional stochastic
Lotka-Volterra process $Z=(Z^1_t,\cdots, Z^k_t)_{t\geq 0}$ (SLVP),
which describes the size of a $k$-types density dependent
population. This model generalizes to $k$ types the $2$-types density
dependent model introduced by Cattiaux and M\'el\'eard \cite{Cattiaux2008}.

\me More precisely, we consider for $i,j\in\{1,\cdots,k\}$ the
coefficients
\begin{equation*}
  \label{coeffts} 
  \gamma_i > 0\ ,\ r_i>0\ ;\ c_{ij} >0,\ \forall i,j\in\{1,\cdots,k\}.
\end{equation*}
The process $Z$ takes its values in $(\mathbb{R}_+)^k$ and is
solution of the stochastic differential system 
\begin{equation}
  \label{SLVP} 
  dZ^i_t=\sqrt{\gamma_i Z^i_t}dB^i_t + (r_i Z^i_t - \sum_{{j=1}}^k c_{ij} Z^i_t Z^j_t)\ dt,
\end{equation}
where $(B^i)_{i=1,\cdots, k}$ are independent standard Brownian motions
independent of the initial data $Z_0$. The system \eqref{SLVP} can be obtained as \eqref{eq:Fellerlogistic} as 
 approximation of  renormalized $k$-types birth and death processes in
 case of large population and small life lengths and reproduction
 times. The coefficients $r_i$ are the asymptotic growth rates of
 $i$-type's populations. The positive coefficients $\gamma_i$ can be
 interpreted as demographic parameters describing the ecological
 timescale. The coefficient $c_{ij}$, for $i,j =1,\cdots,k$,
 represents the pressure felt by an individual holding type $i$ from
 an individual with type $j$.  Intra-specific competition is modeled by the
 rates $c_{ii}$, while  inter-specific competition is described
 by the coefficients $c_{ij}>0, i\neq j$.
 If $c_{ij}=0$ for all $i\neq j$, the stochastic $k$-dimensional process reduces to $k$ independent Feller logistic diffusion processes.  Extinction of the
population is modeled by the absorbing state $(0,\cdots, 0)$ and the
extinction of the subpopulation of type $i$ is modeled by the
absorbing set $$H_{i}= (\mathbb{R}_+^*)^{i-1}\times \{0\}\times
(\mathbb{R}_+^*)^{k-i}.$$

 \bi 

\me We denote by $D$ the open subset of $\mathbb{R}^k $ defined by
$D=(\mathbb{R}_{+}^* )^k$ and by $\partial {D}$ its boundary.  We
denote by $T_{0}$ the first hitting time of $(0,\cdots,0)$, by $T_{A}$
the first hitting time of some subset $A$ and thus by $T_{\partial D}$
the exit time of $D$. Of course, some of these stopping times are
comparable. For example if the initial condition belongs to $D$, \be
\label{hittingtime} T_{\partial D}\leq T_{H_{i}}\leq  T_0, \quad \forall i=1,\cdots, k.
\ee On the other hand, $T_{H_{i}}$ and $T_{H_{j}}$ are not directly
comparable for $i\neq j$.

\bi Let us prove the existence of the SLVP .
\begin{proposition} \label{exi-compet} The process $(Z_t)_t$ is well defined
on $\mathbb{R}_+$. In addition, for all $x\in (\mathbb{R}_+)^k$,
$$\mathbb{P}_x(T_0<+\infty)=1$$
and there exists $\lambda>0$ such that
$$\sup_{x\in (\mathbb{R}_+)^k}\mathbb{E}_x(e^{\lambda T_0})
<+\infty.$$
\end{proposition}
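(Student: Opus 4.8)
The plan is to reduce the whole statement to the one-dimensional logistic Feller diffusion studied in Section~\ref{subsection:qsd-for-logistic-Feller-diffusions}, by dominating each coordinate of $Z$ by an independent scalar logistic Feller diffusion driven by the same Brownian motion. First I would establish local existence and pathwise uniqueness of \eqref{SLVP} up to an explosion time $\tau_\infty$. The diffusion coefficient of the $i$-th coordinate, $z\mapsto\sqrt{\gamma_i z_i}$, is $1/2$-H\"older and depends on $z_i$ only, while the drift is locally Lipschitz on $(\mathbb{R}_+)^k$; a Yamada--Watanabe argument applied coordinate by coordinate (exactly as in the scalar case of Theorem~\ref{theorem:convergence-FellerLogistic} and as in \cite{Cattiaux2008} for $k=2$) then yields a unique strong solution up to $\tau_\infty$, with $0$ absorbing for each coordinate since both coefficients vanish there.

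For each $i$ I would introduce the scalar process $\bar Z^i$ solving $d\bar Z^i_t=\sqrt{\gamma_i \bar Z^i_t}\,dB^i_t+(r_i\bar Z^i_t-c_{ii}(\bar Z^i_t)^2)\,dt$ with $\bar Z^i_0=Z^i_0$, driven by the same $B^i$. This is precisely a logistic Feller diffusion of the form \eqref{eq:Fellerlogistic} (with parameters $\gamma_i/2$, $r_i>0$, $c_{ii}>0$), so Theorems~\ref{theorem:convergence-FellerLogistic} and~\ref{theorem:feller-logistic-qsd-for-X} apply to it. Writing the drift of $Z^i$ as $r_iZ^i_t-c_{ii}(Z^i_t)^2-Z^i_t\sum_{j\neq i}c_{ij}Z^j_t$ and using $c_{ij}>0$, $Z^j_t\geq 0$, the drift of $Z^i$ is pathwise bounded above by $r_iZ^i_t-c_{ii}(Z^i_t)^2$, i.e.\ by the drift function of $\bar Z^i$ evaluated at $Z^i_t$. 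Since the two equations share $B^i$ and the same $1/2$-H\"older diffusion coefficient, the comparison theorem for one-dimensional It\^o processes (Ikeda--Watanabe~\cite{Ikeda1989}, Theorem~VI.1.1) gives $0\leq Z^i_t\leq\bar Z^i_t$ for all $t$, almost surely. As the scalar logistic Feller diffusion never explodes, this domination forces $\tau_\infty=+\infty$, so $Z$ is well defined on $\mathbb{R}_+$. (Non-explosion also follows directly from the Lyapunov function $V(z)=\sum_i z_i$, for which $LV\leq(\max_i r_i)V$ because all competition terms are nonpositive.)

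For extinction, set $\tau_i=\inf\{t:Z^i_t=0\}$ and $\bar\tau_i=\inf\{t:\bar Z^i_t=0\}$. Theorem~\ref{theorem:convergence-FellerLogistic}, part ii), gives $\bar\tau_i<\infty$ a.s.; since $0\leq Z^i_t\leq\bar Z^i_t$, at time $\bar\tau_i$ we get $Z^i_{\bar\tau_i}=0$, whence $\tau_i\leq\bar\tau_i$, and $0$ being absorbing, $Z^i$ remains at $0$ thereafter. As each coordinate stays trapped at $0$ once reached, $T_0=\max_i\tau_i\leq\max_i\bar\tau_i<\infty$ a.s., proving $\mathbb{P}_x(T_0<\infty)=1$. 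For the exponential moment I would fix $\lambda>0$ and use $e^{\lambda T_0}=e^{\lambda\max_i\tau_i}\leq\sum_{i=1}^k e^{\lambda\bar\tau_i}$, reducing the claim to the uniform bound $\sup_{y\geq 0}\mathbb{E}_y(e^{\lambda\bar\tau_i})<\infty$ for each scalar logistic Feller diffusion. Proposition~\ref{proposition:controle-moment-logistic-feller} provides, for small $\lambda$, a level $y_\lambda$ with $\sup_{y>y_\lambda}\mathbb{E}_y(e^{\lambda T_{y_\lambda}})<\infty$; combining this with the strong Markov property at $T_{y_\lambda}$ (where the process hits $y_\lambda$ exactly) and with the local boundedness in $y$ of $\mathbb{E}_y(e^{\lambda T_0})$ on the compact $[0,y_\lambda]$ --- which holds for $\lambda$ below the first eigenvalue by integrating the asymptotics $e^{\lambda_1 t}\mathbb{P}_y(T_0>t)\to\eta_1(y)$ of Theorem~\ref{theorem:feller-logistic-qsd-for-X}(2), $\eta_1$ being locally bounded --- yields the desired uniform bound. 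Choosing $\lambda$ smaller than all the relevant first eigenvalues and small enough for Proposition~\ref{proposition:controle-moment-logistic-feller} then gives $\sup_{x\in(\mathbb{R}_+)^k}\mathbb{E}_x(e^{\lambda T_0})<\infty$.

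The step I expect to be the main obstacle is the uniformity over \emph{unbounded} starting points in the exponential-moment estimate: the bound $\sup_{y\geq 0}\mathbb{E}_y(e^{\lambda\bar\tau_i})<\infty$ genuinely relies on the strong cubic/quadratic drift bringing the diffusion down from arbitrarily large values in a time with finite exponential moment, and it is exactly this ``coming down from infinity'' feature, already captured by Proposition~\ref{proposition:controle-moment-logistic-feller}, that makes the supremum finite. A secondary technical point is checking that the scalar comparison theorem applies despite the degeneracy of $\sqrt{\gamma_i z}$ at $0$; this is handled as in the pathwise uniqueness of Theorem~\ref{theorem:convergence-FellerLogistic}, since $1/2$-H\"older continuity is all that the comparison theorem requires.
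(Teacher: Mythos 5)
Your proposal is correct and follows essentially the same route as the paper: the paper's (much terser) proof also dominates each coordinate $Z^i$ by the independent scalar logistic Feller diffusion \eqref{dom} driven by the same Brownian motion $B^i$, invokes the comparison theorem of Ikeda--Watanabe, and imports the almost sure extinction and finite exponential moments of the extinction time from Section~\ref{subsection:qsd-for-logistic-Feller-diffusions}. Your elaboration of the uniform-in-$x$ exponential moment bound (Proposition~\ref{proposition:controle-moment-logistic-feller} for large starting points plus local boundedness on a compact) correctly fills in the details the paper leaves implicit.
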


\begin{proof}
The existence of the SLVP is  shown by a comparison
argument (cf. Ikeda-Watanabe \cite{Ikeda1989} Chapter 6 Thm 1.1). Indeed, the 
coordinates $(Z^i_t)_t$ can be upper-bounded by the independent solutions of 
logistic Feller equations 
\begin{equation} 
  dY^i_t=\sqrt{\gamma_iY^i_t}dB^i_t + (r_i
  Y^i_t - c_{ii} (Y^i_t)^2 )\ dt,\label{dom}
\end{equation}
for which we have obtained in the previous section that 
  extinction occurs a.s. in
finite time and that the extinction time has some finite exponential moments. 
  The almost sure  finiteness of each $T_{H_{i}}$, hence of
$T_{\partial D}$ and $T_{0}$, thus follows.  
\end{proof}

\me As in the previous sections we are interested in the quasi-stationary
distributions for the process \eqref{SLVP}.  We firstly reduce the
problem by a change of variable. Let us define $X_{t}=(X^1_t,\cdots,
X^k_t) $ with $X^i_{t} = 2\sqrt{{Z^i_{t}\over \gamma_{i}}}$. We obtain
via It\^o's formula and for any $i\in\{ 1,\cdots,k\}$,

\begin{eqnarray}
  \label{eqfell3}
  dX^i_t & = & dB_t^i \, + \, \left(\frac {r_i X_t^i}{2} \, - \,
  \sum_{j=1}^k \frac{c_{ij} \gamma_j \,X_t^i (X_t^j)^2}{8}\, -
  \frac{1}{2 X_t^i}\right) .
\end{eqnarray}

\noindent In the following, we will focus on the symmetric case where $X$ is a
Kolmogorov diffusion, that is a Brownian motion with a drift in
gradient form as 
\begin{equation} 
  \label{kolmo}
  dX_t = dB_t \, - \, \nabla V
  (X_t) dt.
\end{equation}
Let us state  a necessary and sufficient condition
to write the drift of $(X)$ as in \eqref{kolmo}.  The proof is obtained by computation and requires the
equality of the second order cross-derivatives of $V$.

\begin{proposition}
If the following balance conditions on the ecological parameters are
satisfied,
\begin{equation}
  \label{balance}
  c_{ij} \gamma_{j} = c_{ji}\gamma_{i}, \quad \forall i, j,
\end{equation}
then the process $X$ is a Kolmogorov process with potential $V$ given by
$$V(x^1,\cdots,x^k)= {1\over 2} \, \sum_{i=1}^k \left(\ln(x^i) +
{c_{ii}\gamma_{i}(x^i)^4\over 16} - {r_{i}(x^i)^2\over 2}\right) +
\sum_{i\neq j}c_{ij} \gamma_{j}\, (x^i)^2\, (x^j)^2.$$
\end{proposition}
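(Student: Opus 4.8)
The plan is to read \eqref{eqfell3} as a Brownian motion with an additive drift and to decide exactly when that drift derives from a potential. Writing the system as $dX_t = dB_t + b(X_t)\,dt$, the drift field $b=(b^1,\dots,b^k)$ has components
\begin{equation*}
  b^i(x) = \frac{r_i x^i}{2} - \frac{1}{2 x^i} - \frac{x^i}{8}\sum_{j=1}^k c_{ij}\gamma_j (x^j)^2,\qquad x\in D:=(\mathbb{R}_+^*)^k .
\end{equation*}
Saying that $X$ is a Kolmogorov process with potential $V$ means precisely that $b=-\nabla V$ on $D$, so the entire question is whether $b$ is a gradient (exact) vector field and, if so, to produce an antiderivative $-V$.

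Since $D$ is an open convex, hence simply connected, subset of $\mathbb{R}^k$ and $b$ is $C^\infty$ on $D$, the Poincar\'e lemma tells us that $b$ is a gradient field if and only if its Jacobian is symmetric, i.e.\ $\partial_j b^i = \partial_i b^j$ for all $i\neq j$. This is exactly the ``equality of second order cross-derivatives of $V$'' alluded to in the statement. The computation is short: for $j\neq i$ the only dependence of $b^i$ on $x^j$ comes from the competition term, whence
\begin{equation*}
  \partial_j b^i(x) = -\frac{x^i x^j}{4}\,c_{ij}\gamma_j,\qquad \partial_i b^j(x) = -\frac{x^i x^j}{4}\,c_{ji}\gamma_i .
\end{equation*}
These two expressions coincide for all $x\in D$ exactly when $c_{ij}\gamma_j = c_{ji}\gamma_i$, which is the balance condition \eqref{balance}. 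Thus \eqref{balance} is both necessary and sufficient for $b$ to be a gradient field, and this is the conceptual heart of the proposition.

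It then remains, under \eqref{balance}, to exhibit the potential. I would build $V$ coordinate by coordinate by integrating $-b^i$ in the variable $x^i$: the three diagonal terms of each $b^i$ integrate to the single-variable part $\tfrac12\ln x^i + (\text{quartic in }x^i) - (\text{quadratic in }x^i)$ of the announced $V$, while the off-diagonal part of the competition term integrates to a cross term proportional to $(x^i)^2(x^j)^2$ whose coefficient is symmetric in $i,j$ precisely because of \eqref{balance}. One then closes the argument by verifying directly that $-\nabla V = b$, which reduces to the same elementary derivatives used above. The only real obstacle is bookkeeping: one must track the numerical constants through the integration and invoke \eqref{balance} to symmetrise the mixed term $\sum_{i\neq j} c_{ij}\gamma_j (x^i)^2 (x^j)^2$ so that each unordered pair $\{i,j\}$ carries a single well-defined coefficient. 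Once the exactness criterion of the previous paragraph is established, no analytic difficulty remains and the verification is purely computational.
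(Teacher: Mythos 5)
Your argument is correct and is exactly the computation the paper has in mind: the paper's proof is only the remark that the result ``is obtained by computation and requires the equality of the second order cross-derivatives of $V$'', which is precisely your symmetry check $\partial_j b^i=\partial_i b^j$, made rigorous through the Poincar\'e lemma on the simply connected domain $D=(\mathbb{R}_+^*)^k$; your identities $\partial_j b^i=-\tfrac{x^ix^j}{4}c_{ij}\gamma_j$ and $\partial_i b^j=-\tfrac{x^ix^j}{4}c_{ji}\gamma_i$ are right and show that \eqref{balance} is necessary and sufficient for the drift of \eqref{eqfell3} to be a gradient. One caveat: you defer the integration to ``bookkeeping'', and this is the one step that detects a problem with the statement as displayed. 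Matching $-\partial_i V$ against the competition term $-\tfrac{1}{8}c_{ij}\gamma_j\,x^i(x^j)^2$ of $b^i$ forces the cross term of the potential to be $\sum_{i\neq j}\tfrac{c_{ij}\gamma_j}{32}(x^i)^2(x^j)^2$ (equivalently $\tfrac{1}{16}$ per unordered pair, after symmetrising with \eqref{balance}), whereas the proposition writes it with coefficient $c_{ij}\gamma_j$; the diagonal part $\tfrac12(\ln x^i+\tfrac{c_{ii}\gamma_i(x^i)^4}{16}-\tfrac{r_i(x^i)^2}{2})$ does check out. So carry the constants through: your method is sound, but as written your outline stops just short of noticing that the displayed $V$ contains a misprint in the mixed term.
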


\bi We will establish an existence and uniqueness result for the QSD
of the process $(X_t)$. The re-statement of the results for the
initial stochastic Lotka-Volterra process follows immediately, since
the hitting time of $(0,\cdots,0)$ and $H_{i}$ and the exit time of
$D$ are the same for both processes $(X)$ and $(Z)$.

\me By generalizing to $k$-types populations the results proved in
\cite{Cattiaux2008} for two-types populations (an easy consequence of
Girsanov's theorem), we get
\begin{proposition}
\label{sortie}
For all $x\in D$, for all $i\neq j$,
$$\mathbb{P}_{x}(T_{\partial D} = T_{H_{i}\cap H_{j}})=0.$$
\end{proposition}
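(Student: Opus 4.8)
The plan is to reduce the statement to a non-attainability property for a planar projection of the diffusion, and then to settle it by comparison with Brownian motion. First I would observe that, exactly as in \eqref{hittingtime}, one has $T_{\partial D}\le T_{H_i\cap H_j}$ for every $i\neq j$, since $H_i\cap H_j\subset\partial D$. Consequently the event $\{T_{\partial D}=T_{H_i\cap H_j}\}$ is precisely the event that the process leaves $D$ by having its $i$-th and $j$-th coordinates vanish at the very same instant, all other coordinates being still positive. Thus it suffices to prove that the two first hitting times of $0$ by $X^i$ and $X^j$ are almost surely distinct or, equivalently, that the planar process $(X^i_t,X^j_t)_t$ started in $(0,\infty)^2$ never reaches the corner $(0,0)$.

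Following Cattiaux and M\'el\'eard \cite{Cattiaux2008}, the next step is a Girsanov comparison with Brownian motion, in the spirit of Proposition~\ref{proposition:girsanov-logistic-feller}. Removing the gradient drift $-\nabla V$ turns $X$ into a standard $k$-dimensional Brownian motion under the Wiener measure, so that its projection $(X^i,X^j)$ becomes a standard planar Brownian motion. Since single points are polar for two-dimensional Brownian motion --- equivalently, two independent one-dimensional Brownian motions almost surely never vanish simultaneously --- the event that $(X^i,X^j)$ hits $(0,0)$ is null under the reference measure. Absolute continuity then transfers this nullity to $\mathbb{P}_x$, which is what we want.

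The main obstacle is that the drift $-\nabla V$ is singular on $\partial D$: by \eqref{eqfell3} each coordinate carries a term $-1/(2X^i_t)$ that blows up as $X^i\downarrow 0$, so Girsanov cannot be applied globally up to $T_{\partial D}$ and the exponential density is only controlled on stopping times keeping the process inside $D$ --- precisely the region one must leave in order to reach the corner. I would handle this by the localization-and-limit scheme already used for Proposition~\ref{proposition:girsanov-logistic-feller}: apply the change of measure up to $\tau_\varepsilon$, the first time some coordinate exits $[\varepsilon,1/\varepsilon]$, and let $\varepsilon\downarrow 0$ by monotone convergence. The genuinely delicate point, where both singular drifts are simultaneously active, is the behaviour near the corner; here the leading operator $\tfrac12\Delta-\tfrac1{2x^i}\partial_{x^i}-\tfrac1{2x^j}\partial_{x^j}$ is invariant under the scaling $(x^i,x^j)\mapsto(\lambda x^i,\lambda x^j)$, so in polar coordinates it separates and its angular part is a one-dimensional Sturm--Liouville operator on $(0,\pi/2)$ with absorption at both ends. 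The strict positivity of its principal Dirichlet eigenvalue yields a superharmonic function of the form $r^{-\beta}\Psi(\phi)$, with $\beta>0$ small and $\Psi>0$ the principal eigenfunction, which tends to $+\infty$ at the corner while remaining finite elsewhere; the associated positive supermartingale certifies, via optional stopping and Fatou's lemma, that $(X^i,X^j)$ cannot attain $(0,0)$, the lower-order regular terms in \eqref{eqfell3} being easily absorbed in this estimate.
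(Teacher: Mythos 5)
Your first two paragraphs are exactly what the paper has in mind: the survey gives no proof of Proposition \ref{sortie} beyond the remark that it generalizes \cite{Cattiaux2008} and is ``an easy consequence of Girsanov's theorem'', and the intended content is precisely your reduction plus the observation that, after removing the gradient drift (with the localization $\tau_\varepsilon$ and the monotone limit handled as in Proposition \ref{propgirsanov}), the pair $(X^i,X^j)$ becomes a planar Brownian motion for which the codimension-two set $\{y^i=y^j=0\}$ (the intended reading of $H_i\cap H_j$) is polar. A path exiting $D$ at that set must accumulate on it while still inside $D$, i.e. on events of the form $\{t<T_{\partial D}\}$ where the Girsanov density is finite, and the absolute continuity transfers the Wiener-null set. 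That part of your proposal is sound and matches the reference.

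The gap is in your corner analysis, and it is concrete. In polar coordinates the leading operator reads $\tfrac12\partial_{rr}-\tfrac{1}{2r}\partial_r+\tfrac{1}{2r^2}\mathcal{A}$ with $\mathcal{A}=\partial_{\phi\phi}+(\tan\phi-\cot\phi)\partial_\phi$, and one computes $\bigl(\tfrac12\Delta-\tfrac{1}{2x^i}\partial_{x^i}-\tfrac{1}{2x^j}\partial_{x^j}\bigr)\bigl(r^{-\beta}\Psi\bigr)=\tfrac12 r^{-\beta-2}\bigl(\beta(\beta+2)\Psi+\mathcal{A}\Psi\bigr)$. The principal Dirichlet eigenfunction of $-\mathcal{A}$ on $(0,\pi/2)$ is explicitly $\Psi(\phi)=(\sin\phi\cos\phi)^2$ with eigenvalue $8$, so your candidate is $f=(x^ix^j)^2/r^{4+\beta}$, $0<\beta<2$; it is indeed superharmonic, but it does \emph{not} tend to $+\infty$ at the corner. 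Since $\Psi$ vanishes at $\phi=0$ and $\phi=\pi/2$, $f$ vanishes on the two faces and its infimum over every punctured neighbourhood of the corner intersected with $D$ is $0$ (along $x^j=(x^i)^2$ one even has $f\to 0$). Optional stopping at the hitting time of such a neighbourhood therefore yields no upper bound on its probability, and a nonnegative supermartingale $f(X_t)$ converging to a finite limit is perfectly compatible with a path reaching the corner tangentially to a face --- which is exactly the event you must exclude. The purely radial alternative fails for sign reasons: the drifts $-1/(2x^i)$ are Bessel drifts of dimension $0$, the radial part satisfies $\mathcal{L}r=-1/(2r)$ (it drifts \emph{towards} the corner), and $\mathcal{L}r^{-\beta}=\tfrac{\beta(\beta+2)}{2}r^{-\beta-2}>0$; and any positive separated supersolution with $\beta>0$ needs a positive angular eigenvalue bound, hence the Dirichlet condition, hence degeneracy on the faces. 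So the Lyapunov patch does not close as written; the corner must be handled inside the Girsanov argument itself, as in \cite{Cattiaux2008}, the lower-order terms of \eqref{eqfell3} playing no role there.
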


\noindent
Let us now state the first theorem, which is concerned by conditioning
on the co-existence of the $k$ types.

\begin{theorem}
  \label{QSDD}
  Under the balance conditions \eqref{balance}, there exists a unique
  quasi-stationary distribution $\nu$ for the process $(X)$ and the
  absorbing set $\partial D$, which is the quasi-limiting distribution
  starting from any initial distribution: for any $\mu$ on $D$ and any $A\subset D$,
  $$\lim_{t\to +\infty} \,\mathbb{P}_{\mu}(X_{t} \in A |
  T_{\partial D} >t)= \nu(A).
  $$
Furthermore, there exist $\lambda>0$ and a positive function $\eta$ such that   $$\lim_{t\to +\infty} e^{\lambda t}\,\mathbb{P}_{x}(X_{t} \in A ;
  T_{\partial D} >t)= \eta(x)\,\nu(A).
  $$

\end{theorem}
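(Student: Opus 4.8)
The plan is to transpose the entire $\mathbb{L}^2$-spectral program developed for the one-dimensional logistic Feller diffusion (from Proposition~\ref{proposition:girsanov-logistic-feller} through the argument of Section~\ref{subsubsection:attractiveness}) to the $k$-dimensional Kolmogorov diffusion~\eqref{kolmo}, working in the Hilbert space $\mathbb{L}^2(\mu)$ with reference measure $\mu(dx)=e^{-2V(x)}\,dx$. First I would establish a $k$-dimensional Girsanov identity exactly as in Proposition~\ref{proposition:girsanov-logistic-feller}, producing against Wiener measure the weight $\exp\bigl(V(x)-V(\omega_t)-\tfrac12\int_0^t(|\nabla V|^2-\Delta V)(\omega_s)\,ds\bigr)$. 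The crucial analytic input is that the associated Schrödinger potential $W=\tfrac12(|\nabla V|^2-\Delta V)$ is bounded below and in fact tends to $+\infty$ both at infinity and along $\partial D$: from the explicit form of $V$ one reads off $|\nabla V|^2\sim|x|^6$ while $\Delta V\sim|x|^2$ as $|x|\to\infty$, and $|\nabla V|^2-\Delta V\sim \tfrac{3}{4(x^i)^2}$ as $x^i\downarrow 0$. Together with the integrability of $e^{-V}$ on $D$ (the boundary singularity is only $(x^i)^{-1/2}$, and the quartic term forces Gaussian-type decay at infinity), this yields, as in Theorem~\ref{theorem:density-FellerLogistic}, a density $r(t,x,\cdot)\in\mathbb{L}^2(\mu)$ for the killed semi-group with the bound $\int r^2(t,x,y)\,\mu(dy)\le (2\pi t)^{-k/2}e^{Ct}e^{2V(x)}$, where $C=-\inf(|\nabla V|^2-\Delta V)<\infty$.

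The confining property of $W$ in every direction (including towards the boundary) is precisely what makes the transformed operator $\tilde L=\tfrac12\Delta-W$ have compact resolvent; hence, as in Theorem~\ref{theorem:spectral-LogisticFeller}, $-L$ has a purely discrete spectrum $0<\lambda_1<\lambda_2<\cdots$ with a spectral gap, and the ground state $\eta_1$ may be taken strictly positive on $D$ by a Krein--Rutman argument. The spectral decomposition of Corollary~\ref{corollary:spectral-decomposition-logistic-feller} then holds verbatim, and the computation of Section~\ref{subsubsection:existence-Yaglom-limit} carries over word for word: the gap $\lambda_2-\lambda_1>0$ gives $e^{\lambda_1 t}P_tf\to\langle\eta_1,f\rangle_\mu\,\eta_1$ in $\mathbb{L}^2(\mu)$, and testing against $r(1,x,\cdot)$ yields, for every $x\in D$, $e^{\lambda_1 t}\,\P_x(X_t\in A;\,T_{\partial D}>t)\to \eta_1(x)\int_A\eta_1\,d\mu$. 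This simultaneously produces the Yaglom limit $\nu(A)=\int_A\eta_1\,d\mu\big/\int_D\eta_1\,d\mu$ and the second displayed convergence, with $\lambda=\lambda_1$ and $\eta$ proportional to $\eta_1$.

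The genuinely new difficulty is the attractiveness of an arbitrary initial distribution, i.e. the analogue of Section~\ref{subsubsection:attractiveness}, which upgrades the convergence from compactly supported data to all of $\mathcal{P}(D)$ and thereby forces uniqueness. I would first reduce to densities $h\in\mathbb{L}^1(\mu)$ via the one-step regularisation $\nu\mapsto\nu P_1$ as before, and then prove the two tightness estimates of Lemma~\ref{tension}: conditioned mass escapes neither towards $\partial D$ nor towards infinity. The escape-to-infinity bound rests on a Lyapunov function $J$ with $LJ\le-\tfrac12$ outside a large ball $B_R$, built from the confining potential, giving $\sup_{|x|\ge R}\E_x(e^{\lambda_1 T_{B_R}})<\infty$ as in Proposition~\ref{proposition:controle-moment-logistic-feller}; the escape-to-boundary bound uses a Harnack inequality on annular subdomains away from $\partial D$. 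Here the main obstacle is that the one-dimensional monotone ordering of the state space --- which made the hitting-time comparisons in Lemma~\ref{tension} and Proposition~\ref{controle-moment} elementary --- is lost in $\R^k$: the boundary $\partial D$ is now a union of faces $H_i$ with non-trivial intersections, and the conditioned mass must be controlled near each face separately. Proposition~\ref{sortie} is the key structural tool, since it guarantees that $\partial D$ is reached through a single face almost surely, so that near the boundary the analysis reduces, in the normal direction, to the already understood one-dimensional $1/(2x)$ repulsion. Once both tightness estimates hold, the splitting argument of Section~\ref{subsubsection:attractiveness} shows $\P_\nu(X_t\in A\mid T_{\partial D}>t)\to\nu(A)$ for every $\nu$, which yields both the quasi-limiting convergence and the uniqueness of the QSD.
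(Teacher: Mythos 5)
Your first two paragraphs follow the same road as the paper: the paper's proof of Theorem \ref{QSDD} is precisely "build the self-adjoint extension of $L=\tfrac12\Delta-\nabla V\cdot\nabla$ on $\mathbb{L}^2(e^{-2V}dx)$, get a purely discrete spectrum with positive ground state $\eta\in\mathbb{L}^1(\mu)$, and set $\nu=\eta\,d\mu/\int\eta\,d\mu$" (it delegates the verification to the appendices of Cattiaux--M\'el\'eard \cite{Cattiaux2008}, Theorem A.4, rather than re-running the Girsanov/Schr\"odinger computation, but the mechanism is identical and your asymptotics for $|\nabla V|^2-\Delta V$ are the right ones). Where you genuinely diverge is the attractiveness/uniqueness step: the paper does \emph{not} redo the tightness argument of Lemma \ref{tension}; it invokes ultracontractivity of $(P_t)$ ($P_t:\mathbb{L}^2(\mu)\to\mathbb{L}^\infty$ for all $t>0$, \cite{Cattiaux2008} Propositions B.12 and B.14), which yields a bound of the form $r(t,x,y)\le C_t\,\eta(x)\eta(y)$ and hence uniform attraction of every initial law in one stroke.

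This divergence is where your proposal has a real gap. Both halves of Lemma \ref{tension} rest on one-dimensional structure that does not survive in $D=(\mathbb{R}_+^*)^k$. The estimate near the boundary uses the monotone coupling $\mathbb{P}_x(T_0>t)\le\mathbb{P}_1(T_0>t)$ for $x\le\varepsilon$, and the estimate at infinity uses that a path started above $x_0$ must pass through $x_0$ before being absorbed; neither comparison has a $k$-dimensional analogue, because the faces $H_i$ are unbounded, the set $\{d(x,\partial D)<\varepsilon\}$ and the set $\{|x|>M\}$ overlap, and there is no ordering forcing a path started near a face at large tangential coordinates to visit a fixed compact set before absorption. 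Proposition \ref{sortie} does not help here: it only rules out exit through the codimension-two intersections $H_i\cap H_j$, and says nothing about the survival probability of points that are simultaneously close to a face and far from the origin, where the drift $-\sum_j c_{ij}\gamma_j x^i(x^j)^2/8$ toward the face is large and the "normal direction is a one-dimensional $1/(2x)$ repulsion" picture breaks down. So your Harnack-plus-Lyapunov programme is not a proof as stated; to close it you would either have to establish genuinely multi-dimensional versions of \eqref{tight1}--\eqref{tight2} (controlling the conditioned mass on the unbounded region near $\partial D$), or switch to the paper's route and prove ultracontractivity of the $k$-type semi-group, which is exactly the step the paper identifies as the one needing generalization from the two-type case.
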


\noindent
\begin{proof}
The proof of the existence of a quasi-stationary distribution results
from the spectral theory for the semi-group of the killed process
$(P_{t})$ (related to $X$) established in Cattiaux-M\'el\'eard
\cite{Cattiaux2008}, Appendices A, B, C. Define the
reference measure on $(\mathbb{R}_{+})^k$ by
$$\mu(dx_{1},\cdots,dx_{k}) = e^{- 2 V(x)} \, dx_{1}\cdots dx_{k}.$$
As in Subsection 5.2.2, one builds a self-adjoint
operator on $\mathbb{L}^2(\mu)$ which coincides with $P_{t}$ for
bounded functions belonging to $\mathbb{L}^2(\mu)$.  Its generator $L$
is self-adjoint on $\mathbb{L}^2(\mu)$ and $$L g = {1\over 2} \Delta g
- V\cdot \nabla g, \quad \forall g\in C_{0}^\infty(D).$$ We  check
that the assumptions required in \cite{Cattiaux2008} Theorem A.4
are satisfied and therefore, the operator $-L$ is proved to have a
purely discrete spectrum of non-negative eigenvalues and the smallest
one $\lambda$ is positive. The corresponding eigenfunction $\eta$ is
proved to be in $\mathbb{L}^1(\mu)$ and the probability measure $\nu=
\frac{\eta \, d\mu}{\int_{D}\eta \, d\mu} $ is the Yaglom limit.

\noindent let us emphasize that the uniqueness of the quasi-stationary
distribution results by \cite{Cattiaux2008} Proposition B.12 from the
ultracontractivity of the semi-group $P_{t}$ (ultracontractivity means
that $P_{t}$ maps continuously $\mathbb{L}^2(\mu)$ in
$\mathbb{L}^\infty(\mu)$ for any $t>0$). The proof of the latter is
easily generalized from the two-types case (\cite{Cattiaux2008}
Proposition B.14) to the $k$-types case.

\end{proof}

\noindent Theorem \ref{QSDD} shows that in some cases, a stabilization
of the process with co-existence of the $k$ types will occur before
one of these types disappears.  Let us now come back to our initial
question: the long-time behavior of the process conditioned on
non-extinction. For each $ i= 1,\ldots,k$, we denote by $\lambda_{i}$
the smallest eigenvalue related to the purely discrete spectrum of the
generator for the $i$-axis diffusion defined by the stochastic
differential equation \eqref{dom}.

\begin{theorem}
\label{theorem:QSD-multi-type}
Under the balance conditions \eqref{balance}, there exists a Yaglom limit $m$ for the
process $(X)$ conditioned on non extinction: for any $x\neq 0$, 
for any $A\subset D$,
  $$\lim_{t\to +\infty}  \,\mathbb{P}_{x}(X_{t} \in A |
  T_{0} >t)= m(A).
  $$
The support of this
measure is included in the $k$ axes.

\noindent Furthermore, if there exist $i_{1},..., i_{l} \in \{1,\cdots, k\}$
such that $\lambda^{i_{1}}=\cdots=\lambda^{i_{l}} < \min_{i\neq
  i_{1},...,i_{l}} \lambda^{i} $, then this QSD is concentrated on the axes of
coordinates $i_1,..., i_l$.
%% given by
%% $\delta_{0}\otimes\cdots\otimes\delta_{0}\otimes\nu^{ i_{0}} \otimes
%% \delta_{0}\otimes \cdots \otimes \delta_{0}$, where $\nu^{i_0}$ is on
%% the $i_{0}$-th coordinate.
\end{theorem}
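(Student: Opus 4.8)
The plan is to reduce the $k$-dimensional conditioning on $\{T_0>t\}$ to the one-dimensional results of Theorem~\ref{theorem:feller-logistic-qsd-for-X} by exploiting the face structure of the absorbing boundary. Here $T_0$ is the hitting time of the origin, whereas $X$ leaves the open orthant $D$ at the earlier time $T_{\partial D}$; by Proposition~\ref{sortie} the process a.s.\ leaves $D$ through a single open face, and since a coordinate stays frozen at $0$ once it vanishes, the trajectory visits a strictly decreasing sequence of faces, interior $\to$ codimension-one face $\to\cdots\to$ axis $\to$ origin, the terminal phase always taking place on a single coordinate axis. I index the relatively open faces by the set $J\subseteq\{1,\dots,k\}$ of still-positive coordinates, write $F_J$ for the corresponding face (so that on axis $i$ the drift of $X$ in \eqref{eqfell3} reduces exactly to the Kolmogorov form \eqref{kolmogorov} of a one-dimensional logistic Feller diffusion with principal eigenvalue $\lambda^i$, eigenfunction $\eta^i$ and Yaglom limit $\alpha^i$), and decompose $\mathbb{P}_x(T_0>t)=\sum_{J\neq\emptyset}\mathbb{P}_x(X_t\in F_J)$.

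First I would invoke the domination of Proposition~\ref{exi-compet}: each coordinate $X^i$ is bounded above by an independent one-dimensional diffusion $Y^i$ of type \eqref{dom}, whose survival decays like $e^{-\lambda^i t}$. Since $\{X^i_t>0\}\subseteq\{Y^i_t>0\}$ and the $Y^i$ are independent, $\mathbb{P}_x(X_t\in F_J)\le\prod_{i\in J}\mathbb{P}_{x^i}(Y^i_t>0)\le C_J\,e^{-(\sum_{i\in J}\lambda^i)t}$. Setting $\lambda:=\min_i\lambda^i$, every face with $|J|\ge2$ and every axis $\{i\}$ with $\lambda^i>\lambda$ contributes only $o(e^{-\lambda t})$, so at exponential order the whole surviving mass sits on the axes attaining the minimal eigenvalue. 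This already gives the support claims: $m$ is carried by the union of coordinate axes and concentrated on the indices $i_1,\dots,i_l$ with $\lambda^{i_j}=\lambda$.

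Next, on each minimal axis I would identify the conditional limit. Writing $\sigma_i$ for the first time the alive set equals $\{i\}$, the strong Markov property gives, for $A\subseteq F_{\{i\}}$, $\mathbb{P}_x(X_t\in A)=\mathbb{E}_x[\mathbf{1}_{\sigma_i<t}\,g^{i}_{t-\sigma_i}(X^i_{\sigma_i})]$, where $g^i_s(y)=\mathbb{P}^{(i)}_y(\tilde X_s\in A,\,\tilde T_0>s)$ is the one-dimensional killed kernel on axis $i$. Theorem~\ref{theorem:feller-logistic-qsd-for-X} yields the pointwise convergence $e^{\lambda s}g^i_s(y)\to\eta^i(y)\,\alpha^i(A)$, and feeding this into the identity above (after normalisation by $\mathbb{P}_x(T_0>t)$) would express the limit as a mixture $m(A)=\sum_{j}w_{i_j}\,\alpha^{i_j}(A)$ over the minimal axes, with conditional law $\alpha^{i}$ on axis $i$.

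The main obstacle is to justify this passage and, above all, to show that the weights $w_{i_j}$ do not depend on $x$ (so that $m$ is a genuine Yaglom limit, not merely a QLD). When a single axis attains the minimum the last-exit decomposition is clean; but for $l\ge2$ the exponential moment $\mathbb{E}_x[e^{\lambda\sigma_i}]$ diverges, since reaching axis $i$ requires extinguishing another coordinate whose own extinction rate is exactly $\lambda$, and by Proposition~\ref{prop:moment} such a moment is finite only below $\lambda$; thus the naive per-axis renewal fails and the minimal axes must be handled jointly. The resolution I would pursue is spectral, as in Cattiaux--M\'el\'eard~\cite{Cattiaux2008}: after the Girsanov/Schr\"odinger reduction of Subsection~\ref{subsection:qsd-for-logistic-Feller-diffusions} the killed semigroup on each face is associated with a self-adjoint operator of discrete spectrum, and gluing these along the incidence of the faces — together with the fact that the process can pass between any two minimal axes through the connecting higher-dimensional faces — should show that $\lambda$ is a simple eigenvalue of the full killed generator with a unique positive eigenfunction $\Phi$ whose restriction to axis $i$ is a positive multiple of $\eta^i$. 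Simplicity forces the rank-one leading behaviour $e^{\lambda t}P_t^{\mathrm{orig}}f(x)\to\Phi(x)\,\langle f,m\rangle$, which makes the weights $x$-independent and identifies $m$ as the corresponding left eigenmeasure. Establishing this quasi-compactness and simplicity, i.e.\ controlling the coupling through the transient higher faces despite the borderline exponential moments, is the delicate analytic core; once it is in place, dominated convergence (with $\Phi$ as dominating function) closes the proofs of existence, $x$-independence and concentration of $m$.
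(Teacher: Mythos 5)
Your skeleton---the decomposition of the surviving mass over the open faces of the orthant, the domination by independent one-dimensional diffusions to discard the faces of dimension $\geq 2$ and the non-minimal axes, and the one-dimensional Yaglom limit on each remaining axis---matches the structure of the paper's argument, and you correctly isolate the real difficulty: when several axes attain the minimal rate $\lambda$, the arrival time $\sigma_i$ on a fixed axis has no exponential moment of order $\lambda$. But the resolution you propose for that difficulty is the step that fails. The minimal axes do \emph{not} communicate: once a coordinate hits $0$ it is frozen there, so the killed semi-group is block-triangular with respect to the partial order of faces, and there is no irreducibility ``through the connecting higher-dimensional faces''. Consequently $\lambda=\min_i\lambda^i$ is in general an eigenvalue of geometric multiplicity $l$ (one nonnegative eigenfunction per minimal axis), not a simple one; no Perron--Frobenius or ultracontractivity argument can force the rank-one behaviour $e^{\lambda t}P_t f\to\Phi\,\langle f,m\rangle$ that your plan requires, and the generator is not symmetric across faces in any natural $\mathbb{L}^2(\mu)$ in the first place. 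With the weights your own mixture formula produces, the normalised limit depends on $x$ through the ratios of the $w_{i_j}(x)$, and nothing in your argument removes that dependence.

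The paper circumvents the divergent moment altogether by an induction on the number of types, peeling off one codimension at a time. The Markov property is applied at $T_{\partial D}$, the exit time of the \emph{current open face}, never at the arrival time on an axis; the needed moment $\mathbb{E}_x(e^{v_{\min}T_{\partial D}})<+\infty$ holds because $v_{\min}$, the smallest decay rate among the codimension-one subsystems, is \emph{strictly} below the principal Dirichlet eigenvalue $\lambda'$ of the interior (Theorem~\ref{QSDD} together with a coupling as in \cite{Cattiaux2008}), so Proposition~\ref{prop:moment} applies at every level of the recursion. The induction hypothesis must also carry a second, uniform-boundedness clause, $\sup_{t,x}e^{\lambda t}|\mathbb{E}_x f(X^{(k-1)}_t)|<+\infty$, to justify the dominated convergence at the next level; the paper spends the entire second half of its proof establishing this via a comparison with a decoupled Lotka--Volterra system. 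Your proposal contains neither the recursive structure nor the uniform bound, and it also gives only upper bounds on the face contributions, whereas a matching lower bound of order $e^{-\lambda t}$ on $\mathbb{P}_x(T_0>t)$ is needed before one can conclude anything about the support of $m$. As written, the proof therefore does not go through.
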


%% In other words, the model exhibits an intermediary scale for which
%% only one type (the dominant type) is surviving.

\begin{proof}

\me Recall that the existence of a Yaglom limit has been proved in the case $k=1$ (Section~\ref{subsection:qsd-for-logistic-Feller-diffusions}).
 In what follows, we prove by induction the existence of a
Yaglom limit for any $k$-type system \eqref{eqfell3}.
% Indeed, assume that for the $k-1$-dimensional
%stochastic system \eqref{hyperplan}, we have the following .

\me The induction assumption $(A_{k-1})$ is as follows: we assume
that, for any $(k-1)$-type Kolmogorov process $X^{(k-1)}$ satisfying \eqref{eqfell3} with \eqref{balance}, there exist a constant $\lambda>0$, a uniformly bounded function
$\eta>0$ and a probability measure $\nu$ on $(\R_+)^{k-1}$ such
that, for any $x\in(\R_+)^{k-1}\setminus\{0\}$ and any bounded measurable function
$f$ on $(\R^+)^{k-1}$ such that $f(0)=0$, we have
\begin{equation}
\label{equation:induction-assumption}
\begin{split}
&\lim_{t\to \infty}e^{\lambda t}\,\mathbb{E}_{x}(f(X^{(k-1)}_t)) = \eta(x) \nu(f);\\
&\sup_{t\geq 0,\ x\in (\R_{+}^*)^{k-1}} |e^{\lambda t}\,\E_x(f(X^{(k-1)}_t))|<+\infty.
\end{split}
\end{equation}

\me As mentioned above,  
Assumption $(A_{1})$ is already proved.  Let us  assume that $(A_{k-1})$ is
true and  show that $(A_{k})$ follows.

\me Let  $X^{(k)}$ be a $k$-type Kolmogorov process  satisfying \eqref{eqfell3} with \eqref{balance}.
Once hitting the boundary $\partial D = \cup_{i=1}^k H_{i}$, the
process will no more leave it. Hence, for $t\geq T_{\partial D}$, the
process will stay on the union of hyperplanes $H_{i}$. Moreover,
$T_{\partial D} = \inf_{i=1,\cdots, k} T_{H_{i}}$.  Fix
$i\in\{1,\cdots, k\}$ and assume that the process leaves $D$ through
$H_{i}$. The dynamics on $H_{i}$ is given by the process
$(U^{(i),j}_{t})_{j\neq i}$ defined in $(\mathbb{R}_{+})^{k-1}$ by:
\begin{eqnarray}
\label{hyperplan}
dU^{(i),j}_{t}  & = & dB_t^j \, + \, \left(\frac {r_j U_t^{(i),j}}{2} \, - \, \sum_{\ell=1, \ell\neq i}^k
\frac{c_{j\ell} \gamma_j \,U_t^{(i),j} (U_t^{(i),\ell})^2}{8}\, -
\frac{1}{2 U_t^{(i),j}}\right) . \nonumber
\end{eqnarray}
Remark that by Proposition \ref{sortie}, the process really leaves
$\partial D$ by the interior of $H_{i}$.  
%% Hence, the system
%% \eqref{hyperplan} is similar to the system \eqref{eqfell3}, but in
%% dimension $k-1$ instead of $k$.  The study developed above can be
%% iterated for this system, $D$ being replaced by $H_{i}$ and ${\partial
%%   D}$ becoming $\cup_{j\neq i, j=1}^k{H_{i}\cap H_{j}}$. 
Each system $(U^{(i),j})_{j\neq i}$ is a $(k-1)$-type kolmogorov process  \eqref{eqfell3} with balance conditions.  Hence, by our
induction assumption ($A_{k-1}$), there exist for each
$i\in\{1,\cdots, k\}$ a positive constant $v_i$, a positive
function $\eta_i$ and a probability measure $\nu_i$ on $H_i$ such
that~\eqref{equation:induction-assumption} holds for
$(U^{(i),j})_{j\neq i}$, $i\in\{1,\cdots,k\}$.

\me Let us define
\begin{equation*}
  v_{min}=\inf_{i\in\{1,\cdots,k\}} v_i.
\end{equation*}
For any bounded measurable function $f$ on $(\R_+)^k$ such that $f(0)=0$ and for all $t\geq 0$, we have
\begin{equation}
\label{equation:decomposition-1}
  e^{v_{min}t}\E_x\left(f(X^{(k)}_t)\right)
  =e^{v_{min} t}\E_x\left(f(X^{(k)}_t)\1_{T_{\partial D}>t}\right)
  +\sum_{i=1}^k \E_x\left(e^{v_{min} t} f(X^{(k)}_t)\1_{T_{\partial D}=T_{H_i}\leq t}\right),
\end{equation}
where we used the fact that $X^{(k)}$ reaches $\partial D$ by
hitting the interior of one and only one $H_i$. By Theorem~\ref{QSDD},
there exist a positive constant $\lambda'$, a positive function $\eta'$
and a probability measure $\nu'$ on $(\R_+^*)^k$ such that
\begin{equation*}
\lim_{t\rightarrow+\infty} e^{\lambda' t}
\E_x\left(f(X^{(k)}_t)\1_{T_{\partial D}>t}\right)=\eta'(x)\nu'(f).
\end{equation*}
Moreover, a similar coupling argument as in~\cite{Cattiaux2008} yields
 $\lambda'>v_{min}$. We deduce that
\begin{equation*}
\lim_{t\rightarrow\infty}e^{v_{min} t}\E_x\left(f(X^{(k)}_t)\1_{T_{\partial D}>t}\right)=0.
\end{equation*}
For each $i\in\{1,\cdots, k\}$, we have by the Markov property
\begin{align}
\E_x\left(e^{v_{min} t} f(X^{(k)}_t)\right.&\left.\1_{T_{\partial
    D}=T_{H_i}\leq t}\right) = \E_x\left( e^{v_{min}
  t}\,\1_{T_{\partial D}=T_{H_i}\leq t}\, \E_{X_{T_{\partial D}}}\left(f(U^{(i)}_{t-T_{\partial D}})\right)\right)\nonumber\\
     &= \E_x\left( e^{v_{min} T_{\partial D}}\,\1_{T_{\partial D}=T_{H_i}\leq
  t}\, \E_{X_{T_{\partial D}}}\left(e^{v_{min}(t-T_{\partial D})}f(U^{(i)}_{t-T_{\partial
    D}})\right)\right).\label{equation:Markov-1}
\end{align}
By the induction assumption $(A_{k-1})$,
$e^{v_{min}(t-T_{\partial D})}f(U^{(i)}_{t-T_{\partial D}})$ is
uniformly bounded. Moreover the inequality $0<v_i<\lambda'$ and
Proposition~\ref{proposition:moment-for-extinction-time} ensure that
$\E_x\left( e^{v_{min} T_{\partial D}}\right)<+\infty$. Using
the convergence property of the induction assumption $(A_{k-1})$ and
the dominated convergence theorem, we deduce that
\begin{equation*}
\lim_{t\rightarrow \infty} \E_x\left(e^{v_{min} t}
f(X^{(k)}_t)\1_{T_{\partial D}=T_{H_i}\leq t}\right) =
\left\lbrace
\begin{array}{l}
\E_x\left(
e^{v_{min} T_{\partial D}}\1_{T_{\partial D}=T_{H_i}}
\eta_i(X_{T_{\partial D}})\right)\nu_i(f),\ \text{if}\ v_i=v_{min}\\
0,\ \text{otherwise}.
\end{array}
\right.
\end{equation*}
We have then
\begin{equation*}
\lim_{t\rightarrow \infty} e^{v_{min} t} \E_x(f(X^{(k)}_t))=\sum_{i=1}^k \1_{v_i=v_{min}}
\E_x\left(e^{v_{min} T_{\partial D}}\1_{T_{\partial D}=T_{H_i}}\eta_i(X_{T_{\partial D}})\right)\nu_i(f),
\end{equation*}
which gives us the first part of the induction assumption $(A_k)$.

%%%%%%%%%%%%%%%%%
%
%
\bi In order to prove the second part of $(A_k)$, let us introduce the SLVP $Y^{(k)}$ with coefficients $(c'_{ij})$ defined by
\begin{equation*}
  c'_{kk}=c_{kk},\ c'_{ij}=c_{ij}\ \text{ and } c'_{ki}=c'_{ik}=0,\ \forall i,j=1,\cdots,k-1.
\end{equation*}
By the same coupling argument as above, the return time to $\partial
D$ for $X^{(k)}$ is stochastically dominated by the return time to
$\partial D$ for $Y^{(k)}$, \textit{i.e.} $\P_x(X^{(k)}_t\in D)\leq
\P_x(Y^{(k)}_t\in D)$ for all $t\geq 0$.

\me Since the $k-1$ first components of $Y^{(k)}$ are independent of the
last one and since 
$$
\{Y^{(k)}_t\in D\}=\{(Y^{(k),1}_t,\cdots,Y^{(k),k-1}_t)\in (\R_+^*)^{k-1}\}\cap\{Y^{(k),k}_t\in \R_+^*\}, 
$$
we have
$$
\P_x(Y^{(k)}_t\in D)\leq \P_x((Y^{(k),1}_t,\cdots,Y^{(k),k-1}_t)\in (\R_+^*)^{k-1})\times \P_x(Y^{(k),k}_t\in \R_+^*).
$$
On the one hand, the dynamic of $(Y^{(k),1},\cdots,Y^{(k),k-1})$ is the same as
$U^{(k)}$, so that, by the second part of the induction assumption
$(A_{k-1})$ and by the definition of $v_{min}$,
$$
\sup_{t\geq 0, x\in D} e^{v_{min} t} \P_x((Y^{(k),1}_t,\cdots,Y^{(k),k-1}_t)\in (\R_+^*)^{k-1})<+\infty.
$$
On the other hand, $Y^{(k),k}$ is a one dimensional SLVP, thus we
deduce from $(A_1)$ that there exists a positive constant $\lambda_1$
such that
$$
\sup_{t\geq 0, x\in D} e^{\lambda_1 t} \P_x(Y^{(k),k}_t\in \R_+^*) <+\infty.
$$
As a consequence, we have
$$
\sup_{t\geq 0, x\in D} e^{(v_{min}+\lambda_1) t} \P_x(X^{(k)}_t\in D)
\leq \sup_{t\geq 0, x\in D} e^{(v_{min}+\lambda_1) t}
\P_x(Y^{(k)}_t\in D)<+\infty
$$
and we deduce that
$$
\sup_{x\in D} \E_x(e^{v_{min}T_{\partial D}})<+\infty.
$$
For any bounded measurable function $f$, this immediately leads us
to $$\sup_{t\geq 0, x\in E} \E_x(e^{v_{min}t}\1_{t<T_{\partial
    D}}f(X^{(k)}_t))<+\infty.$$  Moreover, by
Equality~\eqref{equation:Markov-1} and the second part of $(A_{k-1})$,
we deduce that, for each $i\in\{1,\cdots,k\}$,
\begin{equation*}
 \sup_{t\geq 0, x\in E} \E_x\left(e^{v_{min} t} f(X^{(k)}_t)\1_{T_{\partial
    D}=T_{H_i}\leq t}\right)<+\infty.
\end{equation*}
By Equality~\eqref{equation:decomposition-1}, the second part of the
induction assumption $(A_k)$ is thus proved.

\me By induction on $k\geq 1$, we conclude that Assumption $(A_k)$ is
true for any $k\geq 1$, thus Theorem~\ref{theorem:QSD-multi-type} follows.
\end{proof}

\begin{example}\upshape
  Let us numerically  study  a $3$-type system and
  observe its long-time behavior. The  $3$-tuple process $(Z^1, Z^2, Z^3)$ evolves as  
    \begin{eqnarray*}
      dZ^1_t=\sqrt{\gamma_1 Z_t^1}dB^1_t+\left( r_1 Z^1_t - c_{11}(Z^1_t)^2-c_{12}Z^1_t Z^2_t - c_{13}Z^1_t Z^3_t \right) dt,\\
      dZ^2_t=\sqrt{\gamma_2 Z_t^2}dB^2_t+\left( r_2 Z^2_t -c_{21}Z^1_t Z^2_t - c_{22}(Z^2_t)^2 - c_{23}Z^2_t Z^3_t\right) dt,\\
      dZ^3_t=\sqrt{\gamma_3 Z_t^3}dB^3_t+\left( r_3 Z^3_t -c_{31}Z^1_t Z^3_t - c_{32}Z^2_t Z^3_t - c_{33}(Z^3_t)^2 \right) dt,
  \end{eqnarray*}
with
  \begin{eqnarray*}
    \gamma_{i}=1,\ c_{ii}=10,\quad  \forall i\in\{1, 2, 3\}\quad\text{ and }\quad
     c_{ij}=0.5, \ \forall \ i\neq j\in\{1, 2, 3\},
  \end{eqnarray*}
  and
  \begin{equation*}
    r_1=1.5,\ r_2=1,\ r_3=0.5\ ;\ Z^1_0=Z^2_0=Z^3_0=1.
  \end{equation*}

%  The system is absorbed when the total size of the population
  %$Z^1_t+Z^2_t+Z^3_t$ reaches $0$, that is when the $3$-tuple
  %$(Z^1_t,Z^2_t,Z^3_t)$ reaches $(0,0,0)$.

  \me We describe the dynamics of  $\P_{(1,1,1)}((Z^1_t,Z^2_t,Z^3_t)\in\cdot\ |\ T_0>t)$. As explained above,
  the process conditioned on non-extinction initially behaves as a $3$-type
  population. Then a type goes extinct, then a second one and finally it
  only remains one type in the population. In order to represent
  graphically these transitions, we compute numerically the dynamics
  of the probabilities of coexistence and existence of the different
  types as  functions of  time. In
  Figure~\ref{figure:example6_abc}, we represent
\begin{itemize}
\item[(a)] the probability of coexistence of the three types
  $\P_{(1,1,1)}(Z^1_t>0,\,Z^2_t>0,\,Z^3_t>0\ |\ T_0>t)$;
\item[(b)] the probability
  $\P_{(1,1,1)}(Z^i_t>0,\,Z^j_t>0,\,Z^k_t=0\ |\ T_0>t)$ of coexistence
  of exactly two types $i\neq j$, for each combination of types
  $(i,j,k)=(1,2,3)$, $(i,j,k)=(2,3,1)$ and $(i,j,k)=(1,3,2)$;
\item[(c)] the probability
  $\P_{(1,1,1)}(Z^i_t>0,\,Z^j_t=0,\,Z^k_t=0\ |\ T_0>t)$ of existence
  of one and only one type $i$, for each type $i=1,\,2$ and $3$.
\end{itemize}
\begin{figure}
\begin{center}
\includegraphics[width=15cm]{./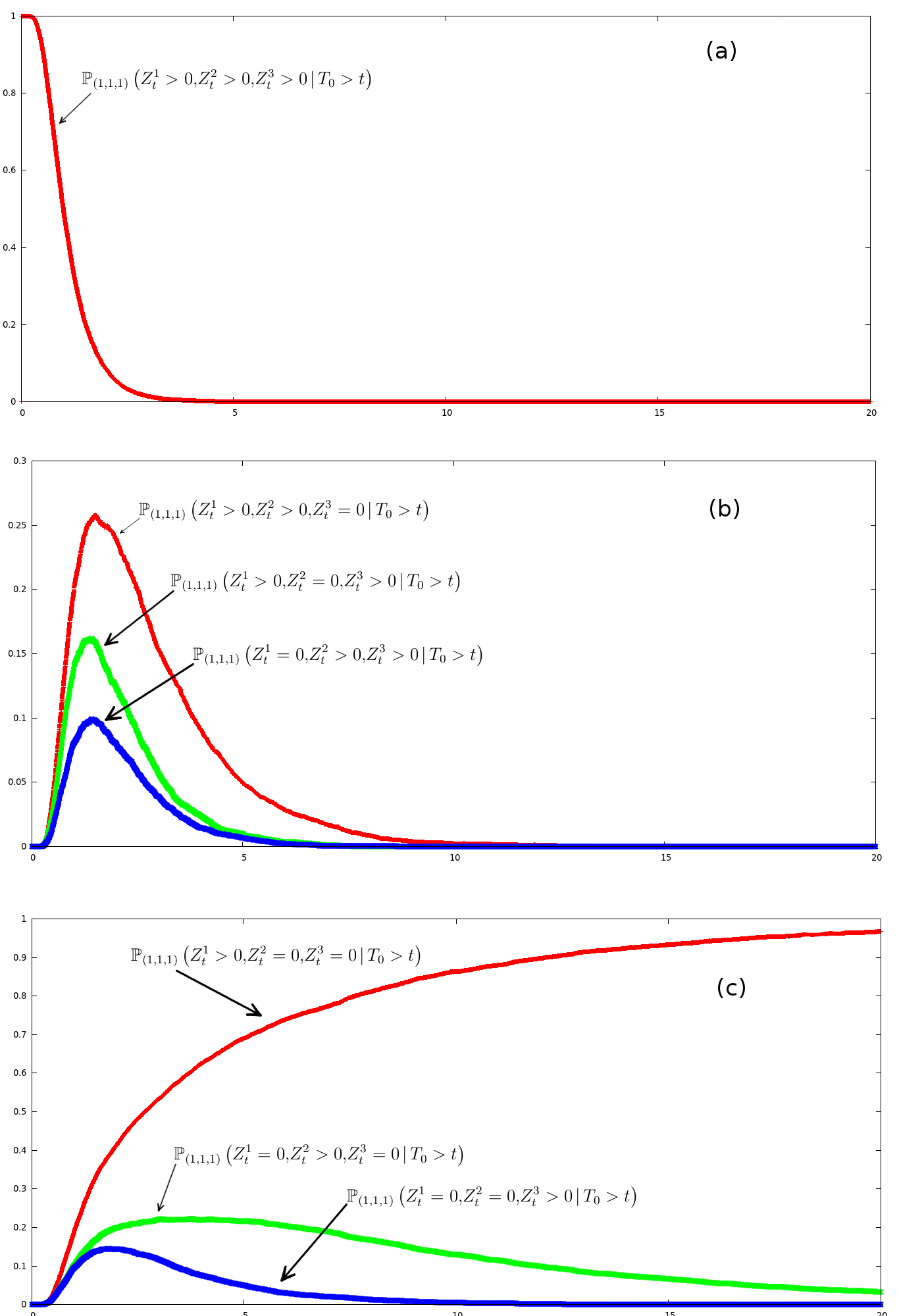}
\caption{\label{figure:example6_abc} Dynamics of the probabilities of
  co-existence and existence of the different types for a $3$-type
  stochastic Lotka-Volterra system. The horizontal axis is the time axis}
\end{center}
\end{figure}

\me As expected, the $3$-type mode disappears quickly and the $2$-type
modes are transient. We also observe that the probability
$\P_{(1,1,1)}(Z^1_t>0,\,Z^2_t=0,\,Z^3_t=0\ |\ T_0>t)$ 
converges to $1$ when $t$ increases, meaning that the last state of the population before extinction is  monotype with type $1$.
 It turns out that the support of the conditional
law $\P_{(1,1,1)}((Z^1_t,Z^2_t,Z^3_t)\in\cdot\ |\ T_0>t)$ becomes more
and more concentrated on $\mathbb{R}_+^*\times\{0\}\times\{0\}$ in the
long time. The Yaglom limit is thus equal to
$\nu_1\otimes\delta_{(0,0)}$, where $\nu_1$ is the Yaglom limit of the
process 
  \begin{equation*}
    dZ'^1_t=\sqrt{Z'^1_t}dB^1_t+\left( r_1 Z'^1_t - c_{11}(Z'^1_t)^2\right) dt,
  \end{equation*}
 absorbed at $0$ and  is
  represented in Figure~\ref{figure:example6_QSD}.
\begin{figure}
\begin{center}
\includegraphics[width=16cm]{./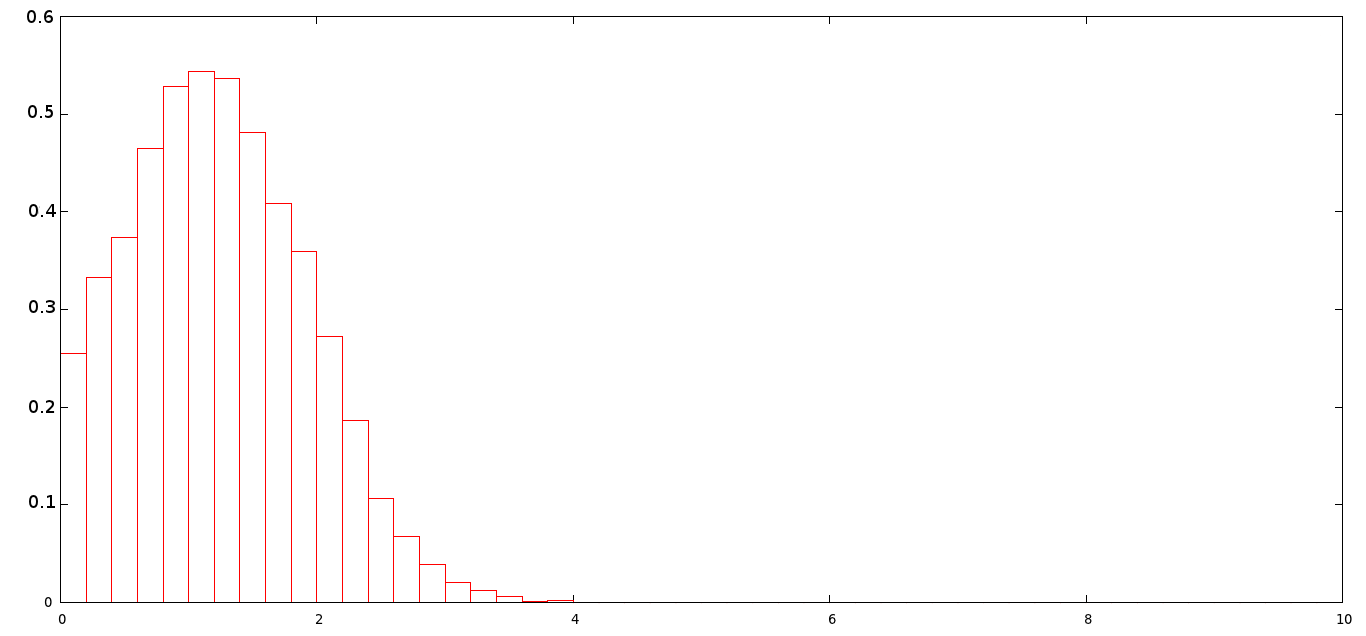}
\caption{\label{figure:example6_QSD} First marginal of the Yaglom
  limit of a $3$-type stochastic Lotka-Volterra system. The two other
  marginals are equal to the null measure.}
\end{center}
\end{figure}
\end{example}

\section{Simulation: the Fleming-Viot system}
\label{section:simulation}

 As seen in the previous sections, the spectral theory is a
 powerful tool to prove  existence and eventually  uniqueness of
 a QSD for a given process $Z$. It is based on the equivalence
 property of Proposition \ref{proposition:qsd-spectral-point-of-view},
 stating that a probability measure $\alpha$ on $E^*$ is a QSD
 for the killed process $Z$ if and only if
 \begin{equation}
   \label{equation:simulation-spectral-point-of-view}
   \alpha L=-\theta(\alpha) \alpha,
 \end{equation}
 where $L$ denotes the infinitesimal generator of $Z$ and
 $\theta(\alpha)$ a positive constant.  In some cases, such as in
 the finite state space case, one can easily compute numerically the whole
 set of eigenvalues and eigenvectors of $L$ as seen in Example 1 and
 Example 2. For these numerical illustrations, we used the software
 {\small SCILAB} and its function {\verb spec }. We also refer to
 \cite{vanDoorn2011} for a detailed description of some algorithms
 available in {\small MATLAB} for the computation of eigenfunctions
 and eigenvalues in large (but finite) state space cases.

 \me In other cases, such as the logistic birth and death process of
 Section \ref{section:QSD-for-BD-process} and the logistic Feller
 diffusion of Section
 \ref{section:the-logistic-feller-diffusion-process}, solving
 numerically Equation
 \eqref{equation:simulation-spectral-point-of-view} is too hard and
 we use a different approach. This approach consists in approximating
 the QSD and the conditioned distribution $\P_z(Z_t\in .|t<T_0)$ by
 the empirical distribution of a simulable interacting particle
 system. This Fleming-Viot type system, built for any number of
 particles $N\geq 2$, has been introduced by Burdzy, Holyst and March
 \cite{Burdzy1996} and explored in \cite{Burdzy2000} and in
 Grigorescu-Kang \cite{Grigorescu2004} for $d$-dimensional killed
 Brownian motions. It has also been studied in Villemonais \cite{Villemonais2010}
 for multi-dimensional diffusion processes with unbounded drifts and a
 general result is available in \cite{Villemonais2011}. Similar
 systems have also been considered by Ferrari-Mari\`c \cite{Ferrari2007}
 for continuous Markov chains in a countable state space. In this
 section, we explain the approximation method based on the
 Fleming-Viot type interacting particle systems.

 \bi Let $Z$ be a killed Markov process which evolves in the state
 space $E$. Fix $N\geq 2$ and let $Z_0\in E$ be its initial value. The interacting particle system with $N$ particles
 $(Z^1,\cdots,Z^N)$ starts from $(Z_0,\cdots,Z_0)$ and belongs to $\left(E^*\right)^N$.  The particles evolve independently from
 this initial position according to the law of the
 killed Markov process $Z$, until one of them hits the state $0$. At
 that time $\tau_1$, the killed particle jumps to the position at
 $\tau_1$ of one of the $N-1$ remaining particles, chosen uniformly
 among them. Then the particles evolve independently according to the law of $Z$ until one of them
 attains $0$ (time $\tau_2$), and so on. The sequence of jumps is
 denoted by $(\tau_n)_n$ and we set
 \begin{equation*}
   \tau_{\infty}=\lim_{n\rightarrow \infty} \tau_n.
 \end{equation*}
 This procedure defines the $(E^*)^N$-valued process $(Z^1, \cdots,
 Z^N)$ for all time ${t\in [0,\tau_{\infty}[}$. 
     Figure~\ref{figure:FV-2-particules} shows an illustration of
     such a system with two particles evolving between their jumps as
     Markov processes absorbed in $0$ and $1$.
     \begin{figure}
       \begin{center}
         \includegraphics[width=16cm]{./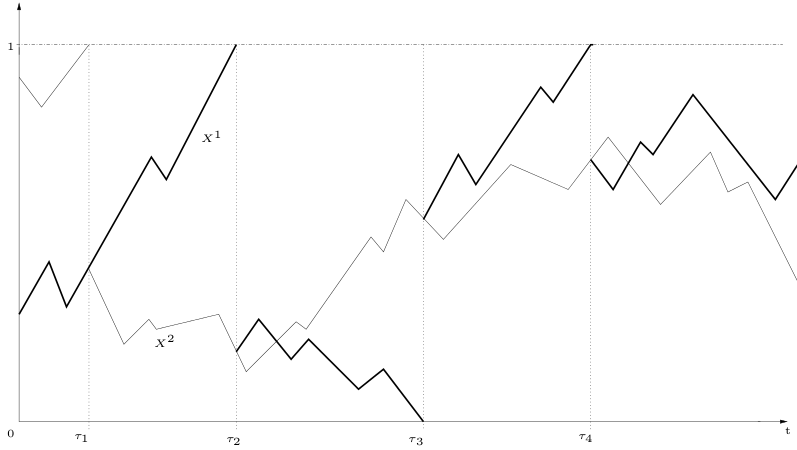}
         \caption{\label{figure:FV-2-particules} Fleming-Viot type
           system with two particles absorbed in $0$ and $1$.}
       \end{center}
     \end{figure}

 \me If $\tau_{\infty}=+\infty$ almost surely, then the
 Fleming-Viot particle system will be well defined at all time $t>0$.
 The condition $\tau_{\infty}=+\infty$ is clearly fulfilled for
 continuous time Markov chains with bounded jump rates. In the
 diffusion process case, criteria have been provided in
 \cite{Bieniek2009}, \cite{Grigorescu2011}, \cite{Villemonais2010}
 and~\cite{Villemonais2011}.

 \me   In
 that case,  denote by $\mu^N_t$ the empirical
 distribution of $(Z^1,\cdots,Z^N)$ at time $t$:
 \begin{equation*}
   \mu^N_t=\frac{1}{N}\sum_{i=1}^N \delta_{Z^i_t},\quad\forall t\geq 0.
 \end{equation*}
 The following result is obtained in \cite{Villemonais2011} by martingale method.
 \begin{theorem}
   \label{theorem:convergence-FV-to-conditioned-distribution}
   Assume that for all $N\geq 2$, $(Z^1,\cdots, Z^N)$ is well defined
   at any time $t\geq 0$. Then, for any time $t>0$, the sequence of
   empirical distributions $(\mu^N_t)$ converges in law to the
   conditioned distribution $\P_{Z_0}\left(Z_t\in \cdot | t<
   T_0\right)$, when $N$ goes to infinity.
 \end{theorem}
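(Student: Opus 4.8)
The plan is to test the empirical measure against a fixed function $f$ belonging to a measure-determining subclass of the domain of $L$ (with $f(0)=0$) and to study the real-valued process $t\mapsto \mu^N_t(f)=\frac1N\sum_{i=1}^N f(Z^i_t)$. Writing $\eta_t:=\P_{Z_0}(Z_t\in\cdot\mid t<T_0)=\delta_{Z_0}P_t/\delta_{Z_0}P_t(\1_{E^*})$ for the target flow, I would show that $\mu^N_\cdot(f)\to\eta_\cdot(f)$ in law; since the limit is a deterministic measure, convergence in law is equivalent to convergence in probability, and convergence of $\mu^N_t$ to $\eta_t$ at each fixed $t$ follows. The three ingredients are a semimartingale decomposition of $\mu^N_\cdot(f)$, tightness, and identification of every limit point with the unique solution of a nonlinear evolution equation.

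First I would apply Dynkin's formula to the generator of the conservative $(E^*)^N$-valued Fleming--Viot dynamics evaluated at $F(z_1,\dots,z_N)=\frac1N\sum_i f(z_i)$, decomposing
\begin{equation*}
\mu^N_t(f)=\mu^N_0(f)+\int_0^t \mu^N_s(Lf)\,ds+R^N_t(f)+M^N_t(f),
\end{equation*}
where $M^N_t(f)$ is a martingale and $R^N_t(f)$ is the compensator of the resampling jumps. The free evolution of the particles produces the drift $\mu^N_s(Lf)$. The key computation concerns $R^N$: each time a particle is killed (which, recalling $-L\1_{E^*}$ is the killing rate, happens at population rate $-N\mu^N_s(L\1_{E^*})$ to leading order) it is replaced by a uniformly chosen survivor carrying average value $\frac{1}{N-1}\sum_{j\ne i}f(Z^j_s)=\mu^N_s(f)+O(1/N)$. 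Hence $R^N_t(f)=-\int_0^t \mu^N_s(L\1_{E^*})\,\mu^N_s(f)\,ds+O(1/N)$, so that the total finite-variation part is $\int_0^t\big(\mu^N_s(Lf)-\mu^N_s(L\1_{E^*})\mu^N_s(f)\big)\,ds+O(1/N)$.

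Next I would control $M^N_t(f)$ and establish tightness. Its continuous part from the free motion has bracket $\frac1N\int_0^t \mu^N_s(\Gamma f)\,ds$ with $\Gamma f=L(f^2)-2fLf$, while each resampling jump moves $\mu^N_\cdot(f)$ by at most $2\|f\|_\infty/N$; compensating the jumps gives $\E[\langle M^N(f)\rangle_t]=O(1/N)$, provided the number of resampling events stays of order $N$ per unit time (this is where the standing hypothesis $\tau_\infty=+\infty$, together with the integrability needed to bound the resampling rate, is essential). Combined with the uniform bound on the finite-variation terms, Aldous' criterion \cite{Aldous1978} then yields tightness of the laws of $(\mu^N_\cdot)$ in $\mathbb{D}([0,T],\mathcal{P}(E^*))$ and shows that every limit point is supported on continuous paths.

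Finally I would identify the limit. Along a convergent subsequence the martingale part vanishes, its bracket tending to $0$, and any limit $\bar\mu_\cdot$ satisfies the deterministic nonlinear equation
\begin{equation*}
\bar\mu_t(f)=\delta_{Z_0}(f)+\int_0^t\Big(\bar\mu_s(Lf)-\bar\mu_s(L\1_{E^*})\,\bar\mu_s(f)\Big)\,ds,\quad \bar\mu_0=\delta_{Z_0}.
\end{equation*}
Differentiating $\eta_t(f)=\delta_{Z_0}P_tf/\delta_{Z_0}P_t(\1_{E^*})$ through the Kolmogorov equations, exactly as in the proof of Proposition~\ref{proposition:qsd-spectral-point-of-view}, gives $\frac{d}{dt}\eta_t(f)=\eta_t(Lf)-\eta_t(f)\,\eta_t(L\1_{E^*})$, so $\eta_t$ solves this equation; a Gronwall argument on the difference of two bounded solutions yields uniqueness. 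Hence every limit point equals $\eta_t$, the whole sequence converges, and $\mu^N_t\to\P_{Z_0}(Z_t\in\cdot\mid t<T_0)$ in law. I expect the main obstacle to be the rigorous treatment of the resampling compensator $R^N_t(f)$ and its martingale: in the diffusion setting killing occurs by continuous hitting of $0$ rather than at a jump rate, so the killing rate must be read off the boundary local time, and one must simultaneously control the number of resampling events and the $O(1/N)$ discrepancy between the resampled value and $\mu^N_s(f)$; this is the technical heart of the martingale method of \cite{Villemonais2011}.
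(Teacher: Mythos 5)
The paper itself contains no proof of this theorem: it is quoted from \cite{Villemonais2011} (``obtained \dots{} by martingale method''), and the surrounding text mentions only that the proofs rest on a coupling argument in the simpler settings. The argument of \cite{Villemonais2011} is a duality-type identity: writing $A^N_t$ for the number of resampling events before $t$, the process $s\mapsto \bigl(\tfrac{N-1}{N}\bigr)^{A^N_s}\mu^N_s(P_{t-s}f)$ is a martingale, so $\E\bigl[\bigl(\tfrac{N-1}{N}\bigr)^{A^N_t}\mu^N_t(f)\bigr]=P_tf(Z_0)$; taking $f=\1_{E^*}$ shows that $\bigl(\tfrac{N-1}{N}\bigr)^{A^N_t}$ has mean $\P_{Z_0}(t<T_0)$ and concentrates around it, and dividing the two identities gives the theorem. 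Your route --- semimartingale decomposition of $\mu^N_\cdot(f)$, tightness, and identification of every limit point with the unique solution of the nonlinear evolution equation --- is the hydrodynamic-limit strategy in the spirit of \cite{Grigorescu2004}, hence a genuinely different and legitimate approach. Your computation of the drift $\mu^N_s(Lf)-\mu^N_s(L\1_{E^*})\,\mu^N_s(f)$ and of the limiting equation $\frac{d}{dt}\eta_t(f)=\eta_t(Lf)-\eta_t(L\1_{E^*})\,\eta_t(f)$ is correct, as is the reduction of convergence in law to convergence in probability for a deterministic limit.

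There is, however, one point that is more than technical. Your bound $\E[\langle M^N(f)\rangle_t]=O(1/N)$ rests on the assertion that the number of resampling events per unit time is of order $N$, and this does \emph{not} follow from the standing hypothesis that the system is well defined for all times: $\tau_\infty=+\infty$ a.s.\ is purely qualitative, whereas you need a quantitative estimate such as $\E[A^N_t]\leq C(t)\,N$. In the generality of the theorem (unbounded killing rates, or hard killing of a diffusion at a boundary, where the compensator must be read off local times and the operator $L\1_{E^*}$ is not a bounded function) this estimate is precisely the difficult part, and it is exactly what the $\bigl(\tfrac{N-1}{N}\bigr)^{A^N_t}$ martingale of \cite{Villemonais2011} is designed to bypass. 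If the killing rate is bounded, say by $\bar\kappa$, then $A^N_t$ is stochastically dominated by a Poisson process of rate $N\bar\kappa$, your scheme closes, and the Gronwall uniqueness step (which likewise needs $\|L\1_{E^*}\|_\infty<\infty$; alternatively, multiply a solution by $\exp(-\int_0^t\bar\mu_s(L\1_{E^*})\,ds)$ to linearize it and invoke uniqueness of the forward equation) goes through. So what you have is a correct proof under a bounded-killing-rate hypothesis, not of the statement in its stated generality; you should either add that hypothesis or supply the missing control of $A^N_t$.
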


 \noindent If moreover $(Z^1,\cdots, Z^N)$ is ergodic, we denote by
 $M^N$ its stationary distribution and by ${\cal X}^N$ its empirical
 stationary distribution, which is defined by $\, {\cal
   X}^N=\frac{1}{N}\sum_{i=1}^N\delta_{z_i}$, where
 $(z_1,\cdots,z_N)\in E^*$ is a random vector distributed with respect
 to $M^N$. In particular, $\mu^N_t$ converges in law to ${\cal X}^N$
 when $t\rightarrow\infty$. We refer to \cite{Villemonais2010} for the
 proof of the following theorem.
 \begin{theorem}
   \label{theorem:convergence-FV-stationary-to-Yaglom-limit}
   Assume that $Z$ has a QLD $\alpha$ which attracts all initial
   distributions: for any probability measure $\mu$
   on $E^*$,
   \begin{equation*}
     \lim_{t\rightarrow+\infty}\P_{\mu}\left(Z_t\in \cdot|t< T_0\right)=\alpha.
   \end{equation*}
   Assume moreover that $(Z^1,\cdots,Z^N)$ is ergodic and that the
   family of laws of $({\cal X}^N)_{N\geq 2}$ is uniformly tight. Then
   the sequence of random probability measures $({\cal X}^N)$
   converges weakly to $\alpha$.
 %% and
 %%   \begin{equation*}
 %%     \lim_{N\rightarrow\infty}\lim_{t\rightarrow+\infty} \mu^N_t=\alpha.
 %%   \end{equation*}
 \end{theorem}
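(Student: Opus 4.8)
The plan is to combine the $N\to\infty$ convergence of Theorem~\ref{theorem:convergence-FV-to-conditioned-distribution} with the $t\to\infty$ attractiveness hypothesis through a commutation-of-limits argument, the uniform tightness being exactly what forces the two limits to interchange. First I would use the uniform tightness of the laws of $({\cal X}^N)_{N\geq 2}$ together with Prokhorov's theorem on the space ${\cal P}(E^*)$ of probability measures on $E^*$ to conclude that the sequence of random measures $({\cal X}^N)$ is relatively compact in distribution. It then suffices to show that every weak limit point is the deterministic measure $\alpha$; so I let ${\cal X}^\infty$ denote the limit in law (as a random probability measure) of a subsequence $({\cal X}^{N_k})$.

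It is convenient to introduce the conditioned flow $\Phi_t(m)=m P_t/\bigl(m P_t(\mathbf{1}_{E^*})\bigr)$, defined for probability measures $m$ on $E^*$ with $mP_t(\mathbf{1}_{E^*})>0$, so that $\Phi_t(m)=\P_m(Z_t\in\cdot\,|\,t<T_0)$. By definition a probability measure is a QSD if and only if it is a fixed point of $\Phi_t$ for every $t$, and the attractiveness hypothesis reads $\Phi_t(m)\to\alpha$ weakly as $t\to\infty$ for every $m$ (so that $\alpha$ is the unique QSD). Now I run the Fleming-Viot system from its stationary law $M^N$, so that $\mu^N_0\stackrel{d}{=}{\cal X}^N$ and, by stationarity, $\mu^N_t\stackrel{d}{=}{\cal X}^N$ for every $t\geq 0$. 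The heart of the proof is a conditional, general-initial-condition version of Theorem~\ref{theorem:convergence-FV-to-conditioned-distribution}, namely that $\mu^N_t-\Phi_t(\mu^N_0)\to 0$ in probability as $N\to\infty$: the empirical measure after time $t$ is asymptotically $\Phi_t$ applied to the initial empirical measure. Granting this, I let $N=N_k\to\infty$: since $\mu^{N_k}_0\stackrel{d}{=}{\cal X}^{N_k}\Rightarrow{\cal X}^\infty$ and $\Phi_t$ is continuous, the continuous mapping theorem gives $\Phi_t(\mu^{N_k}_0)\Rightarrow\Phi_t({\cal X}^\infty)$, hence $\mu^{N_k}_t\Rightarrow\Phi_t({\cal X}^\infty)$; but also $\mu^{N_k}_t\stackrel{d}{=}{\cal X}^{N_k}\Rightarrow{\cal X}^\infty$, yielding the distributional fixed-point relation ${\cal X}^\infty\stackrel{d}{=}\Phi_t({\cal X}^\infty)$ for every fixed $t>0$.

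Finally I would upgrade this distributional relation to the pointwise conclusion ${\cal X}^\infty=\alpha$. Using the semigroup property $\Phi_{s+t}=\Phi_s\circ\Phi_t$, iteration gives ${\cal X}^\infty\stackrel{d}{=}\Phi_{nt}({\cal X}^\infty)$ for every $n\geq 1$. For each fixed realization, ${\cal X}^\infty$ is a genuine probability measure, so $\Phi_{nt}({\cal X}^\infty)\to\alpha$ weakly as $n\to\infty$ by attractiveness; hence $\Phi_{nt}({\cal X}^\infty)\to\alpha$ almost surely, and therefore in distribution. Since the left-hand side has the fixed law of ${\cal X}^\infty$ for every $n$, this forces the law of ${\cal X}^\infty$ to be $\delta_\alpha$, i.e. ${\cal X}^\infty=\alpha$ almost surely. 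As every limit point equals $\alpha$, the whole sequence $({\cal X}^N)$ converges weakly to $\alpha$, which is the claim.

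The main obstacle is the conditional propagation-of-chaos step $\mu^N_t-\Phi_t(\mu^N_0)\to 0$. Theorem~\ref{theorem:convergence-FV-to-conditioned-distribution} is stated only for particles started from a common deterministic point, whereas here the initial configuration is random and exchangeable with empirical measure ${\cal X}^N$. Making this rigorous requires either an exchangeable, random-initial-condition extension of the martingale argument of \cite{Villemonais2011} — controlling the martingale part of $t\mapsto\langle\mu^N_t,f\rangle$ uniformly over initial configurations — or a careful conditioning on the initial $\sigma$-field ${\cal F}_0$ combined with the tightness hypothesis to rule out escape of mass to $0$ or to infinity; this is precisely where the uniform tightness of $({\cal X}^N)$ is genuinely used, beyond the initial compactness extraction.
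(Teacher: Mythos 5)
First, a point of comparison: the paper does not prove this theorem at all; it defers to \cite{Villemonais2010}, remarking only that the proofs of Theorems~\ref{theorem:convergence-FV-to-conditioned-distribution} and \ref{theorem:convergence-FV-stationary-to-Yaglom-limit} rest on a coupling argument. So there is no in-paper proof to measure you against, only the question of whether your outline would close. The soft skeleton you propose is the right one and is essentially what the literature does: Prokhorov extraction from the uniform tightness, stationarity giving $\mu^N_t\stackrel{d}{=}{\cal X}^N$ for all $t$, the distributional fixed-point relation ${\cal X}^\infty\stackrel{d}{=}\Phi_t({\cal X}^\infty)$, and then the attractiveness hypothesis applied pathwise to force the law of ${\cal X}^\infty$ to be $\delta_\alpha$. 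That last step is correctly executed.

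However, the proposal has a genuine gap, and you name it yourself: the claim $\mu^N_t-\Phi_t(\mu^N_0)\to 0$ in probability when the particles start from the (random, exchangeable) stationary configuration. This is not a corollary of Theorem~\ref{theorem:convergence-FV-to-conditioned-distribution}, which only covers $N$ particles started at a common deterministic point, and it is not a routine conditioning exercise: one must control the correction terms coming from the jump mechanism (each killing redistributes mass $1/(N-1)$, and the number of killings up to time $t$ must be shown to be $o(N^2)$ or the martingale bracket of $t\mapsto\mu^N_t(f)$ controlled) \emph{uniformly over initial configurations}, which is exactly the coupling/martingale estimate that constitutes the entire analytic content of \cite{Villemonais2010,Villemonais2011}. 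Asserting it as "the heart of the proof" and then listing two possible ways one might prove it leaves the theorem unproved. A secondary, unaddressed issue is your use of the continuous mapping theorem: $\Phi_t(m)=mP_t/mP_t(\mathbf{1}_{E^*})$ is continuous for the weak topology only if $P_t$ maps $C_b(E^*)$ into itself and $x\mapsto\P_x(t<T_0)$ is continuous (and the denominator does not vanish in the limit); none of this is among the stated hypotheses of the theorem, so in the generality in which the result is formulated this step also needs either an added regularity assumption or a different argument (for instance, working with $\P_{\mu^N_0}(Z_t\in\cdot\,;\,t<T_0)$ before normalizing, together with the tightness to prevent the survival probability from degenerating).
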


 \noindent 
 If $E$ is a bounded subset of $\mathbb{R}^d$, $d\geq 1$, and if $Z$
 is a drifted Brownian motion with bounded drift which is killed at
 the boundaries of $E$, then the assumptions of Theorems
 \ref{theorem:convergence-FV-to-conditioned-distribution} and
 \ref{theorem:convergence-FV-stationary-to-Yaglom-limit} are fulfilled
 (see \cite{Villemonais2010}). The proofs of  Theorems
 \ref{theorem:convergence-FV-to-conditioned-distribution} and
 \ref{theorem:convergence-FV-stationary-to-Yaglom-limit} are based on a
 coupling argument. More general (but longer) proofs can also be found
 in \cite{Grigorescu2011} or \cite{Villemonais2011}. In particular,
 these results provide us a numerical approximation method of the
 Yaglom limit for such processes.

 \bi Let us now consider  the  Kolmogorov diffusion process $X$ defined in \eqref{kolmogorov}. In that case,
 the existence of the Fleming-Viot particle system remains an open
 problem because of the unboundedness of the drift coefficient. In order to avoid this difficulty, we introduce
 the law $\mathbb{P}^{\varepsilon}$ of the diffusion process with
 bounded coefficients defined by
 \begin{equation}
   \label{killedkol}
   dX^{\varepsilon}_t= dB_t - q(X^{\varepsilon}_t) dt\ ;\ X_0\in (\varepsilon, {1/
     \varepsilon}),
 \end{equation}
 killed when it hits $\varepsilon$ or ${1\over \varepsilon}$.  One can
 easily show that at any time $t\geq 0$, the conditioned distribution
 of $X^{\varepsilon}$ converges to the one of $X$:
 \begin{equation*}
   \mathbb{P}^{\varepsilon}\left(X^{\varepsilon}_t\in\cdot|t<T_{\varepsilon}\wedge
   T_{1/\varepsilon}\right)
   \xrightarrow[\varepsilon\rightarrow 0]{} \mathbb{P}\left(X_t\in\cdot|t<T_{0}\right).
 \end{equation*}
 The existence of the Yaglom limit denoted by $\alpha_{\epsilon}$ and the uniqueness of the QSD for
 $\P^{\varepsilon}$ are obtained from Pinsky \cite{Pinsky1985}.  The following approximation result
 is proved in \cite{Villemonais2010}  using a compactness-uniqueness argument.
 \begin{proposition}
   The sequence $(\alpha^\varepsilon)_\varepsilon$ weakly converges to
   the Yaglom limit $\alpha$ of $X$ as $\varepsilon$ tends to $0$.
 \end{proposition}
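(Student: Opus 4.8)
The plan is to run a compactness--uniqueness argument on the family $(\alpha^\varepsilon)_\varepsilon$ of probability measures on $(0,\infty)$. Write $\tau^\varepsilon=T_\varepsilon\wedge T_{1/\varepsilon}$ for the killing time of $X^\varepsilon$, with $T_y$ as in \eqref{tempsd'entree}. By Pinsky \cite{Pinsky1985} and the self-adjoint $\mathbb{L}^2(\mu)$-theory (the interval version of Theorem~\ref{theorem:spectral-LogisticFeller}), each $\alpha^\varepsilon$ is the unique QSD of $X^\varepsilon$; it carries an extinction rate $\theta^\varepsilon>0$ equal to the bottom of the Dirichlet spectrum of $-L$ on $(\varepsilon,1/\varepsilon)$, and it has density proportional to $\eta_1^\varepsilon(x)\,e^{-Q(x)}$ on $(\varepsilon,1/\varepsilon)$, where $\eta_1^\varepsilon>0$ is the principal Dirichlet eigenfunction. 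By domain monotonicity of Dirichlet eigenvalues, $\theta^\varepsilon$ decreases as $\varepsilon\downarrow0$ and stays bounded below by $\lambda_1>0$ (the bottom of the spectrum on $(0,\infty)$ from Theorem~\ref{theorem:spectral-LogisticFeller}), so $\theta^\varepsilon\to\theta^\ast\in[\lambda_1,\theta^{\varepsilon_0}]$ with $\theta^\ast>0$. The three steps will be: (i) tightness of $(\alpha^\varepsilon)_\varepsilon$, hence relative compactness for the weak topology; (ii) every weak limit point is a QSD of $X$; (iii) the uniqueness of the QSD of $X$ (Theorem~\ref{theorem:feller-logistic-qsd-for-X}) forces every limit point to equal $\alpha$, whence the whole family converges.

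Tightness at infinity is the favorable end, driven by the confining cubic drift. Using $\phi(x)=x^2$, a direct computation gives $L\phi=rx^2-\tfrac{c}{4}x^4$, and integrating $\alpha^\varepsilon(L\phi)$ by parts against $\eta_1^\varepsilon e^{-Q}$ (which is in divergence form, since $L\phi\,e^{-Q}=\tfrac12(\phi'e^{-Q})'$) produces the eigenvalue term together with a boundary contribution; the latter has the sign $-\tfrac12\phi(\eta_1^\varepsilon)'e^{-Q}$ evaluated at the endpoints, which is favorably signed (as $(\eta_1^\varepsilon)'(\varepsilon)\ge0$ and $(\eta_1^\varepsilon)'(1/\varepsilon)\le0$) and may be dropped for an upper bound. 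This yields $\tfrac{c}{4}\alpha^\varepsilon(x^4)\le(r+\theta^\varepsilon)\,\alpha^\varepsilon(x^2)\le(r+\theta^{\varepsilon_0})\sqrt{\alpha^\varepsilon(x^4)}$ by Cauchy--Schwarz, so $\sup_\varepsilon\alpha^\varepsilon(x^4)<\infty$ and $\alpha^\varepsilon((M,\infty))\le C/M^4$ uniformly in $\varepsilon$.

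The delicate end, and the main obstacle, is tightness near $0$: one must rule out that the conditioned mass slides into the absorbing boundary as $\varepsilon\to0$. Since $\alpha^\varepsilon((0,\delta))=\big(\int_\varepsilon^{\delta}\eta_1^\varepsilon e^{-Q}\big)\big/\big(\int_\varepsilon^{1/\varepsilon}\eta_1^\varepsilon e^{-Q}\big)$, and $e^{-Q}\sim x^{-1/2}$ is only integrable (not vanishing) at $0$, the required smallness must come from an $\varepsilon$-uniform bound on the eigenfunctions of the form $\eta_1^\varepsilon(x)\le C x^2$ near $0$ (the local behaviour dictated by $\tfrac12\eta''-\tfrac1{2x}\eta'\approx0$), together with an $\varepsilon$-uniform lower bound $\eta_1^\varepsilon\ge c'>0$ on a fixed compact such as $[1,3/2]$. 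Granting these, $\int_\varepsilon^\delta x^2 x^{-1/2}\,dx\sim\delta^{5/2}$ gives $\lim_{\delta\downarrow0}\sup_\varepsilon\alpha^\varepsilon((0,\delta))=0$, the $\varepsilon$-uniform analogue of \eqref{tight1}. I would obtain these bounds from local elliptic (Harnack) estimates exactly as in the proof of Lemma~\ref{tension} --- whose constants depend only on the $\varepsilon$-independent coefficient $q$ --- supplemented by the return-time exponential moments of Proposition~\ref{proposition:controle-moment-logistic-feller}; propagating the Harnack constants uniformly down to the shrinking left boundary, where the coefficient $q$ is singular, is where the real work lies.

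Granting tightness, step (ii) is routine. Fix $f\in C_c^\infty((0,\infty))$; for $\varepsilon$ small its support lies in $(\varepsilon,1/\varepsilon)$, so $f$ is an admissible test function for $X^\varepsilon$ and Proposition~\ref{proposition:qsd-spectral-point-of-view} gives $\alpha^\varepsilon(Lf)=-\theta^\varepsilon\,\alpha^\varepsilon(f)$. As $Lf=\tfrac12 f''-qf'$ is continuous with compact support, passing to a weakly convergent subsequence $\alpha^{\varepsilon_n}\to\alpha^\ast$ (with $\theta^{\varepsilon_n}\to\theta^\ast>0$) yields $\alpha^\ast(Lf)=-\theta^\ast\,\alpha^\ast(f)$ for all such $f$; tightness guarantees $\alpha^\ast$ is a genuine probability measure on $(0,\infty)$ with no loss of mass. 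By Proposition~\ref{proposition:qsd-spectral-point-of-view} (whose hypothesis holds for this diffusion, as noted after its statement), $\alpha^\ast$ is a QSD of $X$. Finally, Theorem~\ref{theorem:feller-logistic-qsd-for-X} asserts that $X$ has a unique QSD, namely $\alpha$ (given by \eqref{equation:logistic-feller-alpha}), so $\alpha^\ast=\alpha$; every subsequential limit being $\alpha$, the tight family $(\alpha^\varepsilon)_\varepsilon$ converges weakly to $\alpha$, which is the assertion of the proposition.
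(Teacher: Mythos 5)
Your overall plan --- relative compactness of $(\alpha^\varepsilon)_\varepsilon$, identification of every weak limit point as a QSD of $X$ via the generator identity, and then the uniqueness statement of Theorem~\ref{theorem:feller-logistic-qsd-for-X} --- is precisely the ``compactness--uniqueness argument'' that the paper attributes to \cite{Villemonais2010}, and a good part of it is correctly carried out: the domain monotonicity giving $\theta^\varepsilon\downarrow\theta^\ast\ge\lambda_1>0$; the Lyapunov computation $L(x^2)=rx^2-\tfrac{c}{4}x^4$, whose integration against $\eta_1^\varepsilon e^{-Q}$ indeed produces only favorably signed boundary terms and yields $\sup_\varepsilon\alpha^\varepsilon(x^4)<\infty$, hence tightness at infinity; and the passage to the limit in $\alpha^\varepsilon(Lf)=-\theta^\varepsilon\alpha^\varepsilon(f)$ for $f\in C^\infty_c((0,\infty))$, which (granting Proposition~\ref{proposition:qsd-spectral-point-of-view} in the form the paper itself uses it) identifies any tight limit point with the unique QSD $\alpha$.

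The genuine gap is the one you flag yourself: tightness near $0$, i.e.\ the $\varepsilon$-uniform analogue of \eqref{tight1}. This is not a technical afterthought but the whole content of the proposition --- it is exactly the step that rules out the conditioned mass collapsing onto the absorbing boundary as $\varepsilon\to 0$, leaving a sub-probability or degenerate limit --- and your proposal reduces it to two uniform eigenfunction estimates ($\eta_1^\varepsilon(x)\le Cx^2$ near $0$ and $\inf_{[1,3/2]}\eta_1^\varepsilon\ge c'>0$, with constants independent of $\varepsilon$) that are asserted but not proved; propagating Harnack-type constants down to a boundary that moves into the singularity of $q$ is, as you say, ``where the real work lies,'' and until it is done the proof is not complete. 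Note also that the heuristic you give for why these bounds suffice contains an error that makes the step more delicate than you present it: from \eqref{q}, $e^{-Q(y)}=y^{-1}e^{-\frac{r}{2}(1-y^2)-\frac{c}{16}(y^4-1)}\sim C/y$ near $0$ (it is $e^{-Q/2}$, not $e^{-Q}$, that behaves like $y^{-1/2}$), so $e^{-Q}$ is \emph{not} integrable at $0$ and $\int_\varepsilon^\delta e^{-Q}$ diverges like $\ln(\delta/\varepsilon)$ as $\varepsilon\to0$. Consequently a mere uniform upper bound on $\eta_1^\varepsilon$ would not give tightness; the quadratic decay at the left endpoint, uniformly in $\varepsilon$, is indispensable (with it, $\int_\varepsilon^\delta x^2\,x^{-1}dx\sim\delta^2$ and the argument closes). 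Establishing that uniform decay is the missing core of the proof.
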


 \me For all $N\geq 2$, we denote by
 $(X^{\varepsilon,1},\cdots, X^{\varepsilon,N})$ the interacting
 particle system built as above, with the law $\P^{\epsilon}$. Since
 the diffusion process $X^{\epsilon}$ is a drifted Brownian motion
 with bounded drift evolving in the bounded interval
 $]\epsilon,1/\epsilon[$, the interacting particle system
     $(X^{\varepsilon,1},\cdots, X^{\varepsilon,N})$ fulfills the
     assumptions of Theorems
     \ref{theorem:convergence-FV-to-conditioned-distribution} and
     \ref{theorem:convergence-FV-stationary-to-Yaglom-limit}.
     Denoting by $\mu^{\epsilon, N}$ the empirical
 distribution of the simulable particle system
 $(X^{\varepsilon,1},\cdots , X^{\varepsilon,N})$, we get
 \begin{equation*}
   \lim_{\epsilon\rightarrow 0} \lim_{N\rightarrow\infty} \mu^{\varepsilon,N}_t=\P_{X_0}\left(X_t\in\cdot|t<T_0\right),\ \forall t\geq 0,
 \end{equation*}
 and
 \begin{equation*}
   \lim_{\epsilon\rightarrow 0} \lim_{N\rightarrow\infty}
   \lim_{t\rightarrow\infty}
   \mu^{\varepsilon,N}_t=\lim_{\epsilon\rightarrow 0} \alpha_{\varepsilon}=\alpha.
 \end{equation*}
 Then, choosing $\epsilon$ small enough and $N$ big enough, we get a
 numerical approximation method for the conditioned distribution and
 the Yaglom limit of $X$.

 \begin{example}\upshape
   Let us now  develop this simulation method in the case of the
   Wright-Fisher diffusion conditioned to be absorbed at $0$, which
   evolves
   in $[0,1[$ and is defined by
   \begin{equation*}
     dZ_t=\sqrt{Z_t(1-Z_t)}dB_t-Z_t dt,\ Z_0=z\in]0,1[.
   \end{equation*}
    This is a
   model for a bi-type population in which the second type cannot
   disappear. In that model, $Z_t$ is the proportion of the first type
   in the population at time $t\geq0$ and $1-Z_t$ the proportion of
   the other one. 
   %It The Wright-Fisher diffusion conditioned to be
   %absorbed at $0$
   %   We now look at the evolution of the law of this process conditioned
   %   to not hit $0$, \textit{i.e.} conditioned to the event ``there is
   %   still a positive proportion of individuals of the first type in the
   %   population at time $t$''.
   The existence of a Yaglom limit for this process has been
   proved by Huillet in \cite{Huillet2007}, which also proved that it
   has the density $\,2-2x\,$ with respect to the Lebesgue measure. 

   \me Using the approximation method described above with
   $\epsilon=0.001$ and $N=10000$, we obtain numerically the density
   of the Yaglom limit for $Z$ represented in
   Figure~\ref{figure:wright-fisher-yaglom}, which is very close to
   the function $x\mapsto 2-2x$ and shows the efficiency of the method.
   \begin{figure}[htbp]
     \begin{center}
       \includegraphics[width=13cm]{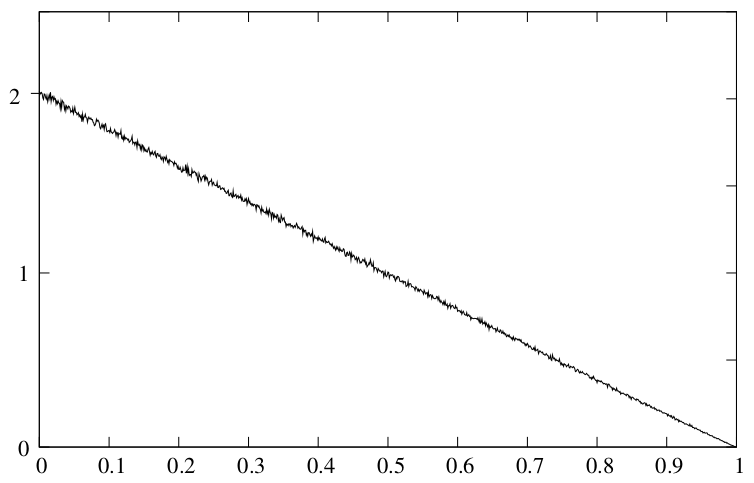}
       \caption{The Yaglom limit of the Wright-Fisher diffusion
         conditioned to be absorbed at $0$ obtained by numerical
         simulation.}
       \label{figure:wright-fisher-yaglom}
     \end{center}
   \end{figure}

 \end{example}

% \bibliography{./bibliography/compilation}
%  \bibliographystyle{abbrv}

\end{document}